\documentclass[11pt,a4paper]{article}


\usepackage[T1]{fontenc}
\usepackage{microtype}

\usepackage{amsmath}								
\usepackage{amssymb}
\usepackage{amsthm}
\usepackage{amscd}
\usepackage{amsfonts}
\usepackage{stmaryrd}

\usepackage{euler}

\usepackage{extarrows}

\usepackage[colorlinks, linktocpage, citecolor = blue, linkcolor = blue]{hyperref}
\usepackage{color}
\usepackage{textgreek}
\usepackage{tikz}									
\usetikzlibrary{matrix}
\usetikzlibrary{patterns}
\usetikzlibrary{matrix}
\usetikzlibrary{positioning}
\usetikzlibrary{decorations.pathmorphing}
\usetikzlibrary{cd}

\usepackage{fullpage}
\usepackage{enumerate}

\linespread{1.1}


\newtheorem*{theorem*}{Theorem}

\newtheorem{maintheorem}{Theorem}

\newtheorem{theorem}{Theorem}[section]
\newtheorem{lemma}[theorem]{Lemma}
\newtheorem{proposition}[theorem]{Proposition}
\newtheorem{corollary}[theorem]{Corollary} 
\newtheorem{conjecture}[theorem]{Conjecture}

\theoremstyle{definition}
\newtheorem{definition}[theorem]{Definition}
\newtheorem{example}[theorem]{Example}

\newtheorem{remark}[theorem]{Remark}


\newcommand{\R}{\mathbb{R}}

\newcommand{\Z}{\mathbb{Z}}
\newcommand{\ZZ}{\mathbb{Z}}

\newcommand{\PP}{\mathbb{P}}

\newcommand{\RR}{\mathbb{R}}

\newcommand{\Mbar}{\overline{M}}

\newcommand{\calMbar}{\overline{\mathcal{M}}}

\newcommand{\calD}{\mathcal{D}}
\newcommand{\calDR}{\mathcal{DR}}

\newcommand{\calH}{\mathcal{H}}
\newcommand{\calHbar}{\overline{\mathcal{H}}}
\newcommand{\calJ}{\mathcal{J}}
\newcommand{\calM}{\mathcal{M}}
\newcommand{\calO}{\mathcal{O}}

\newcommand{\calX}{\mathcal{X}}
\newcommand{\calXbar}{\overline{\mathcal{X}}}

\newcommand{\calDiv}{\mathcal{D}iv}
\newcommand{\calDivbar}{\overline{\mathcal{D}iv}}

\DeclareMathOperator{\Jac}{Jac}

\DeclareMathOperator{\Hom}{Hom}

\DeclareMathOperator{\Aut}{Aut}

\DeclareMathOperator{\val}{val}

\DeclareMathOperator{\Span}{Span}

\DeclareMathOperator{\trop}{trop}
\DeclareMathOperator{\Div}{Div}
\DeclareMathOperator{\Ram}{Ram}

\DeclareMathOperator{\mdeg}{mdeg}
\DeclareMathOperator{\DR}{DR}
\DeclareMathOperator{\PD}{PD}
\DeclareMathOperator{\src}{src}
\DeclareMathOperator{\br}{br}
\DeclareMathOperator{\Rat}{Rat}
\DeclareMathOperator{\ord}{ord}
\DeclareMathOperator{\slope}{slope}

\let\div\relax
\DeclareMathOperator{\div}{div}

\let\sp\relax
\DeclareMathOperator{\sp}{sp}


\usepackage{color}

\newcommand{\ph}{\varphi}
\newcommand{\si}{\sigma}
\newcommand{\ga}{\gamma}
\newcommand{\Ga}{\Gamma}
\newcommand{\De}{\Delta}
\newcommand{\tF}{\widetilde{F}}
\newcommand{\tG}{\widetilde{G}}
\newcommand{\tph}{\widetilde{\ph}}
\newcommand{\ttau}{\widetilde{\tau}}
\newcommand{\tGa}{\widetilde{\Ga}}

\newcommand{\tDe}{\widetilde{\Delta}}
\newcommand{\tT}{\widetilde{T}}
\newcommand{\ta}{\widetilde{a}}
\newcommand{\tDR}{\widetilde{DR}}


\title{Tropical double ramification loci}

\date{}

\author{Martin Ulirsch and Dmitry Zakharov}

\newcommand{\addresses}{{
  \bigskip
  \footnotesize
  
  \noindent Martin~Ulirsch,\newline \textsc{Institut f\"ur Mathematik, Goethe-Universit\"at Frankfurt, 60325 Frankfurt am Main, Germany }\newline\nopagebreak
  \textit{E-mail address:} \texttt{ulirsch@math.uni-frankfurt.de}

  \medskip
  
  \noindent Dmitry~Zakharov, \newline \textsc{Department of Mathematics, Central Michigan University, Mount Pleasant, MI 48859, USA}\newline\nopagebreak
  \textit{E-mail address:} \texttt{zakha1d@cmich.edu}

}}


\begin{document}

\maketitle

\begin{abstract}
Motivated by the realizability problem for principal tropical divisors with a fixed ramification profile, we explore the tropical geometry of the double ramification locus in $\calM_{g,n}$.There are two ways to define a tropical analogue of the double ramification locus: one as a locus of principal divisors, the other as a locus of finite effective ramified covers of a tree. We show that both loci admit a structure of a generalized cone complex in $M_{g,n}^{trop}$, with the latter contained in the former. We prove that the locus of principal divisors has cones of codimension zero in $M_{g,n}^{trop}$, while the locus of ramified covers has the expected codimension $g$. This solves the deformation-theoretic part of the realizability problem for principal divisors, reducing it to the so-called Hurwitz existence problem for covers of a fixed ramification type. 
\end{abstract}


\section{Introduction}

\subsection{Realizability of tropical principal divisors}

Let $\Ga$ be a stable tropical curve and $D=\div(f)=\sum_{i=1}^n a_i x_i$ a principal divisor on $\Ga$, where $a=(a_1,\ldots,a_n)$ are non-zero integers summing to zero. We say that the pair $(\Ga, D)$ is \emph{realizable} if there exists a smooth curve $X$ over a non-Archimedean field $K$ together with a stable degeneration $\calX$, as well as a principal divisor $\widetilde{D}$ on $X$ with the same multiplicity profile $a$ as $D$, such that 
\begin{itemize}
\item $\Ga$ is the dual tropical curve of $\calX$; and
\item the \emph{specialization} of $\widetilde{D}$, i.e. the multidegree of the special fiber of the closure of $\widetilde{D}$ in a suitably chosen semistable model of $\calX$, is equal to $D$. 
\end{itemize}

The {\it realizability problem} consists in describing all realizable pairs $(\Ga,D)$.

We begin by noting that if we relax the condition that $\widetilde{D}$ have the same multiplicity profile $a$ as $D$, then the realizability problem is always solvable. Indeed, let $\Ga$ be a tropical curve of genus $g$ that arises as the dual tropical curve of a fixed semistable degeneration $\calX$, and let $D$ be a principal divisor on $\Ga$. Write $D=D_1-D_2$ for effective divisors $D_1,D_2\in\Div_d^+(\Ga)$ of some degree $d$. In \cite[Theorem 1.1]{BakerRabinoff_skelJac=Jacskel}, Baker and Rabinoff show that there exist effective divisors $\widetilde{D}_1,\widetilde{D}_2\in \Div_{d+g}^+(X)$ of degree $d+g$ such that $\widetilde{D}=\widetilde{D}_1-\widetilde{D}_2$ is principal and the specialization of $\widetilde{D}$ to $\Ga$ is equal to $D$. In other words, a principal divisor $D$ on $\Ga$ can be lifted to a principal divisor $\widetilde{D}$ by adding at most $g$ additional zeroes and poles which cancel under tropicalization. Thus, loosely speaking, realizability imposes $g$ independent conditions on the pair $(\Ga,D)$. 

The realizability problem is a precise incarnation of the heuristic that there are "more rational functions on tropical curves than tropicalizations of rational functions". As such, it explains why tropical linear systems are typically much larger than the tropicalizations of algebraic linear systems. In particular, it functions as a moral reason for Baker's specialization lemma in \cite{Baker_specialization} saying that the rank of a linear system on an algebraic curve can only go up, when passing to its tropicalization. 

Our approach to this problem begins with \cite{Caporaso_gonality} and is motivated by \cite{ABBRI, ABBRII}. In \cite{Caporaso_gonality} Caporaso studies whether a $d$-gonal tropical curve is necessarily the tropicalization of a $d$-gonal algebraic curve. In \cite{ABBRI,ABBRII}, the authors study the more general problem of lifting finite harmonic morphisms of tropical curves to finite morphisms of algebraic curves. Both \cite{Caporaso_gonality} and \cite{ABBRI, ABBRII} show that the obstruction to this lifting problem is not deformation-theoretic, but combinatorial, as it depends on the non-vanishing of certain local Hurwitz numbers. Moreover, the authors of \cite{ABBRII} observe in \cite[Proposition 4.2]{ABBRII} that, given a finite harmonic morphism from a tropical curve to a tree, any two fibers are linearly equivalent divisors. 

The following Theorem \ref{thm_realizabilityofprincipaldivisors} uses these observations to sum up and refine what is known about the realizability problem for tropical principal divisors. 

\begin{maintheorem}\label{thm_realizabilityofprincipaldivisors}
Let $D=\div(f)=\sum_{i=1}^n a_i x_i$ be a principal divisor on a tropical curve $\Ga$. Then $(\Ga, D)$ is realizable if and only there is a tropical modification $\widetilde{\Ga}$ of $\Ga$ as well as a finite effective harmonic morphism $f\colon \widetilde{\Ga}\rightarrow \De$ to a metric tree $\De$ with two legs $0$ and $\infty$, such that the local Hurwitz numbers of $f$ are all non-zero and the difference of the preimage of $0$ and $\infty$ is a divisor that stabilizes to $D$. 
\end{maintheorem}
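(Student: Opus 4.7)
The plan is to establish the two directions of the biconditional independently. The sufficiency direction (``harmonic morphism $\Rightarrow$ realizable'') reduces to the lifting machinery developed in \cite{ABBRI, ABBRII}, while the necessity direction (``realizable $\Rightarrow$ harmonic morphism'') is a structural statement about the tropicalization of an algebraic rational function.

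For the sufficiency direction, I begin with a finite effective harmonic morphism $f\colon\widetilde{\Ga}\to\De$ whose local Hurwitz numbers are all nonzero and whose divisor $f^{-1}(0)-f^{-1}(\infty)$ stabilizes to $D$. The lifting theorems of \cite{ABBRII} then produce a finite morphism $\widetilde{f}\colon X\to \PP^1_K$ of smooth proper curves over a suitable non-Archimedean field $K$ whose tropicalization recovers $f$. Interpreting $\widetilde{f}$ as a rational function, the principal divisor $\widetilde{D}=\widetilde{f}^*(0)-\widetilde{f}^*(\infty)$ on $X$ has multiplicity profile $a$, and its specialization stabilizes to $D$ by compatibility of specialization with tropicalization. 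This realizes $(\Ga, D)$.

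For the necessity direction, suppose $\widetilde{D}=\div(\widetilde{f})$ realizes $(\Ga, D)$ on some $X/K$ with semistable model $\calX$. I view $\widetilde{f}$ as a finite morphism $X\to\PP^1_K$ and refine $\calX$ to a semistable model $\calX'$ such that the closures of $\widetilde{f}^{-1}(0)$ and $\widetilde{f}^{-1}(\infty)$ are disjoint unions of smooth sections meeting the special fiber transversally. The induced map on Berkovich skeleta then gives a finite effective harmonic morphism $f\colon\widetilde{\Ga}\to\De$, where $\widetilde{\Ga}$ is the dual tropical curve of $\calX'$ (a tropical modification of $\Ga$) and $\De$ is the skeleton of $(\PP^1_K)^{an}$, a metric tree with legs $0$ and $\infty$. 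The local Hurwitz numbers of $f$ are the ramification indices of $\widetilde{f}$ along irreducible components of the special fiber, hence strictly positive. By construction $f^{-1}(0)-f^{-1}(\infty)$ is the specialization of $\widetilde{D}$, which stabilizes to $D$.

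The main obstacle is the application of the ABBR lifting theorem in the sufficiency direction, since that is precisely where the combinatorial non-vanishing condition on local Hurwitz numbers enters essentially as an obstruction. A secondary technical subtlety is the careful handling of tropical modifications in both directions: in the necessity direction the modification records the extra components of $\calX'$ needed to separate zeros and poles of $\widetilde{f}$, analogous to the $g$ extra zeros and poles appearing in \cite[Theorem 1.1]{BakerRabinoff_skelJac=Jacskel}; in the sufficiency direction one must verify that stabilizing $f^{-1}(0)-f^{-1}(\infty)$ on $\widetilde{\Ga}$ returns precisely the divisor $D$ with prescribed multiplicity profile $a$ on $\Ga$.
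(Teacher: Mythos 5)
Your proposal is correct in outline, but it takes the route the paper only mentions as an alternative, not the one it actually follows. The paper's proof is moduli-theoretic and very short: Theorem \ref{thm_realizabilityofprincipaldivisors} is deduced together with Theorem \ref{thm_realizabilityDR} from the tropicalization of admissible covers, namely Theorem \ref{thm_Hurwitzrealizability} (the image of $\trop_{g\rightarrow h,d}(\mu)$ consists exactly of the cones of Hurwitz type, from \cite{CavalieriMarkwigRanganathan_tropadmissiblecovers}), combined with the compatibility of tropicalization with the source/forgetful maps and with the divisor spaces (Theorem \ref{thm_tropadmcovfunc}, Proposition \ref{prop_tropdiv=func}, Proposition \ref{prop_factorization}). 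Your argument instead lifts pointwise via \cite{ABBRI, ABBRII} for the ``if'' direction and uses skeleton theory for the ``only if'' direction; the introduction explicitly flags this as a viable alternative. The paper's approach buys the stronger, uniform statement of Theorem \ref{thm_realizabilityDR} about which cones of $\DR_{g,a}^{trop}$ lie in the realizability locus, with Theorem \ref{thm_realizabilityofprincipaldivisors} as a corollary; your approach avoids the compactified moduli spaces but yields only the pointwise statement.

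Two steps need tightening. First, in the necessity direction your sentence ``the local Hurwitz numbers of $f$ are the ramification indices of $\widetilde{f}$ along irreducible components of the special fiber'' is incorrect as stated: a local Hurwitz number at a vertex $v'$ is a weighted count of covers of the residue curve $C_{\ph(v')}$ by a curve of genus $g(v')$ with the prescribed ramification profile over the tangent directions, not a ramification index. The correct justification of non-vanishing is that the special fiber of a semistable (admissible) model of $\widetilde{f}$ restricts, over each component, to an honest cover realizing exactly that local data, so each count is at least one. Second, choosing a model $\calX'$ in which the zeros and poles are separated does not by itself produce a \emph{finite} harmonic morphism of skeleta; you should fix a skeleton $\De$ of $(\PP^1)^{an}$ containing $0$, $\infty$ and the retractions of the branch points, and set $\widetilde{\Ga}=(\widetilde{f}^{an})^{-1}(\De)$; then \cite[Theorem A]{ABBRI} guarantees that $\widetilde{\Ga}$ is a skeleton (hence a tropical modification of $\Ga$) and that the restriction $\widetilde{\Ga}\rightarrow\De$ is finite and harmonic, with effectiveness coming from Riemann--Hurwitz on the residue curves. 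With these adjustments your argument goes through.
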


In this article, we argue that a principal divisor is realizable if and only if it lies in a natural codimension $g$ locus of $M_{g,n}^{trop}$, called the \emph{tropical double ramification locus}, and satisfies combinatorial conditions determined by the non-vanishing of certain Hurwitz numbers. The tropical double ramification locus contains the tropicalization of the algebraic double ramification locus. In general it is strictly greater, however, it has the expected codimension $g$ in $M_{g,n}^{trop}$, hence exactly encodes the $g$ conditions observed in \cite{BakerRabinoff_skelJac=Jacskel}. 

\subsection{A tropical double ramification locus}

Let $g\geq 1$, $n\geq 2$, and let $a=(a_1,\ldots, a_n)\in \Z^n$ be non-zero integers such that $a_1+\cdots+a_n=0$. The \emph{double ramification locus} $\calDR_{g,a}$ in $\calM_{g,n}$ is the codimension $g$ locus of marked smooth curves $(X,p_1,\ldots, p_n)$ that fulfill the following two equivalent conditions:
\begin{enumerate}[(i)]
\item The divisor $\sum_{i=1}^n a_i p_i$ is principal, in other words
\begin{equation*}
\calO_X\Big(\sum_{i=1}^na_ip_i\Big)\simeq \calO_X \ . 
\end{equation*}
\item There is a map $f\colon X\rightarrow \PP^1$ with ramification profiles $\displaystyle\sum_{i:a_i>0}a_ip_i$ and $\displaystyle\sum_{i:a_i<0}(-a_i)p_i$ over $0\in \PP^1$ and $\infty\in\PP^1$, respectively.
\end{enumerate}

In order to define a tropical double ramification locus, we may use a tropical analogue of either of these two conditions; each of them leads to a different locus in  $M_{g,n}^{trop}$.

\begin{definition}\label{def_introPDDR} Denote by $M_{g,n}^{trop}$ the moduli space of stable tropical curves $(\Ga, p_1,\ldots, p_n)$ of genus $g$ with $n$ marked legs, as defined in \cite{ACP} and Section \ref{section_Mgntrop} below.
\begin{enumerate}[(i)]  
\item The \emph{locus of principal divisors} $\PD_{g,a}\subseteq M_{g,n}^{trop}$ is the set of points $(\Ga, p_1, \ldots, p_n)\in M_{g,n}^{trop}$ such that the divisor $\sum_{i=0}^n a_i p_i$ is principal (see Definition \ref{def_PDharmonic} below for details). 

\item The \emph{double ramification locus} $\DR_{g,a}\subseteq M_{g,n}^{trop}$ is the set of marked curves $(\Ga, p_1, \dots, p_n)\in M_{g,n}^{trop}$ for which there exists a tropical modification $\widetilde{\Ga}$ of $\Ga$ as well as a finite effective harmonic morphism $f\colon \widetilde{\Ga}\rightarrow \De$ to a tree $\De$ with two legs $0$ and $\infty$, such that the preimage of $0$ is the divisor $\displaystyle\sum_{i:a_i>0}a_i p_i$ and the preimage of $\infty$ is the divisor $\displaystyle\sum_{i:a_i<0}(-a_i) p_i$. 
\end{enumerate}\end{definition}

The following Theorem \ref{thm_DRlocus} describes the geometric structure of these loci and constitutes the main result of this paper (see Section~\ref{sec:conecomplexes} for definitions):

\begin{maintheorem}\label{thm_DRlocus}
\begin{enumerate}[(i)]
\item The locus $\PD_{g,a}^{trop}$ of tropical principal divisors is a linear subset of $M_{g,n}^{trop}$ that has maximal cones of every codimension between zero and $g$.

\item The tropical double ramification locus $\DR_{g,a}^{trop}$ is a semilinear subset of $\PD_{g,a}^{trop}$ whose maximal-dimensional cones have codimension $g$ in $M_{g,n}^{trop}$. 
\end{enumerate} 
\end{maintheorem}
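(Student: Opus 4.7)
\emph{Part (i).} I would analyze $\PD_{g,a}^{trop}$ cone by cone on the generalized cone complex $M_{g,n}^{trop}$. Fix a combinatorial type $G$: a stable graph of total genus $g$ with $n$ legs, whose underlying graph has Betti number $g(G)$ and vertex weights summing to $g - g(G)$. On a metric realization of $G$ with edge lengths $\ell$, the divisor $D_a = \sum_i a_i p_i$ is principal iff there exists a tropical rational function $f$ with $\div(f) = D_a$, which unpacks into two stages: first choose an integer flow $m = (m_e)$ on the oriented edges of $G$ realizing $D_a$ at each vertex (the set of such flows is a $\Z$-affine subspace of rank $g(G)$ in $\Z^{E(G)}$), and then require the cycle sums $\sum_{e \in C} m_e \ell(e) = 0$ for $C$ in a basis of $H_1(G,\Z)$, giving $g(G)$ linear equations in $\ell$. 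The union over flows is a linear subset of $\sigma_G$ of codimension at most $g(G) \leq g$; compatibility with edge contractions and weighted contraction morphisms then assembles these into a linear subset of $M_{g,n}^{trop}$. To realize each codimension $0 \leq k \leq g$, I vary the combinatorial type: $k = g$ comes from a trivalent type with $g(G) = g$ chosen so the cycle conditions are linearly independent, $k = 0$ from a type where $D_a$ is principal for all edge lengths (e.g.\ by concentrating legs so $D_a$ vanishes as a divisor on $G$), and intermediate $k$ from types with $g(G) = k$ and vertex weights summing to $g - k$.

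\emph{Part (ii).} I would parametrize cones of $\DR_{g,a}^{trop}$ by combinatorial cover data: a graph $\widetilde{G}$ of genus $g$, a tree $\Delta$ with two distinguished legs $0, \infty$, a finite effective harmonic morphism $f : \widetilde{G} \to \Delta$ with prescribed ramification above the legs (encoded by the positive and negative parts of $a$), and a contraction $\widetilde{G} \to G$ encoding the tropical modification. The harmonicity condition $m_e \cdot \ell(e) = \ell(f(e))$ forces edge lengths of $\widetilde{G}$ to be pulled back from those of $\Delta$, cutting out a linear cone of dimension $|E(\Delta)|$ in $\sigma_{\widetilde{G}}$; projection to $\sigma_G$ yields a linear subcone, and the union over all cover data produces the semilinear structure. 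Containment in $\PD_{g,a}^{trop}$ follows from the easy direction of Theorem~\ref{thm_realizabilityofprincipaldivisors}, since $f^{-1}(0) - f^{-1}(\infty)$ is manifestly principal. The codimension claim rests on the tropical Riemann--Hurwitz formula applied to $f$ (with $g(\Delta) = 0$ and $n_\Delta = 2$), from which one extracts a bound $|E(\Delta)| \leq 2g - 3 + n$, with equality for a sufficiently generic cover in which $\widetilde{G}$ is trivalent and $\Delta$ has the maximum number of interior vertices compatible with the prescribed ramification. Since $\dim \sigma_G \leq 3g - 3 + n$, every cone of $\DR_{g,a}^{trop}$ has codimension at least $g$, and the maximal ones realize codimension exactly $g$.

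\emph{Main obstacle.} The delicate step is the dimension count in part (ii). One must carefully track how ramification is distributed between interior vertices of $\widetilde{G}$ and the marked legs, handle the tropical modification $\widetilde{G} \to G$ so that edge lengths descend correctly to $G$, and verify that the Riemann--Hurwitz bound $|E(\Delta)| = 2g - 3 + n$ is actually attained by a valid harmonic cover compatible with the prescribed ramification profile $a$. This last existence step is closely tied to the Hurwitz existence problem referenced in the abstract and will constitute the combinatorial heart of the proof.
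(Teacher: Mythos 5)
Your overall strategy runs parallel to the paper's (cone-by-cone slope analysis for (i), enumeration of combinatorial cover data plus a dimension count for (ii)), but there are genuine gaps at exactly the points where the paper has to work hardest. In part (i), the sentence ``the union over flows is a linear subset'' is the whole difficulty: the set of integer flows realizing $D_a$ on a fixed type $G$ is an infinite rank-$g(G)$ affine lattice, so a priori you are taking an \emph{infinite} union of linear pieces inside $M_G^{trop}$, whereas (semi)linearity demands finitely many pieces per cone. The paper's proof of Theorem~\ref{thm:PDlocus} is devoted to this finiteness: an induction on genus (after reducing to trivalent, weightless types) showing that all but finitely many slope vectors force a positively oriented cycle and hence give cones contained in proper faces. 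Nothing in your sketch plays this role. Your examples for the codimension range are also off: concentrating all legs at one vertex only gives a full-dimensional cone when $n\leq 2$, and for intermediate $k$ a type with $g(G)=k$ and weights summing to $g-k$ has $M_G^{trop}$ of codimension at least $g-k$ already, on top of which the cycle conditions cut further; moreover such cones are never maximal in $\PD_{g,a}^{trop}$, since resolving a weight into a loop (with slope zero on the new loop) extends the principal-divisor locus to a strictly larger cone. The paper instead stays inside a maximally degenerate type --- a chain of $g$ loops with a bridge --- and gets codimension exactly $k$ by placing $k$ legs on distinct loops and the remaining legs on the bridge.

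In part (ii), your upper bound is in the right spirit, but the paper makes it precise (and simultaneously gets the finiteness needed for semilinearity) by adding $2g+n-2$ legs at the ramification to promote the cover to an \emph{unramified} cover of a \emph{stable} tree in $M_{0,2g+n}^{trop}$: then there are finitely many tree types, finitely many unramified covers of each (plus the edge-contraction-reversal lemma to see every tree type occurs), and the source lengths are pulled back, giving $\dim\leq 2g+n-3$. Your ``union over all cover data'' leaves open why only finitely many pairs $(\widetilde{\Ga},\De)$ are relevant over a given $G$. The more serious gap is the lower bound: asserting that equality holds ``for a sufficiently generic cover'' is not a construction, and it is genuinely delicate --- the forgetful projection from the augmented locus can have positive-dimensional fibers once $d\geq 3$, so attaining dimension $2g-3+n$ requires an explicit cone. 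The paper builds one by induction, grafting trees in the style of Cools--Draisma to increase genus and degree and then merging legs to increase the multiplicities $a_i$. Finally, this existence statement is purely tropical/combinatorial; it is not the Hurwitz existence problem, which enters only in Theorem~\ref{thm_realizabilityDR} when deciding which cones of $\DR_{g,a}^{trop}$ are realizable by algebraic covers.
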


We prove Part (i) of Theorem \ref{thm_DRlocus} using an argument similar to the one used to prove the structure theorem for the tropical Hodge bundle \cite[Theorem 1.2]{LinUlirsch}. Part (ii) is proved using a refinement of the algorithm developed in \cite{CoolsDraisma} to describe the $d$-gonal locus in $M_g^{trop}$. 

In the hyperelliptic case, when the zero (or pole) degree of $a$ is equal to 2, we have a stronger result about the topological and combinatorial properties of the tropical double ramification cycle. 

\begin{maintheorem} \label{thm_hyperellipticDC}
Let $a=(2,-2)$, $(2,-1,-1)$, or $(1,1,-1,-1)$. The hyperelliptic double ramification locus $\DR_{g,a}^{trop}$ is a linear subset of respectively $M_{g,2}^{trop}$, $M_{g,3}^{trop}$, and $M_{g,4}^{trop}$ that is connected in codimension one. Its projection to $M_g^{trop}$ is the realizable hyperelliptic locus $H_g$, and the fibers of the projection have dimensions $0$, $1$, and $2$, respectively. 
\end{maintheorem}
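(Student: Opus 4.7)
The plan is to exploit the fact that all local Hurwitz numbers of a degree $2$ harmonic morphism are automatically non-zero, so by Theorem \ref{thm_realizabilityofprincipaldivisors} a marked tropical curve lies in $\DR_{g,a}^{trop}$ if and only if the underlying curve $\Ga$ admits a degree $2$ harmonic morphism $\pi\colon \Ga \to T$ to a tree with associated involution $\iota$, and the markings are placed compatibly with $\iota$: for $a=(2,-2)$, both points are distinct fixed points of $\iota$; for $a=(2,-1,-1)$, the point $p_1$ is a fixed point of $\iota$ and $(p_2,p_3)$ is an ordered $\iota$-orbit of two distinct points; for $a=(1,1,-1,-1)$, we have two disjoint ordered $\iota$-orbits $(p_1,p_2)$ and $(p_3,p_4)$. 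This reduces the study of $\DR_{g,a}^{trop}$ to that of tropical hyperelliptic curves equipped with compatibly marked points.

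To establish linearity I would proceed cone-by-cone. Fix a combinatorial type of marked hyperelliptic data $(\Ga, \iota, p_1, \ldots, p_n)$ and form the quotient tree $\De := \Ga/\iota$, enhanced with legs at the images of the markings. Each edge of $\Ga$ is mapped to an edge of $\De$ either isometrically (if the cover splits there) or with expansion factor $2$ (if the edge is fixed by $\iota$). Hence the edge lengths of $\Ga$ are linear functions of those of $\De$ with rational coefficients in $\{1, 1/2\}$, and the positions of markings not fixed by $\iota$ contribute additional linear parameters along a chosen branch of the cover. This yields an explicit linear cone structure on $\DR_{g,a}^{trop}$, and compatibility with face inclusions in $M_{g,n}^{trop}$ follows by tracking degenerations of $\De$.

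For the projection $\DR_{g,a}^{trop} \to M_g^{trop}$, the image lies in $H_g$ by construction. Conversely, by tropical Riemann--Hurwitz every hyperelliptic tropical curve of genus $g$ carries $2g+2$ Weierstrass points together with a one-parameter family of generic non-fixed $\iota$-orbits parameterized by points of $\De$, so the required markings can always be chosen and the projection surjects onto $H_g$. The fiber dimensions are determined by the independent continuous parameters: each marked $\iota$-orbit contributes one parameter (the position of its image in $\De$), while each marked Weierstrass point contributes zero. This gives fiber dimensions $0$, $1$, and $2$ in the three cases respectively, matching the expected value $n-2$ from Theorem \ref{thm_DRlocus}(ii), and each fiber is a finite union of products of intervals, hence path connected.

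Codimension-one connectivity of $\DR_{g,a}^{trop}$ then reduces to the same property for $H_g$, which is known from existing work on tropical hyperelliptic curves. The main obstacle I expect is verifying the linearity claim uniformly across combinatorial boundaries: when an edge of $\De$ contracts, the corresponding edge or edges of $\Ga$ must contract compatibly with a cone face of $M_{g,n}^{trop}$, and different combinatorial types of the data $(\Ga, \iota, p_\bullet)$ must be shown to glue coherently along their shared boundary cones. A related subtlety is ensuring that marked Weierstrass points remain distinct and correctly labelled during any allowed degeneration of $\De$.
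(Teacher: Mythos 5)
There is a genuine gap, and it occurs at the very first step: your membership criterion for $\DR_{g,a}^{trop}$ is not correct as stated. First, you never impose \emph{effectiveness} of the degree-two morphism. A stable curve may admit a (unique) hyperelliptic morphism $\Ga\to T$ that is not effective, and such curves form cones of dimension up to $3g-3$ in $M_g^{trop}$ (see Remark~\ref{rem:genus3hyperelliptic}); without the effectiveness condition your projection to $M_g^{trop}$ would be the divisorial hyperelliptic locus rather than the realizable locus $H_g$, and the dimension counts collapse. Relatedly, for $a=(2,-2)$ the correct condition on the markings is not ``two distinct fixed points of $\iota$'' but $x_1+x_2\leq R$, where $R=\Ram$ of the hyperelliptic morphism: the two points may coincide (if $R$ has multiplicity $\geq 2$ there), and a fixed point with $\val_{C}(v)=2g(v)+2$ is \emph{not} allowed, since attaching a dilated leg there would destroy effectiveness. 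Your appeal to Theorem~\ref{thm_realizabilityofprincipaldivisors} is also misplaced: membership in $\DR_{g,a}^{trop}$ is by definition the existence of a finite effective harmonic morphism from a tropical modification, with no Hurwitz-number condition; the nonvanishing of degree-two Hurwitz numbers is only relevant to the realizability statement (Theorem~\ref{thm_realizabilityDR}), not to the structure theorem you are proving.

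Second, even after correcting this, your reduction silently assumes that the legs are attached to points of $\Ga$ itself. The points of $\DR_{g,a}^{trop}$ are stable $n$-marked curves whose underlying unmarked stabilization is $\Ga$, and these may carry extra edges that are stabilized only by the legs: the paper's fiber descriptions (Propositions~\ref{prop:211} and~\ref{prop:1111}) include components where dilated or undilated edges of arbitrary length are grafted onto $\Ga$ at points of $R$ (or at $\si$-conjugate pairs) with the legs attached at their far endpoints, and these components account for most of the cone structure of the fibers. What legitimizes discarding modifications at all is the degree-two-specific restriction statement (Proposition~\ref{prop:hyperellipticrestriction}), which you neither state nor prove, and even it only removes extremal edges not carrying markings. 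Because your parametrization captures only one of the several types of fiber components (one of three for $(2,-1,-1)$, one of six for $(1,1,-1,-1)$), the linearity of the locus, its connectivity in codimension one, and the identification of the fibers cannot be concluded from your description: you have described a proper subset of $\DR_{g,a}^{trop}$, and the missing components are exactly where the boundary gluings that you flag as ``the main obstacle'' take place.
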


Denote by $\calM_{g,n}^{an}$ the non-Archimedean analytic stack associated to $\calM_{g,n}$. As introduced in \cite{ACP}, there is a natural continuous, proper and surjective tropicalization map $\trop_{g,n}\colon \calM_{g,n}^{an}\rightarrow M_{g,n}^{trop}$ that associates to a stable one-parameter degeneration of a marked smooth curve its dual tropical curve. We show in Prop.~\ref{prop_factorization} below that the tropicalization map $\trop_{g,n}$ restricts to a tropicalization map $\trop_{g,a}\colon \calDR_{g,a}^{an}\rightarrow \DR_{g,a}^{trop}$, which is not surjective in general. The situation may be summarized in the following diagram:
\begin{center}\begin{tikzcd}
\calDR_{g,a}^{an} \arrow[d,"\trop_{g,a}"'] \arrow[rr,"\subseteq"]& &\calM_{g,n}^{an} \arrow[d, twoheadrightarrow, "\trop_{g,n}"]\\
    \DR_{g,a}^{trop}\arrow[r,"\subseteq"] & \PD_{g,a}^{trop}\arrow[r,"\subseteq"] & M_{g,n}^{trop}
\end{tikzcd}\end{center}

The following Theorem \ref{thm_realizabilityDR} is a refinement of Theorem \ref{thm_realizabilityofprincipaldivisors}, which in our terminology describes the realizability locus $\trop_{g,a}(\calDR_{g,a}^{an})$ in $\DR_{g,a}^{trop}$. Our moduli-theoretic proof is based on the tropicalization of the moduli space of admissible covers developed in \cite{CavalieriMarkwigRanganathan_tropadmissiblecovers}. An alternative proof could have made use of the lifting results in \cite{ABBRI, ABBRII}.

\begin{maintheorem}\label{thm_realizabilityDR}
The realizability locus $\trop_{g,a}(\calDR_{g,a}^{an})$ is the subcomplex of $\DR_{g,a}^{trop}$ consisting of cones of Hurwitz type. 
\end{maintheorem}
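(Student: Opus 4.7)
The plan is to deduce the theorem from the compatibility of tropicalization with the source morphism from the moduli space of admissible covers. Let $\overline{\calH}_{g,a}$ denote the moduli stack of admissible covers of a rational curve with ramification profile $a$ at the two special points and no other ramification, and let $\sigma\colon \overline{\calH}_{g,a}\to \overline{\calM}_{g,n}$ be the source morphism remembering only the marked source curve. By the alternative characterization $(ii)$ of $\calDR_{g,a}$ recalled above, the image under $\sigma$ of the smooth locus $\calH_{g,a}\subseteq \overline{\calH}_{g,a}$ is precisely $\calDR_{g,a}$. On the tropical side, \cite{CavalieriMarkwigRanganathan_tropadmissiblecovers} constructs a generalized cone complex $H_{g,a}^{trop}$ parametrizing tropical admissible covers of a metric tree with ramification profile $a$, together with a tropical source map $\sigma^{trop}\colon H_{g,a}^{trop}\to M_{g,n}^{trop}$ and a continuous, proper, surjective tropicalization map $\trop_{\calH}\colon \overline{\calH}_{g,a}^{an}\twoheadrightarrow H_{g,a}^{trop}$ that is compatible with $\sigma$ and $\sigma^{trop}$ in the obvious square.

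A point of $H_{g,a}^{trop}$ over a fixed combinatorial type consists of a finite effective harmonic morphism $\widetilde{f}\colon \widetilde{\Ga}\to \De$ together with a choice of local ramification datum at every vertex that is compatible with the prescribed edge multiplicities; the existence of such a choice at each vertex is by definition the non-vanishing of the corresponding local Hurwitz numbers. Hence, after composing with the stabilization of the source, the image $\sigma^{trop}(H_{g,a}^{trop})\subseteq M_{g,n}^{trop}$ is exactly the union of cones of Hurwitz type in $\DR_{g,a}^{trop}$. Combining this with commutativity of the tropicalization square and with $\sigma(\calH_{g,a})=\calDR_{g,a}$, one obtains
\begin{equation*}
\trop_{g,a}\bigl(\calDR_{g,a}^{an}\bigr) \;=\; \trop_{g,n}\bigl(\sigma^{an}(\calH_{g,a}^{an})\bigr) \;=\; \sigma^{trop}\bigl(\trop_{\calH}(\calH_{g,a}^{an})\bigr),
\end{equation*}
so it remains to establish that $\trop_{\calH}(\calH_{g,a}^{an}) = H_{g,a}^{trop}$, i.e., that tropicalization remains surjective after restriction to the smooth open locus.

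This last step is a density argument in the spirit of \cite{ACP}: since $\calH_{g,a}$ is a dense open substack of the proper DM stack $\overline{\calH}_{g,a}$, any boundary point of the skeleton $H_{g,a}^{trop}$ is realized as the tropicalization of the generic point of a formal analytic disk transverse to the corresponding boundary stratum, and the generic point of such a disk lies in $\calH_{g,a}^{an}$. The main obstacle is to be careful about two subtleties: first, one must verify that the CMR moduli space of tropical admissible covers genuinely has the functorial behavior that makes the compatibility square with $\sigma$ commute on nose (rather than only up to stabilization); second, one must check that stabilizing a tropical admissible cover at the source---an operation that may contract edges of $\widetilde{\Ga}$ and collapse tropical modifications---does not change the underlying Hurwitz datum, so that the image $\sigma^{trop}(H_{g,a}^{trop})$ is exactly the full Hurwitz-type subcomplex of $\DR_{g,a}^{trop}$ and not a proper sub-complex.
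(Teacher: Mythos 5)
Your overall strategy---tropicalize the moduli space of admissible covers, use compatibility of tropicalization with the source (and forgetful) morphisms, and identify the image inside $\DR_{g,a}^{trop}$---is the same as the paper's. But the final reduction is where the argument breaks: you claim that it remains to show $\trop_{\calH}(\calH_{g,a}^{an})=H_{g,a}^{trop}$ and propose to prove this surjectivity by an ACP-style density argument. That statement is false, and no density argument can repair it. The tropical Hurwitz space of Cavalieri--Markwig--Ranganathan parametrizes \emph{all} unramified harmonic morphisms of tropical curves with the prescribed leg data; its points do \emph{not} carry ``a choice of local ramification datum at every vertex whose existence is the non-vanishing of local Hurwitz numbers,'' as you assert---that conflates the tropical moduli space with the realizable locus, which is exactly what the theorem is about. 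With the actual definition, the map from the non-Archimedean skeleton of $\calHbar_{g\rightarrow h,d}(\mu)$ to $H^{trop}_{g\rightarrow h,d}(\mu)$ is a strict morphism of cone complexes (Theorem \ref{thm_tropHurwitz}) but is \emph{not} surjective: an unramified tropical cover lifts to an algebraic admissible cover only if all local Hurwitz numbers are nonzero, and Example \ref{example_Hurwitzrealizability} (degree $4$, profiles $(2,2),(2,2),(3,1)$ over $\bullet_{0,3}$) gives a cone of $H^{trop}$ hit by no algebraic cover. The density argument only shows surjectivity of the retraction onto the skeleton $\Sigma_{g\rightarrow h,d}(\mu)$; it says nothing about the cones of $H^{trop}$ that do not come from boundary strata of $\calHbar_{g\rightarrow h,d}(\mu)$, and this group-theoretic obstruction (the Hurwitz existence problem) cannot be seen by properness or genericity.

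The missing ingredient is precisely the paper's Theorem \ref{thm_Hurwitzrealizability} (Theorem 2 of \cite{CavalieriMarkwigRanganathan_tropadmissiblecovers}): the image of $\trop_{g\rightarrow h,d}(\mu)$ on $\calH^{an}_{g\rightarrow h,d}(\mu)$ consists exactly of the cones parametrizing covers of Hurwitz type. Once you invoke that, your first and second displayed identifications go through via the commutative diagram of Theorem \ref{thm_tropadmcovfunc} together with the forgetful maps (this is how Proposition \ref{prop_factorization} and the paper's proof of the theorem proceed), and the two ``subtleties'' you flag at the end are handled by that same diagram rather than by any surjectivity of $\trop_{\calH}$ on the smooth locus. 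As written, however, your proof both mis-states the definition of $H_{g,a}^{trop}$ and asserts a surjectivity that the paper explicitly disproves, so the characterization of the realizability locus as the Hurwitz-type subcomplex is not established.
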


We may reinterpret Theorem \ref{thm_realizabilityDR} as saying that the realizability problem for tropical principal divisors consists of two parts, a continuous and a discrete (actually group-theoretic) obstruction: 
\begin{itemize}
\item By the Bieri--Groves theorem for subvarieties of toroidal embeddings (see \cite[Theorem 1.1]{Ulirsch_tropcomplogreg}) the realizability locus $\trop_{g,a}(\calDR_{g,a}^{an})$ has codimension at least $g$. The principal divisor locus $\PD_{g,a}^{trop}$ has codimension zero, so replacing $\PD_{g,a}^{trop}$ to $\DR_{g,n}^{trop}$ precisely cuts away these superfluous dimensions. 
\item An element of $\DR_{g,a}^{trop}$ lies in $\trop_{g,a}(\calDR_{g,a}^{an})$. In other words it represents a realizable divisor, if and only if the local Hurwitz numbers of the corresponding unramified cover are all non-zero.
\end{itemize}

 In general, the problem of deciding whether, given discrete branch data fulfilling the Riemann--Hurwitz formula, there is a branched cover of Riemann surfaces of that type, goes by the name of the \emph{Hurwitz existence problem}. For genus zero targets this problem is by far and large unsolved (see \cite{PervovaPetronio_HurwitzexistenceI} for the state of the art).


\subsection{Related work}

\subsubsection{Hyperelliptic and $d$-gonal tropical curves}

One line of motivation for our work is the definition of gonality for tropical curves. Given a divisor $D$ on a tropical curve $\Ga$, the Baker--Norine rank of $D$ is the largest integer $k$ such that $D-E$ has a nonempty linear system for any effective divisor $E$ of degree $k$. We say that a tropical curve is divisorially $d$-gonal if it carries a $g^1_d$, i.e.~a divisor of degree $d$ and rank at least one.

This definition is problematic from a moduli-theoretic point of view, for the reason explained above: tropical linear systems are typically larger than tropicalizations of algebraic linear systems. As a result, loci in $M_{g,n}^{trop}$ defined using Brill--Noether conditions such as divisorial gonality inevitably have unexpectedly high dimension. For example, the moduli space of tropical hyperelliptic curves of genus $g$ (i.e. curves admitting a $g^1_2$) has dimension $3g-3$ (see~\cite{LimPaynePotashnik}), instead of the expected $2g-1$, which implies that not every hyperelliptic tropical curve is the tropicalization of a hyperelliptic curve. The behavior is similar for higher gonality. A related result of~\cite{LimPaynePotashnik} is that the function $\dim W^r_d(\Ga)$ is not upper semi-continuous on the moduli space $M_g^{trop}$, implying that loci in $M_g^{trop}$ defined by divisorial Brill--Noether conditions are not even cone complexes in general. 

Hyperelliptic tropical curves have been studied extensively, and the realizability problem for them is fully understood. In \cite{Chan_hyperelliptic}, expanding on earlier work of Baker and Norine for finite graphs \cite{BakerNorine_harmonic}, Chan studies the locus of hyperelliptic curves in $M_g^{trop}$. She proves that a tropical curve $\Ga$ is hyperelliptic if and only if it admits a (necessarily unique) harmonic morphism $\Ga\to \De$ of degree 2 to a metric tree (equivalently, a unique involution $\iota:\Ga\to \Ga$ such that the quotient $\Ga/\iota$ is a tree). Theorem 4.13 in~\cite{ABBRII}, which generalizes Theorem 4.8~\cite{Caporaso_gonality} to metric trees, gives a complete answer to the realizability problem for hyperelliptic curves. Specifically, the theorem states that a hyperelliptic tropical curve $\Ga$ is the tropicalization of an algebraic hyperelliptic curve if and only if for every point $p\in \Ga$ fixed by the hyperelliptic involution $\iota$, the number of tangent directions at $p$ fixed by $\iota$ is less than or equal to $2g(p)+2$. In the language of this paper, this condition is equivalent to requiring that the hyperelliptic morphism $\Ga\to \De$ be {\it effective}.

Hyperelliptic tropical curves can be viewed as ramified Galois covers of metric trees with Galois group $\ZZ/2\ZZ$, and a number of papers have generalized this construction. Len~\cite{Len_hyperelliptic} studies hyperelliptic metrized complexes, while Bolognese, Brandt, and Chua~\cite{BBC} solve the problem of finding the abstract tropicalization of an algebraic hyperelliptic curve. Jensen and Len~\cite{JensenLen} consider unramified $\ZZ/2\ZZ$-covers of arbitrary tropical curves, while Brandt and Helminck~\cite{BrandtHelminck_superelliptic} study Galois covers of metric trees with arbitrary cyclic Galois group and study their locus in $M_g^{trop}$. In~\cite{LUZI}, Len and the authors generalize all these constructions and develop a theory of Galois covers of tropical curves with arbitrary abelian Galois group. The realizability problem is also central to tropical Hurwitz theory. We refer the reader to \cite{CavalieriJohnsonMarkwig, BertrandBrugalleMikhalkin, CavalieriMarkwigRanganathan_tropadmissiblecovers, BBBM, GoujardMoeller, MandelRuddat} and the references therein for more details on this extensive topic. More details on the realizability problem in characteristics other than zero can also be found in \cite{BreznerTemkin, CohenTemkinTrushin}.

From a moduli-theoretic perspective, two papers that are especially important for us are~\cite{CavalieriMarkwigRanganathan_tropadmissiblecovers} and ~\cite{CoolsDraisma}. In \cite{CavalieriMarkwigRanganathan_tropadmissiblecovers}, Cavalieri, Markwig, and Ranganathan provide us a new moduli-theoretic perspective on the tropicalization of ramified covers, expanding on the classical work of Abramovich, Caporaso, and Payne \cite{ACP} for the moduli space of algebraic curves. In \cite{CoolsDraisma}, Cools and Draisma study the $d$-gonal locus in $M_g^{trop}$ from a purely combinatorial perspective. Reformulated in our language, their main result is the following: the locus of curves in $M_g^{trop}$ admitting a finite effective degree $d$ harmonic morphism $\Ga\to \De$ to a metric tree has the expected dimension $\min(2g+2g-5,3g-3)$. It follows from the results of \cite{ABBRII} that any such curve admits a $g^1_d$, hence is divisorially $d$-gonal. In other words, requiring that the $g^1_d$ is represented by a finite effective map to a tree produces a locus of the expected dimension, as in the case of hyperelliptic curves. We note here that the realizability problem for $d$-gonal curves, unlike hyperelliptic curves, is highly non-trivial (and not addressed in~\cite{CoolsDraisma}), since Hurwitz numbers of degree $d\geq 4$ can generally vanish (see \cite[Section 2.2]{Caporaso_gonality}). Our results on the tropical double ramification locus can be seen as a generalization of the results of~\cite{CoolsDraisma}. In \cite{DraismaVargasI} Draisma and Vargas continue their study of tropical gonality by showing that every metric graph has gonality at most $\lceil \frac{g}{2}\rceil+1$.

\subsubsection{Other instances of the realizability problem}

The realizability problem for tropical divisors in its most general form (known to us) can be stated as follows. Let $a=(a_1,\ldots, a_n)\in\Z^n$ be integers such that $a_1+\cdots+a_n=d$ for an integer $d\geq 0$, let $r\geq -1$, and let $K$ be a non-Archimedean field. Given a tropical curve $\Gamma$ and a divisor $D$ on $\Gamma$ of multiplicity profile $a$ (and thus of degree $d$) and of rank $r$, does there exist a smooth projective curve over $K$ and a divisor $\widetilde{D}$ with multiplicity profile $a$ (and thus of degree $d$) and of rank $r$ such that $\Gamma$ is the dual tropical curve of $X$ and $D$ is the specialization of $\widetilde{D}$? We remark that, according Baker's specialization lemma \cite{Baker_specialization}, the rank of a divisor can only go up when specializing to a tropical curve.

The case $r=1$ and $d=2$ is the realizability problem for hyperelliptic curves, which has been completely solved in \cite{ABBRI, ABBRII}. The case $d>2$ (while still $r=1$) has been treated in \cite{LuoManjunath_smoothingg1d} in the framework of limit linear series on metrized curve complexes. A special case of this realizability problem also appears in \cite{BrandtHelminck_superelliptic}, where Brandt and Helminck consider superelliptic curves, i.e.~cyclic Galois covers of $\PP^1$.

In the case $r\geq 2$, Cartwright \cite{Cartwright_Murphyslaw} shows a version of Murphy's law for the realizability problem: given a matroid, there is a tropical curve $\Ga$ and a divisor $D$ of rank $r\geq 2$ on it, such that deciding realizability for $(\Ga,D)$ is equivalent to deciding whether the matroid is realizable over a field. This suggests that the realizability problem is far beyond reach in its most general form. 

Nevertheless, there are many special cases when solutions can be given. For one, the realizability problem for divisor classes admits a full solution on tropical curves of a certain generic shape, known as a \emph{chain of loops} (see \cite{CartwrightJensenPayne}). In \cite{JensenRanganathan}, the authors provide us with a different perspective on the realizability problem on chains of loops using techniques from logarithmic Gromov--Witten theory. This solution allows the authors to prove a new Brill-Noether theorem for curves of a fixed gonality that significantly generalizes the classical Brill-Noether theorem to special curves.

There are also other situations where the realizability problem is solvable uniformly on all tropical curves. In \cite{MoellerUlirschWerner_realizability} Moeller, the first author, and Werner solve the realizability problem for effective canonical divisors, i.e. the case $r=g-1$ and $d=g-1$ (with all $a_i>0$). In this case, Cartwright's restriction does not apply, since effective canonical divisors correspond to sections of twists of the relative dualizing sheaf.

In this article we are treating the case $r=0$ and $d=0$, i.e. the case of principal divisors. As explained above, this has also been the subject of \cite{BakerRabinoff_skelJac=Jacskel}, albeit without reference to the specific multiplicity profiles.

\subsubsection{The double ramification cycle, the tautological ring, and the Abel--Jacobi section}

Our work is also motivated by the significant attention that the algebraic double ramification locus $\calDR_{g,a}\subset \calM_{g,n}$ has attracted in the last decade. A natural direction of inquiry has been to describe a meaningful compactification of $\calDR_{g,a}$ in the Deligne-Knudsen-Mumford compactification $\calMbar_{g,n}$, and to study its (virtual) class in the tautological ring $R^*(\calMbar_{g,n})$. This is also known as \emph{Eliashberg's problem}, who in the early 2000's posed this question in the context of symplectic field theory. In this subsection, we review a number of recent results concerning the double ramification cycle; see~\cite{Pandharipande} for an excellent survey.

There are two strategies, quite different in nature, to construct natural compactifications of the double ramification locus $\calDR_{g,a}$.
\begin{enumerate}[(i)]
\item The double ramification locus $\calDR_{g,a}$ is the pullback of the zero section of the universal Jacobian $\calJ_{g,n}$ along the natural Abel-Jacobi section 
\begin{equation*}\begin{split}
\calM_{g,n}&\longrightarrow \calJ_{g,n},\\ 
(X,p_1,\ldots, p_n)&\longmapsto \big(X,p_1,\ldots, p_n,\calO_X(a_1p_1+\cdots+a_np_n)\big),
\end{split}\end{equation*} 
so we can compactify $\calDR_{g,a}$ by finding a suitable compactification of $\calJ_{g,n}$ and pulling back the zero section along the resolution of the Abel--Jacobi map.
\item The locus $\calDR_{g,a}$ is the set of marked curves admitting a map $f\colon X\rightarrow \PP^1$ with ramification profiles at $0$ and $\infty$ prescribed by $a$, so we define the compactification of $\calDR_{g,a}$ as the image in $\calMbar_{g,n}$ of a suitably compactified moduli space of maps $f\colon X\rightarrow \PP^1$.

\end{enumerate}

The first results on Eliashberg's problem used the universal Jacobian approach. The moduli space $\calM_{g,n}$ admits a partial compactifaction $\calM_{g,n}^{ct}$ parametrizing curves of compact type, whose Jacobians are abelian varieties. The Abel--Jacobi map naturally extends to $\calM_{g,n}^{ct}\longrightarrow\calJ_{g,n}$, and we can define the class $\big[\calDR_{g,a}^{ct}\big]$ in the tautological ring $R^g\big(\calM_{g,n}^{ct}\big)$ as the pullback of the zero section of $\Jac_{g,n}$. Using this approach, Hain~\cite{Hain_normalfunctions} computed the class $\big[\calDR_{g,a}^{ct}\big]$ in homology using Hodge theory, and Grushevsky and Zakharov~\cite{GrushevskyZakharov_thetadivisor} computed the same class in the Chow ring using test curves. In~\cite{GrushevskyZakharov_zerosection}, the authors extended this result to compute the double ramification cycle on a slightly larger space $\calM_{g,n}^1\supset\calM_{g,n}^{ct}$, parametrizing curves having at most one non-separating node. 

There are two natural ways to compactify the moduli space of maps $X\to \PP^1$ with ramification specified by $a\in \ZZ^n$ over two points $0,\infty\in \PP^1$. In both cases we allow both the source curve $X$ and the target $\PP^1$ to degenerate. The first is the space of admissible covers $\calH_{g,a}$, defined by Harris and Mumford in~\cite{HarrisMumford}. We define the {\it admissible} double ramification locus $\calDR^{adm}_{g,a}$ as the image of $\calH_{g,a}$ in $\calMbar_{g,n}$, it coincides with the closure of $\calDR_{g,a}$, and its class is in the tautological ring $R^g(\calMbar_{g,n})$ by~\cite{FP05}. The second, motivated by Gromov--Witten theory, is known as the moduli space of rubber relative stable maps to $\PP^1$. This space is not equidimensional, but admits a virtual fundamental class, whose pushforward $R_{g,a}\in R^g\big(\calMbar_{g,n}\big)$ is called the {\it relative} double ramification cycle (see~\cite{GraberVakil} and~\cite{FP05}).

It may appear at first that $\calDR^{adm}_{g,a}$ is the natural object to study. However, almost all results concerning the double ramification cycle have used the class $R_{g,a}$, while $\calDR^{adm}_{g,a}$ remains somewhat mysterious. Cavalieri, Marcus, and Wise showed in~\cite{2012CavalieriMarcusWise} and~\cite{2013MarcusWise} that the restriction of $R_{g,a}$ to the tautological ring $R^g\big(\calM_{g,n}^{ct}\big)$ coincides with $\big[\calDR_{g,a}^{ct}\big]$. In 2014, based on the known formula for $\big[\calDR_{g,a}^{ct}\big]$ and the results of~\cite{GrushevskyZakharov_zerosection}, Pixton conjectured a formula for $R_{g,a}$ on all of $\calMbar_{g,n}$, which was proved in~\cite{JPPZ}. Together with a formula for $R_{g,a}\in R^g\big(\calMbar_{g,n}\big)$, Pixton conjectured a set of relations in $R^k\big(\calMbar_{g,n}\big)$ for $k\geq g+1$. These relations were proved in~\cite{2016CladerJanda}. In~\cite{CGJZ}, it was shown that these relations imply the known vanishing results for the tautological ring $R^*\big(\calM_{g,n}\big)$, and can be used to determine explicit boundary formulas on $\calMbar_{g,n}$ for tautological classes that vanish on $\calM_{g,n}$.

As we noted above, in order to define a full compactification of $\calDR_{g,a}$ using the universal Jacobian $\calJ_{g,n}$, it is necessary to find a suitable compactification of $\calJ_{g,n}$ and extend the Abel--Jacobi section. In~\cite{Holmes_extendingDRcycle}, Holmes considered the compactification of $\calJ_{g,n}$ to the multidegree zero universal Jacobian $\calJ^{\underline{0}}_{g,n}$ over $\calMbar_{g,n}$. In this case, the Abel--Jacobi section only extends to a rational map $\calMbar_{g,n}\dashrightarrow\calJ^{\underline{0}}_{g,n}$. The scheme-theoretic pullback of the zero section of $\calJ^{\underline{0}}$ to the resolution is proper over $\calMbar_{g,n}$, and its pushforward to $\calMbar_{g,n}$ turns out to coincide with $R_{g,a}$.

Alternatively, Kass and Pagani~\cite{KassPagani} resolved the Abel--Jacobi map by replacing $\calJ_{g,n}$ with a space $\overline{\calJ}_{g,n}(\phi)$ depending on a stability parameter $\phi$, and showed in~\cite{HolmesKassPagani} that the pullback of the zero section along the Abel--Jacobi map is again equal to $R_{g,a}$. Finally, a treatment of this problem using logarithmic geometry was made in~\cite{MarcusWise_logAbelJacobi}, and yet again produces the relative double ramification cycle $R_{g,a}$.

In \cite{AbreuPacini_universalAbelJacobi} Abreu and Pacini show that the tropical Abel-Jacobi section, as a map from $M_{g,n}^{trop}$ to the universal tropical Jacobian they construct in \cite{AbreuPacini_universalJacobian}, tells us exactly which toroidal resolution to apply for the algebraic Abel-Jacobi section to compactify. In \cite[Section 7.1]{AbreuPacini_universalAbelJacobi}, they in particular also consider a tropical analogue of the double ramification locus. In our language, this is exactly the principal divisor locus $\PD_{g,a}^{trop}$.

\subsubsection{Multiplicativity of the double ramification cycle}

Over the locus $\calM_{g,n}^{ct}$ of stable curves of compact type, the double ramification cycle is known to fulfill the multiplicity relation
\begin{equation}\label{eq_multiplicativity}
\big[\calDR^{ct}_{g,a}\big]\cdot \big[\calDR^{ct}_{g,b}\big] = \big[\calDR^{ct}_{g,a}\big]\cdot \big[\calDR^{ct}_{g,a+b}\big] 
\end{equation}
for two vectors $a,b\in\Z^n$ of integers summing to zero. Its extension to all of $\calMbar_{g,n}$, however, does not fulfill the multiplicity relation \eqref{eq_multiplicativity}. 

In \cite{HolmesPixtonSchmitt} the authors find that an analogue of the multiplicity relation \eqref{eq_multiplicativity} remains valid, when considering an extension of the double ramification cycle in a suitable blow-up of $\calMbar_{g,n}$. They, in particular, notice that for \eqref{eq_multiplicativity} to be valid it is enough to consider a toroidal blow-up along the Deligne-Mumford boundary. It would be highly interesting to investigate the relationship between these toroidal blow-ups and the (non-proper) toroidal modification induced by a subdivision of $M_{g,n}^{trop}$ supported on $\DR_{g,a}^{trop}$ constructed in this article. This would very much be in the spirit of Tevelev's theory of tropical compactifications (see \cite{Tevelev_tropcomp}), extended to subvarieties of $\calM_{g,n}$. 

We also refer the reader to \cite{Herr_logproductformula} and \cite{Ranganathan_logproductformula} for an analogous development in the context of a product formula in logarithmic Gromov-Witten theory. 

\subsection{Acknowledgments}
During the preparation of this project we profited from conversations with many of our colleagues. We, in particular, thank Alex Abreu, Renzo Cavalieri, Felix Janda, Yoav Len, Martin M\"oller, Marco Pacini, Dhruv Ranganathan,  Matthew Satriano, and Jonathan Wise.

This project  has  received  funding  from  the  European Union's Horizon 2020 research and innovation programme  under the Marie-Sk\l odowska-Curie Grant Agreement No. 793039. \includegraphics[height=1.7ex]{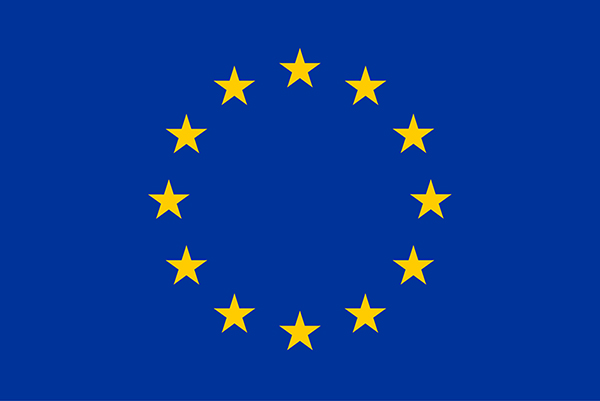}
We also acknowledge support from the LOEWE-Schwerpunkt ``Uniformisierte Strukturen in Arithmetik und Geometrie''.

\setcounter{tocdepth}{2}
\tableofcontents

\section{Preliminaries}

We begin with a plethora of definitions. Most of these are standard and are listed without numbering. However, several definitions that are either of particular importance or that are, to the best of the authors' knowledge, new or unfamiliar, are typeset separately.

\subsection{Graphs}

A {\it graph with legs} $G$, or simply a {\it graph}, consists of the following data:
\begin{itemize}

\item A finite set $X(G)$.

\item An idempotent {\it root map} $r_{G}:X(G)\to X(G)$.

\item An involution $\iota_{G}:X(G)\to X(G)$ whose fixed set contains the image of $r_{G}$. 

\end{itemize}
The image of $r_{G}$, on which it acts trivially, is the set of {\it vertices} of $G$ and is denoted $V(G)$, and the complement $H(G)=X(G)\backslash V(G)$ is the set of {\it half-edges} of $G$. The involution $\iota_{G}$ preserves $H(G)$ and partitions it into orbits of size 1 and 2, called respectively the {\it edges} and {\it legs} of $G$. We denote the sets of edges and legs by $E(G)$ and $L(G)$, respectively. The root map $r_{G}$ assigns {\it root vertices}, two to each edge and one to each leg (every vertex has itself as its endpoint). An edge whose endpoints are equal is called a {\it loop}.

The {\it geometric realization} $|G|$ of a graph $G$ is the one-dimensional CW complex (zero-dimensional if $H(G)$ is empty) defined as follows. As a set, $|G|$ is obtained by taking the union of a copy $I_e$ of the closed interval $[0,1]$ for each edge $e\in H(G)$, a copy $I_p$ of the half-axis $[0,\infty)$ for each leg $p\in L(G)$, and a point for each vertex $v\in V(G)$, which we also denote $v$. For each edge $e=\{h_1,h_2\}\in H(G)$ we identify the $0$ and $1$ of $I_e$ with the vertices $r_{G}(h_1)$ and $r_{G}(h_2)$ in any order. For each $p\in L(G)$, we identify the $0$ in $I_p$ with $h_{G}(p)$. In this way, $V(G)$ is naturally a subset of $|G|$. We say that a graph $G$ is {\it connected} if $|G|$ is connected. Unless otherwise specified, we only consider connected graphs. 

\begin{remark} It may appear that there is no need to strictly distinguish legs, which we can think of as edges of infinite length, from finite extremal edges. However, from a tropical and moduli-theoretic viewpoint, these objects play entirely different roles: the former correspond to marked points (hence our use of $p$ to denote legs), and the latter to unstable rational tails. For this reason, there is no point at the end of a leg, and we do not employ the compactified tropical moduli spaces of curves introduced in \cite{ACP}.

\end{remark}

Given a vertex $v\in V(G)$, the set of {\it tangent directions} $T_vG$ at $v$ enumerates the edges and legs attached to $v$:
$$
T_vG=\{h\in H(G)|r_{G}(h)=v\},
$$
with each loop at $v$ counted twice. The {\it valency} $\val(v)$ of a vertex $v$ is equal to $\#T_vG$. 

A {\it vertex weighting} of a graph $G$ is a map $g:V(G)\to \ZZ_{\geq 0}$, where $g(v)$ is called the {\it genus} of $v$ and represents $g(v)$ infinitesimal loops attached at $v$. We refer to the pair $(G,g)$ as a {\it weighted graph} and usually omit $g$. The {\it genus} $g(G)$ of a connected weighted graph is given by
\begin{equation*}
g(G)=b_1(|G|)+\sum_{v\in V(G)}g(v)
\end{equation*}
where 
\begin{equation*}
b_1(\vert G\vert)=\#E(G)-\#V(G)+1
\end{equation*}
is the first Betti number of the graph $G$. A weighted graph of genus zero is called a {\it tree} (in other words, a tree cannot have nontrivial vertex weights).

\begin{definition}\label{def_Eulercharacteristics} Let $G$ be a weighted graph. We define the {\it Euler characteristic} $\chi(v)$ of a vertex $v\in V(G)$ by
$$
\chi(v)=2-2g(v)-\val(v).
$$
Similarly, we define the {\it Euler characteristic} $\chi(G)$ of a connected weighted graph $G$ by
$$
\chi(G)=2-2g(G)-\#L(G).
$$
\end{definition}
It is easy to check that
$$
\sum_{v\in V(G)} \chi(v)=\chi(G).
$$

\begin{remark} The Euler characteristic $\chi(G)$ is not to be confused with the topological Euler characteristic of the CW-complex $|G|$. 
\end{remark}

We say that a vertex $v\in V(G)$ is {\it stable} if $\chi(v)<0$, {\it semistable} if $\chi(v)\leq 0$, and {\it unstable} if $\chi(v)>0$. An unstable vertex has genus 0 and is either isolated or has valency one (note that there is no unstable vertex at the end of a leg). A semistable vertex that is not stable is either an isolated vertex of genus 1 or a vertex of genus 0 and valency 2, in which case we call it {\it simple}. We say that a weighted graph $G$ is {\it semistable} if all of its vertices are semistable and {\it stable} if all of its vertices are stable. We observe that if $G$ is semistable then $\chi(G)\leq 0$ and if $G$ is stable then $\chi(G)<0$. 

Let $G$ be a connected weighted graph with $\chi(G)<0$. We form the {\it stabilization} $G_{st}$ of $G$ as follows (see \cite{ACP}). First, if $G$ has an unstable vertex $u$ with $\chi(u)=1$, remove $u$ and the edge rooted at it. Proceeding in this way, we remove all unstable trees of edges and obtain the {\it semistabilization} $G_{sst}$ of $G$. Now let $v\in V(G_{sst})$ be a simple vertex $v$. If $v$ is the root vertex of two edges $e_1$ and $e_2$, replace them with a single edge $e$ rooted at the remaining root vertices of $e_1$ and $e_2$. If $v$ is the root vertex of an edge $e$ and a leg $p$, remove $e$ and root $p$ at the remaining root vertex of $e$. Proceeding in this way, we remove all semistable vertices and obtain a stable graph $G_{st}$ with $\chi(G_{st})=\chi(G_{sst})=\chi(G)$. 

Let $G'$ and $G$ be graphs, and let $\ph:X(G')\to X(G)$ be a map of sets commuting with the involution and root maps. Parsing this, we see the following:

\begin{itemize} 

\item A vertex $v'\in V(G')$ maps to a vertex $\ph(v')\in V(G)$.

\item A leg $p'\in L(G')$ with endpoint $v'$ either maps to a leg $\ph(p')\in L(G)$ with endpoint $\ph(v')$, or to a vertex $v=\ph(p')\in V(G)$, in which case $\ph(v')=v$ and we say that $\ph$ {\it contracts} $p'$.

\item For an edge $e'=\{h'_1,h'_2\}\in E(G')$ with endpoints $v'_1$ and $v'_2$, there are three possibilities. First, $\ph(e')=\{\ph(h'_1),\ph(h'_2)\}\in E(G)$ may be an edge with endpoints $\ph(v'_1)$ and $\ph(v'_2)$. Second, $v=\ph(h'_1)=\ph(h'_2)\in V(G)$ may be a vertex, in which case $\ph(v'_1)=\ph(v'_2)=v$ and we say that $\ph$ {\it contracts} $e'$. Finally, it is possible that $\ph(h'_1)=\ph(h'_2)\in L(G)$ is a leg with endpoint $\ph(v'_1)=\ph(v'_2)$. 

\end{itemize}
We say that $\ph:G'\to G$ is a {\it morphism of graphs} if it maps edges to either edges or vertices. We say that $\ph$ is {\it finite} if it does not contract any edges or legs. 

A {\it subgraph} $F$ of a graph $G$ is a subset of $X(G)$ that is preserved by the maps $r_{G}$ and $\iota_{G}$. If $G$ is weighted, we weight $F$ by restriction. Given a subset $S\subset E(G)\cup L(G)$ of edges and legs of $G$, we denote $[S]$ the {\it subgraph generated by} $S$, which consists of $S$ together with all root vertices of elements of $S$.

\begin{definition}\label{def_neighborhoodofsubgraph} Let $F$ be a subgraph of a graph $G$. We define a graph $O(F)$, called the {\it neighborhood} of $F$ in $G$, by attaching to $F$, as a leg, each half-edge of $G$ that is rooted at a vertex of $F$ but is itself not in $F$ (so an edge between vertices of $F$ that is not itself in $F$ produces two legs in $O(F)$). Specifically, as a set $X(O(F))=r_{G}^{-1}(X(F))$, and we define $r_{O(F)}$ as the restriction of $r_{G}$, while $\iota_{O(F)}$ is defined to be the restriction of $\iota_{G}$ on $X(F)\subset r_{G}^{-1}(X(F))$ and the identity map on the complement $r_{G}^{-1}(X(F))\backslash X(F)$. We define the {\it neighborhood} $O(S)$ of a subset $S\subset E(G)\cup L(G)$ to be the neighborhood of $[S]$ in $G$.

We observe that for any $v\in V(F)$ there is a natural identification $T_vO(F)=T_vG$, while in general it is only true that $T_vF\subset T_vG$. In particular, we can identify $\{v\}\cup T_vG$ with the graph $O(\{v\})$, and, if $G$ is weighted, then for a vertex $v\in F$ its Euler characteristics as a vertex in $G$ and as a vertex in $O(F)$ are equal, and are both equal to the Euler characteristic of the graph $O(\{v\})$:
$$
\chi_{G}(v)=\chi_{O(F)}(v)=\chi\big(O(\{v\})\big).
$$

\label{def:neighborhood}
\end{definition}


\subsection{Metric graphs and tropical curves}

Let $G$ be a graph. A {\it metric} on $G$ is a function $\ell:E(G)\to \RR_{>0}$. A {\it metric graph} $(G,\ell)$ is a graph $G$ with a choice of metric $\ell$. We call the underlying graph $G$ the {\it combinatorial type} of the metric graph. A weighted metric graph $(G,g,\ell)$ is a metric graph $(G,\ell)$ together with a weighting $g$ on $G$.

Let $(G,\ell)$ be a metric graph, and let $|G|$ be the geometric realization of its combinatorial type. We equip $|G|$ with the structure of a metric space, called the {\it metric realization} of $(G,\ell)$, as follows. For each edge $e\in E(G)$, we identify the corresponding interval $I_e\subset |G|$ with a finite interval of length $l(e)$. For each leg $p\in |G|$, we identify the interval $I_p\subset |G|$ with a copy of $[0,\infty)$. We then give $|G|$ the path metric. 

Let $(G',\ell')$ and $(G,\ell)$ be metric graphs. A {\it morphism} $\ph:(G',\ell')\to (G,\ell)$ of metric graphs is a pair consisting of a morphism of graphs $\ph:G'\to G$ and a weighting $d_{\ph}:E(G')\cup L(G')\to \ZZ_{\geq 0}$, called the {\it degree} of $\ph$, such that the following properties are satisfied. For an edge $e'\in E(G')$ or a leg $p'\in L(G')$, $\ph$ contracts $e'$ or $p'$ if and only if $d_{\ph}(e')=0$ or $d_{\ph}(p')=0$, respectively. If $d_{\ph}(e')>0$, then we require that
\begin{equation}
\ell\left(\ph(e')\right)=d_{\ph}(e')\ell'(e').
\label{eq:length}
\end{equation}
A morphism $\ph:(G',\ell')\to (G,\ell)$ of metric graphs induces a continuous map $|\ph|:|G'|\to |G|$ of their metric realizations, where, for a pair of edges $e=\ph(e')$, the map is given by dilation by a factor of $d_{\ph}(e')$, and similarly for a pair of legs $p=\ph(p')$. This map is piecewise-linear with integer slope with respect to the metric structure.

A basic inconvenience when dealing with metric graphs is that different graphs may have the same metric realizations. This motivates the following definition.

\begin{definition} A {\it tropical curve} is a pair consisting of a connected metric space $\Ga$ and a weight function $g_{\Ga}:\Ga\to \ZZ_{\geq 0}$ such that there exists a weighted metric graph $(G,g,\ell)$ and an isometry $m:|G|\to \Ga$ of its metric realization with $\Ga$, with respect to which the weight functions agree:
$$
g_{\Ga}(x)=\begin{cases}g(v)&\textrm{ if } x=m(v),\,v\in V(G), \\ 0 & \textrm{ otherwise. }\end{cases}
$$
We call a quadruple $(G,g,\ell,m)$ satisfying these properties a {\it model} for $\Ga$. 
\end{definition}

The {\it genus} of a tropical curve $\Ga$ is given by
$$
g(\Ga)=b_1(\Ga)+\sum_{x\in \Ga} g_{\Ga}(x) 
$$
and is equal to the genus of any model of $\Ga$. A {\it tree} is a tropical curve of genus zero. 

For a point $x\in \Ga$ on a tropical curve $\Ga$ with model $(G,g,\ell,m)$, we define its {\it valence} $\val(x)$ to be $\val(v)$ if $x=m(v)$ for some $v\in V(G)$ and $2$ otherwise. We similarly define the Euler characteristic as $\chi(x)=2-2g_{\Ga}(x)-\val(x)$, these numbers do not depend on the choice of model. We define the Euler characteristic of a tropical curve as $\chi(\Ga)=\chi(G)$ for any model $G$ of $\Ga$. We have
$$
\chi(\Ga)=\sum_{x\in \Ga}\chi(x),
$$
where on the right hand side $\chi(x)=0$ for all but finitely many $x\in \Ga$.

\begin{remark} Our definition of tropical curve differs from Def.~2.14 in \cite{ABBRII}, where a tropical curve is defined as an equivalence class of metric graphs up to tropical modifications (see below).
\end{remark}

Let $\Ga$ be a tropical curve with model $G$. We can form another model $G'$ by replacing an edge $e\in E(G)$ by two new edges joined at a new simple vertex, and arbitrarily splitting the length of $e$. Similarly, we can split a leg $p\in L(G)$ into a leg and an edge of arbitrary length. Conversely, any tropical curve $\Ga$ has a unique {\it minimal model} $G_{min}$ having no simple vertices (unless $\Ga$ is the real line or a circle).  We say that $\Ga$ is {\it stable} if its minimal model is a stable graph.

We observe that every connected tropical curve $\Ga$ except for the real line $(-\infty,\infty)$ has a well-defined set of legs, corresponding to the legs of its minimal model. We define the legs of $(-\infty,\infty)$ to be $(-\infty,0]$ and $[0,\infty)$. We denote by $\Ga^{\circ}$ the tropical curve obtained by removing the legs of $\Ga$ (but retaining the attachment points), and by $c:\Ga\to \Ga^{\circ}$ the natural retraction map.

\begin{definition} A {\it morphism} $\tau:\Ga'\to \Ga$ of tropical curves is a continuous, piecewise-linear map with integer slopes that is eventually linear on every leg. Specifically, let $p'\subset \Ga'$ be a leg, identified with $[0,\infty)$. Then there exists a constant $c>0$ such that either $\tau(x)$ is constant on $p'$ for all $x>c$, or there is a leg $p\subset \Ga$ and numbers $a\in \ZZ_{>0}$ and $b\in \RR$ such that, identifying $p$ with $[0,\infty)$, we have $\tau(x)=ax+b\in p$ for all $x>c$. In the latter case, we say that {\it $\tau$ maps $p'$ to $p$ with degree $a$} and write $\tau(p')=p$ and $d_{\tau}(p')=a$ (note that a finite section of $p'$ may map to $\Ga\backslash p$).

Let $\tau:\Ga'\to \Ga$ be a morphism of tropical curves. A {\it model} for $\tau$ is a pair of models $(G',g',\ell',m')$ and $(G,g,\ell,m)$ for $\Ga'$ and $\Ga$, respectively, and a morphism $\ph:G'\to G$ of metric graphs such that $m\circ|\ph|=\tau\circ m'$. 
\end{definition}

Given a morphism $\tau:\Ga'\to \Ga$ of tropical curves, we construct a model for it as follows. Let $(G',g',\ell',m')$ and $(G,g,\ell,m)$ be models for $\Ga'$ and $\Ga$, respectively. Subdivide $G'$ so that $m'(V(G'))$ contains every point of $\Ga'$ at which $\tau$ changes slope; by definition there are finitely many such points. Then, further subdivide $G'$ and $G$ so that $\tau(m'(V(G')))=m(V(G))$ and $\tau^{-1}(m(V(G)))=m'(V(G'))$, in other words so that vertices map to vertices. We then have a well-defined morphism of graphs $\ph:G'\to G$. Furthermore, on the geometric realizations of the resulting models the map $\tau$ is linear with integer slope on each edge and each leg, and we set the degree of $\ph$ to be equal to this slope.

Given a model $\ph:G'\to G$ of a morphism $\tau:\Ga'\to \Ga$ of tropical curves, we can produce another model by adding a simple vertex to $G$ to split an edge or a leg, and correspondingly splitting all the preimages. Conversely, any morphism $\tau:\Ga'\to \Ga$ to a tropical curve $\Ga$ with $\chi(\Ga)<0$ has a unique {\it minimal model} $\ph_{min}:G'_{min,\tau}\to G_{min,\tau}$ with the property that every simple vertex $v\in V(G_{min,\tau})$ has at least one preimage that is not simple.

\begin{definition} Let $\Ga$ be a tropical curve. An {\it elementary tropical modification} $\Ga$ is a tropical curve obtained by attaching an edge of arbitrary length to a point of $\Ga$, and setting $g(x)=0$ for all points of the new edge. A {\it tropical modification} $\Ga'$ of $\Ga$ is a tropical curve obtained by a finite sequence of elementary tropical modifications of $\Ga$, in other words by attaching finitely many trees of finite size. 

Given a tropical curve $\Ga$ with $\chi(\Ga)<0$, there is a unique stable tropical curve $\Ga_{st}$, called the {\it stabilization} of $\Ga$, such that $\Ga$ is a tropical modification of $\Ga_{st}$. In terms of the underlying graphs, a model for $\Ga_{st}$ can be obtained by taking the stabilization of any model of $\Ga$. We note that $\Ga_{st}$ is naturally a subset of $\Ga$ (the inclusion map is a morphism of tropical curves, but not a harmonic morphism), and the legs of $\Ga$ are in natural bijection with the legs of $\Ga_{st}$.

\end{definition}

\begin{remark} Our definition of tropical modification agrees in spirit, though not in fact, with Def.~2.12 in \cite{ABBRII}, where an elementary tropical modification consists in adding an infinite edge, i.e. a leg, to a metric graph. We distinguish between legs and extremal edges because we interpret the former as marked points on the curve. This distinction will prove to be important when we talk about ramification. Specifically, the absence of a vertex at the end of a leg means that tropical curve with legs has more unramified covers (see below) than a combinatorially identical curve with extremal edges instead of legs. 
\end{remark}


\subsection{Harmonic morphisms and ramification}

\begin{definition} Let $G'$ and $G$ be graphs. A {\it harmonic morphism} $\ph:G'\to G$ is a morphism of graphs together with a weighting $d_{\ph}:X(G')\to \ZZ_{\geq 0}$, called the {\it degree} of $\ph$, such that the following properties are satisfied:
\begin{itemize}

\item If $e'=\{h'_1,h'_2\}$ is an edge then $d_{\ph}(h'_1)=d_{\ph}(h'_2)$. We call this number the {\it degree} of $\ph$ along $e'$ and denote it $d_{\ph}(e')$.

\item If $h'\in H(G')$, then $d_{\ph}(h')=0$ if and only if $\ph(h')$ is a vertex.

\item For every vertex $v'\in V(G')$ and for every tangent direction $h\in T_{\ph(v)}G$ we have
\begin{equation}
d_{\ph}(v')=\sum_{\substack{h'\in T_{v'}G', \\ \ph(h')=h}}d_{\ph}(h').
\label{eq:harmonic}
\end{equation}
In particular, this sum does not depend on the choice of $h$.

\end{itemize}

A harmonic morphism of weighted graphs is a harmonic morphism of the underlying graphs, and a harmonic morphism of metric graphs is a harmonic morphism of the underlying graphs satisfying condition \eqref{eq:length} at every edge of the source. Given a morphism $\tau:\Ga'\to \Ga$ of tropical curves, the slope of $\tau$ locally defines the degree, and we say that $\tau$ is harmonic if it has a harmonic model.

\end{definition}

\begin{remark} Unless $\ph(v')$ is an isolated vertex, the degree of a harmonic morphism $\ph:G'\to G$ at a vertex $v'\in V(G')$ can be recovered from the degrees along the edges and legs.
\end{remark}

Given a harmonic morphism $\ph:G'\to G$ of graphs and a vertex $v\in V(G)$, the sum
$$
d_{\ph}(v)=\sum_{\substack{v'\in V(G'),\\ \ph(v')=v}}d_{\ph}(v')
$$
is called the {\it degree} of $\ph$ at $v$. It is easy to check that $d_{\ph}(v)$ is constant along any connected component of $G$. In particular, if $G$ is connected, we call this number the {\it degree} of $\ph$ and denote it $\deg(\ph)$, note that for any $v\in V(G)$, $e\in E(G)$ or $p\in L(G)$ we have 
$$
\deg(\ph)=\sum_{\substack{v'\in V(G'), \\ \ph(v')=v}}d_{\ph}(v')=\sum_{\substack{e'\in E(G'), \\ \ph(e')=e}}d_{\ph}(e')=
\sum_{\substack{p'\in L(G'),\\ \ph(p')=p}}d_{\ph}(p').
$$
We also observe that if $G$ is connected, then any harmonic morphism $\ph:G'\to G$ of positive degree is necessarily surjective, and in particular $\val(v')\geq \val(\ph(v'))$ for any $v'\in V(G')$.

We say that a harmonic morphism $\ph:G'\to G$ is {\it finite} if $d_{\ph}(h')>0$ for each $h'\in H(G')$, in this case $\ph$ does not contract any edges or legs and is surjective if $G$ is connected. A finite harmonic morphism $\ph:G'\to G$ induces maps on the tangent spaces $d\ph_{v'}:T_{v'}G'\to T_{\ph(v')}G$ for all $v'\in V(G')$.

\begin{remark} Let $\ph:G'\to G$ be a finite harmonic morphism of graphs. Given a length function $\ell$ on $G$, there is a unique length function $\ell'$ on $G'$ such that $\ph$ is a harmonic morphism of the corresponding metric graphs. Indeed, given an edge $e'\in E(G')$, we set $\ell'(e')=\ell\left(\ph(e')\right)/d_{\ph}(e')$. 
\label{rem:edgelengths}
\end{remark}

\begin{definition} Let $\ph:G'\to G$ be a finite harmonic morphism of weighted graphs. The {\it ramification degree} $\Ram_{\ph}(v')$ of $\ph$ at a vertex $v'\in V(G')$ is equal to
\begin{equation}
\Ram_{\ph}(v')=d_{\ph}(v')\chi(\ph(v'))-\chi(v').
\label{eq:Ram}
\end{equation}
We say that $\ph$ is {\it effective} if $\Ram_{\ph}(v')\geq 0$ for all $v'\in V(G')$ and {\it unramified} if $\Ram_{\ph}(v')=0$ for all $v'\in V(G')$, and we remark again that there is no vertex at the end of a leg. The condition $\Ram_{\ph}(v')=0$ is called the {\it Riemann--Hurwitz condition} at $v'$. The {\it ramification degree} $\Ram \ph$ is the sum of the ramification degrees at the vertices of $G'$:
\begin{equation}
\Ram (\ph)=\sum_{v'\in V(G')}\Ram_{\ph}(v').
\label{eq:Ramglobal}
\end{equation}
A finite harmonic morphism of tropical curves is called {\it effective} or {\it unramified} if it has an effective or unramified model, respectively.
\end{definition}
If $\ph:G'\to G$ is a finite harmonic morphism of connected graphs, adding the local ramification degrees we obtain the global Riemann--Hurwitz formula
\begin{equation}
\chi(G')=\deg (\ph)\chi(G)-\Ram(\ph).
\label{eq:RHformula}
\end{equation}
For an unramified morphism $\ph$, the global Riemann--Hurwitz formula is
\begin{equation}
\chi(G')=\deg(\ph)\chi(G).
\end{equation}

\begin{remark} A simple calculation shows that our definition agrees with the standard one in the literature (see, for example, Section 2.2 in \cite{ABBRII}):
\begin{equation}
\Ram_{\ph}(v')=d_{\ph}(v')\left(2-2g(\ph(v'))\right)-\left(2-2g(v')\right)-\sum_{h'\in T_{v'}G'} \left(d_{\ph}(h')-1\right).
\label{eq:Ram2}
\end{equation}
We also note that unramified harmonic morphisms are called {\it tropical admissible covers} in~\cite{CavalieriMarkwigRanganathan_tropadmissiblecovers}.
\end{remark}

\begin{remark} Let $G'$ be a graph, let $G$ be a weighted graph, and let $\ph:G'\to G$ be a finite harmonic morphism. Then there exists at most one weighting of $G'$ that makes $\ph$ an unramified harmonic morphism. Indeed, for a vertex $v'\in V(G')$, solving $\Ram_{\ph}(v')=0$ for $g(v')$ gives us
\begin{equation}
g(v')=1-d_{\ph}(v')+ d_{\ph}(v') g(\ph(v'))+ \frac{d_{\ph}(v')\val(\ph(v'))-\val(v')}{2}.
\label{eq:admissiblegenus}
\end{equation}
If this number is a non-negative integer for all $v'\in V(G')$, then it defines a weighting of $G'$ with respect to which $\ph$ is unramified, otherwise there is no such weighting. 
\label{rem:uniqueweighting}
\end{remark}

\begin{remark} From a moduli-theoretic perspective, we are primarily interested in unramified morphisms of metric graphs. However, Remark~\ref{rem:edgelengths} shows that the metric structure does not play an interesting role in the combinatorics of unramified morphisms. For this reason, to simplify the exposition we formulate our results on unramified morphisms for graphs without a metric, and reintroduce metric graphs only when will consider moduli spaces of tropical curves. 

\end{remark}

If $\ph:G'\to G$ is a harmonic morphism and $F'$ is a subgraph of $G'$, then the restriction $\ph|_{F'}:F'\to \ph(F')$ is not necessarily harmonic. It is, however, harmonic if $F'=\ph^{-1}(F)$ for some subgraph $F$ of $G$. We can extend this morphism to a harmonic morphism $\ph|_{O(F')}:O(F')\to O(F)$ of the corresponding neighborhoods (see Def.~\ref{def:neighborhood}). Note that $\deg \ph|_{O(F')}=\deg \ph|_{F'}=\deg \ph$. Furthermore, if $F'_0$ is a connected component of $F'$ that maps surjectively onto $F$, then we can further restrict $\ph$ to harmonic morphisms $\ph|_{F'_0}:F'_0\to F$ and $\ph|_{O(F'_0)}:O(F'_0)\to O(F)$.

\begin{remark} We observe that restricting a harmonic morphism increases the ramification degree at the remaining vertices, and hence the restriction of an effective morphism is effective. Indeed, let $\ph:G'\to G$ be a harmonic morphism, let $v\in V(G)$, let $h\in T_v(G)$, and let $G_0$ be the subgraph of $G$ obtained by removing $h$, if it is a leg, or the edge containing $h$. Let $G'_0=\ph^{-1}(G_0)$, let $\ph|_{G'_0}:G'_0\to G_0$ be the restriction, let $v'\in \ph^{-1}(v)$, and let $d=d_{\ph}(v')$. If $h$ is not one half of a loop at $v$, then $\chi_{G_0}(v)=\chi_G(v)+1$, while $\chi_{G'_0}(v')\leq \chi_{G'}(v')+d$, because $h$ has at most $d$ preimages at $v'$. Similarly, if $h$ is half of a loop at $v$, then $\chi_{G_0}(v)=\chi_G(v)+2$ and $\chi_{G'_0}(v')\leq \chi_{G'}(v')+2d$, and in either case we have $\Ram_{\ph|_{G'_0}}(v')\geq \Ram_{\ph}(v')$. 
\label{rem:restriction}
\end{remark}

If $\ph:G'\to G$ is unramified, $F$ a connected subgraph of $G$, and $F'_0$ be a connected component of $\ph^{-1}(F)$, then the finite effective harmonic morphism $\ph|_{F'_0}:F'_0\to F'$ is not necessarily unramified. However, the restriction to the neighborhoods $\ph|_{O(F'_0)}:O(F'_0)\to O(F)$ is unramified, since the morphisms $\ph$ and $\ph|_{O(F'_0)}$ are identical in the neighborhood of any vertex $v'\in V(F'_0)$. 

\begin{remark} We also observe that a finite effective harmonic morphism $\ph:G'\to G$ can be promoted to an unramified morphism by adding legs to $G'$ and $G$. Indeed, let $v'\in V(G')$ be a vertex with $\Ram_{\ph}(v')\geq 1$, let $v=\ph(v')$, and let $\ph^{-1}(v)=\{v'_1,\ldots,v'_m\}$ with $v_1=v'$. We modify $\ph:G'\to G$ as follows: attach a leg $p$ to $v$, attach $d_{\ph}(v')-1$ legs to $v'$, and attach $d_{\ph}(v'_i)$ legs to $v'_i$ for $i=2,\ldots,m$. Map all these new legs of $G'$ to $p$. Set the degree of $\ph$ to be 2 at one of the legs attached to $v'$, and 1 at all other legs. The resulting graph morphism is a finite effective harmonic morphism having ramification index one less at $v'$, and the same at all other vertices. Performing this procedure $\Ram(\ph)$ times, we obtain an unramified morphism.
\label{rem:addramification}
\end{remark}

We now show that an unramified harmonic morphism of graphs induces an unramified morphism of their stabilizations. Indeed, let $\ph:G'\to G$ be unramified. For any $v'\in V(G')$, the condition $\Ram_{\ph}(v')=0$ implies that $\chi(v')$ and $\chi(\ph(v'))$ have the same sign, so $G'$ is semistable or stable if and only if $G$ is. Furthermore, by~\eqref{eq:RHformula} $\chi(G')<0$ if and only if $\chi(G)<0$.

If $G$ is unstable, pick a vertex $v\in V(G)$ with $\chi(v)=1$, and let $e\in E(G)$ be the unique edge rooted at $v$. Then $\chi(v')=1$ and $d_{\ph}(v')=1$ for all $v'\in \ph^{-1}(v)$. Hence $v$ has $\deg(\ph)$ preimages $v'_i$, each of which is a root vertex of a unique edge $e'_i$ mapping to $e$, and $d_{\ph}(e'_i)=1$ for all $i$. Let $u\in V(G)$ be the other root vertex of $e$. For any $u'\in \ph^{-1}(u)$, $d_{\ph}(u')$ of the $e'_i$ are rooted at $u'$. Therefore, removing $v$, $e$, $v'_i$, and $e'_i$ increases $\chi(u)$ by $1$ and increases $\chi(u')$ by $d_{\ph}(u')$ for each $u'\in \ph^{-1}(u)$. This does not change the ramification degree at $u'$. Proceeding in this way, we simultaneously remove the unstable trees of $G'$ and $G$ and obtain an unramified morphism $\ph_{sst}:G'_{sst}\to G_{sst}$.

Similarly, it is clear that the image and preimage of a simple vertex is simple. Let $v\in V(G)$ be a simple vertex with tangent directions $h_1$ and $h_2$, and let $v'\in \ph^{-1}(v)$, then $v'$ is a simple vertex with tangent directions $h'_1$ and $h'_2$ mapping with the same degree to $h_1$ and $h_2$, respectively. We now remove $v$ and join $h_1$ to $h_2$, and at the same time remove $v'$ and join $h'_1$ and $h'_2$ for each $v'\in \ph^{-1}(v)$. We extend $\ph$ to the edges or legs obtained by this joining in the obvious manner. Proceeding in this way, we remove the simple vertices of $G'$ and $G$ and obtain an unramified morphism $\ph_{st}:G'_{st}\to G_{st}$.

\begin{definition} Let $\ph:G'\to G$ be an unramified harmonic morphism of connected weighted graphs, and suppose that $\chi(G')<0$ (equivalently, suppose that $\chi(G')<0$). The unramified harmonic morphism $\ph_{st}:G'_{st}\to G_{st}$ constructed above is called the {\it stabilization} of $\ph$.
\end{definition}

We give a corresponding definition for unramified harmonic morphisms of tropical curves. The stabilization of a tropical curve is naturally a subset, so we obtain the stabilization of an unramified harmonic morphism by restriction. 

\begin{definition} Let $\tau:\Ga'\to \Ga$ be an unramified harmonic morphism of tropical curves, and suppose that $\chi(\Ga')<0$ (equivalenly, suppose that $\chi(\Ga')<0$). The {\it stabilization} $\tau_{st}:\Ga'_{st}\to \Ga_{st}$ is the unramified harmonic morphism obtained by restricting $\tau$ to $\Ga'_{st}$. \label{def:stabilization}

\end{definition}

The following simple observation will prove to be important.

\begin{proposition} Let $G$ be a connected weighted graph. There are finitely many unramified harmonic morphisms $\ph:G'\to G$ of a given degree $d$.

\label{prop:finitelymany}
\end{proposition}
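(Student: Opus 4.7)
My plan is to show that both the combinatorial type of $G'$ and the harmonic morphism data are constrained to finitely many possibilities, given the target $G$ and the degree $d$.

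First I would bound the size of $G'$. Since $\ph$ is unramified and hence finite, $d_\ph(h') \geq 1$ for every half-edge $h' \in H(G')$. Applied to the harmonicity/degree identities, this gives
\[
\#\{v' \in V(G') : \ph(v')=v\} \leq d, \qquad \#\{e' \in E(G') : \ph(e')=e\} \leq d, \qquad \#\{p' \in L(G') : \ph(p')=p\} \leq d,
\]
for every vertex $v$, edge $e$, and leg $p$ of $G$. Since $G$ is finite, $\#V(G'),\#E(G'),\#L(G')$ are all bounded in terms of $G$ and $d$ alone. Up to isomorphism there are only finitely many graphs with a bounded number of vertices, edges, and legs, so the combinatorial type of $G'$ ranges over a finite set.

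Next I would argue that, once the combinatorial types of $G'$ and $G$ are fixed, there are only finitely many maps of underlying data $X(G') \to X(G)$ commuting with the root and involution maps, simply because both sets are finite. The degree function $d_\ph : X(G') \to \ZZ_{\geq 0}$ takes values in $\{0,1,\dots,d\}$, so once $\ph$ is fixed there are only finitely many candidates for $d_\ph$ (in fact the harmonicity identity~\eqref{eq:harmonic} pins it down at each vertex from its values on the incident half-edges). Finally, Remark~\ref{rem:uniqueweighting} shows that the weighting on $G'$ is uniquely determined by $\ph$ and $d_\ph$ through the unramification condition~\eqref{eq:admissiblegenus}, so the weighting contributes no additional freedom.

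Combining these three finiteness statements — finitely many combinatorial types of $G'$, finitely many set-theoretic morphisms of underlying data, and finitely many degree functions with a uniquely determined weighting — yields the claim. I do not anticipate any serious obstacle here: the argument is essentially a counting argument once the a priori bound on $\#V(G')$, $\#E(G')$, and $\#L(G')$ is in place, and the only subtlety is to keep track that "unramified" plus "finite" is what forces the fibers over every edge and leg of $G$ to have total weighted count exactly $d$ with each summand strictly positive.
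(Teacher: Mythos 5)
Your argument is correct and is essentially the paper's own proof in a non-constructive wrapper: the paper enumerates the same finite data explicitly (a partition of $d$ over each vertex, edge and leg of $G$, finitely many ways to attach the fibers, the harmonicity check, and the unique weighting from Remark~\ref{rem:uniqueweighting}), which is exactly your bound of at most $d$ preimages per cell plus a finite count of morphisms, degree functions and weightings. The only shared implicit point is that local degrees at vertices are positive (which rules out, e.g., superfluous degree-zero genus-one components of $G'$), and this holds in the intended setting just as in the paper's algorithm, which checks connectedness of $G'$.
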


\begin{proof} We give an explicit algorithm enumerating all such covers. 
\begin{enumerate} \item Pick a partition $d_v$ of $d$ for each vertex $v\in V(G)$, and similarly partitions of $d$ for each $e\in E(G)$ and $p\in L(G)$.
\item For each $v\in V(G)$ we let $\ph^{-1}(v)$ be a set indexed by the partition $d_v$, and for $v'\in \ph^{-1}(v)$ define $d_{\ph}(v')$ to be the corresponding element of the partition. We similarly define the edges and legs of $G'$ and the degree of $\ph$ on them.
\item For each edge $e\in E(G)$ rooted at a vertex $v\in V(G)$, there are finitely many ways to attach the elements of $\ph^{-1}(e)$ to the elements of $\ph^{-1}(v)$. We similarly attach the legs $L(G')$ to the vertices $V(G')$.
\item We have constructed a graph $G'$ and a morphism $\ph:G'\to G$. We check whether $G'$ is connected. 
\item If the harmonicity condition~\eqref{eq:harmonic} holds at each $v'\in V(G')$, then we have defined a harmonic morphism $\ph:G'\to G$ of degree $d$. 
\item By Rem.~\ref{rem:uniqueweighting}, there is at most one way to assign genera $g(v')$ for all $v'\in V(G')$ so that $\ph$ is unramified. 
\end{enumerate}\end{proof}

This algorithm is of course computationally intractable for large $d$. However, in Sec.~\ref{sec:hyperelliptic} we will give an explicit implementation of this algorithm in the hyperelliptic case $d=2$. 


\subsection{Edge contraction}

In this section, we recall the operation of edge contraction introduced (see Sec.~3.2.5 of~\cite{CavalieriMarkwigRanganathan_tropadmissiblecovers} or Def.~2.22 in \cite{Caporaso_handbook}). For the convenience of the reader, we spell out the operation in detail.

\begin{definition} \label{def:contractionS} Let $(G,g_G)$ be a weighted graph, and let $S\subset E(G)$ be a set of edges of $G$. The {\it contraction} of $G$ along $S$ is a weighted graph $(G/S,g_{G/S})$ together with a non-finite morphism $c_S:G\to G/S$ defined as follows.

Let $F=[S]$ be the subgraph of $G$ generated by $S$, and let $F_1,\ldots,F_k$ be the connected components of $F$. The graph $G/S$ is obtained from $G$ by contracting each $F_i$ to a vertex $v_i$ of genus $g(F_i)$, and $c_S$ is the natural surjective map. Specifically:

\begin{itemize}
\item The set of vertices of $G/S$ is $V(G/S)=V(G)\backslash V(F)\cup \{v_1,\ldots,v_m\}$. We define the contraction map $c_S$ on $V(S)$ and the weighting $g_{G/S}$ by
$$
c_s(v)=\begin{cases} v_i & \textrm{ if } v\in V(F_i), \\ v & \textrm{ if } v\in V(G\backslash F), \end{cases}
$$
and
$$
g_{G/S}(v)=\begin{cases} g(F_i) & \textrm{ if } v=v_i, \\ g_{G}(v) & \textrm{  if } v\in V(G)\backslash V(F).\end{cases}
$$

\item The set of half-edges of $G/S$ is $H(G/S)=H(G)\backslash H(F)$. We define the contraction map $c_S$ on $H(G)$ by
$$
c(h)=\begin{cases} v_i & \textrm{ if } h\in H(F_i), \\ h & \textrm{ if } h\in H(G)\backslash S. \end{cases}
$$

\item We define the root map on $G/S$ as $r_{G/S}=c_S\circ r_G$, and the involution $\iota_{G/S}$  by restricting $\iota_G$ to $H(G/S)$.

\end{itemize}
The contraction $c_S$ establishes a bijection between the half-edges of $G/S$ and the half-edges of $G$ that are not in $S$. Hence there are bijections between $E(G)\backslash S$ and $E(G/S)$, and between $L(G)$ and $L(G/S)$. 
\end{definition}

If $S=S_1\sqcup S_2$ is a subset of edges of $E(G)$, then $S_2$ is naturally a subset of edges of $E(G/S_1)$, and the contraction map $G\to G/S$ factors as $G\to G/S_1\to (G/S_1)/S_2=G/S$. Hence any contraction is a composition of {\it edge contractions}, which we describe explicitly. Let $e\in E(G)$ be an edge. The contraction $G/e$ of $G$ along $e$ is obtained from $G$ as follows:
\begin{itemize}

\item If $e$ is a loop at $v\in V(G)$, then delete $e$ and increase the genus of $v$ by 1.

\item If $e$ has distinct endpoints $u$ and $v$, then delete $e$ and identify $u$ and $v$ to a new vertex of genus $g(u)+g(v)$. 

\end{itemize}

We can also define edge contractions in an invariant way as follows.

\begin{definition} Let $(G',g')$ and $(G,g)$ be weighted graphs. A {\it contraction} $c:(G',g')\to (G,g)$ is a graph morphism satisfying the following properties:
\begin{itemize}
\item For each half-edge $h\in H(G)$, the preimage $c^{-1}(h)\in H(G')$ consists of one element.

\item For each $p'\in L(G')$ we have $c(p')\in L(G)$, in other words $c$ does not contract any legs. 
\item For each vertex $v\in V(G)$, the weighted graph $c^{-1}(v)$ is connected with genus $g(v)$.
\end{itemize}\label{def:contraction}
\end{definition}

It is an exercise to check that a contraction according to Def.~\ref{def:contraction} is a composition of a contraction along a subset of edges (using Def.~\ref{def:contractionS}) and an isomorphism of weighted graphs.

We can also simultaneously contract the target and source of an unramified harmonic morphism along a subset of edges of the target and its preimage in the source. This definition is implicitly used in Proposition 19 in~\cite{CavalieriMarkwigRanganathan_tropadmissiblecovers}, and the lemma that follows is a restatement of this result. We provide a proof for the sake of completeness, to illustrate the convenience of using the Euler characteristic, and because we will later require a converse result (see Lem.~\ref{lem:edgecontraction}).

\begin{definition} \label{def_contractionofunramifiedharmonic}
Let $\ph:G'\to G$ be an unramified harmonic morphism of weighted graphs, and let $S\subset E(G)$ be a subset of edges of $G$. Let $S'=\ph^{-1}(S)$. We define a harmonic morphism, called the {\it contraction} $\ph_S:G'/S'\to G/S$ of $\ph$ along $S$, as follows.

Let $F'=[S']$ and $F=[S]$ be the subgraphs of $G'$ and $G$ generated by $S'$ and $S$, respectively. To simplify notation we assume that $F$ is connected, and denote the connected components of $F'$ by $F'_1,\ldots,F'_m$.

\begin{itemize}

\item The set of half-edges of $G'/S'$ is $H(G'/S')=H(G')\backslash H(F')$, and for $h'\in H(G'/S')$ we define $\ph_S(h')=c_S(\ph(h'))$ with degree $d_{\ph_S}(h')=d_{\ph}(h')$. 

\item The set of vertices of $G'/S'$ is $V(G'/S')=V(G')\backslash V(F')\cup \{v'_1,\ldots,v'_m\}$, where each $v'_i$ corresponds to a connected component $F'_i$ of $F'$. For $v'\in V(G')\backslash V(F')$ we define $\ph_{S}(v')=c_S(\ph(v'))$ with degree $d_{\ph_S}(v')=d_{\ph}(v')$. Since $F$ is connected, $\ph$ maps $F'_i$ onto $F$, and moreover the restriction $\ph|_{F'_i}:F'_i\to F$ is a harmonic morphism (not necessarily unramified). We define $\ph_{S}(v'_i)=v$, where $v$ is the vertex of $G/S$ corresponding to $F$, with degree $d_{\ph_{S}}(v'_i)=\deg \ph|_{F'_i}$.

\end{itemize}
For arbitrary $S$ we define the contraction of $\ph$ by composing the contractions along the connected components of $[S]$. As with contractions of graphs, contractions of covers can be factored as compositions of contractions along disjoint subsets of edges.
\label{def:unramifiedcontraction}
\end{definition}

The following is an example of an edge contraction of an unramified harmonic morphism $\ph:G'\to G$ along a single edge $e\in E(G)$. The morphism $\ph:G'\to G$ is the right column (only the neighborhoods of the contracted subgraphs are shown), the contracted edge and its preimages in $G'$ are bold, and the contracted morphism is the left column. The edges and half-edges of the sources are marked by degree.

\begin{center}
\begin{tikzpicture}

\begin{scope}[shift={(0,0)}]

\draw [thick] (0,0) -- (-0.8,0.4);
\draw [thick] (0,0) -- (-0.8,-0.4);
\draw [thick] (0,0) -- (0.8,0.4);
\draw [thick] (0,0) -- (0.8,-0.4);
\draw[fill](0,0) circle(.8mm);

\end{scope}

\begin{scope}[shift={(3,0)}]

\draw [thick] (0,0) -- (-0.8,0.4);
\draw [thick] (0,0) -- (-0.8,-0.4);
\draw[fill](0,0) circle(.8mm);
\draw [ultra thick] (0,0) -- (2,0) node[midway,above]{$e$};

\begin{scope}[shift={(2,0)}]
\draw [thick] (0,0) -- (0.8,0.4);
\draw [thick] (0,0) -- (0.8,-0.4);
\draw[fill](0,0) circle(.8mm);

\end{scope}

\end{scope}

\begin{scope}[shift={(0,2)}]

\draw [thick] (0,0) -- (-0.8,0.6) node[left]{$2$};
\draw [thick] (0,0) -- (-0.8,0.2) node[left]{$3$};
\draw [thick] (0,0) -- (-0.8,-0.2) node[left]{$2$};
\draw [thick] (0,0) -- (-0.8,-0.6) node[left]{$3$};
\draw [thick] (0,0) -- (0.8,0.6) node[right]{$4$};
\draw [thick] (0,0) -- (0.8,0.2) node[right]{$1$};
\draw [thick] (0,0) -- (0.8,-0.2) node[right]{$3$};
\draw [thick] (0,0) -- (0.8,-0.6) node[right]{$2$};
\draw[fill,white](0,0) circle(2mm);
\draw[thick](0,0) circle(2mm);
\draw (0,0) node{$2$};

\end{scope}

\begin{scope}[shift={(3,2)}]

\draw [ultra thick] (0,0.7) .. controls (0.5,0.7) and (1.5,0.5) .. (2,0) node[midway,above] {$1$} ;
\draw [ultra thick] (0,0.7) .. controls (0.5,0.2) and (1.5,0) .. (2,0);
\draw (0.7,0) node{$1$};

\draw [ultra thick] (0,-0.7) -- (2,0) node[midway,below] {$3$} ;

\begin{scope}[shift={(0,0.7)}]
\draw [thick] (0,0) -- (-0.8,0.4) node[left]{$2$};
\draw [thick] (0,0) -- (-0.8,-0.4) node[left]{$2$};
\draw[fill](0,0) circle(.8mm);
\end{scope}

\begin{scope}[shift={(0,-0.7)}]
\draw [thick] (0,0) -- (-0.8,0.4) node[left]{$3$};
\draw [thick] (0,0) -- (-0.8,-0.4) node[left]{$3$};
\draw[fill,white](0,0) circle(2mm);
\draw[thick](0,0) circle(2mm);
\draw (0,0) node{$1$};
\end{scope}

\begin{scope}[shift={(2,0)}]
\draw [thick] (0,0) -- (0.8,0.6) node[right]{$4$};
\draw [thick] (0,0) -- (0.8,0.2) node[right]{$1$};
\draw [thick] (0,0) -- (0.8,-0.2) node[right]{$3$};
\draw [thick] (0,0) -- (0.8,-0.6) node[right]{$2$};

\draw[fill](0,0) circle(0.8mm);
\end{scope}

\end{scope}

\end{tikzpicture}
\end{center}

\begin{lemma} \label{lemma_contractionofunramifiedharmonic} Let $\ph:G'\to G$ be an unramified harmonic morphism of weighted graphs, and let $S\subset E(G)$. Then the contraction $\ph_S:G'/S'\to G/S$ of $\ph$ along $S$ is an unramified harmonic morphism.
\end{lemma}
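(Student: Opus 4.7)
The plan is to verify the harmonicity condition~\eqref{eq:harmonic} and the Riemann--Hurwitz identity $\Ram_{\ph_S}(v')=0$ at every vertex of $G'/S'$. Since a contraction along $S$ factors through contractions along the connected components of $[S]$, I would reduce to the case in which $F=[S]$ is a single connected subgraph, writing $[S']=F'_1\sqcup\cdots\sqcup F'_m$ for the connected components of its preimage, each mapping surjectively onto $F$.

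For a vertex $v'\in V(G'/S')$ inherited from $V(G')\setminus V(F')$, the graph $G'/S'$ agrees with $G'$ in a neighborhood of $v'$, and similarly $G/S$ agrees with $G$ near $\ph_S(v')=c_S(\ph(v'))$. Both the harmonicity and the ramification degree at $v'$ are therefore inherited directly from $\ph$.

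The essential case is a new vertex $v'_i$ arising from a component $F'_i$. The key input, noted in the paragraph preceding Remark~\ref{rem:addramification}, is that the restriction
\[
\ph|_{O(F'_i)}\colon O(F'_i)\longrightarrow O(F)
\]
is an unramified harmonic morphism of connected graphs, of degree $d:=\deg\ph|_{F'_i}=d_{\ph_S}(v'_i)$. There are natural bijections $T_{v'_i}(G'/S')\leftrightarrow L(O(F'_i))$ and $T_v(G/S)\leftrightarrow L(O(F))$, where $v=\ph_S(v'_i)$, and under these bijections the local data of $\ph_S$ at $v'_i$ coincides with the map on legs induced by $\ph|_{O(F'_i)}$. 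The harmonicity condition~\eqref{eq:harmonic} at $v'_i$ therefore reads, for each leg $h$ of $O(F)$,
\[
d=\sum_{\substack{h'\in L(O(F'_i)),\\ \ph(h')=h}} d_{\ph}(h'),
\]
which is precisely the standard constancy-of-degree property for the harmonic morphism $\ph|_{O(F'_i)}$ evaluated on a leg.

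For the Riemann--Hurwitz identity at $v'_i$, Definition~\ref{def:neighborhood} gives $\chi_{G'/S'}(v'_i)=\chi(O(F'_i))$ and $\chi_{G/S}(v)=\chi(O(F))$: in each case the valency of the new vertex equals the number of legs of the corresponding neighborhood graph, and the genus of the new vertex equals the genus of the neighborhood graph. Applying the global Riemann--Hurwitz formula~\eqref{eq:RHformula} to the unramified morphism $\ph|_{O(F'_i)}$ yields
\[
\chi(O(F'_i))=d\cdot\chi(O(F)),
\]
which rearranges to $\Ram_{\ph_S}(v'_i)=0$. I expect the main obstacle to be purely bookkeeping: one must track the tangent-direction bijections and the Euler-characteristic identifications carefully, distinguishing the legs of $G$ originally attached to $F$ from the half-edges that become legs only after passage to $O(F)$. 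Once these identifications are in place, the unramifiedness of $\ph_S$ is a direct consequence of applying the global Riemann--Hurwitz formula to the neighborhood restriction.
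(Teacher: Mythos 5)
Your proposal is correct and follows essentially the same route as the paper's proof: reduce to connected $F=[S]$, note that nothing changes at vertices away from the contracted subgraph, and verify harmonicity and vanishing ramification at the new vertex $v'_i$ via the identifications $\chi(v'_i)=\chi(O(F'_i))$, $\chi(v)=\chi(O(F))$ and the neighborhood restriction $\ph|_{O(F'_i)}\colon O(F'_i)\to O(F)$. The only difference is one of packaging: where you invoke the previously established facts that $\ph|_{O(F'_i)}$ is unramified, that the total degree over any leg of a connected target equals $\deg\ph|_{O(F'_i)}$, and the global Riemann--Hurwitz formula \eqref{eq:RHformula}, the paper carries out the corresponding double sums over vertices of $O(F'_i)$ explicitly, which is the same computation.
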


\begin{proof} We again assume that $S$ is connected, and use the notation of Def.~\ref{def:unramifiedcontraction}. The map $\ph_S$ does not contract any half-edges, so it is sufficient to check that it satisfies the harmonicity condition \eqref{eq:harmonic} and $\Ram_{\ph_S}(v')=0$ at every vertex $v'$ of $G'/S'$. In the neighborhood of a vertex $v'\in V(G')\backslash V(F')$, the map $\ph_S$ is identical to $\ph$, hence it is harmonic and unramified, so it remains to check the vertices $v'_i$.

We consider the neighborhoods $O(F)$ and $O(F'_i)$ and the restriction $\ph_i: O(F'_i)\to O(F)$ of $\ph$ to $O(F'_i)$. We observe that $\chi(v)=\chi(O(F))$ and $\chi(v'_i)=\chi(O(F'_i))$. Now let $h\in H(G/S)$ be a half-edge rooted at $v$, it corresponds to a leg $h\in L(O(F))$ rooted at some vertex $u\in V(O(F))$. Similarly, a half-edge $h'\in H(G'/S')$ rooted at $v'_i$ and mapping to $h$ corresponds to a leg $h'\in H(O(F'_i))$ rooted at some vertex $u'\in V(O(F'_i))$ and mapping to $u$. Therefore,
\begin{equation*}\begin{split}
\sum_{\substack{ h'\in T_{v'_i}(G'/S'),\\ \ph_S(h')=h}} d_{\ph_S}(h')&= \sum_{\substack{u'\in V(O(F'_i)), \\ \ph(u')=u}}\;\sum_{\substack{ h'\in T_{u'}\tF'_i, \\ \ph(h')=h}} d_{\ph}(h')\\&=\sum_{\substack{u'\in V(O(F'_i)), \\ \ph(u')=u}}d_{\ph}(u')
=\deg \ph_i=d_{\ph_S}(v'_i).
\end{split}\end{equation*}
Hence $\ph_S$ is harmonic at $v'_i$. Similarly
\begin{equation*}\begin{split}
\chi(v'_i)&=\chi(O(F'_i))\\ &=\sum_{u'\in V(O(F'_i))}\chi(u')\\ &=\sum_{u'\in V(O(F'_i))}d_{\ph}(u')\chi(\ph(u')) 
\\ &=\sum_{u\in V(F)}\chi(u)\sum_{\substack{u'\in V(O(F'_i)), \\ \ph(u')=u}}d_{\ph}(u')\\ &=\sum_{u\in V(O(F))}\chi(u)\deg \ph_i \\ &=\chi(\tF)\deg \ph|_{F'_i}=d_{\ph_S}(v'_i)\chi(v'_i),
\end{split}\end{equation*}
therefore $\ph_S$ is unramified.
\end{proof}

We will later see in Lem.~\ref{lem:edgecontraction} that edge contractions of unramified harmonic morphisms can be (non-uniquely) reversed: given $G$ and $S$ as above, and given an unramified harmonic morphism $\ph$ with target $G/S$, there exists an unramified harmonic morphism to $G$ whose contraction is $\ph$.


\subsection{Divisors and rational functions}

Let $\Ga$ be a tropical curve. A {\it divisor} $D$ on $\Ga$ is a finite formal sum $D=\sum_{i=1}^na_ix_i$ over points $x_i$ of $\Ga$, with integer coefficients. We write $\Div(\Ga)$ for the group of divisors on $\Ga$. The {\it degree} function $\deg\colon\Div(\Ga)\rightarrow \Z$ given by $\deg (D)=\sum_{i=1}^n a_i$ defines a natural grading on $\Div (\Ga)$, and we denote the set of degree $d$ divisors by $\Div_d(\Ga)$. 

\begin{definition} Let $\tau:\Ga'\to \Ga$ be a finite harmonic morphism of tropical curves. The {\it ramification divisor} of $\tau$ is the divisor
$$
\sum_{x'\in \Ga'} \Ram_{\tau}(x') x'.
$$
\end{definition}

It is clear that a finite harmonic morphism $\tau:\Ga'\to \Ga$ of tropical curves is effective if and only if the ramification divisor is effective, and unramified if and only if the ramification divisor is zero.

Let $\Ga$ be a tropical curve. A {\it rational function} $f$ on $\Ga$ is a continuous, piecewise-linear function $f:\Ga\to \RR$ with integer slopes. Given a point $x\in \Ga$ and a tangent direction $h\in T_x\Ga$, there is a well-defined {\it outgoing slope} $\slope_f(h)\in \ZZ$. We require rational functions to be eventually linear on every leg. In other words, if $p\subset \Ga$ is a leg, identified with $[0,\infty)$, then there exist $a\in \ZZ$, $b\in \RR$, and $c\in \RR_{\geq 0}$ such that for $x\in p$ we have $f(x)=ax+b$ for $x\geq c$; the number $a$ is called the {\it slope} of $f$ along $p$ and is denoted $\slope_f(p)$. We write $\Rat(\Ga)$ for the abelian group of rational functions on $\Ga$. The divisor $\div (f)$ associated to a rational function $f$ is given by 
\begin{equation*}
\div(f)=\sum_{x\in\Ga}\ord_f(x) x \ ,
\end{equation*}
where 
\begin{equation*}
\ord_f(x)=\sum_{h\in T_x \Ga}\slope_f(h)
\end{equation*}
is the sum of the outgoing slopes of $f$ at $x$. We note that this sum is finite, because $f$ is eventually linear on each leg and because a graph with no legs is compact. The association $f\mapsto \div(f)$ defines a natural homomorphism $\div\colon \Rat(\Ga)\rightarrow \Div(\Ga)$ whose image is easily checked to lie in $\Div_0(\Ga)$. 

\begin{definition}
We say that a rational function $F\in\Rat(\Ga)$ is \emph{harmonic} if $\div(F)=0$. 
\end{definition}

\begin{remark} It is easy to see that a rational function $f$ on a tropical curve $\Ga$ is the same thing as a morphism from $\Ga$ to $\RR$, where we view the latter as a tropical curve. This morphism is harmonic if and only if the function is harmonic, and finite if and only if the function has nonzero slope everywhere. 

\end{remark}

Recall that for a tropical curve $\Ga$ we denote $\Ga^{\circ}$ the tropical curve obtained from $\Ga$ by removing the legs, and $c:\Ga\to \Ga^{\circ}$ the natural retraction map. 

\begin{proposition}\label{prop_harmonicfunction=rationalfunction}
Let $(\Ga,p_1,\ldots, p_n)$ be a tropical curve with $n$ marked legs $p_1,\ldots, p_n$, and let $a_1,\ldots, a_n\in\Z$ be nonzero integers such that $a_1+\cdots+a_n=0$. There is a one-to-one correspondence between harmonic functions $f\colon\Ga\rightarrow \R$ having outgoing slope $-a_i$ on the leg $p_i$ for $i=1,\ldots,n$, and rational functions $f^{\circ}$ on $\Ga^{\circ}$ such that $\div(f^{\circ})=\sum_{i=1}^n a_i c(p_i)$.
\end{proposition}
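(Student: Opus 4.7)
The plan is to construct the bijection explicitly via restriction in one direction and linear extension in the other, then verify that the divisor of the rational function on $\Ga^\circ$ is determined by the outgoing leg slopes on $\Ga$. The entire content reduces to a careful accounting of tangent directions at the attachment points $c(p_i)$.

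\textbf{From harmonic $f$ to $f^\circ$.} Given harmonic $f\colon\Ga\to\R$ with outgoing slope $-a_i$ along $p_i$, I would first observe that harmonicity at an interior point of any leg (which has valence two) forces the two outgoing slopes to sum to zero, so $f$ is in fact linear on each leg $p_i$ with constant slope $-a_i$ outward from $c(p_i)$. The restriction $f^\circ:=f|_{\Ga^\circ}$ is therefore a well-defined continuous piecewise-linear function with integer slopes, i.e.\ a rational function on $\Ga^\circ$. To compute $\ord_{f^\circ}(x)$ at a point $x\in\Ga^\circ$, I use the decomposition of tangent directions
\[
T_x\Ga \;=\; T_x\Ga^\circ \;\sqcup\; \{\text{legs } p_i \text{ with } c(p_i)=x\}.
\]
Harmonicity gives $\ord_f(x)=0$, and each leg direction contributes outgoing slope $-a_i$ to $\ord_f(x)$ but is absent from $\ord_{f^\circ}(x)$; subtracting, $\ord_{f^\circ}(x)=\sum_{i:\,c(p_i)=x}a_i$, which is exactly the required divisor.

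\textbf{From $f^\circ$ to $f$.} Conversely, given $f^\circ$ on $\Ga^\circ$ with $\div(f^\circ)=\sum_i a_i\, c(p_i)$, I would define $f$ to agree with $f^\circ$ on $\Ga^\circ$ and to be linear on each leg $p_i$, parametrised as $[0,\infty)$ with $0\leftrightarrow c(p_i)$, via $f(t)=f^\circ(c(p_i))-a_i t$. Continuity and the prescribed outgoing slope $-a_i$ are automatic. Harmonicity at interior points of the legs is clear, and at any $x\in\Ga^\circ$ the same decomposition of $T_x\Ga$ gives
\[
\ord_f(x) \;=\; \ord_{f^\circ}(x) + \sum_{i:\,c(p_i)=x}(-a_i) \;=\; \sum_{i:\,c(p_i)=x}a_i - \sum_{i:\,c(p_i)=x}a_i \;=\; 0
\]
by the divisor hypothesis.

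\textbf{Mutual inverseness.} Restriction after linear extension is tautological, and linear extension after restriction recovers $f$ because the first step of the forward direction showed that $f$ is already linear on each leg with slope $-a_i$. I do not anticipate any real obstacle: the entire statement is bookkeeping of tangent directions at the attachment points, powered by the elementary observation that a piecewise-linear function on a one-dimensional neighbourhood whose outgoing slopes sum to zero is linear. The only subtlety worth stating carefully is the case when several legs $p_i$ share a common attachment point, which is handled uniformly by summing over the indices $i$ with $c(p_i)=x$.
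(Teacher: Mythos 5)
Your proof is correct and follows essentially the same route as the paper: restrict $f$ to $\Ga^{\circ}$ in one direction, extend $f^{\circ}$ linearly along each leg with slope $-a_i$ in the other, and verify the divisor/harmonicity conditions by comparing outgoing slopes at the attachment points $c(p_i)$. Your version merely spells out the bookkeeping (linearity on legs from harmonicity at valence-two points, the tangent-direction decomposition at $c(p_i)$) that the paper leaves implicit.
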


\begin{proof}
Let $f\in \Rat(\Ga)$ be a harmonic function whose slope at the leg $p_i$ is $-a_i$ for all $i=1,\ldots n$, and let $f^{\circ}$ be the restriction of $f$ to $\Ga^{\circ}$. It is clear that $\div(f^{\circ})=\sum_{i=1}^n a_i c(p_i)$, since, by the harmonicity of $f$, at any point $x\in \Ga^0$ the sum of the outgoing slopes of $f$ along the finite edges of $\Ga$ is equal to $\sum_{c(p_i)=x} a_i$. Conversely, given a rational function $f^{\circ}\in \Rat(\Ga^{\circ})$ with $\div(f^{\circ})=\sum_{i=1}^n a_i c(p_i)$, we uniquely extend $f^{\circ}$ to a harmonic function $f$ on $\Ga$ with outgoing slope $-a_i$ along the leg $p_i$. 
\end{proof}

\begin{proposition}\label{prop_pullbackharmonic}
Let $\tau \colon \Ga'\rightarrow\Ga$ be a harmonic morphism of tropical curves, and let $f\colon \Ga\rightarrow \R$ a harmonic function. Then the pullback $\tau^\ast f\colon \Ga'\rightarrow \R$, given by $\tau^\ast f(x')=f(\tau(x'))$ for $x'\in \Ga'$, is also harmonic. Moreover, for a leg $p'_i$ of $\Ga'$ mapping to the leg $p_i$ of $\Ga$ we have
\begin{equation*}
\slope_{\tau^\ast f}(p'_i)=d_{\tau}(p_i')\cdot \slope_{f}(p_i).
\end{equation*}
\end{proposition}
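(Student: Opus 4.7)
My plan is to choose a common model for the morphism $\tau$ and the rational function $f$, observe that on this model the pullback $\tau^*f$ becomes piecewise linear with explicitly computable slopes, and then verify both the harmonicity condition and the leg-slope formula by a direct bookkeeping calculation that reduces to the definitions of harmonicity for $\tau$ and for $f$.

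The first step is to pick a harmonic model $\ph\colon G'\to G$ of $\tau$ and then refine $G$, pulling the refinement back to $G'$ through $\ph$, so that every point of $\Ga$ at which $f$ fails to be linear is a vertex of $G$. With this choice, $\tau^*f=f\circ\tau$ is linear with integer slope on each edge and each leg of $G'$ and so is continuous and piecewise linear with integer slopes. Eventual linearity on legs is automatic: on a leg $p'_i\subset\Ga'$ identified with $[0,\infty)$, for $x$ sufficiently large we have $\tau(x)=d_\tau(p'_i)\,x+b$ and $f(y)=\slope_f(p_i)\,y+c$, hence $\tau^*f(x)=d_\tau(p'_i)\slope_f(p_i)\,x+\text{const}$, which proves both that $\tau^*f\in\Rat(\Ga')$ and the claimed slope formula $\slope_{\tau^*f}(p'_i)=d_\tau(p'_i)\slope_f(p_i)$.

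For harmonicity, I would observe that at a tangent direction $h'\in T_{v'}G'$, the outgoing slope of $\tau^*f$ equals $d_\ph(h')\cdot\slope_f(\ph(h'))$, interpreted as $0$ whenever $\ph$ contracts $h'$ (in that case $d_\ph(h')=0$ and $\tau^*f$ is constant along $h'$). Setting $v=\ph(v')$ and interchanging the order of summation then gives
\begin{align*}
\ord_{\tau^*f}(v') &= \sum_{h'\in T_{v'}G'} d_\ph(h')\,\slope_f\bigl(\ph(h')\bigr) \\
&= \sum_{h\in T_v G}\slope_f(h)\sum_{\substack{h'\in T_{v'}G'\\\ph(h')=h}} d_\ph(h') \\
&= d_\ph(v')\sum_{h\in T_v G}\slope_f(h) = d_\ph(v')\cdot\ord_f(v) = 0,
\end{align*}
where the third equality is the harmonicity condition \eqref{eq:harmonic} for $\ph$ and the last uses $\div(f)=0$.

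I do not foresee any serious obstacle. The argument is essentially a chain-rule calculation fused with the defining formulas for harmonicity of $\tau$ and of $f$; the only care needed is to ensure that the chosen model is fine enough that $f$, and hence $\tau^*f$, is linear on every edge and leg, and to correctly handle half-edges contracted by $\ph$, which reassuringly contribute trivially both to \eqref{eq:harmonic} and to the slope sum at $v'$.
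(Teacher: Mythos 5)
Your proposal is correct and follows essentially the same route as the paper: a chain-rule computation of the slope of $\tau^\ast f$ along each tangent direction, followed by interchanging the order of summation and invoking the harmonicity condition for $\tau$ to get $\ord_{\tau^\ast f}(x')=d_\tau(x')\cdot\ord_f(\tau(x'))=0$, with the leg formula coming from eventual linearity. Your extra care with choosing a fine enough model and with half-edges contracted by $\tau$ (which contribute zero) is a harmless refinement of the same argument.
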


\begin{proof}
Let $f\in\Rat(\Ga)$. For a half-edge $h'$ of $\Ga'$ rooted at a vertex $v'$, the outgoing slope of $\tau^\ast f$ is given by the chain rule:
\begin{equation*}
\slope_{\tau^\ast f}(h')=d_\tau(h')\cdot \slope_f (h),
\end{equation*}
where $h=\tau(h')$. Therefore, at every point $x'$ of $\Ga'$ we have 
\begin{equation*}
\sum_{\substack{h'\in T_{x'}\Ga', \\ \tau(h')=h}}\slope_{\tau^\ast f} (h')=\sum_{\substack{h'\in T_{x'}\Ga', \\ \tau(h')=h}}d_{\tau}(h')\cdot \slope_f (h)=d_{\tau}(x')\cdot \slope_f (h).
\end{equation*}
Adding this over all tangent directions $h\in T_x\Ga$, where $x=\tau(x')$, we have
\begin{equation*}
\ord_{\tau^\ast f}(x')= d_{\tau}(x') \cdot\ord_f(x). 
\end{equation*}
If $f$ is harmonic, then $\ord_f(x)=0$, hence $\ord_{\tau^{\ast}f}(x')=0$ and therefore $\tau^\ast f$ is also harmonic.
\end{proof}

Given a tropical curve $\Ga$ with a leg $p$, it is easy to see that any point $x\in p$ is linearly equivalent to the root point $c(p)$, via the function with slope $1$ on the segment connecting $c(p)$ and $x$ and slope zero everywhere else. Hence we can talk about divisors containing legs of $\Ga$, and linear equivalence between them.

\begin{corollary}[\cite{ABBRII} Prop.~4.2]\label{cor_tree->linearequivalence} Let $\Ga$ be a tropical curve, and let $\tau \colon\Ga\rightarrow \De$ be a harmonic morphism to a metric tree. Then the preimages of two legs are linearly equivalent.
\end{corollary}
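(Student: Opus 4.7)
The plan is to construct an explicit harmonic function on $\Ga$ whose associated divisor on $\Ga^{\circ}$ exhibits the desired linear equivalence; Propositions~\ref{prop_harmonicfunction=rationalfunction} and~\ref{prop_pullbackharmonic} will then do essentially all of the work. Let $p_0$ and $p_\infty$ denote the two legs of $\De$ in question. The first step is to exploit that $\De$ is a tree: there exists a rational function $f^{\circ}\in\Rat(\De^{\circ})$ with
\[
\div(f^{\circ}) = c(p_0) - c(p_\infty).
\]
Concretely, $f^{\circ}$ can be taken to be the signed distance function along the unique path in $\De^{\circ}$ from $c(p_0)$ to $c(p_\infty)$, extended as a constant over every other branch of $\De^{\circ}$. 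Via Proposition~\ref{prop_harmonicfunction=rationalfunction}, this $f^{\circ}$ corresponds to a harmonic function $f\colon\De\to\R$ whose outgoing slope is $-1$ on $p_0$, $+1$ on $p_\infty$, and $0$ on every other leg of $\De$.

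Next I would pull this back along $\tau$ to obtain $F := \tau^{\ast} f\colon \Ga\to \R$. By Proposition~\ref{prop_pullbackharmonic}, $F$ is again harmonic on $\Ga$, and its slope on a leg $p'$ of $\Ga$ equals $d_\tau(p')\cdot\slope_f(\tau(p'))$. Consequently $F$ has outgoing slope $-d_\tau(p'_i)$ on each leg $p'_i$ with $\tau(p'_i) = p_0$, slope $+d_\tau(p'_j)$ on each leg $p'_j$ with $\tau(p'_j) = p_\infty$, and slope $0$ on all remaining legs of $\Ga$. Applying the reverse direction of Proposition~\ref{prop_harmonicfunction=rationalfunction} to $F$, its restriction $F^{\circ}$ to $\Ga^{\circ}$ is then a rational function with
\[
\div(F^{\circ}) = \sum_{\tau(p'_i) = p_0} d_\tau(p'_i)\, c(p'_i) \;-\; \sum_{\tau(p'_j) = p_\infty} d_\tau(p'_j)\, c(p'_j).
\]

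Finally, using the observation (recalled just before the corollary) that each point on a leg is linearly equivalent to the root vertex of that leg, this $F^{\circ}$ witnesses the required linear equivalence $\tau^{-1}(p_0)\sim\tau^{-1}(p_\infty)$ between divisors on $\Ga$. The only step that uses the tree hypothesis is the construction of $f^{\circ}$, which is also the only place a non-trivial obstruction could in principle arise; but it is immediate from $H_1(|\De|)=0$, so no real obstacle stands in the way.
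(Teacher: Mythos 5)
Your proposal is correct and follows essentially the same route as the paper: construct the harmonic function on $\De$ with slopes $\pm 1$ on the two distinguished legs and $0$ elsewhere, pull it back via Proposition~\ref{prop_pullbackharmonic}, and translate back to a divisor relation on $\Ga^{\circ}$ via Proposition~\ref{prop_harmonicfunction=rationalfunction}. The only difference is cosmetic: you build $f$ explicitly as a signed distance function along the path in the tree, where the paper simply asserts its existence.
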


\begin{proof} Pick two legs $0$ and $\infty$ on $\De$. It is clear that there exists a harmonic function $f$ on $\De$ (unique up to an additive constant) having slopes $+1$ and $-1$ on $0$ and $\infty$, respectively, and slope $0$ along all other legs. By Prop.~\ref{prop_pullbackharmonic}, the pullback $\tau^{\ast}f$ is harmonic on $\Ga$, having slope $d_{\tau}(p)$ along any leg $p\subset \Ga$ mapping to $0$, slope $-d_{\tau}$ along any leg mapping to $\infty$, and slope zero along all other legs. Hence, by Prop.~\ref{prop_harmonicfunction=rationalfunction}, the restriction of $- \tau^\ast F$ to $\Ga^{\circ}$ defines an equivalence between the divisors $\sum_{p:\tau(p)=0} d_{\tau}(p)c(p)$ and $\sum_{p:\tau(p)=\infty} d_{\tau}(p) c(p)$.
\end{proof}


\section{Tropical moduli spaces}

In this section we recall and refine the definition of \emph{generalized cone complexes}, which provide us with a natural language to study tropical moduli spaces. We review two well-known examples: the moduli space of tropical curves $M_{g,n}^{trop}$ and the moduli space of tropical admissible covers $H^{\trop}_{g\rightarrow h,d}(\mu)$. We also introduce a moduli space $\Div^{trop}_{g,d_+,d_-}$ parametrizing triples consisting of a stable tropical curve $\Ga$ of genus $g$ and a pair of effective divisors $D_+$ and $D_-$ on $\Ga$ of degrees $d_+$ and $d_-$, respectively.


\subsection{Generalized cone complexes} \label{sec:conecomplexes}

In \cite[Definition II.1.5]{KKMSD} the authors introduce the notion of a \emph{(rational polyhedral) cone complex} in order to describe the combinatorial structure of toroidal embeddings without self-intersecting strata. In \cite{ACP} (see Section 2.6) this notion is generalized to \emph{generalized (rational polyhedral) cone complexes} that can also deal with self-intersecting toroidal strata. In this section, we recall this definition, informed by the stack-theoretic framework developed in \cite{CCUW}. 

A {\it rational polyhedral cone with integral structure} $(\si,M)$, or simply a {\it cone}, is a topological space $\si$ together with a finitely generated group $M$ of continuous real-valued functions on $\si$, such that the induced map $\si\to \Hom(M,\RR)$ is a homeomorphism onto a strictly convex polyhedral cone in $N_\R:=\Hom(M,\RR)$ which is rational with respect to the lattice $N:=\Hom(M,\ZZ)$. A {\it morphism} $f:(M',\si')\to(M,\si)$ of cones is a continuous map $f:\si'\to \si$ such that the pullback of any function in $M$ is in $M'$.

Given a cone $(\si,M)$, the {\it dual monoid} $S_\si$ consists of those functions in $M$ that are non-negative on $\si$. We recover $\si$ from $S_\si$ as the space of monoid homomorphisms
$$
\si=\Hom(S_\si,\RR_{\geq 0}).
$$
A morphism $f\colon (\sigma', M')\rightarrow(\sigma,M)$ of cones induces a homomorphism $f^\#\colon M\rightarrow M'$ that sends $S_{\sigma}$ to $S_{\sigma'}$. A {\it face} of $\si$ is a subset $\tau$ along which some function $u\in S_{\si}$ vanishes. The dual monoid $S_\tau$ is naturally  given by the localization 
\begin{equation*}
(S_\sigma)_u=\big\{s-ku\in M\big\vert s\in S_\sigma, k\in\Z\big\}
\end{equation*}
of $S_\sigma$ along the submonoid generated by $u$. 

We say that  a cone is \emph{sharp} if the monoid $S_\sigma$ has no non-trivial units or, in other words, if $\sigma$ spans $N_\R$. Write $S_\sigma^\ast$ for the subgroup of units in $S_\sigma$. To any cone $(\sigma,M)$ we may associate a sharp cone $(\sigma, \Mbar)$ by setting $\Mbar=M/S_\sigma^\ast$; a morphism $f\colon (\sigma',M')\rightarrow(\sigma,M)$ induces a morphism $\overline{f}\colon (\sigma',\Mbar')\rightarrow (\sigma,\Mbar)$, since $f^\#(S_{\sigma'}^\ast)\subseteq S_{\sigma}^\ast$. A morphism $(\sigma',M')\rightarrow (\sigma,M)$ is said to be a \emph{face map} if it induces an isomorphism $(\sigma',\Mbar')\xrightarrow{\simeq}(\tau,\Mbar)$ onto a face $\tau$ of $\sigma$. Notice that, in particular, all automorphisms of a cone $(\sigma, M)$ are face morphisms. 

From now on we work only with sharp cones and the term \emph{cone} will uniformly refer to a sharp cone. We denote the category of sharp cones by $\mathbf{RPC}$ and write $\mathbf{RPC}^f$ for the subcategory of face maps. 

\begin{definition}\label{def_conecomplex}
A \emph{generalized rational polyhedral cone complex} $\Sigma$, or simply a \emph{generalized cone complex}, is a topological space $\vert \Sigma\vert$ together with a presentation as a colimit of a diagram $\sigma\colon I\longrightarrow \mathbf{RPC}^f$ of face maps parameterized by an index category $I$, subject to the following axioms:
\begin{enumerate}[(i)]
\item For every face $\tau$ of $\sigma_i$ in the diagram, there is a morphism $j\rightarrow i$ in $I$ such that $\sigma_j=\tau$ and the induced map $\sigma_j\rightarrow \sigma_i$ is the sharp face inclusion of $\tau$ into $\sigma$.
\item If $u\colon j\rightarrow i$ and $v\colon: k\rightarrow i$ are morphisms in $I$ such that $\sigma_v\colon\sigma_k\rightarrow\sigma_i$ factors through a face map $\sigma_k\rightarrow \sigma_j$, then there is a unique morphism $w\colon k\rightarrow j$ such that $u\circ w=v$ and $\sigma_w\colon \sigma_k\rightarrow \sigma_j$ is the given face map.
\end{enumerate}
\end{definition}

The avid reader will notice that the two axioms in Definition \ref{def_conecomplex} precisely say that the functor $\sigma\colon I\rightarrow \mathbf{RPC}^f$ defines a category fibered in groupoids. If we take the colimit not in the category of topological spaces but rather in a $2$-categorical sense, we obtain the notion of a \emph{cone stack}, as introduced in \cite[Section 2]{CCUW}, an analogue Deligne-Mumford stacks over $\mathbf{RPC}$. If $I$ is the category associated to a poset, there is no $2$-categorical structure and we obtain a so-called \emph{cone space}, an analogue of an algebraic spaces over $\mathbf{RPC}$. Finally, if $I$ is a poset and, in (i), every face $\tau$ of $\sigma_i$ is the image of exactly one morphism $\sigma_j\rightarrow \sigma_i$, then the colimit of $\Sigma$ is a \emph{(rational polyhedral) cone complex} in the sense of \cite{KKMSD}.

In \cite[Section 2.6]{ACP} a generalized cone complex is defined to be a topological space $\vert \Sigma\vert$ together with a presentation as an arbitrary diagram of face maps; our definition is equivalent to theirs, since every diagram of face maps generates a unique minimal diagram fulfilling axioms (i) and (ii) so that its colimit does not change. 
 
A morphism $f\colon\Sigma\rightarrow\Sigma'$ of generalized cone complexes is a continuous map $f\colon \vert \Sigma\vert\rightarrow\vert\Sigma'\vert$ such that for every cone $\sigma_{i'}$ in $\Sigma'$ there is a cone $\sigma_i$ in $\Sigma$ such that $f\vert \sigma_i$ factors through $\sigma_{i'}\rightarrow\Sigma'$. Being a morphism of generalized cone complexes is a weaker condition than being a morphism of categories fibered in groupoids over $\mathbf{RPC}^f$: for example, given a finite group $G$ acting trivially on a cone $\sigma$, the quotient $\sigma/G$ is naturally isomorphic to $\sigma$. 

The moduli spaces $M_{g,n}^{trop}$, $\Div_{g,d_+,d_-}^{trop}$, and $H^{\trop}_{g\rightarrow h,d}(\mu)$, which we introduce below, are generalized cone complexes. We require an analogue of the notion of a subvariety for cone complexes. The image of a morphism of generalized cone complexes is not necessarily a subcomplex, since cones are not required to map isomorphically to cones. For example, the image of the map $\RR_{\geq 0}\to \RR_{\geq 0}^2$ defined by $x\mapsto (x,x)$ is not a subcomplex of $\RR_{\geq 0}^2$. This motivates the following definition.

\begin{definition} Let $\Sigma$ be a generalized cone complex. A subset $X\subset \vert \Sigma\vert$ is called {\it semilinear} if, for every cone $\sigma\rightarrow\Sigma$ in $\Sigma$ the preimage $X\cap\sigma$ of $X$ in $\sigma$ is a union of finitely many subsets, each of which is given by finitely many homogeneous $\Z$-linear non-strict inequalities in the coordinates on $\sigma$.
\end{definition}

Given a semilinear subset $X\subset |\Sigma|$, we can (non-canonically) choose a suitable subdivision of $\Sigma'$ of $\Sigma$ such that $X$ is a subcomplex of $\Sigma'$. In other words, a semilinear subset of $\Sigma$ is a subcomplex of a subdivision of $\Sigma$. In a slight abuse of notation, we simply write $X\subseteq \Sigma$ for a semilinear subset of a generalized cone complex $\Sigma$.

\begin{proposition} 
Let $f\colon\Sigma\rightarrow\Sigma$ be a locally finite morphism of generalized cone complexes and let $X\subset \Sigma$ be a semilinear subset. Then $f(X)\subset \Sigma$ is also a semilinear subset.
\label{prop:linear}
\end{proposition}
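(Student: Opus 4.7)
The plan is to verify semilinearity of $f(X)$ one cone at a time in the target. Fix a cone $\tau$ of $\Sigma'$ (the target), and it suffices to show that $f(X) \cap \tau$ is a finite union of subsets of $\tau$ cut out by homogeneous $\Z$-linear non-strict inequalities. By the definition of a morphism of generalized cone complexes, for every cone $\sigma$ of $\Sigma$ the restriction $f|_\sigma$ factors through \emph{some} cone of $\Sigma'$ via a $\Z$-linear map of cones. By local finiteness, only finitely many cones $\sigma_1,\dots,\sigma_N$ of $\Sigma$ contribute to $\tau$, meaning that $f|_{\sigma_j}$ factors through $\tau$ itself or through a cone of $\Sigma'$ admitting a face map to $\tau$. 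Using the face map axioms (i)--(ii) in Definition \ref{def_conecomplex} we may, after replacing each $\sigma_j$ by a face if necessary, assume each restriction is a $\Z$-linear map $f_j \colon \sigma_j \to \tau$.

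The next step is to apply the cone-by-cone reduction to the input. By hypothesis, each $X \cap \sigma_j$ is a finite union $\bigcup_i P_{j,i}$ of rational polyhedral cones in $\sigma_j$, each cut out by finitely many homogeneous $\Z$-linear non-strict inequalities. The classical Minkowski--Weyl / Fourier-Motzkin theorem from convex geometry asserts that the image of a rational polyhedral cone under a $\Z$-linear map is again a rational polyhedral cone, described by finitely many homogeneous $\Z$-linear non-strict inequalities. Applying this to each pair $(f_j, P_{j,i})$ shows that $f_j(P_{j,i})$ is a semilinear subset of $\tau$.

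Assembling the pieces, we have
\begin{equation*}
f(X) \cap \tau \;=\; \bigcup_{j=1}^N \bigcup_i f_j(P_{j,i}),
\end{equation*}
which is a finite union of semilinear subsets of $\tau$, and hence semilinear. Since $\tau$ was arbitrary, this shows that $f(X)$ is a semilinear subset of $\Sigma'$.

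The main obstacle in the argument is bookkeeping rather than mathematical depth. The delicate point is handling the combinatorial structure of the source cone complex and making sure we capture all contributions to a given target cone $\tau$ through the various face maps; local finiteness is precisely what ensures this is a finite process. Once the reduction to a single $\Z$-linear map between rational polyhedral cones is in place, the result is an immediate consequence of standard convex geometry. One should also verify that the notion of ``semilinear'' is preserved under taking faces, so that the face-map reductions at the start do not enlarge the combinatorial complexity; this follows from the fact that restricting a system of homogeneous $\Z$-linear inequalities to a face is still such a system.
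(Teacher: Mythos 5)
Your proof is correct and takes essentially the same route as the paper, whose argument is just a one-line appeal to ``standard linear algebra'' (the image of a rational polyhedral cone under a $\Z$-linear map is again rational polyhedral) together with the observation that local finiteness is what makes the intersection of $f(X)$ with each target cone a \emph{finite} union of inequality-defined pieces. You have simply spelled out the cone-by-cone bookkeeping (passing to faces via the axioms of Definition \ref{def_conecomplex} and invoking Minkowski--Weyl/Fourier--Motzkin) that the paper leaves implicit.
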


\begin{proof}
This follows using standard linear algebra. Notice, however, that the condition of $f$ being locally finite is necessary to ensure that the intersection of $f(X)$ with every cone is given by finitely many inequalities.
\end{proof}

We can also impose a stronger requirement on semilinear subsets.

\begin{definition} A semilinear subset $X\subset \Sigma$ is called {\it linear} if, for every cone $\sigma\rightarrow\Sigma$, the intersection $X\cap \sigma$ is the union of the finitely many subsets, each of which is the quotient of the intersection of an $\Aut(\sigma/\Sigma)$-invariant linear subspace of $\Span(\sigma)$ with $\sigma$.
\end{definition}

An equality is equivalent to a pair of non-strict inequalities, hence a linear subset is semilinear. We note that the image of a linear subset is not necessarily linear. For example, the subset $\{(x,y,z):x=y+z\}\subset \RR_{\geq 0}^3$ is linear, but its image under the map $\RR_{\geq 0}^3\to \RR_{\geq 0}^2$ sending $(x,y,z)$ to $(x,y)$ is the subset $\{(x,y):x\geq y\}\subset \RR_{\geq 0}^2$, which is only semilinear.


\subsection{Tropical curves and their moduli}\label{section_Mgntrop}

We now recall the construction of the moduli space $M_{g,n}^{trop}$ of stable tropical curves of genus $g$ with $n$ marked points, essentially following the treatment in \cite{ACP}.
Let $G$ be a graph with $n$ legs. A {\it marking} of $G$ is a bijection 
$$
p:\{1,\ldots,n\}\to L(G),
$$
where we denote $p_i=p(i)$. A {\it weighted marked graph} is a weighted graph with a choice of marking. A \emph{marking} of a tropical curve $\Ga$ is a marking of a model of $\Ga$; this does not depend on the choice of the model. In this section we recall from \cite[Section 4]{ACP} how to construct a generalized cone complex $\Mbar_{g,n}^{trop}$ that acts as a coarse moduli space of stable tropical curves of genus $g$ with $n$ marked legs. 

Let $g$ and $n$ be non-negative integers such that $2g-2+n>0$. Denote by $I_{g,n}$ the category whose objects are stable weighted marked graphs $(G,g_G,p)$ of genus $g$ with $n$ marked points. The set of morphisms between two such graphs $(G',g_{G'}',p')$ and $(G,g_G,p)$ is the set of contractions (see Def.~\ref{def:contraction}) $\ph:(G',g_{G'}',p')\to (G,g_G,p)$ that preserve the markings, in other words such that $\ph\circ p'=p$. Passing to the skeleton category, we can drop the marking and simply denote the legs of any object in $I_{g,n}$ by $p_1,\ldots,p_n$, where $p_i=p(i)$. Furthermore, given a morphism $\ph:G'\to G$ in $I_{g,n}$, we can identify $G$ with the contraction of $G'$ along a subset of edges $S'\subset E(G')$, so any such map induces an injective map $\ph^{-1}:E(G)\to E(G')$. 

The initial objects in $I_{g,n}$ are weighted marked graphs $G$ having $g(v)=0$ and $\val(v)=3$ for all $v\in V(G)$; we call such graphs {\it maximally degenerate}. The unique terminal object in $I_{g,n}$ is the graph $\bullet_{g,n}$ consisting of a genus $g$ vertex with $n$ legs and no edges. 

Given $G\in I_{g,n}$, we denote 
$$
M^{trop}_G:=(\RR_{\geq 0})^{E(G)} \ .
$$
This cone is naturally a parameter space for pairs $(\Ga,\phi)$ consisting of a stable tropical curve $\Ga$ and a weighted edge contraction $\phi\colon G\rightarrow G_{min}(\Ga)$, where $G_{min}(\Ga)$ denotes the minimal model of $\Ga$. The tropical curve $\Ga$ is hereby identified with $(l(e))_{e\in E(\Ga)}\in M^{trop}_G$.

A morphism $\phi\colon (G,g_G, p)\rightarrow (G',g_{G'},p')$ in $I_{g,n}$ induces a face morphism $M_{G}^{trop}\rightarrow M_{G'}^{trop}$:
\begin{itemize}
\item An automorphism of $(G,g_G,p)$ induces an automorphism of the cone $M_G^{trop}$ given by permuting the entries according to the permutation of $E(G)$. 
\item A weighted edge contraction $\phi\colon (G,g_G,p)\rightarrow(G',g_{G'},p')$ contracting the edges $S\subseteq E(G)$ induces a face map $M_{G'}^{trop}\hookrightarrow M_G^{trop}$ onto the face that is given by setting the $S$-coordinates in $M_G^{trop}=\R_{\geq 0}^{E(G)}$ equal to zero. 
\end{itemize}

\begin{proposition}
The contravariant functor $(G,g_{G},p)\mapsto M_G^{trop}$ defines a generalized cone complex $M_{g,n}^{trop}$ that functions as a parameter space of stable tropical curves of genus $g$ with $n$ marked legs. 
\end{proposition}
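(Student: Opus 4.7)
The plan is to verify the two axioms of Definition \ref{def_conecomplex} for the diagram $\sigma \colon I_{g,n}^{op} \to \mathbf{RPC}^f$ that sends a stable weighted marked graph $G$ to the cone $M_G^{trop} = \RR_{\geq 0}^{E(G)}$, and that sends each contraction $\phi \colon G \to G'$ in $I_{g,n}$ to the face inclusion $M_{G'}^{trop} \hookrightarrow M_G^{trop}$ obtained by setting to zero the coordinates indexed by the contracted edges. For axiom (i), every face $\tau$ of $M_G^{trop}$ is determined by the subset $S \subseteq E(G)$ of edges whose coordinates vanish on $\tau$. The contraction $c_S \colon G \to G/S$ of Definition \ref{def:contractionS} is a morphism in $I_{g,n}$, because edge contraction preserves both the genus and the set of marked legs, and because it preserves stability: contracting a non-loop edge replaces two stable vertices by a vertex whose Euler characteristic is the sum of theirs, while contracting a loop at $v$ leaves $\chi(v)$ unchanged. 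The induced face map $M_{G/S}^{trop} \hookrightarrow M_G^{trop}$ has image precisely $\tau$, which supplies the required morphism in the diagram.

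For axiom (ii), suppose $u \colon G_i \to G_j$ and $v \colon G_i \to G_k$ are contractions in $I_{g,n}$ such that the face inclusion $\sigma_v \colon M_{G_k}^{trop} \hookrightarrow M_{G_i}^{trop}$ factors as $M_{G_k}^{trop} \hookrightarrow M_{G_j}^{trop} \hookrightarrow M_{G_i}^{trop}$ through $\sigma_u$. Let $S_u, S_v \subseteq E(G_i)$ denote the subsets of edges contracted by $u$ and $v$ respectively. The factorization hypothesis forces $S_u \subseteq S_v$, so $T := S_v \setminus S_u$ is a well-defined subset of $E(G_j) \cong E(G_i) \setminus S_u$, and the required morphism in $I_{g,n}$ is the composition $w \colon G_j \to G_j/T \cong G_k$, where the final isomorphism is dictated by the prescribed face map on cones. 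Uniqueness of $w$ is immediate, since $T$ is forced by the face data and the remaining isomorphism is pinned down by the edge-wise identification encoded in that face map.

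For the moduli interpretation, every stable tropical curve $\Gamma$ of genus $g$ with $n$ marked legs admits a unique minimal model $G_{min}(\Gamma) \in I_{g,n}$, and the tuple of its edge lengths determines a point in the relative interior of $M_{G_{min}(\Gamma)}^{trop}$, well-defined up to the action of $\Aut(G_{min}(\Gamma))$. Conversely, a point in the relative interior of $M_G^{trop}$ parametrizes a stable tropical curve of combinatorial type $G$, and a boundary point of $M_G^{trop}$ with vanishing coordinates on $S \subseteq E(G)$ is identified, via the face map induced by $c_S$, with an interior point of $M_{G/S}^{trop}$ and so corresponds to a stable tropical curve of combinatorial type $G/S$. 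This yields the claimed bijection between points of $\vert M_{g,n}^{trop} \vert$ and isomorphism classes of stable tropical curves. The only delicate point in the whole argument is bookkeeping the action of graph automorphisms when gluing cones along shared faces, which is precisely the reason $M_{g,n}^{trop}$ is a generalized cone complex (indeed a cone stack in the sense of \cite{CCUW}) rather than an ordinary cone complex.
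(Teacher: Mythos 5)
Your verification is correct and follows exactly the route the paper intends: the paper simply states that the check of the two axioms is "left to the avid reader," and your argument supplies precisely that check (faces of $M_G^{trop}$ arise from edge contractions $G\to G/S$, which preserve genus, markings, and stability; factorizations through faces yield unique morphisms; interior points of cones correspond to curves of the given combinatorial type). Nothing in your write-up deviates from or adds to the paper's intended construction, and the details you fill in are accurate.
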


\begin{proof}
This easy verification of the axioms is left to the avid reader.
\end{proof}

\begin{definition}
The generalized cone complex $M_{g,n}^{trop}$ is called the \emph{(coarse) moduli space of stable tropical curves of genus $g$ with $n$ marked points.}
\end{definition}

\begin{remark}
 In our paper, we restrict our attention to $M_{g,n}^{trop}$ and do not study the compactified moduli space $\overline{M}_{g,n}^{trop}$ (as introduced in \cite[Section 3.3]{Caporaso_handbook}), which is obtained by allowing edge lengths to be infinite. 
\end{remark}

We now recall the tropical forgetful maps (as e.g. introduced in \cite[Section 8]{ACP}).

\begin{definition} We define the map $\pi:M^{trop}_{g,n+1}\to M^{trop}_{g,n}$ {\it forgetting the leg} $p_{n+1}$ in the following way. Let $\Ga\in M^{trop}_{g,n+1}$ be a metric graph with combinatorial type $G=C(\Ga)$, and let $v=r_G(p_{n+1})\in V(G)$ be the vertex at which the leg $p_{n+1}$ is attached. If $\chi(v)=1$, then the condition $2g-2+n>0$ implies that $g(v)=0$ and $\val(v)=3$, and that the other two half-edges at $v$ either correspond to two distinct edges, or to an edge and a leg. 
\begin{itemize}

\item Suppose that $g(v)=0$, $\val(v)=3$, and that $v$ is the endpoint of two edges $e_1,e_2\in E(G)$ whose other endpoints are $v_1$ and $v_2$, respectively. We let $\pi(G)$ be the graph obtained from $G$ by removing $p_{n+1}$, $v$, $e_1$, and $e_2$, and attaching an edge $e$ at $v_1$ and $v_2$, and we let $\pi(\Ga)$ be the metric graph obtained from $\pi(G)$ by setting $l_{\pi(G)}(e)=l_{G}(e_1)+l_{G}(e_2)$, and setting all other edge lengths to be the same as in $\Ga$.

\item Suppose that $g(v)=0$, $\val(v)=3$, and that $v$ is the endpoint of an edge $e\in E(G)$ whose other endpoint is $u$, and a leg $p_i$. We let $\pi(G)$ be the graph obtained from $G$ by removing $p_i$, $p_{n+1}$, $v$, and $e$ and reattaching $p_i$ to $u$, and we let $\pi(\Ga)$ be the metric graph whose combinatorial type is $\pi(G)$, and with the same edge lengths as $\Ga$.

\item If $\chi(v)\geq 2$, we let $\pi(\Ga)$ be the metric graph whose combinatorial type is $G\backslash \{p_{n+1}\}$, and with the same edge lengths as $\Ga$.

\end{itemize}

\end{definition}


\subsection{Moduli of tropical divisors}\label{section_modulitropicaldivisors}

Let $g\geq 1$, and let $d_+,d_-\geq 0$. In this section, we construct a generalized cone complex $\Div_{g,d_+,d_-}^{trop}$ that parameterizes triples $(\Ga, D_+, D_-)$ consisting of a stable tropical curve $\Ga$ of genus $g$ and two effective divisors $D_+\in Div_{d_+}^+(\Ga)$ and $D_-\in\Div_{d_-}^+(\Ga)$ on $\Ga$. The idea is that, for a fixed tropical curve $\Ga$, the locus where the support of $D_+$ and $D_-$ are disjoint parameterizes divisors of degree $d=d_+-d_-$ that can be written as a difference $D_+-D_-$ without cancellation. 

In this section only, we consider divisors on weighted graphs. For a weighted graph $G$, we define $\Div (G)$ to be the free abelian group on $V(G)$. It is clear that a divisor $D$ on a weighted graph $G$ induces a divisor on any tropical curve of which $G$ is a model, and conversely any divisor on a tropical curve can be obtained in this way for an appropriate choice of model.

We consider the category $I_{g,(d_+, d_-)}$. Its objects are triples $(G, D_+,D_-)$ of the following form.

\begin{itemize}
\item $G$ is a semistable weighted graph of genus $g$.

\item $D_+\in\Div_{d_+}(G)$ and $D_-\in\Div_{d_-}(G)$ are effective divisors of degrees $d_+$ and $d_-$, respectively, such that the union of the supports of $D_+$ and $D_-$ contains all simple vertices of $G$.  
\end{itemize}
The morphisms in the category $I_{g,(d_+, d_-)}$ are generated by the following:

\begin{itemize}
\item For every object $(G,D_+,D_-)$, an isomorphism $\ph:G\to G'$ induces a corresponding isomorphism $\ph:(G,D_+,D_-)\to (G',D'_+,D'_-)$, where $D'_{\pm}=\ph_*D_{\pm}$. 

\item For every object $(G,D_+,D_-)$ and every edge set $S\subset E(G)$, the edge contraction $\ph_S:G\to G/S$ induces a morphism $\ph_S:(G,D_+,D_-)\to (G/S,D^S_+,D^S_-)$, where $D^S_{\pm}=[\ph_S]_*(D_\pm)$.
\end{itemize} 
We remark that automorphisms and weighted edge contractions are (non-finite) harmonic morphisms of degree one, so for a divisor $D=\sum a_i v_i$ its pushforward under such a $\ph$ is simply $\ph_*D=\sum a_i \ph(v_i)$. 

Given an object $\left(G,D_+, D_-\right)$ in $I_{g,(d_+,d_-)}$, we denote by $\Div_{(G,D_+,D_-)}$ the cone 
\begin{equation}\label{eq_HassetttoDiv}
\Div_{(G,D_+,D_-)}=(\R_{\geq 0})^{E(G)} \ . 
\end{equation}
This association defines a contravariant functor $I_{g,(d_+,d_-)}\rightarrow \mathbf{RPC}^f$ as follows:
\begin{itemize}
\item Given an object $(G,D_+,D_-)$ and an isomorphism $\ph:G\to G'$, there is a corresponding isomorphism of cones from  $\Div_{(G',D'_+,D'_-)}$ to $\Div_{(G,D_+,D_-)}$ that permutes the entries according to the permutation $\ph:E(G)\to E(G')$ of the edges. 

\item Given an object $(G,D_+,D_-)$ and an edge contraction $\ph_S:G\to G/S$, there is a face morphism from $\Div_{(G/S,D^S_+,D^S_-)}$ isomorphically onto the face of $\Div_{(G,D_+,D_-)}$ obtained by setting all coordinates corresponding to $S\subset E(G)$ to zero.
\end{itemize}

Given a stable tropical curve $\Ga$ with a pair of divisors $D_+$ and $D_-$, there is a unique minimal semistable model $G_{\Ga,D_+,D_-}$ of $\Ga$ whose vertex set contains the supports of $D_+$ and $D_-$. It is clear that the points of $\Div_{(G,D_+,D_-)}$ parameterize triples $(\Ga, D_+,D_-)$ consisting of a tropical curve $\Ga$ and two effective divisors $D_+, D_-$ of degree $d_+, d_-$ respectively, together with a choice of isomorphism of $G_{\Ga,D_+,D_-}$ with $G$.

\begin{proposition}
The association $(G,D_+,D_-)\mapsto \Div_{(G,D_+,D_-)}$ defines a generalized cone complex $\Div_{g,d_+,d_-}^{trop}\colon I_{g,d_+,d_-}\rightarrow \mathbf{RPC}^f$ whose geometric realization parameterizes stable tropical curves $\Ga$ of genus $g$ together with two effective divisors $D_+$ of degree $d_+$ and $D_-$ of degree $d_-$ on $\Ga$.
\end{proposition}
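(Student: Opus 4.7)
The plan is to mirror the verification performed for $M_{g,n}^{trop}$, now keeping track of the extra divisor data. Unpacking Definition \ref{def_conecomplex}, we must show (a) that the assignment $(G,D_+,D_-)\mapsto \Div_{(G,D_+,D_-)}$ together with the prescribed action of automorphisms and edge contractions gives a well-defined functor $I_{g,(d_+,d_-)}\to\mathbf{RPC}^f$, (b) that every face of $\Div_{(G,D_+,D_-)}$ arises from a unique morphism in $I_{g,(d_+,d_-)}$, and (c) that the points of the resulting colimit are in bijection with triples $(\Ga,D_+,D_-)$ as advertised.

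For (a), I first check that morphisms in $I_{g,(d_+,d_-)}$ are well defined, namely that if $(G,D_+,D_-)$ is an object and $S\subseteq E(G)$, then the triple $(G/S,D_+^S,D_-^S)$ again belongs to $I_{g,(d_+,d_-)}$. Semistability of $G/S$ and the preservation of the genus and of the divisor degrees are immediate. The nontrivial point is the condition on simple vertices: every simple vertex $v$ of $G/S$ either comes from an uncontracted simple vertex of $G$, in which case it is already in $\mathrm{supp}(D_+)\cup\mathrm{supp}(D_-)$, or it arises from a connected component of $[S]$; in the latter case one checks case-by-case that such a merged vertex either contains a simple vertex of $G$ (forcing membership in $\mathrm{supp}(D_+^S)\cup\mathrm{supp}(D_-^S)$ by pushforward) or else does not exist by semistability, and one invokes $\bigl[\ph_S\bigr]_\ast$ to propagate the support condition. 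Functoriality of the cone assignment is then transparent: automorphisms of $(G,D_+,D_-)$ act by permutation on $\mathbb{R}_{\geq 0}^{E(G)}$ and an edge contraction along $S$ induces the sharp face inclusion $\mathbb{R}_{\geq 0}^{E(G/S)}\hookrightarrow\mathbb{R}_{\geq 0}^{E(G)}$ given by $x_e=0$ for $e\in S$.

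For (b), the faces of $\Div_{(G,D_+,D_-)}=\mathbb{R}_{\geq 0}^{E(G)}$ are in bijection with subsets $S\subseteq E(G)$, and by step (a) each such face is realized by the contraction $(G,D_+,D_-)\to (G/S,D_+^S,D_-^S)$. Axiom (i) of Definition~\ref{def_conecomplex} is therefore fulfilled. For axiom (ii), suppose $u\colon (G_2,D_{2,\pm})\to (G_1,D_{1,\pm})$ and $v\colon (G_3,D_{3,\pm})\to (G_1,D_{1,\pm})$ are morphisms whose induced cone map factors as a sharp face inclusion $\Div_{(G_3,D_{3,\pm})}\hookrightarrow \Div_{(G_2,D_{2,\pm})}$ followed by $u$; then $G_3$ is obtained from $G_2$ by contracting a specified subset of edges, and both the contraction and its compatibility with the markings of $D_{\pm}$ are uniquely determined by the face, giving the required unique $w$.

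For (c), given a stable tropical curve $\Ga$ with effective divisors $D_+\in\Div^+_{d_+}(\Ga)$ and $D_-\in\Div^+_{d_-}(\Ga)$, I invoke the minimal semistable model $G_{\Ga,D_+,D_-}$, obtained from the minimal model of $\Ga$ by inserting a single simple vertex at each point of $\mathrm{supp}(D_+)\cup\mathrm{supp}(D_-)$ that is not already a vertex. This construction is canonical, so the map sending $(\Ga,D_+,D_-)$ to the point of $\Div_{(G_{\Ga,D_+,D_-},D_+,D_-)}$ determined by the edge lengths of $\Ga$ is well defined, and its image in the colimit $\Div_{g,d_+,d_-}^{trop}$ depends only on the isomorphism class of $(\Ga,D_+,D_-)$. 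The inverse map is given by interpreting a point of $\Div_{(G,D_+,D_-)}$ as a metric realization of $G$, and the identifications imposed by the face and automorphism morphisms are precisely those needed to match distinct models of the same triple. I expect the most delicate point to be the support-of-simple-vertices check in step (a); once that is in place, everything else is a direct transcription of the verification that yields $M_{g,n}^{trop}$.
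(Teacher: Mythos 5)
Your proposal is correct and follows exactly the route the paper intends: the paper's own ``proof'' simply leaves the verification of the two axioms of Definition \ref{def_conecomplex} to the reader, and your write-up supplies precisely those details, including the one genuinely non-trivial point (that a merged vertex of $G/S$ can only be simple if all vertices of the contracted component were simple in $G$, hence lie in the divisor support, so the support condition is preserved under contraction). Nothing in your argument deviates from or goes beyond what the paper's construction in Section \ref{section_modulitropicaldivisors} presupposes.
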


\begin{proof} 
We again leave this straightforward verification of the two axioms to the avid reader.
\end{proof}

\begin{definition}
The generalized cone complex $\Div_{g,d_+,d_-}^{trop}$ is called the \emph{moduli space of tropical divisors} of zero degree $d_+$ and polar degree $d_-$, or short of degree $(d_+,d_-)$. 
\end{definition}

Let $\epsilon=\frac{1}{d_++d_-}$. Denote by $M_{g,\epsilon^{d_+ +d_-}}^{trop}$ the moduli space of weighted stable tropical curves of type $(g,\epsilon^{d_++d_-})$ introduced in \cite{Ulirsch_tropHassett} and \cite{CavalieriHampeMarkwigRanganathan}, a tropical analogue of Hassett's moduli spaces of weighted stable curves in \cite{Hassett}. The space $M_{g,\epsilon^{d_++d_-}}^{trop}$ has the structure of a generalized cone complex and parameterizes tropical curves of genus $g$ with $d_++d_-$ marked legs that are \emph{stable of type $\epsilon^{d_++d_-}$}, i.e. that fulfill
\begin{equation*}
2g(v)-2+\vert v\vert_{\epsilon}>0
\end{equation*}
for all vertices of the minimal model $G$ of $\Ga$. Here $\vert v\vert_{\epsilon}$ denotes the weighted valence of $v$ in $\Ga$ given by 
\begin{equation*}
\vert v\vert_{\epsilon}=\vert v\vert_E+\epsilon \cdot \vert v\vert_L
\end{equation*}
where $\vert v\vert_E$ and $\vert v\vert_L$ denote respectively the number of bounded edges and the number of legs emanating from $v$. Note, that, given an $\epsilon^{d_++d_-}$-stable curve $(\Ga,p_1,\ldots, p_{d_++d_-})$, the underlying tropical curve $\Ga^\circ$ obtained by removing the legs is already stable.

\begin{proposition}\label{prop_Divgddtrop=quot}
There is a natural morphism $M_{g,\epsilon^{d_++d_-}}^{trop}\rightarrow \Div_{g,d_+,d_-}^{trop}$ given by 
\begin{equation*}
\big(\Ga, p_1,\ldots, p_{d_++d_-}\big)\longmapsto \Big(\Ga, \sum_{i=1}^{d_+} p_i, \sum_{i=1}^{d_-}p_{d_++i}\Big) 
\end{equation*}
that induces an isomorphism
\begin{equation*}
M_{g,\epsilon^{d_++d_-}}^{trop}\Big/(S_{d_+}\times S_{d_-})\xlongrightarrow{\simeq} \Div_{g,d_+,d_-}^{trop} \ .
\end{equation*}
\end{proposition}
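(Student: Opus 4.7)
The plan is to define the morphism at the level of indexing categories $I_{g, \epsilon^{d_++d_-}} \to I_{g, (d_+, d_-)}$, verify that it defines a morphism of generalized cone complexes, and then identify the fibers with $(S_{d_+} \times S_{d_-})$-orbits. On objects, send an $\epsilon$-stable combinatorial type $(G, p_1, \ldots, p_{d_++d_-})$ to $(G^\circ, D_+, D_-)$, where $G^\circ$ is the un-marked semistable graph obtained by deleting the legs (keeping the same vertices and edges), $D_+ = \sum_{i=1}^{d_+} r_G(p_i)$, and $D_- = \sum_{i=1}^{d_-} r_G(p_{d_++i})$. To see this lands in $I_{g, (d_+, d_-)}$, I rewrite the $\epsilon$-stability condition at each vertex $v$ of $G$ as $-\chi_{G^\circ}(v) + \epsilon|v|_L > 0$ with $\epsilon = 1/(d_++d_-)$: since $|v|_L \leq d_+ + d_-$, this forces $\chi_{G^\circ}(v) \leq 0$ (so $G^\circ$ is semistable), and it forces $|v|_L \geq 1$ whenever $\chi_{G^\circ}(v) = 0$ (so every simple vertex of $G^\circ$ lies in the support of $D_+ + D_-$).

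Weighted edge contractions in $I_{g, \epsilon^{d_++d_-}}$ preserve the edge set and act trivially on legs, so applying the same contraction on $G^\circ$ with pushed-forward divisors defines the functor on morphisms. Under the identification $E(G) = E(G^\circ)$, the induced map on cones $\R_{\geq 0}^{E(G)} \to \R_{\geq 0}^{E(G^\circ)}$ is the identity and compatible with face maps, so the assignment extends to a morphism of generalized cone complexes. Since $D_{\pm}$ only records the multiset of leg root vertices in each of the two blocks, this morphism is $(S_{d_+} \times S_{d_-})$-invariant and factors through the quotient.

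To show the induced map on the quotient is an isomorphism, I verify bijectivity at the indexing-category level. Given $(G', D'_+, D'_-) \in I_{g, (d_+, d_-)}$, I lift it by attaching $D'_+(v)$ legs labeled bijectively in $\{1, \ldots, d_+\}$ and $D'_-(v)$ legs labeled in $\{d_++1, \ldots, d_++d_-\}$ at each vertex $v$; the support condition on simple vertices ensures the resulting graph is $\epsilon$-stable by the inequality from the previous paragraph run in reverse. Different liftings differ precisely by an element of $S_{d_+} \times S_{d_-}$, so the fibers of the functor are exactly the $(S_{d_+} \times S_{d_-})$-orbits on objects. Combined with the fact that the map is the identity on each individual cone, this yields the claimed isomorphism upon quotienting.

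The main technical obstacle is the bookkeeping for the automorphism groups: the automorphism group of $(G^\circ, D_+, D_-)$ in $I_{g, (d_+, d_-)}$ must be identified with the automorphism group of $(G, p_1, \ldots, p_{d_++d_-})$ enlarged by the coupled $(S_{d_+} \times S_{d_-})$-action that permutes labels and vertices simultaneously. This falls out of the orbit-stabilizer principle once the object-level bijection is established, and confirms that taking the quotient on $M_{g, \epsilon^{d_++d_-}}^{trop}$ matches the indexing-category structure of $\Div_{g, d_+, d_-}^{trop}$.
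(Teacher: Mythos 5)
Your proof is correct and follows essentially the same route as the paper's: define the map by forgetting the ordering of the legs and recording their root vertices as divisors, prove surjectivity by re-attaching $D_\pm(v)$ legs at each vertex, and identify the fibers with $(S_{d_+}\times S_{d_-})$-orbits. Your write-up is in fact more careful than the paper's one-paragraph argument, since you make precise the equivalence between $\epsilon^{d_++d_-}$-stability of $(G,p_1,\ldots,p_{d_++d_-})$ and semistability of $G^\circ$ together with the support condition at simple vertices, and you address the automorphism bookkeeping explicitly.
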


\begin{proof} A graph $G$ with $d_++d_-$ legs is stable of type $\epsilon^{d_++d_-}$ if and only if the graph $G^\circ$ obtained by removing the legs is semistable. So the association defines a morphism of generalized cone complexes that is surjective, since we may replace every divisor $D_{\pm}$ at a vertex $v$ by $D_\pm(v)$-many legs, making the resulting legged graph stable of type $\epsilon^{d_++d_-}$. This morphism is not injective: the extra datum in $M_{g,\epsilon^{d_++d_-}}^{trop}$ is precisely the full order on the marked points, which disappears when we take a quotient by $S_{d_+}\times S_{d_-}$. 
\end{proof}


\subsection{Moduli of tropical admissible covers} \label{sec:tropadmissible}

In this section, we recall the construction (see~\cite{CavalieriMarkwigRanganathan_tropadmissiblecovers}) of the moduli space $H^{\trop}_{g\rightarrow h,d}(\mu)$ of tropical admissible covers. We fix integers $g,h\geq 0$ and $d>0$, and a vector of partitions $\mu=(\mu^1,\ldots, \mu^r)$ of $d$. We write $\vert \mu^i\vert$ for the length of the partition $\mu^i$. We consider the category $I_{g\rightarrow h, d}(\mu)$ defined as follows.

The objects of $I_{g\rightarrow h, d}(\mu)$ are morphisms $\ph\colon G'\rightarrow G$ of the following kind:

\begin{itemize}

\item $G'$ is a stable weighted graph of genus $g$ with marked legs $p'_{ij}$ for $i=1,\ldots,r$ and $j=1,\ldots,|\mu^i|$, and additional marked legs $q'_{ij}$ for $i=1,\ldots,s$ and $j=1,\ldots,d-1$. 

\item $G$ is a stable weighted graph of genus $h$ with marked legs $p_i$ for $i=1,\ldots,r$, and additional marked legs $q_i$ for $i=1,\ldots,s$.

\item $\ph$ is an unramified harmonic morphism of degree $d$, mapping the legs $p'_{ij}$ to $p_i$ with dilation profile $\mu^i$, and the legs $q'_{ij}$ to $q_i$ with dilation profile $(2,1,\ldots,1)$.

\end{itemize}
Since $\ph$ is unramified, we have $\chi(G')=d\chi(G)$, which means that $s$ is uniquely determined by the following formula:
\begin{equation}
s=2g-2dh+2d-2+\sum_{i=1}^r |\mu^i|-rd
\label{eq:explicits}
\end{equation}
The morphisms in $I_{g\rightarrow h,d}(\mu)$ are generated by the following:
\begin{itemize}
\item Isomorphisms of $\ph\colon G'\rightarrow G$, i.e. pairs of isomorphisms $\ga'\colon G'\xrightarrow{\simeq} \widetilde{G}'$ and $\ga\colon G\xrightarrow{\simeq}\widetilde{G}$ such that $\ph\circ \ga'=\ga\circ \ph$.
\item Weighted edge contractions $\ph_S:G'/S'\to G/S$ induced by the weighted contraction of the base graph $G$ along a subset $S\subset E(G)$ of its edges (see Def.~\ref{def_contractionofunramifiedharmonic} for details; it is clear that for any $\ph:G'\to G$ the multiplicity profiles of any such contracted $\ph_S$ along the legs are the same).

\end{itemize}

Given an object $\ph\colon G'\rightarrow G$, we define a rational polyhedral cone
\begin{equation*}
M_\ph=\R_{\geq 0}^{E(G)}
\end{equation*}
that parametrizes unramified harmonic morphisms of tropical curves which on the underlying graphs agrees with a weighed edge contraction of $\ph$, where the set of contracted edges $S\subset E(G)$ is the set of edges whose corresponding coordinate is zero. The association $\ph\mapsto M_\ph$ defines a functor $I_{g\rightarrow h,d}(\mu)\rightarrow \mathbf{RPC}^f$ as follows: 
\begin{itemize}
\item An automorphism of $\ph$ induces a permutation of the entries of $M_\ph=\R_{\geq 0}^{E(H)}$.
\item A weighted edge contraction induces the embedding of the face of $M_\ph=\R_{\geq 0}^{E(H)}$ corresponding to the contracted edges.
\end{itemize}

\begin{proposition}
The association $\ph\mapsto M_\ph$ defines a generalized cone complex $H^{trop}_{g\rightarrow h,d}(\mu)$ that parameterizes unramified harmonic morphisms $[\Ga'\rightarrow \Ga]$ from a genus $g$ stable tropical curve $\Ga'$ to a genus $h$ stable tropical curve $\Ga$; having dilation profile $\mu^i$ on marked legs $p'_{ij}\subset \Ga'$ over marked legs $p_i\subset \Ga$ (for $i=1,\ldots, r$ and $j=1,\ldots, \vert \mu_i\vert$) and dilation profile $(2,1,\ldots,1)$ on marked legs $q'_{ij}\subset \Ga'$ over marked legs $q_i$ (for $i=1,\ldots,s$ and $j=1,\ldots,d-1$). 

\end{proposition}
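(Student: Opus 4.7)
The plan is to verify the two axioms of Definition~\ref{def_conecomplex} for the functor $\ph \mapsto M_\ph$, and then argue that the resulting geometric realization carries the desired moduli interpretation. The argument is formally parallel to the construction of $M_{g,n}^{trop}$ and $\Div_{g,d_+,d_-}^{trop}$ in the preceding sections; the essential new input is that unramified harmonicity is compatible with edge contraction.

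\textbf{Axiom (i).} Let $\ph:G'\to G$ be an object of $I_{g\to h,d}(\mu)$, and let $\tau$ be a face of $M_\ph = \R_{\geq 0}^{E(G)}$ cut out by setting the coordinates indexed by some subset $S\subseteq E(G)$ to zero. First I would apply Lemma~\ref{lemma_contractionofunramifiedharmonic} to the contraction $\ph_S:G'/S'\to G/S$, where $S'=\ph^{-1}(S)$; the lemma guarantees that $\ph_S$ is again an unramified harmonic morphism of weighted graphs. Using the explicit description of contractions in Definition~\ref{def_contractionofunramifiedharmonic}, one checks that the combinatorial data defining membership in $I_{g\to h,d}(\mu)$ are preserved: the degree stays $d$, genera are preserved by weighted contractions on both sides, the legs of $G/S$ are in bijection with those of $G$ (contractions do not contract legs), and the dilation profiles along the $p_i$ and $q_i$ are inherited from $\ph$. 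Hence $\ph_S\in I_{g\to h,d}(\mu)$, and by construction the induced face map $M_{\ph_S}\hookrightarrow M_\ph$ is the sharp face inclusion identifying $M_{\ph_S}$ with $\tau$.

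\textbf{Axiom (ii).} Given morphisms $u:\ph_j\to\ph_i$ and $v:\ph_k\to\ph_i$ whose induced cone map $M_{\ph_k}\to M_{\ph_i}$ factors through the face map $M_{\ph_j}\to M_{\ph_i}$, I would produce the required $w:\ph_k\to\ph_j$ by unwinding the edge bookkeeping. The morphism $u$ identifies $E(G_j)$ with a subset $E(G_i)\setminus S_u$ of $E(G_i)$, and similarly for $v$. The factorization condition on cones forces $S_u\subseteq S_v$, so $v$ is determined by the weighted edge contraction of $G_j$ along $S_v\setminus S_u$ together with the corresponding contraction of $G_k'$, yielding a unique $w:\ph_k\to\ph_j$ in $I_{g\to h,d}(\mu)$ with $u\circ w=v$ and inducing the prescribed face map. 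Uniqueness is built into the rigidity of the edge labelling. Together these verifications show that $H^{trop}_{g\to h,d}(\mu)$ is a generalized cone complex.

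\textbf{Moduli interpretation.} Given a point in the interior of $M_\ph$, i.e.\ a choice of strictly positive lengths $\ell:E(G)\to \R_{>0}$, Remark~\ref{rem:edgelengths} uniquely determines a length function $\ell'$ on $G'$ making $\ph$ into an unramified harmonic morphism of metric graphs with the prescribed leg dilation profiles; taking metric realizations yields an unramified harmonic morphism $\tau:\Ga'\to\Ga$ of stable tropical curves of the required type. Conversely, any such $\tau$ has a minimal model (as in Definition~\ref{def:stabilization}) which, up to automorphism, is unique and determines an object of $I_{g\to h,d}(\mu)$ together with a distinguished point in its cone. Face maps correspond exactly to letting edge lengths on $G$ degenerate to zero and simultaneously contracting in source and target by Definition~\ref{def_contractionofunramifiedharmonic}, so glueing along the diagram exactly identifies tropical admissible covers whose minimal models coincide. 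The main subtlety to pin down carefully is that isomorphisms of covers, which enter via the morphisms in $I_{g\to h,d}(\mu)$, are faithfully recorded by $\Aut(M_\ph/H^{trop}_{g\to h,d}(\mu))$, so that the colimit parameterizes isomorphism classes and not merely combinatorial types; this is a direct consequence of the definition of the category and the face-map axioms.
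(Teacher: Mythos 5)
Your proposal is correct and follows essentially the same route as the paper: the paper's proof simply observes that both axioms of Definition~\ref{def_conecomplex} follow from the construction of weighted edge contractions of unramified harmonic morphisms (Definition~\ref{def_contractionofunramifiedharmonic} and Lemma~\ref{lemma_contractionofunramifiedharmonic}) and leaves the remaining verifications to the reader. You have merely spelled out those details, including the metric/moduli interpretation via Remark~\ref{rem:edgelengths}, in a way consistent with the intended argument.
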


\begin{proof}
Both axioms follow from the construction of weighted edge contractions of unramified harmonic morphisms in Definition \ref{def_contractionofunramifiedharmonic} and Lemma~\ref{lemma_contractionofunramifiedharmonic}. We leave the details to the avid reader.
\end{proof}

Let $N=\sum_{i=1}^r \vert \mu^i\vert+(d-1)s$ be the number of marked legs on the source graph. As explained in \cite{CavalieriMarkwigRanganathan_tropadmissiblecovers}, there are two tautological morphisms associated with the moduli space $H^{trop}_{g\rightarrow h,d}(\mu)$:
\begin{itemize}
\item The \emph{source map} $H^{trop}_{g\rightarrow h,d}(\mu)\rightarrow M_{g,N}^{trop}$ that sends $\big[\Ga'\rightarrow\Ga\big]$ to the tropical curve $\Ga'$ with its $N$ marked legs. 
\item The \emph{target map } $H^{trop}_{g\rightarrow h,d}(\mu)\rightarrow M_{h,r+s}^{trop}$ that sends $\big[\Ga'\rightarrow\Ga\big]$ to the tropical curve $\Ga$ with its $r+s$ marked legs. 
\end{itemize}

It is readily verified that the source and target maps are morphisms of generalized cone complexes.


\section{The locus of principal divisors}

In this section, we define the locus of principal divisors $\PD_{g,a}^{trop}\subset M_{g,n}^{trop}$ of a given multiplicity profile $a\in \ZZ^n$ and prove that it is a generalized cone complex of the same dimension as $M_{g,n}^{trop}$.

\begin{definition}\label{def_PDharmonic}
Let $n\geq 2$, and let $a=(a_1,\ldots, a_n)\in\Z^n$ be nonzero integers such that $a_1+\cdots+a_n=0$. The \emph{locus of principal divisors} $\PD_{g,a}^{trop}\subseteq M_{g,n}^{trop}$ is the set of stable marked tropical curves $(\Ga, p_1,\ldots, p_n)$ such that there is a harmonic function $f$ on $\Ga$ whose (outgoing) slope along the leg $p_i$ is $-a_i$.
\end{definition}

Note that by Proposition \ref{prop_harmonicfunction=rationalfunction} a harmonic function $f$ with slopes $-a_i$ along $p_i$ exists if and only if the divisor $\sum_{i=1}^n a_i c(p_i)$ on $\Ga^{\circ}$ is principal. So Definition \ref{def_PDharmonic} is equivalent to Definition \ref{def_introPDDR} in the introduction. Whenever there is no chance of confusion we drop the superscript and simply write $\PD_{g,a}$ instead of $\PD_{g,a}^{trop}$ for the locus of principal divisors. 

Denote the analytification of the algebraic double ramification locus $\calDR_{g,a}$ by $\calDR_{g,a}^{an}$. By the slope formula \cite[Theorem 5.14]{BPR} the restriction of the tropicalization map $\trop_{g,n}\colon  \calM_{g,n}^{an}\longrightarrow M_{g,n}^{trop}$ to  $\calDR_{g,a}^{an}$ naturally factors through $\PD_{g,a}\subseteq M_{g,n}^{trop}$. This also follows from Corollary \ref{cor_tree->linearequivalence} and Proposition \ref{prop_factorization} below.

\begin{theorem}\label{thm:PDlocus}
The locus of principal divisors $\PD_{g,a}$ is a semilinear subset in $M_{g,n}^{trop}$ that contains maximal cones of every codimension between zero and $g$. 
\end{theorem}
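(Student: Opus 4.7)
The plan is to analyze $\PD_{g,a}$ stratum by stratum on the cone complex $M_{g,n}^{trop}$, first establishing semilinearity on each cone $M_G^{trop}$, and then exhibiting explicit examples witnessing maximal cones of each codimension between $0$ and $g$.

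For the semilinearity part, I fix a combinatorial type $G$ of a stable marked tropical curve and describe $\PD_{g,a}\cap M_G^{trop}$ as a finite union of linear subspaces intersected with the cone $\R_{\geq 0}^{E(G)}$. By Proposition \ref{prop_harmonicfunction=rationalfunction}, the principal condition amounts to the existence of a harmonic function $f$ on $\Gamma$ with outgoing slope $-a_i$ on each leg $p_i$. Parametrizing such an $f$ (up to additive constant) by its integer slopes $s=(s_e)\in\Z^{E(G)}$ with respect to fixed edge orientations, the condition splits into two parts: first, a harmonicity equation at each vertex relating $s$ to the $a_i$, which is purely combinatorial and whose integer solutions form an affine lattice $s^{0}+Z_1(G,\Z)\subset\Z^{E(G)}$ of rank $g$; second, the closedness condition around each cycle $C_i$, namely $\sum_{e\in C_i}\epsilon_e s_e\ell_e=0$, which is linear in $\ell$ for each fixed $s$ and carves out a linear subspace $V_s\subset\R^{E(G)}$. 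Thus
\[
\PD_{g,a}\cap M_G^{trop}\;=\;\bigcup_{s\in s^{0}+Z_1(G,\Z)}\bigl(V_s\cap\R_{\geq 0}^{E(G)}\bigr).
\]
To conclude, I must argue that only finitely many $s$ contribute nontrivially. On the interior of $M_G^{trop}$ the equations determine a unique real slope vector $s_\R(\ell)\in\R^{E(G)}$ — the minimum-energy harmonic flow on the resistor network $(G,\ell)$ realizing the divisor $\sum_i a_i c(p_i)$. This flow is scale-invariant in $\ell$ and its edge-wise values are uniformly bounded by a combinatorial constant depending only on $\sum_i|a_i|$, by a standard maximum-flow argument. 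Hence only finitely many integer lattice points can arise as $s_\R(\ell)$ on the interior, and an analogous inductive analysis on faces of $M_G^{trop}$ (where the length pairing matrix degenerates and one inherits the problem on a lower-dimensional stratum) yields finiteness globally. Each piece $V_s\cap\R_{\geq 0}^{E(G)}$ is cut out by finitely many homogeneous $\Z$-linear equations together with the cone inequalities, so $\PD_{g,a}\cap M_G^{trop}$ is semilinear; compatibility with the face maps of $M_{g,n}^{trop}$ is automatic because the principal condition is intrinsic to $(\Gamma,p_1,\ldots,p_n)$.

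For the codimension claim, I construct explicit witnesses for each $k\in\{0,1,\ldots,g\}$. The guiding principle is that a cycle $C_i$ contributes a nontrivial linear equation in $\ell$ precisely when $D=\sum a_i c(p_i)$ forces nonzero flow across $C_i$; cycles disjoint from the support of $D$ admit slope representatives with $s_e=0$ on their edges and impose no condition. For codimension $0$ I take a maximally degenerate stable graph $G$ in which all $g$ cycles sit in loop-like subgraphs attached to the tree carrying the marked legs through cut vertices, placed so that the divisor retracts onto the tree part; then every cycle equation holds identically on $M_G^{trop}$ and the entire top-dimensional cone lies in $\PD_{g,a}$. For codimension $g$ I take a top-dimensional $G$ in which the divisor forces nonzero flow through every cycle (e.g.~a chain of loops with the positive-$a_i$ and negative-$a_i$ legs clustered at opposite ends), so that the $g$ cycle conditions are linearly independent in $\ell$ and cut out a subcone of codimension $g$. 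Intermediate codimensions $k$ are realized by interpolating between these two constructions: arrange for $k$ cycles to be crossed by the divisorial flow and the remaining $g-k$ to be disjoint from it, so that exactly $k$ of the cycle equations are nontrivial.

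The main obstacle I anticipate is the finiteness argument underlying semilinearity, which requires a careful uniform bound on the real minimum-energy flow across the whole cone together with a coherent treatment of the boundary faces where the length pairing matrix $M(\ell)$ degenerates. A secondary subtlety is verifying the \emph{maximality} of the constructed codim-$k$ cones: one must rule out that such a cone is a proper face of some larger cone of $\PD_{g,a}$ living in a different combinatorial stratum, which is handled by direct inspection of the finitely many unspecialization neighbors of the chosen $G$ and checking that on each the principal condition fails generically.
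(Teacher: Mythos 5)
Your proposal is correct in outline, but the key finiteness step is argued along a genuinely different route from the paper's. The paper also reduces to the slope-vector description $\PD_{g,a}\cap M_G^{trop}=\bigcup_s E_s$, but it proves that only finitely many $s$ contribute by an induction on the genus of $G$ with explicit graph surgeries: reduce to a trivalent graph with one leg per vertex, orient edges by the sign of the slope, observe that if no leg has same-sign slopes on its two adjacent edges then the orientation contains a positively oriented cycle forcing those edge lengths to zero (so $E_s$ lies in a proper face), and otherwise delete the leg and its vertex to pass to smaller genus. You instead use uniqueness of the real-valued solution of the Poisson problem on the interior of each cone: the electrical (minimum-energy) flow $s_\R(\ell)$ is the only candidate slope vector there, it is scale-invariant, and it is bounded edgewise by $d=\sum_{a_i>0}a_i$, so at most finitely many integer vectors occur on the interior; faces are handled by induction on the face poset, which matches the per-cone definition of semilinearity. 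This is sound, and it buys an explicit quantitative bound $|s_e|\le d$ that the paper's combinatorial induction does not state; conversely, the paper's argument is purely graph-theoretic and avoids any appeal to electrical-network facts. Do make the phrase ``standard maximum-flow argument'' precise -- the clean justifications are either the flow-decomposition of the acyclic minimum-energy flow into source-to-sink paths, or Kirchhoff's spanning-tree averaging formula for the current. Your witnesses for the codimension statement are essentially the paper's two examples (legs on a subdivided bridge for codimension zero; a chain of $g$ loops with $k$ legs on distinct loops and the rest on a bridge for codimension $k$), and your extra check of maximality is welcome; it also follows quickly from your own setup, since on the relative interior of a top-dimensional cell the slope vector is unique, so a codimension-$k$ piece $E_s$ with interior points having all edge lengths positive cannot sit inside a larger $E_{s'}$ with $s'\neq s$.
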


Theorem \ref{thm:PDlocus} implies Theorem \ref{thm_DRlocus} (i) from the introduction. Its proof follows the ideas laid out in the proof of \cite[Theorem 1.2]{LinUlirsch} describing the structure of the tropical Hodge bundle, which in turn is based on the polyhedral description of tropical linear series in \cite{GathmannKerber,HaaseMusikerYu, MikhalkinZharkov}. 

\begin{proof}[Proof of Theorem \ref{thm:PDlocus}]
Let $G$ be a stable weighted graph of genus $g$ with $n$ marked legs. We need to show that $M_G^{trop}\cap \PD_{g,a}$ is an intersection of the cone $M_G^{trop}=\R_{\geq 0}^{E(G)}$ with a union of finitely many linear subspaces of $\R^{E(G)}$.

Denote by $S_a\subseteq \Z^{H(G)}$ the affine subspace of possible choices of slopes $s=(s_h)_{h\in H(G)}$ of a rational function on $G$ having outgoing slopes $a_i$ on the legs $p_i$. In other words, the numbers $s_h$ satisfy $s_h=-s_{\iota(h)}$ for every edge $e=\{h,\iota(h)\}$ of $G$, $s_{p_i}=-a_i$ for all legs $p_i$, and the harmonicity condition
\begin{equation*}
\sum_{r(h)=v} s_h = 0 
\end{equation*}
at every vertex $v\in V(G)$. The slopes on the legs are fixed, so we think of $S_a$ as a subset of $\Z^{E(G)}$.

Let $\ell:E(G)\to \RR_{\geq 0}$ be an edge length function on $G$, where we allow edge lengths to be zero by contracting the corresponding edges. A tuple $(s_h)_{h\in H(G)}\in S_a$ is represented by a function on $(G,\ell)$ if and only if the edge lengths satisfy a set of constraints. Denote by $C_1(G,\Z)=\Z^{E(G)}$ the group of simplicial 1-chains, and by  $H_1(G,\Z)\subseteq C_1(G,\Z)$ the group of 1-cycles on $G$. The metric $\ell$ defines an edge length pairing 
\begin{equation*}
\langle .,.\rangle_{\ell} \colon C_1(G,\Z)\times C_1(G,\Z)\longrightarrow \R
\end{equation*}
by the formula
\begin{equation*}
\Big\langle\sum_{e\in E(G)} a_e\cdot e,\sum_{e\in E(G)}b_e\cdot e\Big\rangle_{\ell}= \sum_{e\in E(G)} a_e b_e\cdot \ell(e) \ .
\end{equation*}
For a given $s\in S_a$, denote by $E_{s}\subset\R_{\geq 0}^{E(G)}$ the set of edge lengths on $G$ (zero edge lengths are allowed) such that there exists a rational function  on $(G,\ell)$ having slopes $s$. It is equal to
\begin{equation*}
E_{s}=\Big\{\ell\in \R_{\geq 0}^{E(G)}\Big\vert \langle s,\gamma\rangle_{\ell} = 0 \textrm{ for all } \gamma\in H_1(G,\Z) \Big\} \ .
\end{equation*}
The set $E_s\in \R_{\geq 0}^{E(G)}$ is a rational polyhedral cone, since it is the intersection of $M_G^{trop}=\R_{\geq 0}^{E(G)}$ with finitely many $\Z$-linear subspaces (corresponding to a $\Z$-basis of $H_1(G,\Z)$). Moreover, we have that 
\begin{equation*}
M_G^{trop}\cap \PD_{g,a} = \bigcup_{s\in S_G} E_{s} \ .
\end{equation*}
Therefore, we need to show that $E_s$ is equal to $\{0\}\in\R_{\geq 0}^{E(G)}$ for all but finitely many $s\in S_G$. In fact, it is enough to show that $E_s$ is a subset of a proper face of $\R_{\geq 0}^{E(G)}$ for all but finitely many $s$, since these faces correspond to $M_{G'}$ with a $G'$ a weighted edge contraction of $G$.

We proceed by induction on the genus $g$ of $G$. If $g=0$, the graph $G$ is a tree with zero vertex weights, and the slopes along the legs uniquely determine the slopes along the finite edges, in other words the entire vector $s$. So, in this case we have 
\begin{equation*}
M_G^{trop}\cap \PD_{g,a} = E_s \ .
\end{equation*}
for a unique vector $s$, proving the claim.

Suppose now that $g>0$. We first claim that we can assume that $G$ is trivalent, with all vertex weights equal to zero, and that every vertex has at most one leg rooted at it. Indeed, if $G$ does not satisfy the above properties, we can perform one of the following operations to it:
\begin{itemize}
\item Replace a vertex $v$ of positive weight $g(v)$ with a genus zero vertex having $g(v)$ loops.
 
\item Split a vertex of valence $\val(v)>3$ into two vertices of valence at least 3, connected by an edge. 

\item If there is a trivalent vertex having two legs and an edge, replace the two legs and the edge with a single leg, and the slopes with the sum of the slopes (in other words, there is a secondary induction on $n$). 

\end{itemize}
Repeating this operation, we obtain a trivalent graph having each leg rooted at a unique vertex. By harmonicity, a choice of slopes on the original graph uniquely determines a choice of slopes on the resulting graph, so it is sufficient to prove our theorem for graphs satisfying the above conditions. 

We claim that, whenever $E_s$ is not a subset of a proper face of $\R_{\geq 0}^{E(G)}$, there is a leg $p$ rooted at a vertex $v$ such that the outgoing slopes along the two edges $e_1$ and $e_2$ rooted at $v$ have the same sign. Indeed, suppose that that there is no leg with this property. Orient each edge of $G$ so that $s$ has positive slope with respect to the orientation. At every vertex $v\in V(G)$ with a leg, one of the two edges is incoming, while the other is outgoing. Therefore the edge orientation on $G$ descends to an edge orientation on the graph obtained from $G$ by removing the legs. By the harmonicity condition on $G$ there no vertex with all negative or all positive incoming slopes, hence this orientation is not acyclic. Choose a positively oriented path $\ga$, then any rational function with slopes given by $s$ has positive slopes along $\ga$. The only way that this is possible is if the lengths of all edges in $\ga$ are zero, so $E_s$ is a subset of a proper face of $\R_{\geq 0}^{E(G)}$.

Now let $p_i$ be a leg rooted at a vertex $v$, so that the outgoing slopes $s_e$ and $s_{e'}$ of the two edges $e$ and $e'$ rooted at $v$ have the same sign. We now consider the graph $\widetilde{G}$ obtained from $G$ as follows: remove $p$ and $v$, and replace $e$ and $e'$ with two new legs $l$ and $l'$. There are two possibilities. First, it may happen that $\widetilde{G}$ has two connected components, each of which necessarily has genus greater than zero, and therefore less than $g$. The slopes $s_e$ and $s_{e'}$ on the legs $l$ and $l'$ are then uniquely determined by the slopes of the other legs on each component, since the slope of all legs on a single component add to zero. By induction, there are finitely many ways to choose the slope vectors on each component, and hence on all of $G$. On the other hand, if $\widetilde{G}$ is connected, then it has genus $g-1$. The slopes $s_e$ and $s_{e'}$ along the legs $l$ and $l$ have the same sign and add up to $a_i$, so there are finitely many possibilities for them. We now apply induction for each such choice and complete the proof.

The statement concerning the existence of a cone of every codimension between $0$ and $g$ follows from the following two examples. \end{proof}

\begin{example}[Bridges]

Let $G^{\circ}$ be a maximally degenerate stable graph of genus $g$ with no legs and a bridge edge $e$. Split $e$ into $n+1$ bridges at vertices $v_1,\ldots,v_n$, labelled left to right, and let $G$ be the stable graph formed by attaching legs $p_1,\ldots,p_n$ at $v_1,\ldots,v_n$. For any choice of edge length $\ell\colon E(G)\rightarrow \R_{>0}$, there exists a harmonic function $f$ on $\Gamma=(G,\ell)$ such that its slope along the leg $l_i$ is $-a_i$. Indeed, $f$ is constant on the part of $\Ga$ to the left of $v_1$, has slope $a_1$ on the edge connecting $v_1$ and $v_2$, slope $a_1+a_2$ on the edge connecting $v_2$ and $v_3$, and so on, and then again constant to the right of $v_n$. Therefore the whole cell $M_G^{trop}=\R_{\geq 0}^{E(G)}$ of dimension $3g-3+n$ is part of the principal divisor locus. This shows that $\PD_{g,a}$ always has a cell of codimension zero.
\end{example}

\begin{example}[Chain of loops]
Let $G^{\circ}$ be a chain of $g$ loops, and let $1\leq k\leq g$. Pick a bridge $e\in E(G^{\circ})$ and edges $f_1,\ldots,f_k\in E(G^{\circ})$ lying on distinct loops. Similarly to the construction above, let $G$ be the stable graph obtained by attaching $k$ legs $p_1,\ldots,p_k$ to vertices on $f_1,\ldots,f_k$, and the remaining legs $p_{k+1},\ldots,p_n$ to vertices on $e$. Then the metric graph $(G,\ell)$ lies in $PD_{g,a}$ if and only if the lengths of the subdivisions of $f_i$ satisfy $k$ independent constraints, while the legs on the bridge impose no constraints. This shows that $\PD_{g,a}$ has a maximal cone of codimension $k$ in $M_{g,n}^{trop}$ for all $1\leq k\leq g$.
\end{example}






\section{The tropical double ramification locus}

As we have noted in the introduction, a basic problem in tropical geometry is that "tropical curves have too many principal divisors". Specifically, we saw in Theorem~\ref{thm:PDlocus} that, given a multiplicity profile $a=(a_1,\ldots,a_n)$ with $a_1+\cdots+a_n=0$, the locus $PD_{g,a}\subset M_{g,n}^{trop}$ of principal divisors of profile $a$ has cells of top dimension $3g-3+n$. On the other hand, by a toroidal version of the Bieri--Groves theorem \cite[Theorem 1.1]{Ulirsch_tropcomplogreg}, the tropicalization of the double ramification locus 
$\calDR_{g,a}\subset\calM_{g,n}$ is a semilinear subset of $M_{g,n}^{trop}$ of dimension at most $2g-3$. For this reason, tropical linear series are typically much larger in dimension than expected, and, in particular, have larger dimensions than the tropicalizations of the corresponding algebraic linear series.

In this section we provide a solution to this problem by defining a {\it tropical double ramification locus} $DR_{g,a}\subset M_{g,n}^{trop}$, which contains (and is, in general, strictly greater than) the tropicalization of $\calDR_{g,a}$, but has the correct dimension. A marked curve $(X,p_1,\ldots,p_n)\in \calM_{g,n}$ lies in $\calDR_{g,a}$ if it admits a map $X\to \PP^1$ with prescribed ramification profiles over $0$ and $\infty$, so we define $DR_{g,a}$ as the set of tropical marked curves admitting a morphism to a tree $\Delta$ with specified degrees over two legs $0$ and $\infty$ of $\Delta$. Borrowing an idea from Cools and Draisma~\cite{CoolsDraisma}, we require that the morphism be finite and effective, and we allow tropical modifications on the source curve $\Ga$; this turns out to be necessary to produce a locus of the correct dimension. We then prove Theorem~\ref{thm:main}, stating that the tropical double ramification locus $DR_{g,a}$ is a semilinear subset of $\PD_{g,a}\subset M_{g,n}^{trop}$ of dimension $2g-3+n$, which is the main result of our paper.

\begin{definition} Let $g\geq 1$, $n\geq 2$, and let $a=(a_1,\ldots,a_n)\in \ZZ^n$ be an $n$-tuple of nonzero integers such that $a_1+\cdots+a_n=0$. We define the {\it double ramification locus} $DR_{g,a}\subset M_{g,n}^{trop}$ to be the set of stable tropical curves $(\Ga,p_1,\ldots,p_n)$ such that there exists a finite effective morphism $\tau:\Ga'\to \De$ of the following kind:

\begin{enumerate} \item $\Ga'$ is a tropical modification of $\Ga$.

\item $\De$ is a marked tree of genus 0 having two legs $0$ and $\infty$.

\item $\tau$ acts in the following way on the legs:

\begin{equation}
\tau(p_i)=\left\{\begin{array}{cc} 0, & \textrm{ if } a_i>0\ \\ \infty, & \textrm{ if } a_i<0\end{array}\right.\quad \textrm{ and }\quad d_{\tau}(p_i)=|a_i|. 
\label{eq:DRcondition}
\end{equation}

\end{enumerate}

We call the integer $d=\displaystyle\sum_{i:a_i>0}a_i=-\displaystyle\sum_{i:a_i<0}a_i$ the {\it degree} of the double ramification locus $DR_{g,a}$; it is equal to the degree of any such map $\tau$. \label{def:DR}
\end{definition}

The following Theorem \ref{thm_structureDRlocus} is Theorem \ref{thm_DRlocus} (b) from the introduction. 

\begin{theorem}\label{thm_structureDRlocus}
Suppose that $d\geq 2$. The double ramification locus $DR_{g,a}$ is a semilinear subset of $\PD_{g,a}^{trop}$ of dimension $2g-3+n$.
\label{thm:main}
\end{theorem}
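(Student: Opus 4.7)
The plan breaks into three parts: the inclusion $DR_{g,a}\subseteq \PD_{g,a}^{trop}$, semilinearity, and the dimension computation.

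For the inclusion, suppose $(\Ga,p_1,\ldots,p_n)\in DR_{g,a}$ is witnessed by an effective morphism $\tau\colon\Ga'\to\De$ with $\Ga'$ a tropical modification of $\Ga$. Corollary~\ref{cor_tree->linearequivalence} applied to the legs $0,\infty$ of $\De$ produces a harmonic function $F$ on $\Ga'$ with slope $-a_i$ on each $p_i$. On every finite tree attached by the tropical modification (which carries no legs), harmonicity forces $F$ to be constant, so the restriction of $F$ to $\Ga\subset\Ga'$ is harmonic with the correct leg slopes. Hence $(\Ga,p_1,\ldots,p_n)\in \PD_{g,a}^{trop}$ by Definition~\ref{def_PDharmonic}.

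For semilinearity, set $\mu^+=(a_i:a_i>0)$, $\mu^-=(-a_i:a_i<0)$, and $s=2g-2+n$; since $\chi(\Ga')=\chi(\Ga)=2-2g-n$ and $\chi(\De)=0$, the Riemann--Hurwitz formula~\eqref{eq:RHformula} forces total ramification $\Ram(\tau)=s$. I would then identify $DR_{g,a}$ with the image in $M_{g,n}^{trop}$ of the tropical Hurwitz space $H^{trop}_{g\to 0,d}(\mu^+,\mu^-)$ under the source map, composed with the forgetful morphism $M_{g,n+(d-1)s}^{trop}\to M_{g,n}^{trop}$ that forgets the $(d-1)s$ simple-ramification markings. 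The nontrivial direction, that every point of $DR_{g,a}$ lies in the image, uses Remark~\ref{rem:addramification} to promote any effective $\tau$ into an unramified cover by adding legs at each ramified vertex (one marking downstairs, $d-1$ markings with profile $(2,1,\ldots,1)$ upstairs per unit of ramification). Since both the source and forgetful maps are locally finite morphisms of generalized cone complexes, Proposition~\ref{prop:linear} yields that $DR_{g,a}$ is semilinear.

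The dimension upper bound $\dim DR_{g,a}\le 2g-3+n$ is immediate from this description: each cone of $H^{trop}_{g\to 0,d}(\mu^+,\mu^-)$ is parametrized by a stable tree $\De$ with $r+s=2g+n$ legs, so has dimension at most $2g+n-3$, and the source edge lengths are determined by the target via Remark~\ref{rem:edgelengths}. The main obstacle, and the most substantial part of the proof, is the matching lower bound: one must exhibit an explicit effective cover whose image cone has codimension exactly $g$ in $M_{g,n}^{trop}$. I would adapt the algorithm of Cools--Draisma~\cite{CoolsDraisma} to the DR setting: fix a trivalent tree $\De$ with $2g+n$ legs, of maximal combinatorial dimension $2g-3+n$, and build a degree-$d$ effective cover $\Ga'\to\De$ whose source has chain-of-loops or caterpillar topology, by successively choosing edge-profiles consistent with the harmonicity condition~\eqref{eq:harmonic} at each internal vertex of $\De$ so that the two distinguished legs $0,\infty$ realize the prescribed profiles $\mu^\pm$. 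Verifying that such a cover exists for every admissible multiplicity profile $a$ (not just the $d$-gonal case), that it is effective, and that the corresponding cell maps generically finitely onto a cell of full dimension $2g-3+n$ in $M_{g,n}^{trop}$ is the core combinatorial content which generalizes the Cools--Draisma construction.
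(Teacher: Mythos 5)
Your first three steps are sound and essentially reproduce the paper's argument: the restriction argument for $DR_{g,a}\subseteq \PD_{g,a}^{trop}$ (a harmonic function is indeed forced to be constant on the dangling trees of a modification, since the leaf points carry no legs), the passage to unramified covers with auxiliary markings via Remark~\ref{rem:addramification} (the paper packages this as the locus $\tDR_{g,a}$ rather than literally as the source-map image of $H^{trop}_{g\to 0,d}(\mu)$, but the two descriptions coincide; you should still say a word about the converse inclusion, i.e.\ that removing the auxiliary legs from an unramified cover leaves a finite \emph{effective} morphism, which is Remark~\ref{rem:restriction}), semilinearity via Proposition~\ref{prop:linear}, and the upper bound $\dim DR_{g,a}\le 2g-3+n$ from the count of edges of a stable tree with $2g+n$ legs.

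The genuine gap is the lower bound, which you correctly identify as the substantial part but then only sketch. Producing \emph{some} effective degree-$d$ cover of a trivalent tree with $2g+n$ legs is not the issue — by Lemma~\ref{lem:edgecontraction} every maximally degenerate tree carries an unramified cover with the prescribed leg data. The difficulty is that the resulting $(2g-3+n)$-dimensional cone of $\tDR_{g,a}$ may collapse under the forgetful-and-stabilize map $\pi$: for $d\ge 3$ this map genuinely has positive-dimensional fibers (Remark~\ref{rem:dimensiondrop}, and the explicit collapse in Section~\ref{sec:example}), so ``choose a trivalent $\De$ and build a caterpillar cover by picking edge profiles'' does not by itself yield a cell of dimension $2g-3+n$ in $M_{g,n}^{trop}$; one must argue that the edge lengths of the \emph{stabilized} source together with the positions of the $n$ marked legs vary freely in a family of that dimension, for \emph{every} profile $a$ and every $d$ relative to $g$. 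The paper does not attempt your direct construction: it quotes \cite{CoolsDraisma} as a black box for $a=(1,\ldots,1,-1,\ldots,-1)$ with $d\le g/2+1$ (attaching $0$ and $\infty$ at arbitrary points to gain the two extra dimensions), then runs the grafting-a-tree induction to reach arbitrary $d$, and finally a separate induction on $\max|a_i|$, gluing two legs over $0$ into one leg of weight $b+c$ (raising the genus by one and adding one parameter), to reach arbitrary profiles. Without either carrying out your caterpillar construction in detail — including effectiveness, realization of $\mu^{\pm}$ over $0,\infty$, and finiteness of $\pi$ on the constructed cone — or substituting an inductive scheme of this kind, the equality $\dim DR_{g,a}=2g-3+n$ is not established by your proposal.
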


Before we proceed with the proof, we explain why all of the conditions in the theorem are necessary.

\begin{enumerate}

\item If we drop the requirement that the morphism $\tau$ be finite, then we will obtain the principal divisor locus $PD_{g,a}$. Indeed, a piecewise-linear function $f:\Ga\to \RR$ can be viewed as a harmonic, not necessarily finite, morphism to the tree $\RR$, and condition~\ref{eq:DRcondition} then implies that the divisor $\sum a_i p_i$ is principal. Hence the condition that $\tau$ be finite is necessary to obtain a locus of dimension $2g-3+n$.

\item A somewhat more subtle observation is that if we relax the requirement that $\tau$ be effective, then the resulting double ramification locus will also be of dimension greater than $2g-3+n$. As an example, let $\Ga$ be one of the stable genus three hyperelliptic curves mentioned in Remark~\ref{rem:genus3hyperelliptic}, and let $\ph:\Ga\to \De$ be the hyperelliptic morphism. Attaching two legs $p_1$ and $p_2$ to $\Ga$ at any two points in the interior bold subgraph, and corresponding legs $0$ and $\infty$ to $\De$, we obtain a cone in the double ramification locus $DR_{3,(2,-2)}$ of dimension 8, in other words, of codimension zero in $M_{3,2}^{trop}$.

\item We also observe that if $\Ga'\supset \Ga$ is a tropical modification of $\Ga$ and $\tau:\Ga'\to \De$ is a finite effective harmonic morphism as above, then the restriction of $\tau$ to $\Ga$ need not be a harmonic morphism. Therefore, the locus in $M_{g,n}^{trop}$ consisting of stable curves $\Ga$ admitting a morphism $\tau:\Ga\to \De$ of the type described above is strictly smaller than $DR_{g,a}$, and will not in general contain the tropicalization of the algebraic double ramification locus. For example, let $g=1$ and $a=(2,1,-2,-1)$, and consider the following morphism $\tau:\Ga'\to \De$ ($\tau$ has degree $2$ on the thick edges and degree $1$ elsewhere):
\begin{center}
\begin{tikzpicture}

\draw [thick] (0,0) .. controls (0,0.5) and (2,0.5) .. (2,0);
\draw [thick] (0,0) .. controls (0,-0.5) and (2,-0.5) .. (2,0);
\draw[ultra thick] (2,0) -- (3.5,0);
\draw[fill] (2,0) circle(.8mm);
\draw[fill] (3.5,0) circle(.8mm);
\draw[fill] (5,0) circle(.8mm);
\draw[fill] (5,-1) circle(.8mm);
\draw[fill] (5,-2) circle(.8mm);

\draw[thick] (3.5,0) -- (5,0);
\draw[thick] (3.5,0) -- (5,-1);
\draw[thick] (3.5,-1) -- (5,0);
\draw[thick] (0,-1) -- (3.5,-1);
\draw[ultra thick] (5,0) -- (5.8,0.25) node[right] {$p_1$};
\draw[ultra thick] (5,0) -- (5.8,-0.25) node[right] {$p_3$};
\draw[thick] (5,-1) -- (5.8,-0.75) node[right] {$p_2$};
\draw[thick] (5,-1) -- (5.8,-1.25) node[right] {$p_4$};

\draw[thick] (0,-2) -- (5,-2);
\draw[thick] (5,-2) -- (5.8,-1.75) node[right] {$0$};
\draw[thick] (5,-2) -- (5.8,-2.25) node[right] {$\infty$};

\end{tikzpicture} 

\end{center}
Removing the unstable edge of $\Ga'$ would violate the harmonicity condition at its root vertex. The morphism $\tau$ has degree three, and all Hurwitz numbers of degree three are nonzero, hence $\tau$ is algebraizable and the stabilization of $\Ga$ is contained in the tropicalization of the algebraic double ramification locus. 

We will later see in Proposition~\ref{prop:hyperellipticrestriction} that for $d=2$, any morphism $\tau:\Ga'\to \De$ naturally restricts to the stabilization $\Ga$ of $\Ga'$.

\end{enumerate}

The proof of Theorem~\ref{thm_structureDRlocus} is given in Section~\ref{sec:mainproof}. First, we prove Lemma~\ref{lem:edgecontraction} on reversing edge contractions, this is the content of Section~\ref{sec:edgecontraction}. The proof itself consists of two parts. First, we give an algorithm, explicit but in general computationally intractable, for describing $DR_{g,a}$. The algorithm also establishes the dimension bound $\dim DR_{g,a}\leq 2g-3+n$. The second part of the proof consists of constructing a cone of $DR_{g,a}$ of the maximal dimension $2g-3+n$. This construction uses the inductive procedure of grafting a tree and is borrowed from~\cite{CoolsDraisma}. 

We conclude with a conjecture on the topological structure of the double ramification locus.

\begin{conjecture} The double ramification locus $DR_{g,a}$ is connected in codimension one and all of its maximal cones have dimension $2g-3+n$.

\end{conjecture}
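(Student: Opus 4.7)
The plan is to realize $DR_{g,a}$ as the image, under the source map followed by stabilization of the source, of a suitable subcomplex $\calH_{g,a}^{trop}$ of the tropical Hurwitz space $H^{trop}_{g\to 0, d}(\mu)$ from Section \ref{sec:tropadmissible}, consisting of covers whose target is a genus-zero tree with distinguished legs $0, \infty$ and whose leg ramification profile matches $a$ in the sense of Definition \ref{def:DR}. By construction this surjects onto $DR_{g,a}$, so the conjecture reduces to showing (a) $\calH_{g,a}^{trop}$ is pure of dimension $2g-3+n$ and that the fibers of the source-stabilization map have generically constant (zero-dimensional) structure, and (b) the top-dimensional stratum of $\calH_{g,a}^{trop}$ is connected in codimension one.

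For pure dimensionality, the key step is to show that any cover $\tau: \Ga' \to \De$ in $\calH_{g,a}^{trop}$ can be refined to one whose target $\De$ is trivalent and whose source $\Ga'$ has no vertices of excess valency beyond those forced by the Riemann--Hurwitz formula. This is where the reversal of edge contractions, i.e.\ Lemma \ref{lem:edgecontraction}, is essential: given a vertex $v \in V(\De)$ of valency $> 3$, we split $v$ into two vertices joined by a new edge $e$ and lift this refinement to $\Ga'$ using the lemma, producing a cover in a cone of $\calH_{g,a}^{trop}$ of strictly higher dimension that specializes to the original. Iterating, every cover sits in the closure of a trivalent-target cover, and a direct count of parameters (using the Riemann--Hurwitz-constrained edge-length cone and the description already used in the proof of Theorem \ref{thm:main}) shows that these trivalent cones contribute cones of $DR_{g,a}$ of dimension exactly $2g-3+n$. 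Combining this with the upper bound from Theorem \ref{thm:main} yields purity.

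For connectedness in codimension one, we analyze the two sources of codimension-one walls between maximal cones: (i) contraction of a single non-cover edge of $\Ga$ (a standard $M_{g,n}^{trop}$ wall), and (ii) the \emph{tropical Whitehead move} inside $\calH_{g,a}^{trop}$, where a bounded edge of $\De$ is contracted to a tetravalent vertex and then re-expanded in the other direction, together with the corresponding lifting to $\Ga'$ dictated by Lemma \ref{lem:edgecontraction}. The plan is then to exhibit a canonical comb-shaped \emph{pivot cover}: a cover where $\De$ is the tree with a single chain of bounded edges separating the legs over $0$ from those over $\infty$, and $\Ga'$ is a Cools--Draisma-type chain-of-loops cover. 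One shows that every maximal cover can be transformed into this pivot by a finite sequence of the two wall-crossings above, proceeding inductively on the number of bounded edges of $\De$ that separate legs non-trivially.

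The hard part will be Step (b), specifically verifying that an arbitrary trivalent target $\De$ can be reached from the pivot purely through codimension-one Whitehead moves while staying inside $DR_{g,a}$ (not just inside the ambient Hurwitz space). This is the tropical shadow of the irreducibility of the algebraic Hurwitz space $\calH_{g,a}$, whose classical proof via Clebsch--Hurwitz monodromy should admit a combinatorial tropical adaptation in the framework of \cite{CavalieriMarkwigRanganathan_tropadmissiblecovers}; however, care is needed because $DR_{g,a}$ strictly contains the realizability locus (Theorem \ref{thm_realizabilityDR}), so one must ensure that non-Hurwitz-type maximal cones are also reachable via wall-crossings, possibly by passing through Hurwitz-type cones in their codimension-one closure. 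Controlling this interaction between the Hurwitz and non-Hurwitz loci along walls is the main technical obstacle I foresee.
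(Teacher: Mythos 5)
The statement you are proving is stated in the paper as a \emph{conjecture}: the authors do not prove it, and explicitly flag the two properties you are after (purity of dimension and connectedness in codimension one) as exactly what they cannot establish. They prove only that $DR_{g,a}$ has dimension at most $2g-3+n$ and contains at least one cone of that dimension, and they support the conjecture by the computations of $DR_{1,(d,-d)}$ and the hyperelliptic loci. So there is no proof in the paper to compare against; what matters is whether your plan closes the gap the authors left open. It does not.

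The central difficulty is the one your step (a) waves at and then assumes away. Your $\calH^{trop}_{g,a}$ is essentially the paper's $\tDR_{g,a}\subset M^{trop}_{g,n+N(d-1)}$, which is indeed pure of dimension $2g+n-3$ and connected in codimension one, because each of its maximal cones maps isomorphically to a cone $M^{trop}_{\tT}$ of $M^{trop}_{0,2+N}$ and Lemma \ref{lem:edgecontraction} lets you extend covers across faces. The problem is entirely in the projection $\pi$ (forgetting the auxiliary legs and stabilizing): Remark \ref{rem:dimensiondrop} exhibits, already for $g=1$ and $d\geq 3$, one-dimensional strata of $\tDR_{1,(d,-d)}$ that collapse to a single point of $DR_{1,(d,-d)}$. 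So the assertion that the fibers of the source-stabilization map are generically zero-dimensional is false as stated, and you offer no mechanism to rule out that some maximal cone of $\tDR_{g,a}$ collapses to a cone of $DR_{g,a}$ of dimension $<2g-3+n$ whose image is \emph{not} contained in the image of any other maximal cone. Refining the target to be trivalent via Lemma \ref{lem:edgecontraction} does not help here: it shows every cover specializes from one over a trivalent tree, i.e.\ it re-proves that every point of $\tDR_{g,a}$ lies in a $(2g+n-3)$-dimensional cone \emph{upstairs}, but says nothing about what survives after $\pi$. Purity of $DR_{g,a}$ is precisely a statement about the images, and that is the open problem.

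For step (b) the same issue recurs in a sharper form: even granting that $\tDR_{g,a}$ is connected in codimension one via Whitehead moves on the target tree, the images of two maximal cones under $\pi$ can overlap, coincide, or be nested, so the codimension-one wall structure of $DR_{g,a}$ is not the image of the wall structure of $\tDR_{g,a}$ (nor of the Hurwitz space). A Clebsch--Hurwitz-type monodromy argument, even if tropicalized, would at best give connectedness of the Hurwitz-type locus upstairs; you correctly note that $DR_{g,a}$ strictly contains the realizability locus, but the proposed fix (passing through Hurwitz-type cones in codimension-one closures) is not substantiated and, again, must be carried out after projection. As it stands, your proposal reproduces the paper's construction of $\tDR_{g,a}$ and its properties, and leaves open exactly the two points that make the statement a conjecture.
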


This conjecture is supported by two extended examples: in Section~\ref{sec:example} we compute $DR_{1,(a,-a)}\subset M_{1,2}^{trop}$ for all $a\geq 2$, and in Section~\ref{sec:hyperelliptic} we compute the hyperelliptic loci $DR_{g,(2,-2)}\subset M_{g,2}^{trop}$, $DR_{2,(2,-1,-1)}\subset M_{g,3}^{trop}$, and $DR_{2,(1,1,-1,-1)}\subset M_{g,4}^{trop}$.

\subsection{A lemma on reversing edge contractions} \label{sec:edgecontraction}

Before proving the main theorem, we first prove a result about reversing contractions of unramified harmonic morphisms (see Definition~\ref{def:unramifiedcontraction}).

\begin{lemma} Let $\ph:G'\to G$ be an unramified harmonic morphism of weighted graphs, let $K$ be a weighted graph, and let $S\subset E(K)$ be a subset of edges of $K$ such that $K/S$ is isomorphic to $G$. Then there exists a weighted graph $K'$ and an unramified harmonic morphism $\psi:K'\to K$ such that the contraction $\psi_S$ of $\psi$ along $S$ is isomorphic to $\ph$. 
\label{lem:edgecontraction}
\end{lemma}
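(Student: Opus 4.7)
The plan is to induct on $\#S$, reducing the general statement to the case of contracting a single edge. The base case $\#S=0$ is trivial, since we may take $K'=G'$ and $\psi=\ph$. For the inductive step, choose any $e\in S$, so that $S\setminus\{e\}$ is naturally a subset of edges of $K/e$ and $(K/e)/(S\setminus\{e\})\cong G$. By induction applied to this smaller contracted set, we find an unramified harmonic morphism $\widetilde\psi\colon \widetilde K\to K/e$ whose contraction along $S\setminus\{e\}$ is $\ph$. Then the single-edge case produces $\psi\colon K'\to K$ whose contraction along $\{e\}$ is $\widetilde\psi$; the contraction of $\psi$ along all of $S$ then equals $\ph$, as composing contractions along disjoint edge sets commutes with the target graph's contraction (Def.~\ref{def:unramifiedcontraction}).

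So reduce to $S=\{e\}$, with $F=[S]\subset K$ and $v_i\in V(G)$ the contracted vertex. The idea is to modify $G'$ locally at each preimage $v'\in\ph^{-1}(v_i)$ by ``uncontracting'' $v'$ into a small subgraph mapping to $F$. Set $d=d_{\ph}(v')$, and distinguish two cases. If $e$ is a loop at a vertex $v\in V(K)$, then $F$ is a single vertex with a loop; replace $v'$ by a single vertex $w$ carrying a loop $e'$ of degree $d$, map $w$ to $v$ with degree $d$ and $e'$ to $e$ with degree $d$, keep all external half-edges (and their $\ph$-degrees) at $w$, and assign weight $g(w)=g(v')-1$. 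If $e$ has distinct endpoints $u,v$, then $F$ is two vertices joined by an edge; replace $v'$ by two vertices $w_u,w_v$ joined by a single edge $e'$ of degree $d$, distributing the external half-edges at $v'$ between $w_u$ and $w_v$ according to whether the corresponding half-edge at $v_i$ in $G$ came from $T_uK$ or $T_vK$, and choose the unique weights making the morphism unramified at $w_u$ and $w_v$ (Rem.~\ref{rem:uniqueweighting}). Away from the preimages of $v_i$, the map $\psi$ is identical to $\ph$.

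Three things must then be verified. First, harmonicity of $\psi$ at each new vertex: for an external half-edge $h\in T_uK$ or $T_vK$, the preimages of $h$ at $w$ (or $w_u$, $w_v$) are precisely the external half-edges of $v'$ that mapped to the corresponding half-edge of $v_i$ in $G$, so their degrees sum to $d$ by harmonicity of $\ph$; for the half-edges of $e$ itself, the unique preimage $e'$ carries degree $d$. Second, that the contraction $\psi_{\{e\}}$ recovers $\ph$: contracting the loop $e'$ at $w$ yields a vertex of weight $g(w)+1=g(v')$ in the loop case, while contracting $e'$ between $w_u$ and $w_v$ yields a vertex of weight $g(w_u)+g(w_v)$ in the non-loop case, and a direct computation with formula~\eqref{eq:admissiblegenus} applied in $K$ versus $G$, using $\val(v_i)=\val_K(v)-2$ or $\val(v_i)=\val_K(u)+\val_K(v)-2$, shows this equals $g(v')$. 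Third, that the assigned weights are non-negative integers: in the non-loop case this is automatic from Rem.~\ref{rem:uniqueweighting} applied to the unramified $\psi$, while in the loop case one checks $g(v')\geq 1$ using $g(v_i)=g(v)+1\geq 1$ and $\val(v')\leq d\val(v_i)$.

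The main obstacle is the bookkeeping in the weight verification, which requires carefully tracking how valences and genera change under a single edge contraction and matching these with the unramifiedness conditions on both $\ph$ and $\psi$; everything else is a direct unfolding of definitions. Note that the construction is highly non-unique (in the loop case one could use several smaller loops; in the non-loop case one could use several parallel edges between groups of vertices), but the lemma only asserts existence, so the simplest ``one-edge'' lift suffices.
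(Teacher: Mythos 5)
Your reduction to a single edge and your treatment of the loop case match the paper and are fine. The gap is in the non-loop case, where you assert that the ``one-edge'' lift always works and that non-negativity and integrality of the weights are ``automatic from Rem.~\ref{rem:uniqueweighting} applied to the unramified $\psi$.'' Remark~\ref{rem:uniqueweighting} only says that formula~\eqref{eq:admissiblegenus} gives the \emph{unique candidate} weighting, and that a weighting exists \emph{only if} these numbers happen to be non-negative integers; it provides no guarantee that they are. In fact they often are not. Writing $k$ and $l$ for the numbers of preimages at $u'\in\ph^{-1}(u)$ of the half-edges coming from $T_vK$ and $T_wK$, unramifiedness of $\ph$ gives $\bigl[k-1+d\chi(v)\bigr]+\bigl[l-1+d\chi(w)\bigr]=-2g(u')$, so the two bracketed integers have the same parity but are otherwise unconstrained: if both are odd, your single edge $e'$ forces half-integer genera at $w_u,w_v$ (the paper repairs this by using \emph{two} parallel edges with degrees $d_1+d_2=d$); and if one of them is positive, say $t=k+d\chi(v)\geq 2$, the single-edge lift forces a \emph{negative} genus at $w_u$, and no two-vertex configuration works at all. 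A concrete instance of the second failure occurs already with $d=3$, $g(v)=0$, $\val(v)=3$, $k=5$ (the paper's third example), where $v$ must have $t$ preimages of genus zero.

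Handling that last case is the real content of the lemma and is missing from your proposal: one must split $u'$ into $t$ genus-zero vertices over $v$, each joined by a single edge to a vertex over $w$, and distribute the preimages of the half-edges $f_1,\ldots,f_m$ among them so that the harmonicity condition~\eqref{eq:harmonic} holds at every new vertex with a consistent choice of local degrees $c_1,\ldots,c_t$ summing to $d$. This is a genuinely non-trivial combinatorial step; the paper isolates it as Lemma~\ref{lemma:partition}, a statement about refining an $m$-tuple of partitions of $d$ by a common partition of length $t$ with prescribed fiber lengths, proved by a wall/pigeonhole argument. Your closing remark that ``the simplest one-edge lift suffices'' because only existence is asserted is therefore exactly where the argument breaks: existence of \emph{some} lift requires choosing the shape of the local fiber over the contracted edge according to the parity and sign of $k-1+d\chi(v)$ and $l-1+d\chi(w)$, and in the positive case requires the partition-refinement lemma.
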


\begin{remark} If the morphism $\ph$ is realizable, then the result follows from the properness of the target map on the moduli space of admissible covers. Indeed, let $f:X'\to X$ be an admissible cover of stable algebraic curves over a non-Archimedean field $K$, such that $G'$ and $G$ are respectively the dual graphs of $X'$ and $X$ and the induced map on the dual graphs is $\ph$. Further, assume that the graph $K$ is the dual graph of a stable algebraic curve $Y$ obtained by degenerating $X$, with the edges $S$ corresponding to the nodes of $Y$ that are smoothed in $X$. By properness, the admissible cover $f$ extends to an admissible cover $g:Y'\to Y$, and it is clear that $\ph$ is the contraction of the tropicalization $\psi:K'\to K$ of $g$ along $S$. In other words, we can interpret the above result as saying that "the target map of the moduli space of tropical unramified covers is proper". 

\end{remark}

\begin{proof} To avoid talking about isomorphisms, we identify $K/S$ with $G$. Factoring into edge contractions, we can assume that $S$ consists of a single edge $e\in E(K)$.

We first consider the simpler case when $e$ is a loop at a vertex $v\in V(K)$, in which case $V(K)$ is identified with $V(G)$. Let $\ph^{-1}(v)=\{v'_1,\ldots,v'_m\}$, then $g_{G}(v)=g_K(v)+1\geq 1$, hence for each $i$ $\Ram_{\ph}(v'_i)=0$ implies that
$$
g_{G'}(v'_i)=d_{\ph}(v'_i)(g_G(v)-1)+d_{\ph}(v'_i)\val(v)-\val(v'_i)+1\geq 1.
$$
We therefore construct $K'$ from $G'$ by attaching a loop $e'_i$ at each vertex $v'_i$ and reducing its genus by one. We define $\psi$ by $\psi(e'_i)=e$ with $d_{\psi}(e'_i)=d_{\ph}(v'_i)$, . We note that $\chi_K(v)=\chi_G(v)$ and $\chi_{K'}(v'_i)=\chi_{G'}(v)$, therefore $\psi$ is unramified at $v'_i$. It follows that $\psi$ is unramified, and it is clear that $\psi_S$ is isomorphic to $\ph$.

We now suppose that $e=\{e_v,e_w\}$ is rooted at distinct vertices $v,w\in V(K)$. Let $u\in V(G)$ be the vertex of genus $g(v)+g(w)$ obtained from merging $v$ and $w$.
We construct $K'$ from $G'$ in the following way. Pick a vertex $u'\in V(G')$ such that $\ph(u')=u$. We replace $u'$ with an appropriate bipartite subgraph $L'$ that $\psi$ maps to the subgraph $L=\{v,w,e\}\subset K$. The subgraph $L'$ consists of vertices $v'_i$ and $w'_j$ mapping to $v$ and $w$, respectively, and a number of edges that are all mapped to $e$. Every half-edge $h\in T_uG=(T_vK\cup T_wK)\backslash\{e_v,e_w\}$ is rooted either at $v$ or at $w$, and we correspondingly need to root the half-edges $\ph^{-1}(h)$ either at the $v'_i$ or at the $w'_j$, in such a way that the harmonicity condition~\eqref{eq:harmonic} is satisfied. Finally, we need to define the local degrees of $\psi$ at the edges of $L'$ so that harmonicity is preserved, and assign genera on $V(L')$ by formula~\eqref{eq:admissiblegenus}, making sure that these are non-negative integers. This operation is performed independently at each $u'\in \ph^{-1}(u)$.

To construct the subgraph $L'$ for a given $u'\in \ph^{-1}(u)$, we look at the degrees and genera of the vertices $u'$, $v$, and $w$. It turns out that there are three possibilities. We may attempt to choose $L'$ to be the simplest subgraph possible, consisting of two vertices $v'$ and $w'$ connected by a single edge $e'$ (case~(\ref{item:extensioncase1}) below). The harmonicity condition is then trivially satisfied, but the genera $g(v')$ and $g(w')$ given by equation~\eqref{eq:admissiblegenus} may be half-integers, and one of them, say $g(v')$, may be negative. The former case can be resolved by adding a second edge to $L'$ (case~(\ref{item:extensioncase2})), but the latter (case~(\ref{item:extensioncase3})) requires splitting $v'$ into several vertices of genus zero, each connected by a single edge to $w'$.

For each of the three cases, we give an example. In all three cases, the vertex $u$ has genus one and valency four, while $g(v)=0$, $g(w)=1$, and $\val(v)=\val(w)=3$:

\begin{center}
\begin{tikzpicture}

\begin{scope}[shift={(0,0)}]

\draw [thick] (0,0) -- (-0.8,0.4) node[left]{$f_1$};
\draw [thick] (0,0) -- (-0.8,-0.4) node[left]{$f_2$};
\draw [thick] (0,0) -- (0.8,0.4) node[right]{$h_1$};
\draw [thick] (0,0) -- (0.8,-0.4) node[right]{$h_2$};
\draw[fill,white](0,0) circle(2mm);
\draw[thick](0,0) circle(2mm);
\draw (0,0) node{$1$};
\draw(0,-0.2) node[below]{$u$};

\end{scope}

\begin{scope}[shift={(3,0)}]

\draw [thick] (0,0) -- (-0.8,0.4) node[left]{$f_1$};
\draw [thick] (0,0) -- (-0.8,-0.4) node[left]{$f_2$};
\draw[fill](0,0) circle(.8mm);
\draw(0,-0.2) node[below]{$v$};

\draw [ultra thick] (0,0) -- (1.8,0) node[midway,above]{$e$};

\begin{scope}[shift={(2,0)}]
\draw [thick] (0,0) -- (0.8,0.4) node[right]{$h_1$};
\draw [thick] (0,0) -- (0.8,-0.4) node[right]{$h_2$};
\draw[fill,white](0,0) circle(2mm);
\draw[thick](0,0) circle(2mm);
\draw (0,0) node{$1$};
\draw(0,-0.2) node[below]{$w$};
\end{scope}

\end{scope}

\end{tikzpicture}
\end{center}
We label each edge and half-edge of $O(u')$ and $O(L')$ by its degree if it is greater than one. The contracted edge $e$ and the corresponding contracted edges in $L'$ are shown thick for emphasis.

Consider the tangent spaces at $v$, $w$ and $u$:
$$
T_vK=\{e_v,f_1,\ldots,f_m\},\quad T_wK=\{e_w,h_1,\ldots,h_n\},\quad \textrm{ and }\quad T_uG=\{f_1,\ldots,f_m,h_1,\ldots,h_n\}.
$$
Denote
$$
d=d_{\ph}(u'),\quad k=\#( \ph^{-1}\{f_1,\ldots,f_m\}),\quad \textrm{ and }\quad l=\# (\ph^{-1}\{h_1,\ldots,h_n\}).
$$
The condition that $\ph$ is unramified at $u'$ reads
\begin{equation}
2-2g(u')-k-l=\chi(u')=d\chi(u)=d\big[\chi(v)+\chi(w)\big],
\label{eq:localRHu'}
\end{equation}
which we rewrite as
$$
\big[k-1+d\chi(v)\big]+\big[l-1+d\chi(w)\big]=-2g(u').
$$
The two integers $k-1+d\chi(v)$ and $l-1+d\chi(w)$ sum to a non-positive even integer, so there are three possibilities:

\begin{enumerate}

\item \label{item:extensioncase1} $k-1+d\chi(v)=-2x$, $l-1+d\chi(w)=-2y$ for some non-negative integers $x$ and $y$. In this case we let $L'$ be the graph consisting of two vertices $v'$ and $w'$ of genera $g(v')=x$ and $g(w')=y$, joined by one edge $e'$. We glue $L'$ to $K$ in place of $u$ by attaching the half-edges $\ph^{-1}\{f_1,\ldots,f_m\}$ to $v'$ and the $\ph^{-1}\{h_1,\ldots,h_n\}$ to $w'$, and define $\psi$ on $L'$ by
$$
\psi(v')=v,\quad \psi(w')=w,\quad \psi(e')=e,\quad  \textrm{ and }\quad d_{\psi}(v')=d_{\psi}(w')=d_{\psi}(e')=d.
$$
We have $\chi(v')=1-2x-k$ and $\chi(w')=1-2y-l$, so by the definition of $x$ and $y$ the morphism $\psi$ is unramified at $v'$ and $w'$.

An example of this case, with $k=2$, $l=4$, and $d=3$, is given below.

\begin{center}
\begin{tikzpicture}

\begin{scope}[shift={(0,0)}]

\draw [thick] (0,0) -- (-0.8,0.4) node[left]{$f_1$};
\draw [thick] (0,0) -- (-0.8,-0.4) node[left]{$f_2$};
\draw [thick] (0,0) -- (0.8,0.4) node[right]{$h_1$};
\draw [thick] (0,0) -- (0.8,-0.4) node[right]{$h_2$};
\draw[fill,white](0,0) circle(2mm);
\draw[thick](0,0) circle(2mm);
\draw (0,0) node{$1$};
\draw(0,-0.2) node[below]{$u$};

\end{scope}

\begin{scope}[shift={(3,0)}]

\draw [thick] (0,0) -- (-0.8,0.4) node[left]{$f_1$};
\draw [thick] (0,0) -- (-0.8,-0.4) node[left]{$f_2$};
\draw[fill](0,0) circle(.8mm);
\draw(0,-0.2) node[below]{$v$};

\draw [ultra thick] (0,0) -- (1.8,0) node[midway,above]{$e$};

\begin{scope}[shift={(2,0)}]
\draw [thick] (0,0) -- (0.8,0.4) node[right]{$h_1$};
\draw [thick] (0,0) -- (0.8,-0.4) node[right]{$h_2$};
\draw[fill,white](0,0) circle(2mm);
\draw[thick](0,0) circle(2mm);
\draw (0,0) node{$1$};
\draw(0,-0.2) node[below]{$w$};
\end{scope}

\end{scope}

\begin{scope}[shift={(0,2)}]

\draw [thick] (0,0) -- (-0.8,0.4) node[left]{$3$};
\draw [thick] (0,0) -- (-0.8,-0.4) node[left]{$3$};
\draw [thick] (0,0) -- (0.8,0.6) node[right]{$2$};
\draw [thick] (0,0) -- (0.8,0.2);
\draw [thick] (0,0) -- (0.8,-0.2) node[right]{$2$};
\draw [thick] (0,0) -- (0.8,-0.6);
\draw[fill,white](0,0) circle(2mm);
\draw[thick](0,0) circle(2mm);
\draw (0,0) node{$4$};
\draw(0,-0.2) node[below]{$u'$};

\end{scope}

\begin{scope}[shift={(3,2)}]

\draw [thick] (0,0) -- (-0.8,0.4) node[left]{$3$};
\draw [thick] (0,0) -- (-0.8,-0.4) node[left]{$3$};
\draw [ultra thick] (0,0) -- (2,0) node[midway,above]{$3$};
\draw[fill,white](0,0) circle(2mm);
\draw[thick](0,0) circle(2mm);
\draw (0,0) node{$1$};
\draw(0,-0.2) node[below]{$v'$};

\begin{scope}[shift={(2,0)}]
\draw [thick] (0,0) -- (0.8,0.6) node[right]{$2$};
\draw [thick] (0,0) -- (0.8,0.2);
\draw [thick] (0,0) -- (0.8,-0.2) node[right]{$2$};
\draw [thick] (0,0) -- (0.8,-0.6);
\draw[fill,white](0,0) circle(2mm);
\draw[thick](0,0) circle(2mm);
\draw (0,0) node{$3$};
\draw(0,-0.2) node[below]{$w'$};
\end{scope}

\end{scope}

\end{tikzpicture}
\end{center}

\item \label{item:extensioncase2} 
$k-1+d\chi(v)=-2x-1$, $l-1+d\chi(w)=-2y-1$ for some non-negative integers $x$ and $y$. This case is similar to the one above. We let $L'$ be the graph consisting of two vertices $v'$ and $w'$ of genera $g(v')=x$ and $g(w')=y$, joined by two edges $e'_1$ and $e'_2$. We attach the half-edges $\ph^{-1}\{f_1,\ldots,f_m\}$ to $v'$ and the $\ph^{-1}\{h_1,\ldots,h_n\}$ to $w'$, and define $\psi$ on $L'$ by
$$
\psi(v')=v,\quad \psi(w')=w,\quad \psi(e'_i)=e, \quad d_{\psi}(v')=d_{\psi}(w')=d, \quad d_{\psi}(e'_i)=d_i,
$$
where $d_1$ and $d_2$ are any two positive integers summing to $d$. We have $\chi(v')=-2x-k$ and $\chi(w')=-2y-l$, and again $\psi$ is unramified at $v'$ and $w'$ by the definition of $x$ and $y$.

An example of this case, with $k=3$, $l=3$, and $d=3$, is given below.

\begin{center}
\begin{tikzpicture}

\begin{scope}[shift={(0,0)}]

\draw [thick] (0,0) -- (-0.8,0.4) node[left]{$f_1$};
\draw [thick] (0,0) -- (-0.8,-0.4) node[left]{$f_2$};
\draw [thick] (0,0) -- (0.8,0.4) node[right]{$h_1$};
\draw [thick] (0,0) -- (0.8,-0.4) node[right]{$h_2$};
\draw[fill,white](0,0) circle(2mm);
\draw[thick](0,0) circle(2mm);
\draw (0,0) node{$1$};
\draw(0,-0.2) node[below]{$u$};

\end{scope}

\begin{scope}[shift={(3,0)}]

\draw [thick] (0,0) -- (-0.8,0.4) node[left]{$f_1$};
\draw [thick] (0,0) -- (-0.8,-0.4) node[left]{$f_2$};
\draw[fill](0,0) circle(.8mm);
\draw(0,-0.2) node[below]{$v$};
\draw [ultra thick] (0,0) -- (2,0) node[midway,above]{$e$};

\begin{scope}[shift={(2,0)}]
\draw [thick] (0,0) -- (0.8,0.4) node[right]{$h_1$};
\draw [thick] (0,0) -- (0.8,-0.4) node[right]{$h_2$};
\draw[fill,white](0,0) circle(2mm);
\draw[thick](0,0) circle(2mm);
\draw (0,0) node{$1$};
\draw(0,-0.2) node[below]{$w$};
\end{scope}

\end{scope}

\begin{scope}[shift={(0,2)}]

\draw [thick] (0,0) -- (-0.8,0.6) node[left]{$2$};
\draw [thick] (0,0) -- (-0.8,0.2);
\draw [thick] (0,0) -- (-0.8,-0.4) node[left]{$3$};
\draw [thick] (0,0) -- (0.8,0.6) node[right]{$2$};
\draw [thick] (0,0) -- (0.8,0.2);
\draw [thick] (0,0) -- (0.8,-0.4) node[right]{$3$};
\draw[fill,white](0,0) circle(2mm);
\draw[thick](0,0) circle(2mm);
\draw (0,0) node{$4$};
\draw(0,-0.2) node[below]{$u'$};

\end{scope}

\begin{scope}[shift={(3,2)}]

\draw[fill](0,0) circle(.8mm);
\draw(0,-0.2) node[below]{$v'$};
\draw [thick] (0,0) -- (-0.8,0.6) node[left]{$2$};
\draw [thick] (0,0) -- (-0.8,0.2);
\draw [thick] (0,0) -- (-0.8,-0.4) node[left]{$3$};
\draw [ultra thick] (0,0) .. controls (0,0.5) and (2,0.5) .. (2,0) node[midway,above] {$2$} ;
\draw [ultra thick] (0,0) .. controls (0,-0.5) and (2,-0.5) .. (2,0);

\begin{scope}[shift={(2,0)}]
\draw [thick] (0,0) -- (0.8,0.6) node[right]{$2$};
\draw [thick] (0,0) -- (0.8,0.2);
\draw [thick] (0,0) -- (0.8,-0.4) node[right]{$3$};
\draw[fill,white](0,0) circle(2mm);
\draw[thick](0,0) circle(2mm);
\draw (0,0) node{$3$};
\draw(0,-0.2) node[below]{$w'$};
\end{scope}

\end{scope}

\end{tikzpicture}
\end{center}

\item \label{item:extensioncase3} 
One of the integers $k-1+d\chi(v)$ and $l-1+d\chi(w)$ is positive, so we assume without loss of generality that $t=k+d\chi(v)\geq 2$. In this case we cannot replace $u'$ with a one-edge graph $L'$ as in cases~(\ref{item:extensioncase1} )and~(\ref{item:extensioncase2}) above, as that would require assigning a negative genus to the vertex $v'$ over $v$. Instead, the vertex $v$ will have $t$ preimages, and we need to split up the preimages of the $f_i$ among them in a way that satisfies the harmonicity condition \eqref{eq:harmonic}. This requires a statement about partitions and their refinements (Lemma~\ref{lemma:partition} below).

First, a few more definitions. Let $d$ be a positive integer. A {\it partition} $a=(a_1,\ldots,a_k)$ of $d$ is a multiset of positive integers adding up to $d$. The {\it length} $|a|$ of $a$ is $k$. Let $a=(a_1,\ldots,a_k)$ and $b=(b_1,\ldots,b_l)$ be two partitions of $d$. A {\it refinement of $a$ by $b$} is a map of multisets $r:b\to a$ such that for each $i$, $r^{-1}(a_i)$ is a partition of $a_i$. 

Denote
$$
\{f'_{i1},\ldots,f'_{ik_i}\}=\ph^{-1}(f_i),\quad a_{ip}=d_{\ph}(f'_{ip}),\quad \textrm{ and }\quad i=1,\ldots,m,
$$
then by the harmonicity condition \eqref{eq:harmonic} each $a_i=(a_{i1},\ldots,a_{ik_i})$ is a partition of $d$, while $(k_1,\ldots,k_m)$ is a partition of $k$. We now observe that $k_i\leq d$ for all $i$, hence $k\leq dm$ and therefore $t\leq d(1-2g(v))$. This implies that $g(v)=0$, $g(w)=g(u)$ and
$$
t=k-d(m-1)=k_1+\cdots+k_m-d(m-1).
$$
Therefore by Lemma~\ref{lemma:partition} there exists a partition $c=(c_1,\ldots,c_t)$ of $d$ and refinements $r_i:a_i\to c$ such that for each $j=1,\ldots,t$, the total length of the partitions $r_1^{-1}(c_j),\ldots,r_m^{-1}(c_j)$ is equal to $c_j(m-1)+1$. 

We now let $L'$ be the graph having $t$ vertices $v'_1,\ldots,v'_t$ of genus $0$, each connected by a single edge $e'_i$ to a vertex $w'$ of genus $g(u')$. For each $i=1,\ldots,m$, attach those of the $e'_{ip}$ to $v'_j$ whose corresponding degrees are in $r_i^{-1}(c_j)$, and attach all $\ph^{-1}(h_1,\ldots,h_n)$ to $w'$. We define $\psi$ on $L'$ by
$$
\psi(v'_j)=v,\quad \psi(w')=w,\quad \psi(e'_i)=e,\quad d_{\psi}(v'_j)=d_{\psi}(e'_j)=c_j,\quad d_{\psi}(w')=d.
$$
For each $j=1,\ldots,t$, we have
$$
g(v'_j)=0,\quad \val(v'_j)=\sum_{i=1}^{m}|r_i^{-1}(c_j)|+1=c_j(m-1)+2,\quad \textrm{ and }\quad \chi(v'_j)=c_j(1-m),
$$
while $g(v)=0$ and $\chi(v)=1-m$. Hence the morphism $\psi$ is unramified at $v'_j$. Similarly,
$$
g(w')=g(u'),\quad \val(w')=l+t,\quad \textrm{ and }\quad\chi(w')=2-2g(u')-l-t
$$
while $g(w)=g(u)$ and $\chi(w)=1-2g(u)-n$. Equation~\eqref{eq:localRHu'} and the definition of $t$ imply that $\psi$ is unramified at $w'$. This completes the proof.

An example is given below with $k=5$, $l=3$, and $d=3$.

\begin{center}
\begin{tikzpicture}

\begin{scope}[shift={(0,0)}]

\draw [thick] (0,0) -- (-0.8,0.4) node[left]{$f_1$};
\draw [thick] (0,0) -- (-0.8,-0.4) node[left]{$f_2$};
\draw [thick] (0,0) -- (0.8,0.4) node[right]{$h_1$};
\draw [thick] (0,0) -- (0.8,-0.4) node[right]{$h_2$};
\draw[fill,white](0,0) circle(2mm);
\draw[thick](0,0) circle(2mm);
\draw (0,0) node{$1$};
\draw(0,-0.2) node[below]{$u$};

\end{scope}

\begin{scope}[shift={(3,0)}]

\draw [thick] (0,0) -- (-0.8,0.4) node[left]{$f_1$};
\draw [thick] (0,0) -- (-0.8,-0.4) node[left]{$f_2$};
\draw[fill](0,0) circle(.8mm);
\draw(0,-0.2) node[below]{$v$};
\draw [ultra thick] (0,0) -- (2,0) node[midway,above]{$e$};

\begin{scope}[shift={(2,0)}]
\draw [thick] (0,0) -- (0.8,0.4) node[right]{$h_1$};
\draw [thick] (0,0) -- (0.8,-0.4) node[right]{$h_2$};
\draw[fill,white](0,0) circle(2mm);
\draw[thick](0,0) circle(2mm);
\draw (0,0) node{$1$};
\draw(0,-0.2) node[below]{$w$};
\end{scope}

\end{scope}

\begin{scope}[shift={(0,2)}]

\draw [thick] (0,0) -- (-0.8,0.8);
\draw [thick] (0,0) -- (-0.8,0.5);
\draw [thick] (0,0) -- (-0.8,0.2);
\draw [thick] (0,0) -- (-0.8,-0.2) node[left]{$2$};
\draw [thick] (0,0) -- (-0.8,-0.5);
\draw [thick] (0,0) -- (0.8,0.6) node[right]{$2$};
\draw [thick] (0,0) -- (0.8,0.2);
\draw [thick] (0,0) -- (0.8,-0.4) node[right]{$3$};
\draw[fill,white](0,0) circle(2mm);
\draw[thick](0,0) circle(2mm);
\draw (0,0) node{$3$};
\draw(0,-0.2) node[below]{$u'$};

\end{scope}

\begin{scope}[shift={(3,2)}]

\begin{scope}[shift={(0,0.7)}]
\draw [thick] (0,0) -- (-0.8,0.6);
\draw [thick] (0,0) -- (-0.8,0.2);
\draw [thick] (0,0) -- (-0.8,-0.4) node[left]{$2$};
\draw[fill](0,0) circle(.8mm);
\draw(0,-0.2) node[below]{$v'_1$};
\end{scope}

\begin{scope}[shift={(0,-0.7)}]
\draw [thick] (0,0) -- (-0.8,0.4);
\draw [thick] (0,0) -- (-0.8,-0.4);
\draw[fill](0,0) circle(.8mm);
\draw(0,-0.2) node[below]{$v'_2$};
\end{scope}

\draw [ultra thick] (0,0.7) -- (2,0) node[midway,above] {$2$} ;
\draw [ultra thick] (0,-0.7) -- (2,0);

\begin{scope}[shift={(2,0)}]
\draw [thick] (0,0) -- (0.8,0.6) node[right]{$2$};
\draw [thick] (0,0) -- (0.8,0.2);
\draw [thick] (0,0) -- (0.8,-0.4) node[right]{$3$};
\draw[fill,white](0,0) circle(2mm);
\draw[thick](0,0) circle(2mm);
\draw (0,0) node{$3$};
\draw(0,-0.2) node[below]{$w'$};
\end{scope}

\end{scope}

\end{tikzpicture}
\end{center}

\end{enumerate}

\end{proof}

\begin{lemma} \label{lemma:partition} Let $d$ be a positive integer, let $a_1=(a_{11},\ldots,a_{1k_1}),\ldots,a_m=(a_{m1},\ldots,a_{mk_m})$ be an $m$-tuple of partitions of $d$ of lengths $k_1,\ldots,k_m$, and suppose that
$$
t=k_1+\cdots+k_m-d(m-1)\geq 1.
$$
Then there is a partition $c=(c_1,\ldots,c_t)$ of $d$ of length $t$ and refinements $r_i:a_i\to c$ with the property that, for each $j=1,\ldots,t$, the sum of the lengths of the partitions $r_1^{-1}(c_j),\ldots,r_n^{-1}(c_j)$ of $c_j$ is equal to $c_j(d-1)+1$.

\end{lemma}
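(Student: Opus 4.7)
The approach is a nested induction: an outer induction on $m$ reduces the problem to the case $m=2$, and an inner induction on $d$ handles the remaining case.

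For the outer induction on $m$, the case $m=1$ is trivial with $c=a_1$ and $r_1$ the identity. For $m \geq 2$, first apply the inductive hypothesis to the partitions $a_1,\ldots,a_{m-1}$ to obtain a common refinement $c'$ of length $t' = \sum_{i<m} k_i - (m-2)d$ together with refinements $r'_i : a_i \to c'$ satisfying $\sum_{i<m}|r_i'^{-1}(c'_{j'})| = c'_{j'}(m-2)+1$. Then apply the $m=2$ case (proved below) to the pair of partitions $c'$ and $a_m$, both of degree $d$, with lengths $t'$ and $k_m$, to obtain a common refinement $c$ of length $t = t'+k_m-d$ with refinements $s:c'\to c$ and $r_m:a_m\to c$ satisfying $|s^{-1}(c_j)| + |r_m^{-1}(c_j)| = c_j+1$. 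Setting $r_i := s \circ r'_i$ for $i<m$ and using $\sum_{j':\,s(c'_{j'})=c_j} c'_{j'}=c_j$ to collect terms, a direct computation gives
\[
\sum_{i=1}^m |r_i^{-1}(c_j)| = \sum_{j':\,s(c'_{j'})=c_j}\bigl(c'_{j'}(m-2)+1\bigr) + |r_m^{-1}(c_j)| = c_j(m-2) + |s^{-1}(c_j)| + |r_m^{-1}(c_j)| = c_j(m-1)+1.
\]

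The $m=2$ case is proved by induction on $d$. Denote the two partitions by $b$ and $a$ with lengths $\ell$ and $k$, so that $t = \ell+k-d \geq 1$. The case $d=1$ is trivial. For $d \geq 2$, the hypothesis $t \geq 1$ forces at least one of $b,a$ to contain a part equal to $1$, since otherwise all parts are $\geq 2$, forcing $\ell, k \leq d/2$ and hence $t \leq 0$. One then splits into three sub-cases: (i) if $t=1$, take $c=(d)$ with the trivial refinements, so that $|r_b^{-1}(d)|+|r_a^{-1}(d)| = \ell+k = d+1 = c_1+1$; (ii) if $t\geq 2$ and both $b$ and $a$ contain a part equal to $1$, peel off one such $1$ from each to obtain partitions $b^-, a^-$ of $d-1$ with $t^-=t-1\geq 1$, apply the inductive hypothesis to obtain $c^-$ of length $t-1$, and set $c=(c^-,1)$ with the obvious extension of the refinements.

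The remaining sub-case (iii) is the main obstacle: $t\geq 2$, but only one of $b,a$ --- say $a$ --- contains a part equal to $1$, while every part of $b$ is $\geq 2$. Let $s = \min_p b_p \geq 2$ and let $u$ denote the number of parts of $a$ equal to $1$. A counting argument using that $a$ has $k$ parts summing to $d$ with each non-one part at least $2$ yields $u \geq 2k-d$; combining this with $k \geq d-\ell+2$ (from $t\geq 2$) and $\ell \leq d/s$ (since all $b$-parts are $\geq s$) gives $u \geq d(s-2)/s + 4$. A short case-by-case check on $s$ shows this lower bound exceeds $s$, indeed gives $u \geq s+1$ in every case. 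Pair the smallest $b$-part with $s$ of the $1$'s of $a$ to form a block $c_{j^*}=s$ with $|B_{j^*}|+|A_{j^*}|=1+s=c_{j^*}+1$; removing this block produces partitions $b^-, a^-$ of degree $d-s$, with lengths $\ell-1$ and $k-s$ and $t^- = t-1 \geq 1$. The strict inequality $u \geq s+1$ ensures that $a^-$ still contains a part equal to $1$, so that the problem remains in case (ii) or case (iii), and the inductive hypothesis applies. The hardest step is the verification of the counting inequality $u \geq s+1$, which requires handling the edge cases of small $s$ (where one also uses that $\ell \geq 2$: if $\ell = 1$ then $b = (d)$ and $t \geq 2$ would force $k \geq d+1$, which is impossible).
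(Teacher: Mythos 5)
Your proof is correct, but it takes a genuinely different route from the paper's. (Note first that the displayed statement contains typos: the intended conclusion, which is what the paper proves and uses in the edge-contraction lemma, is that the total length of $r_1^{-1}(c_j),\ldots,r_m^{-1}(c_j)$ equals $c_j(m-1)+1$; you prove exactly this version, so you are proving the right statement.) The paper's argument is global and non-inductive: it stacks the $m$ partitions as rows of a $d\times m$ wall of blocks, defines $F(i)=\#\{\text{blocks left of }x=i\}-i(m-1)$, observes $F(0)=0$, $F(d)=t$, and that $F$ increases by at most $1$ per step with increments of $+1$ occurring only at lines that bisect no block, and then takes the cut points $d_j$ where $F$ first reaches $j$; the parts $c_j=d_j-d_{j-1}$ and the refinements fall out immediately, with the block count between consecutive cuts giving $c_j(m-1)+1$ in one computation. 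Your nested induction (outer on $m$ via composition of refinements, inner on $d$ via peeling) is more elementary in flavor and your bookkeeping is sound: the composition identity $\sum_i|r_i^{-1}(c_j)|=c_j(m-2)+|s^{-1}(c_j)|+|r_m^{-1}(c_j)|$ is right, and in the outer step the needed hypothesis $t'\geq 1$ is automatic since $t'=t+(d-k_m)\geq t$ (worth stating explicitly). The price you pay is sub-case (iii), whose counting inequality is an artifact of the inductive strategy that the paper's wall argument avoids entirely; moreover your insistence on $u\geq s+1$ is stronger than necessary, since the inductive hypothesis at degree $d-s$ is the full $m=2$ statement (with $t^-=t-1\geq 1$), so you only need $u\geq s$ and need not keep the reduced instance inside cases (ii)--(iii). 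In short: your proof buys a self-contained recursive construction at the cost of case analysis, while the paper's buys brevity and a transparent reason for the count $c_j(m-1)+1$.
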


\begin{proof} We order each partition $a_i$ in an arbitrary way, and we visualize the entire $m$-tuple as a rectangular wall of size $d\times m$, with rows of horizontal blocks representing the partitions, located on the $xy$-plane with the origin in the lower left corner. For example, if $d=6$, $m=3$, $a_1=(1,2,2,1)$, $a_2=(1,1,1,2,1)$ and $a_3=(2,1,1,1,1)$, we obtain the following picture:
\begin{center}
\begin{tikzpicture}
\draw[thick] (0,0) -- (3,0);
\draw[thick] (0,0.5) -- (3,0.5);
\draw[thick] (0,1) -- (3,1);
\draw[thick] (0,1.5) -- (3,1.5);
\draw[thick] (0,0) -- (0,1.5);
\draw[thick] (0.5,0.5) -- (0.5,1.5);
\draw[thick] (1,0) -- (1,1);
\draw[thick] (1.5,0) -- (1.5,1.5);
\draw[thick] (2,0) -- (2,0.5);
\draw[thick] (2.5,0) -- (2.5,1.5);
\draw[thick] (3,0) -- (3,1.5);
\end{tikzpicture}
\end{center}
For each integer $i=0,\ldots,m$, we define the function
$$
F(i)=\#\big\{\mbox{blocks lying to the left of the vertical line }x=i\big\}-i(m-1).
$$
In the example above, we have
$$
F(0)=0,\quad F(1)=0,\quad F(2)=0, \quad F(3)=1,\quad F(4)=0,\quad F(5)=1,\quad F(6)=2.
$$
The function $F$ satisfies the following properties:
$$
F(0)=0,\quad F(d)=k_1+\cdots+k_m-d(m-1)=t,\quad \textrm{ and }\quad 1-m\leq F(i+1)-F(i)\leq 1.
$$
In other words, $F$ changes from $F(0)=0$ to $F(d)=t$ and, at each step, either decreases, or increases by at most $1$. Furthermore, if $F(i)-F(i-1)=1$, then there are $m$ more blocks to the left of $x=i$ than to the left of $x=i-1$, meaning that the vertical line $x=i$ does not bisect any blocks. 

By the pigeonhole principle, there is an increasing (not necessarily unique) sequence of integers $d_1,\ldots,d_t=d$ such that
$$
F(d_j)=j\quad \textrm{ and }\quad F(d_j-1)=j-1.
$$
In the example above, we can choose $d_1=3$ or $d_1=5$. The vertical lines $x=d_j$ do not bisect any blocks, and the number of blocks between $x=d_{j-1}$ and $x=d_j$ is equal to
\begin{equation*}\begin{split}
\#\big\{\mbox{blocks between }x=d_{j-1}\mbox{ and }x=d_j\big\}&
=F(d_j)-F(d_{j-1})+(d_j-d_{j-1})(m-1)\\&=(d_j-d_{j-1})(m-1)+1.
\end{split}\end{equation*}
It follows that $c_j=d_j-d_{j-1}$ is the desired partition, and the refinement maps $r_i:a_i\to c$ send the blocks in the $i$-th row between $x=d_{j-1}$ and $x=d_j$ to $c_j$.  
\end{proof}

\subsection{Proof of Theorem \ref{thm_structureDRlocus}}
\label{sec:mainproof}
We are now ready to prove Theorem~\ref{thm:main}. By Corollary \ref{cor_tree->linearequivalence} we have a natural inclusion $\DR_{g,a}^{trop}\subseteq \PD_{g,a}^{trop}$, so it is enough to show that $DR_{g,a}$ is a semilinear subset of $M_{g,n}^{trop}$ of dimension $2g-3+n$. The proof consists of two parts. First, we give an explicit algorithm for computing $DR_{g,a}$, which shows that it is a semilinear subset of $M_{g,n}^{trop}$ of dimension less than or equal to $2g-3+n$. The idea is that, given a finite effective harmonic morphism $\tau:\Ga'\to \De$ as above, we use Remark~\ref{rem:addramification} and add enough legs to $\Ga'$ and $\De$ to promote $\tau$ to an unramified harmonic morphism $\ttau:\tGa\to \tDe$ to a {\it stable} tree $\tDe\in M_{0,2g+n}^{trop}$. We then enumerate all possible stable trees $\tDe$ and all unramified covers of $\tDe$, and forget the additional legs to recover $DR_{g,a}$. This construction can be seen as a tropical analogue of classical dimension counts for gonality loci. 

Second, we inductively construct an explicit subset of $DR_{g,a}$ of dimension $2g-3+n$. We borrow an idea from \cite{CoolsDraisma}, and indeed our theorem can be viewed as a generalization of \cite[Theorem 1]{CoolsDraisma}.

\medskip \noindent {\bf Algorithm for constructing $DR_{g,a}$}. Let $\Ga\in DR_{g,a}$, and let $\tau:\Ga'\to \De$ be a finite effective harmonic morphism, where $(\Ga')_{st}=\Ga$, $\De$ is a tree with two legs marked $0$ and $\infty$, and $\tau$ acts on the legs according to~\eqref{eq:DRcondition}. Equation~\eqref{eq:RHformula} implies that 
$$
\Ram(\tau)=-\chi(\Ga')=2g+n-2. 
$$
We now use Remark~\ref{rem:addramification} and add legs to $\Ga'$ and $\De$ to kill the ramification. Specifically, we attach $N=2g+n-2$ legs $q_{11},\ldots,q_{N1}$ to $\Ga'$ at the support of the ramification divisor of $\tau$, then add $N$ legs $r_1,\ldots,r_N$ to $\De$ at the images of the ramification points, and finally $N(d-2)$ additional legs $q_{jk}$ to $\Ga'$ at the remaining preimages of the attachment points of the $r_j$, where $j=1,\ldots,N$ and $k=2,\ldots,d-1$. We denote the resulting curves by $\tGa$ and $\tDe$, and we extend $\tau$ to the new legs by mapping $q_{jk}$ to $r_j$, with degree 2 on the $q_{j1}$ and degree 1 on the rest. We then remove any remaining extremal edges from the source and target by passing to the stabilization (see Definition~\ref{def:stabilization}). As a result, we obtain an unramified harmonic morphism $\ttau:\tGa\to \tDe$, where $\tGa\in M_{g,n+N(d-1)}^{trop}$ and $\tDe\in M_{0,2+N}^{trop}$ are {\it stable} curves. We consider the locus of curves in $M_{g,n+N(d-1)}^{trop}$ that we obtain in this way:

\begin{definition} Let $g$ and $n$ be such that $2g-2+n>0$, let $a=(a_1,\ldots,a_n)\in \ZZ^n$ be an $n$-tuple of nonzero integers such that $a_1+\cdots+a_n=0$, let $d$ be the sum of the positive $a_i$, and let $N=2g+n-2$. We define $\tDR_{g,a}\subset M_{g,n+N(d-1)}^{trop}$ to be the locus of marked stable curves $\tGa$ admitting an unramified harmonic morphism $\ttau:\tGa\to \tDe$ of the following kind:

\begin{enumerate}

\item $\tGa$ is a stable tropical curve of genus $g$ with $n$ marked legs $p_1,\ldots,p_n$ and $N(d-1)$ additional marked legs $q_{jk}$, where $1\leq j\leq N$ and $1\leq k\leq d-1$.

\item $\tDe\in M_{0,2+N}^{trop}$ is a stable tree with marked legs $0$ and $\infty$, and $N$ additional marked legs $r_j$, where $1\leq j\leq N$.

\item $\ttau$ is an unramified harmonic morphism acting in the following way on the legs:
\begin{equation}\begin{split}
\ttau(p_i)&=\left\{\begin{array}{cc} 0, & \textrm{ if }a_i>0, \\ \infty, & \textrm{ if }a_i<0,\end{array}\right.\quad \textrm{ and }\quad d_{\ttau}(p_i)=|a_i|\\
\ttau(q_{jk})&=r_j\quad \textrm{ and }\quad d_{\ttau}(q_{jk})=\left\{\begin{array}{cl} 2, & \textrm{ if }k=1, \\ 1, & \textrm{ if }k=2,\ldots,d-1.\end{array}\right.
\label{eq:allram}
\end{split}\end{equation}
\end{enumerate} \label{def:DRtilde}

\end{definition}

Let $\pi:M_{g,n+N(d-1)}^{trop}\to M_{g,n}^{trop}$ be the map that forgets the additional marked legs $q_{jk}$. We claim that $\pi(\tDR_{g,a})=DR_{g,a}$. Indeed, the construction above shows that $\pi(\tDR_{g,a})\supset DR_{g,a}$, since we can recover $\Ga$ from $\tGa$ by removing the additional legs $q_{jk}$ and stabilizing. On the other hand, suppose that $\ttau:\tGa\to \tDe$ is an unramified harmonic morphism satisfying conditions~\eqref{eq:allram}. Let $\Ga$ and $\De$ be respectively $\tGa$ and $\tDe$ with the additional legs $q_{jk}$ and $r_j$ removed, and let $\tau:\Ga\to \De$ be the restriction of $\ttau$ to $\Ga$. Then by Remark~\ref{rem:restriction} the morphism $\tau$ is finite and effective, hence $\Ga_{st}$ lies in $DR_{g,a}$ and therefore $\pi(\tDR_{g,a})\subset DR_{g,a}$. 

We now have an algorithm for constructing $DR_{g,a}=\pi(\tDR_{g,a})$:
\begin{enumerate}
\item Enumerate all combinatorial types $\tT$ of tropical curves in $M_{0,2+N}^{trop}$.

\item For each combinatorial type $\tT$, use the algorithm of Proposition~\ref{prop:finitelymany} to enumerate all unramified harmonic morphisms $\tph:\tG\to \tT$ of graphs satisfying~\eqref{eq:allram}; there are finitely many such morphisms.

\item By Remark~\ref{rem:edgelengths}, for each choice of metric on $\tT$ there is a unique choice of metric on $\tG$ consistent with $\tph$. We thus obtain a cone of $\tDR_{g,a}$ for each unramified harmonic morphism $\tph:\tG\to \tT$ constructed in Steps 1 and 2.

\item Apply the forgetful map to recover $DR_{g,a}=\pi(\tDR_{g,a})$.

\end{enumerate}
We observe that in Step 2, for any combinatorial type $\tT$ there does exist at least one unramified harmonic morphism $\tph:\tG\to \tT$ satisfying~\eqref{eq:allram}. Indeed, the trivial graph $\bullet_{0,2+N}$ is the contraction of $\tT$ along the set of its edges. The trivial morphism $\bullet_{g,n+N(d-1)}\to \bullet_{0,2+N}$ satisfying~\eqref{eq:allram} is an unramified harmonic morphism of degree $d$, and by Lemma~\ref{lem:edgecontraction} extends to an unramified harmonic morphism $\tph:\tG\to \tT$, which also satisfies~\eqref{eq:allram}. We note that in Step 1, it is sufficient to enumerate all maximally degenerate combinatorial types in $M_{0,2g+n}^{trop}$, since by Lemma~\ref{lem:edgecontraction} any unramified harmonic morphism $\tph:\tG\to \tT$ is an edge contraction of an unramified harmonic morphism whose target is maximally degenerate.

We now finish the proof of the first part of the theorem. For each unramified harmonic morphism $\tph:\tG\to \tT$ satisfying~\eqref{eq:allram}, the corresponding cone of $\tDR_{g,a}$ has the same dimension as $M_{\tT}^{trop}\subset M_{0,2+N}^{trop}$. Furthermore, the conditions on the metric on $\tG$ are $\ZZ$-linear, hence the set of metric graphs $\tG$ that we obtain this way (together with their edge contractions) is a linear subset of $M_{g,n+N(d-1)}^{trop}$, of dimension $\dim M_{\tT}^{trop}$. Taking the union of these subsets over all combinatorial types $\tT$ in $M_{0,2+N}^{trop}$ and all unramified harmonic morphisms $\tph:\tG\to \tT$, we obtain that $\tDR_{g,a}$ is a linear subset of $M_{g,n+N(d-1)}^{trop}$ of dimension $N-1=2g+n-3$. Hence by Proposition \ref{prop:linear} the double ramification locus $DR_{g,a}=\pi(\tDR_{g,a})$ is a semilinear subset of $M_{g,n}^{trop}$ of dimension less than or equal to $2g+n-3$. 

If $d\geq 3$, then the map $\pi$ may have positive-dimensional fibers on $\tDR_{g,a}$ (an example is given in Remark~\ref{rem:dimensiondrop}). Therefore, we cannot yet conclude that $DR_{g,a}$ has dimension $2g-3+n$. However, we will see in Section~\ref{sec:hyperelliptic} that $\pi$ has finite fibers on $\tDR_{g,a}$ when $d=2$. 

We remark that the moduli space $M_{g,n}^{trop}$ is {\it equidimensional} in the sense that its maximal cones with respect to inclusion have the same dimension. In addition, $M_{g,n}^{trop}$ is connected through codimension one. The locus $\tDR_{g,a}$ inherits these properties from $M_{g,2+N}^{trop}$, however, we cannot conclude the same about its projection $DR_{g,a}$. We nevertheless believe that $DR_{g,a}$ is also equidimensional and connected through codimension one.

\medskip \noindent {\bf A cell of top dimension.} We now produce a cone of $DR_{g,n}$ of dimension $2g-3+n$. We first prove that $\dim DR_{g,a}=2g-3+n$ when $n=2d$ and $a=(1,\ldots,1,-1,\ldots,-1)\in \ZZ^{2d}$. If $d\leq g/2+1$, this follows directly from the results of Cools and Draisma~\cite{CoolsDraisma}. Indeed, the principal result of~\cite{CoolsDraisma}, rephrased in our language, is the following: the locus of curves in $M_g^{trop}$ having a tropical modification that admits a finite effective harmonic morphism of degree $d$ to a metric tree has dimension $\min(2g+2d-5,3g-3)$, which is equal to $2g+2d-5$ if $d\leq g/2+1$. Given such a morphism $\tau:\Ga\to \De$, we attach two legs $0$ and $\infty$ to $\De$ at arbitrary points, and attach $d_{\tau}(x)$ legs to $\Ga$ at every point $x$ of $\tau^{-1}(\{0,\infty\})$. Extending $\tau$ to the legs with degree one does not change the ramification degree, hence the curve $\Ga$ with the new legs lies in $DR_{g,a}$. The choice of the attaching points of $0$ and $\infty$ was arbitrary, hence
$$
\dim DR_{g,a}=2g+2d-5+2=2g-3+n. 
$$
For $d> g/2+1$ we proceed by induction, using the procedure of grafting a tree (see Section 4.1 in \cite{CoolsDraisma}). Assume that $\dim DR_{g,a}=2g-3+n$ when $n=2d$ and $a=(1,\ldots,1,-1,\ldots,-1)\in \ZZ^{2d}$. Pick a curve $\Ga\in DR_{g,a}$ and let $\tau:\Ga'\to \De$ be the corresponding morphism. Let $p_1,\ldots,p_d$ and $q_1,\ldots,q_d$ be the legs of $\Ga'$ mapping to $0$ and $\infty$, respectively. Let
$$
\Xi=\big\{x'\in \Ga'\big\vert d_{\ph}(x')=1\big\}\subset \Ga'
$$
then $\Xi$ is open and non-empty, since the degree function is upper semi-continuous on $\Ga'$ and since $\Xi$ contains the legs of $\Ga'$. Pick three points $x'_i\in \Xi$ and three positive lengths $l_i$, for $i=1,2,3$. Associated to this data, we construct a curve $\tGa\in DR_{g+2,\ta}$, where $\ta=(1,\ldots,1,-1,\ldots,-1)\in \ZZ^{2d+2}$, in the following steps.

\begin{enumerate} \item Let $\tDe$ be the tree obtained from $\De$ by attaching a finite edge $e_i$ of length $l_i$ to the point $x_i=\tau(x'_i)$, for $i=1,2,3$. We denote $y_i$ the free endpoint of $e_i$.

\item Denote $\ph^{-1}(x_i)=\{x'_{i1},\ldots,x'_{im_i}\}$, so that $x'_{i1}=x'_i$.
Let $\Ga''$ denote the graph obtained from $\Ga'$ by attaching finite edges $e'_{ij}$ of lengths $l_i$ to the points $x'_{ik}$, where $i=1,2,3$ and $j=1,\ldots,d$, in such a way that $e'_{i1}$ is attached to $x'_i$ and the number of edges $e'_{ij}$ that are attached to $x'_{ik}$ is equal to $d_{\ph}(x'_{ik})$. Denote $y'_i$ the free endpoint of $e'_{i1}$.

\item Let $\tGa'$ be the union of $\Ga''$ and a copy of $\tDe$, where we glue $y'_i\in \Ga''$ to $y_i\in \tDe$ for $i=1,2,3$. The curve $\tGa'$ has genus $g+2$, $2d$ legs $p_j$ and $q_j$, and two additional legs coming from $\tDe$, which we denote $0'$ and $\infty'$.

\item We define $\tph:\tGa'\to \tDe$ using $\ph$ on $\Ga'\subset \Ga''\subset\tGa'$, by sending the edges $e'_{ij}\subset \Ga''$ to $e_i$ with degree one, and as the identity on $\tDe$.

\end{enumerate}

It is easy to check that $\tph$ is an unramified harmonic morphism of degree $d+1$, mapping the legs $p_j$ and $0'$ to $0$ and the legs $q_j$ and $\infty'$ to $\infty$. Therefore we have constructed a curve $\tGa=(\tGa')_{st}\in \DR_{g+2,\ta}$. We now count parameters. The curve $\tGa$ depends on choosing three points $x'_i\in \De$ and three lengths $l_i$, so we have constructed a six-dimensional family of curves $\tGa\in \DR_{g+2,\ta}$ for each $\Ga\in \DR_{g,a}$. It follows that
$$
\dim \DR_{g+2,\ta}=\dim \DR_{g,a}+6=2g+2d-3+6=2(g+2)+(2d+2)-3,
$$
which proves the induction step, because $\ta$ has $2d+2$ entries.

To prove that $\dim \DR_{g,a}=2g-3+n$ for any $a=(a_1,\ldots,a_n)$, we proceed by induction on $\max |a_i|$, the base case $a_i=\pm 1$ having been established above. Let $a'=(a_1,\ldots,a_n,b,c)$ be a ramification profile with $b$ and $c$ both positive (or both negative, the argument is identical), and suppose that we have established that $\dim \DR_{g,a'}=2g-1+n$. We claim that $\dim \DR_{g+1,a''}=2g+n$, where $a''=(a_1,\ldots,a_n,b+c)$. Indeed, pick a curve $\Ga\in \DR_{g,a'}$, and let $\ph:\Ga'\to \De$ be the corresponding morphism, where $(\Ga')^{st}=\Ga$. The curve $\Ga'$ has legs $p_1,\ldots,p_n$ mapping with degree $|a_i|$ to $0$ or $\infty$ (depending on whether $a_i>0$ or $a_i<0$), and two additional legs $l_1$ and $l_2$ mapping to the leg $0$ in $\De$. 

We now construct a curve $\tGa\in \DR_{g+1,a''}$ as follows. Pick a point $x$ anywhere on the leg $0\subset \De$, and let $x'_1$ and $x'_2$ be the preimages of $x$ on $l_1$ and $l_2$, respectively. We define $\tGa$ by gluing the two legs $l_1$ and $l_2$ together into a single leg starting at the points $x'_1$ and $x'_2$. We define the morphism $\tph:\tGa\to \De$ by sending $l$ to $0$ with degree $b+c$, and as $\ph$ on the remainder of $\tGa$. The curve $\tGa$ has genus $g+1$, and $\tph$ has ramification degree one at the identified point $x'_1=x'_2$, therefore $\tph$ is finite and effective, and hence $(\tGa)^{st}\in \DR_{g+1,a''}$. Since $\dim \DR_{g,a'}=2g-1+n$ and the construction involved choosing an arbitrary point $x\in 0$, it follows that  $\dim \DR_{g+1,a''}=2g+n$. This completes the proof.


\subsection{Elliptic curves with two marked points: $DR_{1,(d,-d)}$} \label{sec:example}

In this section, we demonstrate the algorithm of Theorem~\ref{thm:main} by computing the double ramification locus $DR_{1,(d,-d)}\subset M_{1,2}^{trop}$, where $d\geq 2$. We recall that $DR_{1,(d,-d)}=\pi(\tDR_{1,(d,-d)})$, where $\tDR_{1,(d,-d)}\subset M_{1,2d}^{trop}$ is the locus is given by Definition~\ref{def:DRtilde}, and $\pi:M_{1,2d}^{trop}\to M_{1,2}^{trop}$ is the forgetful map. Specifically, $\tDR_{1,(d,-d)}$ consists of curves $\tGa$ admitting an unramified harmonic morphism $\ttau:\tGa\to \tDe$ of the following kind:

\begin{enumerate}

\item \label{item:1dd1} $\tGa\in M_{1,2d}^{trop}$ is a stable tropical curve with legs $p_1$ and $p_2$, and additional legs $q_{ij}$, where $i=1,2$ and $j=1,\ldots,d-1$.

\item $\tDe\in M_{0,4}^{trop}$ is a stable tree with legs $0$, $\infty$, and additional legs $r_1$ and $r_2$.

\item $\ttau$ acts on the legs as follows:
\begin{equation*}\begin{split}
\ttau(p_1)& =0\quad \textrm{ and }\quad \ttau(p_2)=\infty\quad \textrm{ with } \quad d_{\ttau}(p_1)=d_{\ttau}(p_2)=d,\\
 \ttau(q_{ij})&=r_i\quad \textrm{ with } \quad d_{\ttau}(q_{ij})=\left\{\begin{array}{cl} 2 & \textrm{ if } j=1, \\ 1 & \textrm{ if } j=2,\ldots,d-1.\end{array}\right.
\end{split}\end{equation*}

\end{enumerate}
We enumerate the possible target trees $\tDe\in M_{0,4}^{trop}$, and for each tree we construct the possible covers $\ttau:\tGa\to \tDe$. For each $\tGa$, we then forget the additional legs $q_{ij}$ and stabilize to obtain the locus $DR_{1,(d,-d)}$.

\begin{enumerate} \item We first consider the following tree $\tDe$:
\begin{center}
\begin{tikzpicture}

\draw [thick] (0,0) -- (3,0) node[midway,above] {$e$} ;
\draw[fill] (0,0) circle(0.8mm);
\draw[fill] (3,0) circle(0.8mm);

\draw (0.1,-0.1) node[below] {$v_1$};

\draw [thick] (0,0) -- (-0.8,0.4) node[left] {$0$};
\draw [thick] (0,0) -- (-0.8,-0.4) node[left] {$r_1$};
\begin{scope}[shift={(3,0)}]
\draw [thick] (0,0) -- (0.8,0.4) node[right]{$\infty$};
\draw [thick] (0,0) -- (0.8,-0.4) node[right]{$r_2$};
\draw (0.0,-0.1) node[below] {$v_2$};
\end{scope}
\end{tikzpicture}
\end{center}
Let $\ttau:\tGa\to \tDe$ be an unramified harmonic morphism of the type described above. Since $0$ and $\infty$ each have one preimage in $\tGa$, it follows that the graph $\tGa$ has two vertices $v'_1$ and $v'_2$, mapping with degree $d$ to $v_1$ and $v_2$, respectively, and that the legs $q_{ij}$ are attached to $v'_i$. Let $\ttau^{-1}(e)=\{e'_1,\ldots,e'_n\}$ be the edges of $\tGa$, then $\val(v'_1)=\val(v'_2)=d+n$, hence the conditions 
$$
0=\Ram_{\ttau}(v'_i)=d\chi(v_i)-\chi(v'_i)=2g(v'_i)+n-2
$$
imply that $n=2$ and $g(v'_1)=g(v'_2)=0$. The harmonicity condition implies that the degrees $b_1=d_{\ph}(e'_1)$ and $b_2=d_{\ph}(e'_2)$ are arbitrary positive integers adding up to $d$, and the lengths of $e'_1$ and $e'_2$ satisfy the condition $b_1l(e'_1)=l(e)=b_2l(e'_2)$. Hence we obtain the following unramified harmonic morphism $\ttau:\tGa\to \tDe$:

\begin{center}
\begin{tikzpicture}

\begin{scope}[shift={(0,0)}]

\draw [thick] (0,0) .. controls (0,0.5) and (3,0.5) .. (3,0) node[midway,above] {$e'_1$} ;
\draw [thick] (0,0) .. controls (0,-0.5) and (3,-0.5) .. (3,0) node[midway,above] {$e'_2$} ;
\draw[fill] (0,0) circle(.8mm);
\draw[fill] (3,0) circle(.8mm);

\draw (0.1,-0.1) node[below] {$v'_1$};

\draw [thick] (0,0) -- (-0.8,0.4) node[left] {$p_1$};

\draw [thick] (0,0) -- (-0.8,0);
\draw [thick] (0,0) -- (-0.8,-0.2) ;
\draw [thick] (0,0) -- (-0.8,-0.4) ;
\draw [thick] (0,0) -- (-0.8,-0.6) ;
\draw (-0.8,-0.4) node[left]{$q_{1j}$};

\begin{scope}[shift={(3,0)}]
\draw (0.0,-0.1) node[below] {$v'_2$};
\draw [thick] (0,0) -- (0.8,0.4) node[right] {$p_2$};
\draw [thick] (0,0) -- (0.8,0);
\draw [thick] (0,0) -- (0.8,-0.2) ;
\draw [thick] (0,0) -- (0.8,-0.4) ;
\draw [thick] (0,0) -- (0.8,-0.6) ;
\draw (0.8,-0.4) node[right]{$q_{2j}$};
\end{scope}

\end{scope}

\begin{scope}[shift={(0,-2)}]

\draw [thick] (0,0) -- (3,0) node[midway,above] {$e$} ;
\draw[fill] (0,0) circle(0.8mm);

\draw (0.1,-0.1) node[below] {$v_1$};

\draw [thick] (0,0) -- (-0.8,0.4) node[left] {$0$};
\draw [thick] (0,0) -- (-0.8,-0.4) node[left] {$r_1$};

\begin{scope}[shift={(3,0)}]
\draw [thick] (0,0) -- (0.8,0.4) node[right]{$\infty$};
\draw [thick] (0,0) -- (0.8,-0.4) node[right]{$r_2$};
\draw[fill] (0,0) circle(0.8mm);
\draw (0,-0.1) node[below] {$v_2$};
\end{scope}

\end{scope}

\end{tikzpicture}  \end{center}

\item Exchanging the legs $r_1$ and $r_2$ in $\tDe$ above corresponds to relabeling the legs $q_{ij}$ in $\tGa$, and produces the same curve $\Ga\in DR_{1,(d,-d)}$. 

\item We now consider the following tree $\tDe$:
\begin{center}
\begin{tikzpicture}

\draw [thick] (0,0) -- (3,0) node[midway,above] {$e$} ;
\draw[fill] (0,0) circle(0.8mm);
\draw (0.1,-0.1) node[below] {$v_1$};
\draw [thick] (0,0) -- (-0.8,0.4) node[left] {$0$};
\draw [thick] (0,0) -- (-0.8,-0.4) node[left] {$\infty$};

\begin{scope}[shift={(3,0)}]
\draw[fill] (0,0) circle(0.8mm);
\draw (0,-0.1) node[below] {$v_2$};
\draw [thick] (0,0) -- (0.8,0.4) node[right]{$r_1$};
\draw [thick] (0,0) -- (0.8,-0.4) node[right]{$r_2$};
\end{scope}

\end{tikzpicture}
\end{center}
Let $\ttau:\tGa\to \tDe$ be an unramified harmonic morphism, then $v_1$ has a unique preimage $v'\in V(\tGa)$, and $d_{\ph}(v')=d$. Let $\ttau^{-1}(e)=\{e'_1,\ldots,e'_n\}$ be the legs of $\tGa$. The condition 
$$
0=\Ram_{\ttau}(v')=d\chi(v_1)-\chi(v')=2g(v')+n-d
$$
implies that either $g(v')=0$ and $n=d$, or $g(v')=1$ and $n=d-2$, because the total genus of $\tGa$ is equal to one.

\begin{enumerate} \item \label{item:1dd3a} If $g(v')=0$ and $n=d$, then $d_{\ph}(e'_j)=1$ for all $j=1,\ldots,d$, and therefore all $e'_j$ have the same length as $e$. Let $\ttau^{-1}(v_2)=\{u'_1,\ldots,u'_m\}$ be the remaining vertices of $\tGa$. Without loss of generality assume that the leg $q_{11}$ is attached to $u'_1$, then $d_{\ttau}(u'_1)\geq 2$ and hence $m\leq d-1$. The genus of $\tGa$ is equal to $d-m$ plus the sum of the genera of the $u'_j$, which implies that $m=d-1$, $g(u'_j)=0$ for $j=1,\ldots, d-1$, $d_{\ttau}(u'_1)=2$, and $d_{\ttau}(u'_j)=1$ for $j=2,\ldots,d-1$. Looking at the local degrees, and relabeling as necessary, we see that the edges $e'_j$ and the legs $q_{ij}$ are attached to $u'_j$ for $j=1,\ldots,d-1$, while $e'_n$ is attached to $u'_1$. Hence we obtain the following unramified harmonic morphism $\ttau:\tGa\to \tDe$:

\begin{center}
\begin{tikzpicture}
\begin{scope}[shift={(0,0)}]
\draw [thick] (0,0) .. controls (0.2,0.5) and (2,1.4) .. (3,1.5) node[midway,above]{$e'_1$};
\draw [thick] (0,0) .. controls (1,0.1) and (2.8,1) .. (3,1.5)node[midway,above]{$e'_2$};
\draw[fill](0,0) circle(.9mm);
\draw(0,-0.1) node[below]{$v'$};
\draw [thick] (0,0) -- (-0.8,0.4) node[left]{$p_1$};
\draw [thick] (0,0) -- (-0.8,-0.4) node[left]{$p_2$};
\draw[fill](3,1.5) circle(.9mm);
\draw(3,1.4) node[below]{$u'_1$};
\draw [thick] (3,1.5) -- (3.8,1.9) node[right] {$q_{11}$};
\draw [thick] (3,1.5) -- (3.8,1.1) node[right] {$q_{21}$};
\draw [thick] (0,0) -- (3,0) node[near end,above]{$e'_3$};
\draw[fill](3,0) circle(.8mm) node[below]{$u'_2$};
\draw [thick] (3,0) -- (3.8,0.4) node[right]{$q_{12}$};
\draw [thick] (3,0) -- (3.8,-0.4) node[right]{$q_{22}$};

\draw [thick] (0,0) -- (3,-1.5) node[midway,below]{$e'_d$};
\draw[fill](3,-1.5) circle(.8mm) node[below]{$u'_{d-1}$};
\draw [thick] (3,-1.5) -- (3.8,-1.1) node[right]{$q_{1,d-1}$};
\draw [thick] (3,-1.5) -- (3.8,-1.9) node[right]{$q_{2,d-1}$};
\draw [thick] (0,0) -- (2,-0.25);
\draw [thick] (0,0) -- (2,-0.5) node[right]{$e'_i$};
\draw [thick] (0,0) -- (2,-0.75);

\end{scope}

\begin{scope}[shift={(0,-3)}]

\draw [thick] (0,0) -- (3,0) node[midway,above] {$e$} ;
\draw[fill] (0,0) circle(0.8mm);

\draw (0.1,-0.1) node[below] {$v_1$};

\draw [thick] (0,0) -- (-0.8,0.4) node[left] {$0$};
\draw [thick] (0,0) -- (-0.8,-0.4) node[left] {$\infty$};

\begin{scope}[shift={(3,0)}]
\draw[fill] (0,0) circle(0.8mm);
\draw (0.0,-0.1) node[below] {$v_2$};
\draw [thick] (0,0) -- (0.8,0.4) node[right]{$r_1$};
\draw [thick] (0,0) -- (0.8,-0.4) node[right]{$r_2$};
\end{scope}

\end{scope}

\end{tikzpicture}

\end{center}

\item \label{item:1dd3b} If $g(v')=1$ and $n=d-2$, the genus constraint implies that each $e'_j$ is attached to a distinct vertex $u'_j\in \ttau^{-1}(v_2)$ of genus zero. The degrees $d_{\ttau}(e'_j)=d_{\ttau}(u'_j)$ form a partition of $d$ of length $d-2$. Relabeling if necessary, we see that there are two possibilities:

\begin{itemize} \item $d\geq 3$,  $d_{\ttau}(u'_1)=3$, $d_{\ttau}(u'_j)=1$ for $i=2,\ldots,d-2$, $q_{ij}$ are attached to $u'_j$ for $j=1,\ldots,d-2$, $q_{i,d-1}$ are attached to $u'_1$.

\item $d\geq 4$, $d_{\ttau}(u'_1)=d_{\ttau}(u'_2)=2$, $d_{\ttau}(u'_j)=1$ for $j=3,\ldots,d-2$, $q_{11}$, $q_{22}$, and $q_{2,d-1}$ are attached to $u'_1$, $q_{21}$, $q_{12}$, and $q_{1,d-1}$ are attached to $u'_2$, and $q_{ij}$ are attached to $u'_j$ for $j=3,\ldots,d-2$.

\end{itemize}
We obtain the following unramified harmonic morphisms $\ttau:\tGa\to \tDe$:

\begin{center}
\begin{tikzpicture}
\begin{scope}[shift={(0,0)}]
\draw [thick] (0,0) -- (3,1.5) node[midway,above]{$e'_1$};
\draw[thick](0,0) circle(2mm);
\draw (0,0) node{$1$};
\draw(0,-0.1) node[below]{$v'$};
\draw [thick] (0,0) -- (-0.8,0.4) node[left]{$p_1$};
\draw [thick] (0,0) -- (-0.8,-0.4) node[left]{$p_2$};
\draw[fill](3,1.5) circle(.9mm);
\draw(3,1.4) node[below]{$u'_1$};
\draw [thick] (3,1.5) -- (3.8,2.1) node[right] {$q_{11}$};
\draw [thick] (3,1.5) -- (3.8,1.7) node[right] {$q_{21}$};
\draw [thick] (3,1.5) -- (3.8,1.3) node[right] {$q_{1,d-1}$};
\draw [thick] (3,1.5) -- (3.8,0.9) node[right] {$q_{2,d-1}$};
\draw [thick] (0,0) -- (3,0) node[near end,above]{$e'_2$};
\draw[fill](3,0) circle(.8mm) node[below]{$u'_2$};
\draw [thick] (3,0) -- (3.8,0.4) node[right]{$q_{12}$};
\draw [thick] (3,0) -- (3.8,-0.4) node[right] {$q_{22}$};
\draw [thick] (0,0) -- (3,-1.5) node[midway,below]{$e'_{d-2}$};
\draw[fill](3,-1.5) circle(.8mm) node[below]{$u'_{d-2}$};
\draw [thick] (3,-1.5) -- (3.8,-1.1) node[right]{$q_{1,d-2}$};
\draw [thick] (3,-1.5) -- (3.8,-1.9) node[right]{$q_{2,d-2}$};
\draw [thick] (0,0) -- (2,-0.25);
\draw [thick] (0,0) -- (2,-0.5) node[right]{$e'_i$};
\draw [thick] (0,0) -- (2,-0.75);
\draw[fill, white](0,0) circle(2mm);
\draw[thick](0,0) circle(2mm);
\draw (0,0) node{$1$};

\end{scope}

\begin{scope}[shift={(0,-3)}]

\draw [thick] (0,0) -- (3,0) node[midway,above] {$e$} ;
\draw[fill] (0,0) circle(0.8mm);
\draw[fill] (3,0) circle(0.8mm);

\draw (0.1,-0.1) node[below] {$v_1$};
\draw (3.0,-0.1) node[below] {$v_2$};

\draw [thick] (0,0) -- (-0.8,0.4) node[left] {$0$};
\draw [thick] (0,0) -- (-0.8,-0.4) node[left] {$\infty$};
\draw [thick] (3,0) -- (3.8,0.4) node[right]{$r_1$};
\draw [thick] (3,0) -- (3.8,-0.4) node[right]{$r_2$};

\end{scope}

\begin{scope}[shift={(7,0)}]

\draw [thick] (0.18,0.09) -- (3,1.5) node[midway,above]{$e'_1$};

\draw(0,-0.1) node[below]{$v'$};
\draw [thick] (0,0) -- (-0.8,0.4) node[left]{$p_1$};
\draw [thick] (0,0) -- (-0.8,-0.4) node[left]{$p_2$};
\draw[fill](3,1.5) circle(.9mm);
\draw(3,1.4) node[below]{$u'_1$};
\draw [thick] (3,1.5) -- (3.8,1.9) node[right] {$q_{11}$};
\draw [thick] (3,1.5) -- (3.8,1.5) node[right] {$q_{22}$};
\draw [thick] (3,1.5) -- (3.8,1.1) node[right] {$q_{2,d-1}$};
\draw [thick] (0,0) -- (3,0) node[near end,above]{$e'_2$};
\draw[fill](3,0) circle(.8mm) node[below]{$u'_2$};
\draw [thick] (3,0) -- (3.8,0.4) node[right]{$q_{21}$};
\draw [thick] (3,0) -- (3.8,0) node[right]{$q_{12}$};
\draw [thick] (3,0) -- (3.8,-0.4) node[right] {$q_{1,d-1}$};
\draw [thick] (0,0) -- (3,-1.5) node[midway,below]{$e'_{d-2}$};
\draw[fill](3,-1.5) circle(.8mm) node[below]{$u'_{d-2}$};
\draw [thick] (3,-1.5) -- (3.8,-1.1) node[right]{$q_{1,d-2}$};
\draw [thick] (3,-1.5) -- (3.8,-1.9) node[right]{$q_{2,d-2}$};
\draw [thick] (0,0) -- (2,-0.25);
\draw [thick] (0,0) -- (2,-0.5) node[right]{$e'_i$};
\draw [thick] (0,0) -- (2,-0.75);

\draw[fill, white](0,0) circle(2mm);
\draw[thick](0,0) circle(2mm);
\draw (0,0) node{$1$};

\end{scope}

\begin{scope}[shift={(7,-3)}]

\draw [thick] (0,0) -- (3,0) node[midway,above] {$e$} ;
\draw[fill] (0,0) circle(0.8mm);
\draw[fill] (3,0) circle(0.8mm);

\draw (0.1,-0.1) node[below] {$v_1$};
\draw (3.0,-0.1) node[below] {$v_2$};

\draw [thick] (0,0) -- (-0.8,0.4) node[left] {$0$};
\draw [thick] (0,0) -- (-0.8,-0.4) node[left] {$\infty$};
\draw [thick] (3,0) -- (3.8,0.4) node[right]{$r_1$};
\draw [thick] (3,0) -- (3.8,-0.4) node[right]{$r_2$};

\end{scope}

\end{tikzpicture}

\end{center}

\end{enumerate}

\item \label{item:1dd4}Finally, if $\tDe$ consists of a single vertex $v$ with all legs attached to it, then $\tGa$ consists of a single vertex $v'$ of genus one with all legs attached to it. 

\end{enumerate}

We now consider the image of the locus $\tDR_{1,(d,-d)}$ described above under the map $\pi:M_{1,2d}^{trop}\to M_{1,2}^{trop}$ that forgets the legs $q_{ij}$ and stabilizes. The curves $\tGa$ described in \ref{item:1dd1} and \ref{item:1dd3a} above project to the following one-dimensional families of curves in $DR_{1,(d,-d)}\subset M_{1,2}^{trop}$:
\begin{center}
\begin{tikzpicture}

\draw [thick] (0,0) .. controls (0,0.5) and (3,0.5) .. (3,0) node[midway,above] {$e'_1$} ;
\draw [thick] (0,0) .. controls (0,-0.5) and (3,-0.5) .. (3,0) node[midway,above] {$e'_2$} ;
\draw[fill] (0,0) circle(.8mm);
\draw[fill] (3,0) circle(.8mm);

\draw [thick] (0,0) -- (-0.8,0) node[left] {$p_1$};
\draw [thick] (3,0) -- (3.8,0);
\draw (3.8,0) node[right]{$p_2$};

\begin{scope}[shift={(7,0)}]

\draw [thick] (0,0) .. controls (0,0.5) and (3,0.5) .. (3,0) ;
\draw [thick] (0,0) .. controls (0,-0.5) and (3,-0.5) .. (3,0) ;
\draw[fill] (0,0) circle(.8mm);
\draw [thick] (0,0) -- (-0.8,0.4) node[left] {$p_1$};
\draw [thick] (0,0) -- (-0.8,-0.4) node[left] {$p_2$};
\end{scope}

\end{tikzpicture} 
\end{center}
The curve on the right has a loop of arbitrary length, while the edge lengths of the curve on the left satisfy the constraint 
\begin{equation}
b_1l(e'_1)=b_2l(e'_2),\quad b_1,b_2\geq 1,\quad b_1+b_2=d.
\label{eq:lengthsum}
\end{equation}
On the other hand, the curves described in~\ref{item:1dd3b} and~\ref{item:1dd4} all project to the curve
\begin{center}
\begin{tikzpicture}

\draw [thick] (0,0) -- (-1,0) node[left]{$p_1$};
\draw [thick] (0,0) -- (1,0) node[right]{$p_2$};
\draw[fill,white](0,0) circle(2mm);
\draw[thick](0,0) circle(2mm);
\draw (0,0) node{$1$};
\end{tikzpicture}
\end{center}

\begin{remark} We observe that the curves of type~\ref{item:1dd3b} form one-dimensional strata in $\tDR_{1,(d,-d)}$, but project to a single point in $\DR_{1,(d,-d)}$. Hence the map $\pi:\tDR_{g,a}\to \DR_{g,a}$ may in general have positive-dimensional fibers. Note that this behavior appears only when $d\geq 3$.
\label{rem:dimensiondrop}
\end{remark}

We see that the double ramification locus $\DR_{1,(d,-d)}\subset M_{1,2}^{trop}$ consists of $\lfloor d/2\rfloor+1$ one-dimensional faces corresponding to the partitions $d=b_1+b_2$, and an additional one-dimensional face.

It is instructive to compare the loci $\DR_{1,(d,-d)}$ and $PD_{1,(d,-d)}$. The former is contained in the latter, and geometrically it is clear that Equation~\eqref{eq:lengthsum} states that the point $p_1-p_2$ is a $d$-torsion point in the Jacobian of $\Ga$. However, $\PD_{1,(d,-d)}$ contains the following additional cone, which has empty intersection with $\DR_{1,(d,-d)}$:
\begin{center}
\begin{tikzpicture}
\draw [thick] (7,0) .. controls (7,0.5) and (9,0.5) .. (9,0) ;
\draw [thick] (7,0) .. controls (7,-0.5) and (9,-0.5) .. (9,0) ;
\draw [thick] (6,0) -- (7,0);
\draw[fill] (6,0) circle(.8mm);
\draw[fill] (7,0) circle(.8mm);
\draw [thick] (6,0) -- (5.2,0.4) node[left] {$p_1$};
\draw [thick] (6,0) -- (5.2,-0.4) node[left] {$p_2$};
\end{tikzpicture}
\end{center}
Unlike $\DR_{1,(d,-d)}$, this stratum has codimension zero in $M_{1,2}^{trop}$.

\subsection{The hyperelliptic case} \label{sec:hyperelliptic}\label{section_hyperelliptic}

In this section, we work out the example of hyperelliptic double ramification loci. Specifically, we give a detailed description of the three (up to relabeling) double ramification loci of degree two, namely $\DR_{g,(2,-2)}\subset M_{g,2}^{trop}$, $\DR_{g,(2,-1,-1)}\subset M_{g,3}^{trop}$, and $\DR_{g,(1,1,-1,-1)}\subset M_{g,4}^{trop}$. We also determine the relationship of these double ramification loci to the hyperelliptic locus $\calH_g\subset \calM_g$. Our main result is Theorem~\ref{thm_hyperellipticDC}, which we now restate: 

\begin{theorem} \label{thm:DRdeg2} The double ramification loci 
\begin{equation*}
\DR_{g,(2,-2)}\subset M_{g,2}^{trop}, \quad \DR_{g,(2,-1,-1)}\subset M_{g,3}^{trop}, \quad \textrm{ and } \quad \DR_{g,(1,1,-1,-1)}\subset M_{g,4}^{trop}
\end{equation*}
are linear subsets, connected in codimension one, having maximal cones of dimensions $2g-1$, $2g$, and $2g+1$, respectively. Their projections to $M_g^{trop}$ are equal to the realizable hyperelliptic locus $H_g$:
$$
\pi_2\big(\DR_{g,(2,-2)}\big)=H_g,\quad \pi_3\big(\DR_{g,(2,-1,-1)}\big)=H_g,\quad \textrm{ and } \quad \pi_4\big(\DR_{g,(1,1,-1,-1)}\big)=H_g.
$$
The fibers of this projection have dimensions $0$, $1$, and $2$, respectively.
\end{theorem}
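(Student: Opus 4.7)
The plan is to exploit Chan's characterization of hyperelliptic tropical curves together with the realizability result of~\cite{ABBRII} and the stabilization property of degree two covers referenced in the third remark after Definition~\ref{def:DR}. The key simplification in degree two is that a finite effective harmonic morphism $\tau\colon \Ga'\to \De$ from a tropical modification of $\Ga$ restricts to a finite effective harmonic morphism $\tau_{st}\colon \Ga\to \De_{st}$ on the stabilization (Proposition~\ref{prop:hyperellipticrestriction}), so the hyperelliptic structure descends to $\Ga$ itself, together with a well-defined hyperelliptic involution $\iota\colon \Ga\to \Ga$ by~\cite{Chan_hyperelliptic}. Applying the forgetful map $\pi_n\colon M_{g,n}^{trop}\to M_g^{trop}$ to $\DR_{g,a}^{trop}$ thus produces precisely the locus of stable curves admitting a finite effective degree two harmonic morphism to a metric tree, which by~\cite[Theorem 4.13]{ABBRII} is the realizable hyperelliptic locus $H_g$. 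Its dimension is $2g-1$ by the $d=2$ case of~\cite{CoolsDraisma}.

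Next I would analyze the fiber of $\pi_n$ over a curve $\Ga \in H_g$ with involution $\iota$. A marked leg $p_i$ with $|a_i|=2$ must map onto a leg of the target with dilation factor $2$, which forces $p_i$ to be attached at a fixed point of $\iota$, i.e., a tropical Weierstrass point. A pair of legs $p_i,p_j$ with $|a_i|=|a_j|=1$ mapping to the same leg of the target must be attached at a pair of points exchanged by $\iota$. Since the Weierstrass locus of $\Ga$ is a finite set, while an $\iota$-exchanged pair is parameterized by one real parameter (the location of one representative), the fibers have dimensions $0$, $1$, and $2$ for $(2,-2)$, $(2,-1,-1)$, and $(1,1,-1,-1)$, respectively, giving the claimed total dimensions $2g-1$, $2g$, $2g+1$. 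Linearity of each locus follows because $\iota$ imposes $\Z$-linear equalities of edge lengths within each cone of $M_g^{trop}$, and the placement conditions on the marked legs (Weierstrass, or paired under $\iota$) cut out further $\Z$-linear subspaces in each cone.

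Finally, I would verify connectedness in codimension one. The realizable hyperelliptic locus $H_g\subset M_g^{trop}$ is itself connected in codimension one, since any two maximal combinatorial types of hyperelliptic curves can be joined through edge contractions of the cover, which by Lemma~\ref{lem:edgecontraction} can be reversed on both source and target. To lift this to $\DR_{g,a}^{trop}$, one degenerates the marked legs together with the cover: moving a paired leg until it collides with a Weierstrass point (or with its partner) produces a codimension one face joining two maximal cones, and likewise for sliding legs across vertices of $\Ga$.

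The main expected obstacle lies in this last step. The delicate cases occur when a marked Weierstrass leg slides into a vertex at which the involution acts nontrivially, or when an $\iota$-exchanged pair collides on a fixed locus, because these transitions potentially force a change in the combinatorial type of the cover $\tau$ and hence in the associated $\tGa \in \tDR_{g,a}^{trop}$. Checking that every such specialization is realized by an honest codimension one cone of $\DR_{g,a}^{trop}$ requires a case-by-case analysis using the explicit description of hyperelliptic combinatorial types in~\cite{Chan_hyperelliptic} together with the enumeration algorithm of Proposition~\ref{prop:finitelymany} specialized to degree $2$.
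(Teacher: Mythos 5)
Your overall strategy is the same as the paper's (use the degree-two restriction property of Proposition~\ref{prop:hyperellipticrestriction} to descend to the stabilization, identify the projection with the effective hyperelliptic locus $H_g$, then analyze the fibers of $\pi_n$), but two steps in your fiber analysis do not hold as stated. First, you identify the admissible attachment points for a leg of weight $\pm 2$ with the fixed points of the hyperelliptic involution and assert that this locus is finite. The fixed locus of $\iota$ is the dilation subgraph, which in general contains entire edges and is therefore positive-dimensional; attaching a dilated leg at an interior point of a dilated edge (or at a dilated vertex $v$ with $\val_{C}(v)=2g(v)+2$) makes the local ramification negative, so such points are \emph{not} admissible, and it is precisely the effectiveness constraint --- in the paper's notation, $x_1+x_2\leq R$ where $R$ is the ramification divisor of the hyperelliptic morphism --- that cuts the admissible locus down to a finite set. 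Your dimension count of $0$ for the $(2,-2)$ fibers rests on this finiteness and is not justified by ``fixed point of $\iota$'' alone.

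Second, your description of the fibers only allows legs attached directly to points of $\Ga$, but a point of $\pi_n^{-1}(\Ga)\cap \DR_{g,a}^{trop}$ is a stable \emph{marked} curve whose underlying curve may be a tropical modification of $\Ga$: the paper's Propositions~\ref{prop:211} and~\ref{prop:1111} show that configurations with legs attached to the free endpoints of extra dilated or undilated edges (of arbitrary length) genuinely occur and contribute maximal cones of the fibers. Omitting them does not change the fiber dimensions ($0$, $1$, $2$), but it leaves the linearity claim unverified on those cones (the equal-length and dilation constraints there are linear, but you never exhibit them) and, more seriously, it leaves the connectedness-in-codimension-one statement unproven: the paper obtains it from the explicit classification of all cone types and their common degenerations, whereas you acknowledge that precisely this case analysis is the unresolved obstacle in your plan. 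So the proposal captures the right reduction but has a concrete error in the Weierstrass/fixed-locus identification and does not complete the classification needed for linearity and connectedness.
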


\begin{proof} This theorem will follow from the explicit description for these loci that is given in Props.~\ref{prop:DRhyperelliptic},~\ref{prop:22},~\ref{prop:211}, and~\ref{prop:1111}. 
\end{proof}

We extensively use the theory of hyperelliptic tropical curves, as discussed in the introduction (see~\cite{Chan_hyperelliptic},~\cite{ABBRII},~\cite{BBC}). First, we give some definitions and results about harmonic morphisms of degree two. 

\begin{definition} Let $G$ be a weighted graph. A {\it hyperelliptic morphism} on $G$ is a finite harmonic morphism $\ph:G\to T$ of degree two, where $T$ is a weighted graph of genus zero, such that $d_{\ph}(v)=2$ for any vertex $v\in V(G)$ such that $g(v)>0$. We say that a hyperelliptic morphism is {\it effective} or {\it unramified} if it is so as a harmonic morphism, and we similarly define (effective, unramified) hyperelliptic morphisms of weighted metric graphs and tropical curves. 

\end{definition}

Let $\ph:G\to T$ be a hyperelliptic morphism. Given a vertex, edge, or leg $x\in X(T)$, there are two possibilities: either $x$ has two preimages on each of which $\ph$ has degree one, or $x$ has a unique preimage on which $\ph$ has degree two. If $\ph$ has degree two on a half-edge of $G$, then it necessarily has degree two on its endpoint, so the subset of $X(T)$ on which $\ph$ has degree two forms a subgraph:

\begin{definition} Let $\ph:G\to T$ be a hyperelliptic morphism of weighed graphs. We define the {\it dilation subgraph} $C_{\ph}\subset T$ to be
$$
C_{\ph}=\big\{x\in X(T)\big\vert\#(\ph^{-1}(x))=1\big\}=\ph\big(\big\{x'\in X(G)\big\vert \deg_{\ph}(x')=2\big\}\big).
$$
We similarly define the dilation subgraph of a hyperelliptic morphism of weighted metric graphs or tropical curves. The preimage $\ph^{-1}(C_{\ph})\subset G$ is isomorphic, viewed as a graph without weights, to $C_{\ph}$, so by abuse of notation we will occasionally refer to the preimage $\ph^{-1}(C_{\ph})$ as the dilation subgraph, and denote it $C_{\ph}$. 

\end{definition}

\begin{remark} Given the dilation subgraph $C_{\ph}\subset T$ of a hyperelliptic morphism $\ph:G\to T$, we can uniquely reconstruct the graph $G$ and the map $\ph$, but not the vertex weighting on $G$. Indeed, as a set $X(G)$ is the disjoint union of one copy $D$ of $X(C_{\ph})$ and two copies $U_1$ and $U_2$ of $X(T)\backslash X(C_{\ph})$. We define the graph structure on $G$ by gluing each $U_i$ to $D$ according to how they are glued in $T$. Finally, we define $\ph:G\to T$ by sending $D$ to $X(C_{\ph})$ with degree 2 everywhere, and each $U_i$ to $X(T)\backslash X(C_{\ph})$ with degree 1 everywhere.\label{rem:cycletocover}
\end{remark}

We will be frequently using the ramification divisor of a hyperelliptic morphism:

\begin{proposition} Let $\ph:G\to T$ be a hyperelliptic morphism, where $G$ is a graph of genus $g$. The ramification divisor of $\ph$ is equal to the inverse of the canonical divisor of the preimage $\ph^{-1}(C_{\ph})$ of the dilation subgraph:
\begin{equation}
\Ram \ph=-K_{\ph^{-1}(C_{\ph})}=\sum_{v\in V(C_{\ph})}\big[2g(v)+2-\val_{C_{\ph}}(v)\big]v.
\label{eq:ramification}
\end{equation}
If $G$ and $T$ have no legs, then $\deg \Ram \ph=2g+2$.

\end{proposition}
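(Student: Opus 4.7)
The plan is to compute $\Ram_\ph(v')$ locally at every vertex $v' \in V(G)$ via the definition $\Ram_\ph(v') = d_\ph(v')\chi(\ph(v')) - \chi(v')$, show it vanishes off $\ph^{-1}(C_\ph)$, identify the local value at dilated vertices with the negative canonical divisor of $\ph^{-1}(C_\ph)$, and read off the degree statement from global Riemann--Hurwitz.

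First I would handle a vertex $v'\in V(G)$ with $d_\ph(v')=1$, so that $v=\ph(v')\notin C_\ph$. The hyperelliptic hypothesis forces $g(v')=0$. The key observation is that no tangent direction $h\in T_v T$ can lie in $C_\ph$: if it did, its unique preimage $h'$ rooted at $v'$ would have $d_\ph(h')=2$, contradicting harmonicity $d_\ph(v')=\sum_{h''\mapsto h}d_\ph(h'')=1$. Hence each of the $\val(v)$ tangent directions at $v$ is undilated and lifts to two tangent directions at $v'$ of degree one, yielding $\val(v')=\val(v)$. Combined with $g(v)=0$ (since $T$ has genus zero), this gives $\chi(v')=\chi(v)$ and $\Ram_\ph(v')=0$.

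Next, for $v'\in \ph^{-1}(C_\ph)$ with $v=\ph(v')$, I would count tangent directions: the $\val_{C_\ph}(v)$ dilated directions at $v$ each contribute one tangent direction at $v'$, while the $\val(v)-\val_{C_\ph}(v)$ undilated directions each contribute two, so $\val(v')=2\val(v)-\val_{C_\ph}(v)$. Substituting into $\Ram_\ph(v')=2\chi(v)-\chi(v')$ and using $g(v)=0$ produces
\[
\Ram_\ph(v')=2g(v')+2-\val_{C_\ph}(v).
\]
Since $\ph\colon \ph^{-1}(C_\ph)\to C_\ph$ is an isomorphism of graphs, the identification $v\leftrightarrow v'$ gives $\val_{C_\ph}(v)=\val_{\ph^{-1}(C_\ph)}(v')$; transporting the weights $g(v')$ from $G$ onto $\ph^{-1}(C_\ph)$ recognizes the right-hand side as $-K_{\ph^{-1}(C_\ph)}(v')$, as claimed.

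Finally, when $G$ and $T$ have no legs, $\chi(T)=2$ and $\chi(G)=2-2g$, so the global Riemann--Hurwitz formula \eqref{eq:RHformula} yields $\deg\Ram(\ph)=2\chi(T)-\chi(G)=2g+2$. The only subtle point is the harmonicity argument ruling out dilated tangent directions at an undilated vertex of $T$, which is what localizes all ramification on $\ph^{-1}(C_\ph)$; everything else is routine bookkeeping of Euler characteristics and tangent-direction counts.
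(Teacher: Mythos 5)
Your proof is correct and takes essentially the same route as the paper's: a local Euler-characteristic computation showing $\Ram_{\ph}$ vanishes at undilated vertices and equals $2g(v')+2-\val_{C_{\ph}}(v)$ at dilated ones, followed by the global Riemann--Hurwitz formula for the degree statement. One small wording slip: an undilated tangent direction at $\ph(v')$ lifts to two half-edges of $G$, but only one of them is rooted at $v'$ (the other sits at $\sigma(v')$) — this, via harmonicity, is exactly why $\val(v')=\val(\ph(v'))$, which is the conclusion you then correctly use.
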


\begin{proof} Let $v\in V(G)$. If $v\notin V(C_{\ph})$, then $\ph$ has degree one at $v$ and $g(v)=0$, so $\ph$ is a local isomorphism in the neighborhood of $v$, and therefore $\Ram_{\ph}(v)=0$. If $v\in V(C_{\ph})$, then there are $\val_{C_{\ph}}(v)$ dilated and $\val_{G}(v)-\val_{C_{\ph}}(v)$ undilated tangent directions at $v$. Therefore
$$
\val_T(\ph(v))=\val_{G}(v)+\frac{\val_{G}(v)-\val_{C_{\ph}}(v)}{2}=\frac{\val_{G}(v)+\val_{C_{\ph}}(v)}{2},
$$
and since $g(v)=0$ we obtain that
$$
\Ram_{\ph}(v)=2\chi_{T}(\ph(v))-\chi_{G}(v)=2g(v)+2-\val_{C_{\ph}}(v).
$$
Finally, if $G$ and $T$ have no legs, then by the global Riemann--Hurwitz formula~\eqref{eq:RHformula}
$$
\deg \Ram \ph=\deg \ph \cdot \chi(T)-\chi(G)=2g+2.
$$
\end{proof}

A hyperelliptic morphism $\ph:G\to G$ defines an involution on $G$ whose fixed locus is $C_{\ph}$, providing an alternative and equivalent point of view.

\begin{definition} Let $G$ be a weighted graph. A {\it hyperelliptic involution} on $G$ is a non-trivial involution $\si:G\to G$ such that the quotient graph $G/\si$, equipped with the trivial weighting, has genus zero, and such that $\si(v)=v$ for all $v\in V(G)$ with $g(v)>0$. A hyperelliptic involution is called {\it effective} if for any $v\in V(G)$ fixed by $\si$, the number $\kappa(v)$ of tangent directions fixed by $\si$ is less than or equal to $2g(v)+2$, and {\it unramified} if $\kappa(v)=2g(v)+2$ for all $v\in V(G)$ fixed by $\si$. We define (effective, unramified) hyperelliptic involutions of weighted metric graphs and tropical curves similarly. 

\end{definition}

The following proposition is elementary:

\begin{proposition} Let $\ph:G\to T$ be a hyperelliptic morphism with dilation subgraph $C_{\ph}$. Define the involution $\si:G\to G$ to act trivially on $\ph^{-1}(C_{\ph})$, and by exchanging the two preimages of any $x\in X(T)\backslash X(C_{\ph})$. Then $\si$ is a hyperelliptic involution. Conversely, let $\si:G\to G$ is a hyperelliptic involution and let $\ph:G\to T$ be the induced map to the quotient $T=G/\si$. For $x\in X(G)$ let 
$$
d_{\ph}(x)=\left\{\begin{array}{cc} 2 & \textrm{ if } \si(x)=x, \\ 1 & \textrm{ if } \si(x)\neq x.\end{array}\right.
$$
Then $\ph$ is a hyperelliptic morphism. Under this bijection, effective and unramified hyperelliptic morphisms correspond to respectively effective and unramified hyperelliptic involutions.

\end{proposition}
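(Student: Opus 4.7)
The proposition asserts a bijection between hyperelliptic morphisms and hyperelliptic involutions, together with the claim that this bijection preserves the properties of being effective or unramified. The two constructions are manifestly inverse to each other by definition (once we know each produces an object of the claimed type), so the plan reduces to three independent verifications: (a) the $\sigma$ built from a hyperelliptic morphism $\ph$ is a hyperelliptic involution; (b) the $\ph$ built from a hyperelliptic involution $\sigma$ is a hyperelliptic morphism; (c) ramification properties translate correctly.

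For (a), I would first observe that since $\ph$ has degree two, every $x\in X(T)\setminus X(C_\ph)$ has exactly two preimages of local degree one, which gives a well-defined exchange; every $x\in X(C_\ph)$ has a unique preimage of degree two, on which $\sigma$ acts trivially. To see that $\sigma$ is a morphism of graphs, one checks compatibility with $r_G$ and $\iota_G$: because $\ph$ commutes with root and involution maps and because membership in $\ph^{-1}(C_\ph)$ is preserved by these maps (a half-edge in the dilation subgraph is rooted at a vertex in the dilation subgraph, and its edge-involution partner lies in the dilation subgraph), the two constructions on the complement glue consistently. The induced quotient $G/\sigma$ is canonically identified with $T$, which has genus zero with trivial weighting by hypothesis. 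Finally, any vertex $v\in V(G)$ with $g(v)>0$ has $d_\ph(v)=2$ by the definition of hyperelliptic morphism, hence $v\in \ph^{-1}(C_\ph)$ and so $\sigma(v)=v$.

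For (b), starting from $\sigma$, form $T=G/\sigma$ with trivial weighting (genus zero by definition). The degree assignment is forced to be compatible with $\sigma$, and harmonicity at a vertex $v'\in V(G)$ must be checked case by case. If $\sigma(v')\neq v'$, then $d_\ph(v')=1$ and each tangent direction at $v'$ maps bijectively to a tangent direction at $\ph(v')$ with degree one, so \eqref{eq:harmonic} holds trivially. If $\sigma(v')=v'$, then $d_\ph(v')=2$ and each tangent direction $h\in T_{\ph(v')}T$ has a preimage in $T_{v'}G$ that is either a single half-edge fixed by $\sigma$ (of degree two) or a $\sigma$-orbit of two half-edges (each of degree one); in both cases the degrees sum to two, confirming harmonicity. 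The condition $d_\ph(v)=2$ whenever $g(v)>0$ is immediate from the hypothesis that $\sigma$ fixes every positive-genus vertex.

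For (c), I would invoke the ramification formula
\[
\Ram_\ph(v)=2g(v)+2-\val_{C_\ph}(v)
\]
established immediately before the proposition. Under the correspondence, the fixed locus of $\sigma$ is $\ph^{-1}(C_\ph)$, and the tangent directions at a fixed vertex $v$ that are themselves fixed by $\sigma$ are exactly the tangent directions lying in $C_\ph$, so $\kappa(v)=\val_{C_\ph}(v)$. Hence $\Ram_\ph(v)\ge 0$ is equivalent to $\kappa(v)\le 2g(v)+2$ (effectiveness), and $\Ram_\ph(v)=0$ is equivalent to $\kappa(v)=2g(v)+2$ (unramifiedness). The metric/tropical versions follow by checking the same conditions on a model. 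The main subtlety I anticipate is simply bookkeeping in step (b): one must keep track of how $\sigma$-orbits of half-edges at a fixed vertex partition into fixed singletons versus exchanged pairs, but no new ideas are required beyond this case analysis.
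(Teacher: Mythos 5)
Your proposal is correct and follows essentially the same route as the paper: the paper's (very terse) proof likewise checks that $G/\si=T$ is a tree, that positive-genus vertices are fixed, that the induced $\ph$ is harmonic, and then concludes the effective/unramified correspondence from the identity $\kappa(v)=\val_{C_{\ph}}(v)$ combined with the ramification formula \eqref{eq:ramification}. You simply spell out the harmonicity case analysis and the compatibility of $\si$ with the root and involution maps, which the paper leaves implicit.
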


\begin{proof} Let $\ph:G\to T$ be a hyperelliptic morphism, and let $\si:G\to G$ be as above. Then clearly $G/\si=T$ is a tree, and if $v\in V(G)$ is a vertex of positive genus, then $d_{\ph}(v)=2$, so $\si$ fixes $v$. Hence $\si$ is a hyperelliptic involution. Conversely, if $\si$ is a hyperelliptic involution and $\ph$ is the morphism defined above, then $\ph$ is harmonic, and for any $v\in V(G)$, if $g(v)>0$, then $\si(v)=v$ and hence $d_{\ph}(v)=2$. Finally, for any $v\in V(G)$ fixed by $\si$ we have $\kappa(v)=\val_{C_{\ph}}(v)$, so Equation~\eqref{eq:ramification} implies that $\ph$ is effective (respectively, unramified) if and only if $\si$ is.
\end{proof}

\begin{example} We give an example of a hyperelliptic morphism $\eta:\Ga\to \De$, where $\Ga$ is a stable genus two curve:

\begin{center}

\begin{tikzpicture}

\draw [thick] (1,0) .. controls (1,0.5) and (2,0.5) .. (2,0);
\draw [thick] (1,0) .. controls (1,-0.5) and (2,-0.5) .. (2,0);
\draw[ultra thick] (2,0) -- (3,0);

\draw[fill] (2,0) circle(.8mm);
\draw[fill] (3,0) circle(.8mm);
\draw (2.25,0) node[below]{$v_2$};
\draw (2.85,0) node[below]{$v_3$};
\draw[fill] (1,0) circle(.8mm) node[left]{$v_1$};
\draw[fill] (4.5,0) circle(.8mm) node[right]{$v_4$};
\draw [thick] (3,0) .. controls (3,0.5) and (4.5,0.5) .. (4.5,0);
\draw [thick] (3,0) .. controls (3,-0.5) and (4.5,-0.5) .. (4.5,0);

\begin{scope} [shift={(0,-1)}]

\draw[thick] (1,0) -- (2,0);
\draw[ultra thick] (2,0) -- (3,0);
\draw[thick] (3,0) -- (4.5,0);
\draw[fill] (1,0) circle(.8mm);
\draw[fill] (2,0) circle(.8mm);
\draw[fill] (3,0) circle(.8mm);
\draw[fill] (4.5,0) circle(.8mm);

\end{scope}

\end{tikzpicture} 

\end{center}

The dilation cycle on $\De$ is marked in bold and includes the end vertices. The ramification divisor is $R=v_1+v_2+v_3+v_4$. 
\label{ex:hyperellipticcurve}

\end{example}

If $\ph:G\to T$ is an unramified hyperelliptic morphism with dilation subgraph $C_{\ph}$, then the condition
$$
2g(v')+2=\kappa(v)=\val_{C_{\ph}}(v)\mbox{ for every }v=\ph(v')\in C_{\ph}
$$
implies that $\val_{C_{\ph}}(v)$ is even for all $v\in C_{\ph}$. Conversely, as we will see, any subgraph satisfying this constraint is the dilation subgraph of a unique unramified hyperelliptic morphism.

\begin{definition} Let $G$ be a graph. A {\it cycle} $C$ in $G$ is a semistable subgraph such that $\val_C(v)$ is even for all $v\in V(C)$. 

\end{definition}

We observe that a cycle on a graph with legs is not necessarily a topological cycle, since there is no point at the end of a leg. In particular, a tree with at least two legs has nontrivial cycles, and such a cycle is uniquely determined by its set of legs (this result generalizes Lemma 2.4 in~\cite{BBC} to arbitrary $D$):

\begin{proposition} Let $T$ be a tree, and let $D\subset L(T)$ be a subset of legs of $T$ with an even number of elements. Then there exists a unique cycle $C$ in $T$ whose set of legs is $D$.
\label{prop:cyclelegs}
\end{proposition}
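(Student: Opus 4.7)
My plan is to characterize the cycle $C$ edge-by-edge by a parity criterion that depends only on $D$; this will yield existence and uniqueness simultaneously. Concretely, for each edge $e \in E(T)$, removing $e$ disconnects $T$ into two subtrees; call one of them $T^e_1$. Since $|D|$ is even, the parities of $|D \cap L(T^e_1)|$ and of the legs on the other side agree. I would define $C$ by declaring $e \in C$ iff $|D \cap L(T^e_1)|$ is odd, $p \in C$ iff $p \in D$ for a leg $p$, and including all root vertices of the selected half-edges.

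For uniqueness, suppose $C$ is any cycle with $L(C) = D$. Fix an edge $e$ and set $T_1 = T^e_1$. Summing valencies of $C$ over the vertices of $T_1$, each edge of $C$ lying inside $T_1$ contributes $2$, each leg of $C$ inside $T_1$ contributes $1$, and the endpoint of $e$ in $T_1$ contributes $1$ precisely when $e \in C$. Since every $\val_C(v)$ is even, reducing modulo $2$ gives the congruence
\[
[e \in C] \equiv |D \cap L(T_1)| \pmod 2,
\]
so the edges of $C$ are forced to be those prescribed by the parity rule, and the legs of $C$ are $D$ by assumption.

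For existence, I would verify that the $C$ defined above is a cycle, i.e., that $\val_C(v)$ is even and $\geq 2$ for every $v \in V(C)$. Fix $v \in V(T)$, list the half-edges $h_1,\dots,h_k$ rooted at $v$, and for each $h_i$ let $d_i \in \{0,1\}$ be the parity of the number of legs of $D$ on the component of $T$ obtained after removing $h_i$ (and its paired half-edge, if $h_i$ belongs to an edge) together with $v$; for a leg, this reduces to $d_i = [h_i \in D]$. Because $|D|$ is even, the construction gives $h_i \in C \iff d_i = 1$. On the other hand the legs of $D$ are partitioned according to which $h_i$ they lie beyond, so $\sum_i d_i \equiv |D| \equiv 0 \pmod 2$. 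Hence the number of $i$ with $d_i = 1$, which equals $\val_C(v)$, is even, and as soon as one $d_i = 1$ at least two must equal $1$, giving $\val_C(v) \geq 2$.

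The only place where something substantive happens is the semistability check $\val_C(v) \geq 2$, which would fail if only one half-edge at some $v$ lay in $C$. This is precisely ruled out by the global parity constraint coming from $|D|$ being even, so the assumption that $|D|$ is even is both necessary and exactly what is needed.
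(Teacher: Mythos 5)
Your proof is correct, and it takes a genuinely different route from the paper. The paper argues by induction on the number of vertices of $T$: it prunes a leaf vertex $v$ together with its unique edge $e$ and the $D$-legs at $v$, splitting into cases according to the parity of the number of $D$-legs at $v$ (re-attaching $e$ as a leg in the odd case), and appeals to the inductive hypothesis on the smaller tree. You instead give a closed-form characterization: $e\in C$ if and only if the number of legs of $D$ on one side of $e$ is odd, which is well defined because $\vert D\vert$ is even. Uniqueness then falls out of a handshake-type count of $\sum_{v\in V(T_1)}\val_C(v)$ modulo $2$ on one side of each edge, and existence from the observation that the parities $d_i$ at a fixed vertex sum to $\vert D\vert\equiv 0\pmod 2$, which simultaneously gives evenness of $\val_C(v)$ and the semistability bound $\val_C(v)\geq 2$. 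Your approach buys an explicit description of $C$ and avoids induction entirely, making it clear exactly where the evenness of $\vert D\vert$ enters; the paper's leaf-pruning induction is shorter to state and closer in spirit to the algorithmic constructions used elsewhere in the paper, but it does not exhibit the membership criterion. One small point you leave implicit in the uniqueness step: a cycle cannot contain an isolated vertex (a genus-zero vertex of valence zero has $\chi(v)=2>0$, violating semistability), so once the edges and legs of $C$ are forced, its vertex set is forced as well; adding one sentence to this effect would make the uniqueness argument fully airtight.
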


\begin{proof} We proceed induction on the number of vertices of $T$. If $T$ consists of a single vertex $v$, then we set $V(C)=\{v\}$ and $L(C)=D$.

Now suppose that the result holds for trees with $n$ vertices. Let $T$ be a tree with $n+1$ vertices, and let $D\subset L(T)$ be a set of legs of $T$ such that $\#D$ is even. Pick a vertex $v\in V(T)$ that has a unique edge $e$ attached to it, and let $D_v\subset D$ be the legs attached to $v$. Let $T'$ be the tree obtained from $T$ by removing $v$, $e$, and $D_v$. If $\#D_v$ is even, then by induction there is a unique cycle $C'\subset T'$ whose set of legs is $D\backslash D_v$, and the required cycle $C\subset T$ is equal to $C'\cup \{v\}\cup D_v$ if $\#D_v$ is positive and $C'$ if $D_v$ is empty. If $\#D_v$ is odd, instead consider the tree $T'\cup \{e\}$, with $e$ attached as a leg. By induction, there is a unique cycle $C'\subset T'$ whose set of legs is $\{e\}\cup D\backslash D_v$. Hence the required cycle $C\subset T$ is $C'\cup \{v\}\cup \{e\} \cup D_v$.
\end{proof}

\begin{proposition} Let $\ph:G\to T$ be an unramified hyperelliptic morphism. Then the dilation subgraph $C_{\ph}$ is a cycle, called the {\it dilation cycle} of $\ph$. Conversely, given a tree $T$ and a cycle $C\subset T$, there exists a unique unramified hyperelliptic morphism $\ph:G\to T$ having dilation cycle $C$.
\label{prop:cycleunique}
\end{proposition}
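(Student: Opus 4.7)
The plan is to prove both directions by combining the correspondence in Remark~\ref{rem:cycletocover} (which reconstructs a hyperelliptic morphism from its dilation subgraph as an unweighted graph) with the formula in Remark~\ref{rem:uniqueweighting} (which computes the unique weighting compatible with being unramified).

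For the forward direction, suppose $\ph:G\to T$ is unramified with dilation subgraph $C_{\ph}$. I would check the two defining conditions of a cycle at each $v\in V(C_{\ph})$. Let $v'\in V(G)$ be the unique preimage of $v$; then $d_{\ph}(v')=2$ and $g_T(v)=0$. Since the fixed tangent directions at $v'$ under the hyperelliptic involution are precisely those lying over half-edges of $T$ in $C_{\ph}$, we have $\val_{C_{\ph}}(v)=\kappa(v)=2g(v')+2$, which is even and at least $2$. The even valency is one half of the cycle condition, while $\val_{C_{\ph}}(v)\geq 2$ together with $g_T(v)=0$ gives $\chi_{C_{\ph}}(v)=2-\val_{C_{\ph}}(v)\leq 0$, so $C_{\ph}$ is semistable. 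Hence $C_{\ph}$ is a cycle.

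For the converse, given $(T,C)$, I first apply Remark~\ref{rem:cycletocover}: this produces an unweighted graph $G$ and a graph morphism $\ph:G\to T$ with dilation subgraph $C$, where $d_{\ph}$ is $2$ on the single copy of $C$ and $1$ on the two copies of $T\setminus C$. Harmonicity at each vertex follows from construction. By Remark~\ref{rem:uniqueweighting}, there is at most one weighting on $G$ making $\ph$ unramified, given by \eqref{eq:admissiblegenus}. I would compute it:
\begin{itemize}
\item For $v\in V(T)\setminus V(C)$, each of the two preimages $v'_1,v'_2$ has $d_{\ph}(v'_i)=1$ and $\val_G(v'_i)=\val_T(v)$, giving $g(v'_i)=0$.
\item For $v\in V(C)$, the unique preimage $v'$ has $d_{\ph}(v')=2$ and $\val_G(v')=\val_C(v)+2\bigl(\val_T(v)-\val_C(v)\bigr)=2\val_T(v)-\val_C(v)$, since half-edges of $v$ in $C$ lift uniquely while those outside $C$ lift to a pair attached at $v'$. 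Plugging in, \eqref{eq:admissiblegenus} gives $g(v')=\tfrac{\val_C(v)}{2}-1$.
\end{itemize}
The cycle condition (even valency, semistable) guarantees that $\tfrac{\val_C(v)}{2}-1\in\Z_{\geq 0}$, so this formula really does define a valid weighting and the resulting $\ph:G\to T$ is an unramified hyperelliptic morphism with dilation cycle $C$. Uniqueness is now immediate: the unweighted pair $(G,\ph)$ is determined by $C$ via Remark~\ref{rem:cycletocover}, and the weighting is determined by Remark~\ref{rem:uniqueweighting}.

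The only nontrivial step is the valency count at $v\in V(C)$, which is just careful bookkeeping using the definition of the double cover in Remark~\ref{rem:cycletocover}; everything else reduces to invoking previously established uniqueness statements, so I do not anticipate any serious obstacle.
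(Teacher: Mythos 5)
Your proof is correct and follows essentially the same route as the paper: the forward direction via the unramified condition $2g(v')+2=\val_{C_{\ph}}(\ph(v'))$ (which forces even valency and semistability), and the converse by reconstructing $(G,\ph)$ from $C$ via Remark~\ref{rem:cycletocover} and then pinning down the unique weighting via Remark~\ref{rem:uniqueweighting}, with the valency bookkeeping giving $g(v')=\tfrac{\val_C(v)}{2}-1\geq 0$ exactly as in the paper's condition $2g(v')+2=\val_C(\ph(v'))$. The only cosmetic slip is writing $\kappa(v)$ where you mean $\kappa(v')$, the number of fixed tangent directions at the fixed vertex of $G$.
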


\begin{proof} We have already seen above that $C_{\ph}$ is a cycle if $\ph:G\to T$ is an unramified hyperelliptic morphism. Conversely, let $C\subset T$ be a cycle on a tree. Remark~\ref{rem:cycletocover} describes how to construct a unique hyperelliptic morphism $\ph:G\to T$ having dilation subgraph $C$. The weight $g(v')$ of a vertex $v'\in V(G')$ is uniquely specified by the condition $2g(v')+2=\val_C\big(\ph(v)\big)$, where we note that $g(v')\geq 0$ because $C$ is semistable. 
\end{proof}

\begin{remark} This terminology is borrowed from Jensen and Len (see Chapter 5 of \cite{JensenLen}), who considered unramified harmonic covers $\ph:\tGa\to \Ga$ of degree two, where $\Ga$ is a tropical curve of genus $g$ with trivial weights and no legs. Such a curve has $\# H_1(\Ga,\ZZ/2\ZZ)=2^g$ different cycles, and for any cycle $C\subset \Ga$ there are $2^h$ degree two unramified harmonic covers having dilation cycle $C$, where $h=g(\Ga\backslash C)$ (see \cite[Lemma 5.8]{JensenLen}). Hence Proposition~\ref{prop:cycleunique} follows as a natural generalization to double covers of tropical curves with legs. In \cite{LUZI}, further generalize this construction to unramified harmonic covers $\ph:\tGa\to \Ga$ with an action of a finite abelian group $G$, and Proposition~\ref{prop:cycleunique} follows from \cite[Theorem~4.1]{LUZI}.

\end{remark}

In the algebraic setting, a curve is hyperelliptic if and only if it has a $g^1_2$. Additionally, a hyperelliptic curve is defined by $2g+2$ distinct points in $\PP^1$ up to the action of $\Aut (\PP^1)$. In the tropical setting, the relationship is more complex, and has been studied by a number of authors. We recall the connection.

\begin{definition} (see  \cite{AminiCaporaso}) Let $g\geq 2$, and $\Ga\in M_g^{trop}$ be a stable tropical curve with no legs. Let $\Ga^{\#}\supset \Ga$ be the tropical curve obtained by attaching $g(x)$ loops of arbitrary positive length to each $x\in \Ga$ with $g(x)>0$. For a divisor $D$ on $\Ga$, let $r^{\#}(D)$ denote its rank on $\Ga^{\#}$. We say that $\Ga$ is {\it hyperelliptic} if there exists a divisor $D$ on $\Ga$ of degree two such that $r^{\#}(D)=1$. 

\end{definition}

We now reformulate Theorems 4.12 and 4.13 of \cite{ABBRII} using our definitions. We note in passing that there is a typo in the formulation Theorem 4.12 of \cite{ABBRII}: condition (3) should not include the word "effective". 

\begin{theorem} A stable tropical curve $\Ga$ is hyperelliptic if and only if it admits a hyperelliptic morphism, which is unique if it exists. Furthermore, a hyperelliptic tropical curve is the tropicalization of an algebraic hyperelliptic curve if and only if the hyperelliptic morphism is effective. 
\label{thm:ABBRhyperelliptic}
\end{theorem}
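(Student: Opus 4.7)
The statement is explicitly billed as a reformulation of Theorems~4.12 and~4.13 of \cite{ABBRII} in the language of Section~\ref{section_hyperelliptic}, so the strategy is to match up the two sets of definitions and then invoke those theorems. The key compatibility to verify is that our condition $d_\varphi(v) = 2$ at every vertex $v$ with $g(v) > 0$, appearing in the definition of a hyperelliptic morphism, corresponds to the degree-two pullback condition at weighted vertices used in \cite{ABBRII}, and that our effectivity condition $\Ram_\varphi(v') \ge 0$ translates via \eqref{eq:ramification} to their fixed-tangent-direction bound $\kappa(v) \le 2g(v) + 2$. Neither translation is deep, but they must be carried out explicitly, since the formal definitions differ in appearance.

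For the equivalence in the first sentence, one direction is direct: given a hyperelliptic morphism $\varphi \colon \Gamma \to \Delta$, Corollary~\ref{cor_tree->linearequivalence} shows that any two fibers of $\varphi$ are linearly equivalent degree-two divisors, producing a $g^1_2$ on $\Gamma$. The condition $d_\varphi(v) = 2$ at positive-genus vertices ensures that this pencil extends to $\Gamma^{\#}$ without its rank dropping below one, so $r^{\#}(D) = 1$ and $\Gamma$ is hyperelliptic in the stated sense. The converse, together with the asserted uniqueness, is precisely the content of Theorem~4.12 of \cite{ABBRII}: the existence of a $g^1_2$ on $\Gamma^{\#}$ produces a unique involution $\sigma$ of $\Gamma$ whose quotient is a tree, and the associated quotient map is the unique hyperelliptic morphism, with degree two assigned at the fixed locus of $\sigma$ and degree one elsewhere.

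For the realizability claim, we reinterpret effectivity of $\varphi$ via the formula \eqref{eq:ramification} as the bound $\val_{C_\varphi}(v) \le 2g(v) + 2$ for every vertex on the dilation subgraph. This is exactly the lifting criterion of Theorem~4.13 of \cite{ABBRII}, after the erroneous word ``effective'' flagged in the paragraph preceding our statement is deleted from their condition~(3). To extract realizability in terms of tropicalization, we add legs to $\varphi$ using Remark~\ref{rem:addramification} to produce an unramified harmonic morphism, then appeal to the tropical admissible cover space $H^{\trop}_{g \to 0, 2}$ of Section~\ref{sec:tropadmissible} and properness of its target map (in the sense of Lemma~\ref{lem:edgecontraction}) to lift $\Gamma$ to an algebraic hyperelliptic curve. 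Conversely, any algebraic hyperelliptic curve tropicalizes to a curve whose unique hyperelliptic morphism is the tropicalization of an algebraic degree-two cover, forcing effectivity.

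The expected main obstacle is the dictionary between the leg conventions of \cite{ABBRII}, where an elementary tropical modification attaches an infinite edge to a metric graph, and ours, where legs model marked points and are genuinely distinguished from extremal finite edges. The ramification count at the end of a leg must be handled consistently in both frameworks, since this is exactly where the ``effective versus unramified'' distinction lives. Once this bookkeeping is settled, the remaining content of the proof is a direct citation of \cite{ABBRII}.
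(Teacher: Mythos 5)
Your proposal matches the paper's treatment: the paper offers no proof of this theorem at all, presenting it purely as a reformulation of Theorems 4.12 and 4.13 of \cite{ABBRII} (with the remark about the stray word ``effective'' in Theorem 4.12, which you attribute to Theorem 4.13's condition (3) instead), so your explicit dictionary between the two sets of definitions is exactly the intended content, just written out. The only caveat is your auxiliary realizability route via $H^{\trop}_{g\to 0,2}$: the relevant lifting statement there is Theorem~\ref{thm_Hurwitzrealizability} (degree-two local Hurwitz numbers never vanish, so every unramified degree-two cover is of Hurwitz type), not Lemma~\ref{lem:edgecontraction}, which is a purely combinatorial statement about reversing edge contractions — but this detour is superfluous since the direct citation of \cite{ABBRII} already carries the claim.
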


\begin{remark} It has long been observed that tropical curves defined by Brill--Noether conditions have unexpectedly large dimensions in moduli; this is of course one of the primary motivations for our paper. For example, any genus three curve of the following kind
\begin{center}
\begin{tikzpicture}
\begin{scope}[shift={(0,1)}]
\draw [thick] (1,0) .. controls (1,0.5) and (2,0.5) .. (2,0);
\draw [thick] (1,0) .. controls (1,-0.5) and (2,-0.5) .. (2,0);
\draw[fill] (2,0) circle(.8mm);
\end{scope}
\draw[ultra thick] (2,1) -- (3,0);

\draw [thick] (0.5,0) .. controls (0.5,0.5) and (2,0.5) .. (2,0);
\draw [thick] (0.5,0) .. controls (0.5,-0.5) and (2,-0.5) .. (2,0);
\draw[fill] (2,0) circle(.8mm);
\draw[ultra thick] (2,0) -- (4.5,0);
\draw[fill] (3,0) circle(.8mm);

\begin{scope}[shift={(1.5,0)}]
\draw[fill] (3,0) circle(.8mm);
\draw [thick] (3,0) .. controls (3,0.5) and (4.5,0.5) .. (4.5,0);
\draw [thick] (3,0) .. controls (3,-0.5) and (4.5,-0.5) .. (4.5,0);
\end{scope}

\end{tikzpicture} 
\end{center}
is hyperelliptic, with the dilation subgraph of the unique hyperelliptic morphism marked in bold. Curves of this kind form a cone of maximal dimension in $M_3^{trop}$, while we expect the hyperelliptic locus to have codimension one. Requiring that the hyperelliptic curve be effective simultaneously solves the realizability problem and cuts out a locus of the correct dimension. 
\label{rem:genus3hyperelliptic}
\end{remark}

\begin{definition} Let $\Ga\in M_g^{trop}$ be a hyperelliptic curve. We say that $\Ga$ is {\it effective} if the hyperelliptic morphism on $\Ga$ (which is unique by Theorem~\ref{thm:ABBRhyperelliptic}) is effective. We denote the locus of effective hyperelliptic curves by $H_g\subset M_g^{trop}$. 

\end{definition}

Hyperelliptic curves have genus at least two by definition, so to present our results in a uniform manner we formally introduce the cone complex $H_1=M_1^{trop}=\RR_{\geq 0}$. A point $l\in \RR_{>0}$ corresponds to the tropical curve $\Ga$ consisting of two vertices $v_1$ and $v_2$ joined by two edges $e_1$ and $e_2$ of length $l$. The target tree $T$ of the hyperelliptic morphism $\eta:\Ga\to T$ has a single edge of length $l$, the hyperelliptic involution $\si:\Ga\to T$ fixes $v_1$ and $v_2$ and exchanges $e_1$ and $e_2$, and the ramification divisor of $\eta$ is $\Ram\eta= 2v_1+2v_2$. Finally, the origin $0\in H_1$ corresponds to the tropical curve $\Ga$ consisting of a single point $v$ of genus one, with $\si(v)=v$ and ramification divisor $4v$.

An algebraic hyperelliptic curve of genus $g$ is uniquely determined by $2g+2$ distinct points of $\PP^1$, and we can identify the hyperelliptic locus $\calH_g\subset \calM_g$ with the quotient $\calM_{0,2g+2}/S_{2g+2}$. The same picture holds in the tropical setting if we restrict to the locus of effective hyperelliptic curves. We summarize the relationship between effective hyperelliptic curves and $M_{0,2g+2}^{trop}$.

\begin{proposition} Let $\tDe\in M_{0,2g+2}^{trop}$ be a stable tree with legs $q_1,\ldots,q_{2g+2}$. Then there exists a unique stable curve $\tGa\in M_{g,2g+2}^{trop}$ with legs $p_1,\ldots,p_{2g+2}$ admitting an unramified hyperelliptic morphism $\tau:\tGa\to \tDe$ with $\tau(p_i)=q_i$. Denote $H_{g,2g+2}\subset M_{g,2g+2}^{trop}$ the locus of such curves, and $\pi:H_{g,2g+2}\to M_g^{trop}$ the restriction of the forgetful map to $H_{g,2g+2}$. Then $\pi(H_{g,2g+2})=H_g$, and for any $\Ga\in H_g$ the fiber $\pi^{-1}(\Ga)$ is finite. In particular, $\dim H_g=2g-1$. 
\label{prop:M02g+2}
\end{proposition}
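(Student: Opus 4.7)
The proof decomposes into four claims: the construction of $\tGa$ from $\tDe$ with stability, the inclusion $\pi(H_{g,2g+2}) \subseteq H_g$, the inclusion $H_g \subseteq \pi(H_{g,2g+2})$, and the finiteness of fibers together with the dimension count. The key tools are Propositions~\ref{prop:cyclelegs} and~\ref{prop:cycleunique}, which together set up a bijection between cycles on a tree (specified by their legs) and unramified hyperelliptic covers, in combination with Equation~\eqref{eq:ramification} identifying the ramification divisor.

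First, given $\tDe \in M_{0,2g+2}^{trop}$, the leg set $L(\tDe)$ has even cardinality $2g+2$, so Proposition~\ref{prop:cyclelegs} supplies a unique cycle $C \subset \tDe$ with $L(C) = L(\tDe)$, and Proposition~\ref{prop:cycleunique} then yields a unique unramified hyperelliptic morphism $\tau : \tGa \to \tDe$ with $C_{\tau} = C$. The unique preimage $p_i$ of each $q_i \in C$ provides the marking, with $d_\tau(p_i) = 2$; uniqueness among all candidate morphisms $\tau'$ follows because any $q_i$ lying outside the dilation cycle of $\tau'$ would have two preimages in $\tGa$, contradicting the fact that $\tGa \in M_{g,2g+2}^{trop}$ carries only the $2g+2$ marked legs. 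Stability of $\tGa$ is immediate from $\chi_{\tGa}(v') = d_\tau(v') \cdot \chi_{\tDe}(\tau(v')) < 0$, and the Riemann--Hurwitz relation $\chi(\tGa) = 2\chi(\tDe) = -4g$ combined with $\#L(\tGa) = 2g+2$ yields $g(\tGa) = g$.

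For the equality $\pi(H_{g,2g+2}) = H_g$, the inclusion $\subseteq$ follows from the observation (in the spirit of Remark~\ref{rem:restriction}) that forgetting $k$ degree-$2$ legs at a vertex $v'$ of $\tGa$ changes the local ramification by $(d_\tau(v') - 1) k \geq 0$, so the resulting hyperelliptic morphism $\eta : \Ga \to T$ is effective. Conversely, given $\Ga \in H_g$ with effective hyperelliptic morphism $\eta$, Equation~\eqref{eq:ramification} writes $\Ram \eta = \sum_v a_v v$ with $a_v = 2g(v) + 2 - \val_{C_\eta}(v) \geq 0$ summing to $2g+2$. Attaching $a_v$ degree-$2$ legs to each $v \in \Ga$ and correspondingly to $\eta(v) \in T$ produces an unramified hyperelliptic morphism $\tau : \tGa \to \tDe$, since the new dilation valence at each $v$ becomes exactly $2g(v) + 2$. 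Stability of $\tGa$ follows from $\chi_{\tGa}(v) = \chi_\Ga(v) - a_v < 0$, and stability of $\tDe$ reduces to a case analysis combining the effectivity bound $\val_{C_\eta}(v) \leq 2g(v) + 2$ with the stability inequality $\val_\Ga(v) = 2 \val_T(\eta(v)) - \val_{C_\eta}(v) \geq 3 - 2g(v)$ of $\Ga$, which together force $\val_{\tDe}(\eta(v)) \geq 3$ in every subcase.

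Finally, the fiber of $\pi$ over $\Ga$ is parametrized by labelings of the $2g+2$ attached legs by $\{1,\ldots,2g+2\}$ subject to placing exactly $a_v$ of them at each ramification vertex $v$, hence has finite size $(2g+2)! / \prod_v a_v!$; a short argument rules out placing legs anywhere else, since a stray degree-$2$ leg would push $\val_{C_\tau}(v)$ past $2g(v) + 2$ and break unramifiedness via Equation~\eqref{eq:ramification}, while a stray degree-$1$ leg would appear in $\si$-paired duplicates and over-count the legs of $\tDe$. The assignment $\tau \mapsto \tDe$ then realizes $H_{g,2g+2}$ as an isomorphic copy of $M_{0,2g+2}^{trop}$ as generalized cone complexes (cellwise linear via Remark~\ref{rem:edgelengths}), and Proposition~\ref{prop:linear} yields $\dim H_g = \dim H_{g,2g+2} = \dim M_{0,2g+2}^{trop} = 2g - 1$. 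The main obstacle is the case analysis for stability of $\tDe$: it is mechanical but requires careful bookkeeping of how $\val_T$, $\val_{C_\eta}$, and $g(v)$ interact, and the effectivity bound $\val_{C_\eta}(v) \leq 2g(v) + 2$ turns out to be exactly sharp enough in every subcase.
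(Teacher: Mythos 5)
Your proposal follows the same route as the paper (existence/uniqueness via Propositions~\ref{prop:cyclelegs} and~\ref{prop:cycleunique}, effectivity of the restriction for one inclusion, attaching legs along the ramification divisor for the other, and finiteness of fibers for the dimension count), and most of the added detail --- the genus computation, the stability of the modified target tree via $\val_\Ga(v)=2\val_{\tDe}(\eta(v))-\val_{C_\eta}(v)-a_v$, the effectivity bound --- checks out. But there is one genuine gap: your description of the fiber $\pi^{-1}(\Ga)\cap H_{g,2g+2}$ as ``labelings of $2g+2$ legs placed $a_v$ at a time on the ramification vertices of $\Ga$'' silently assumes that every $\tGa$ in the fiber is obtained from $\Ga$ by attaching legs only, i.e.\ that deleting the marked legs of $\tGa$ never destabilizes it. Your ``stray leg'' argument only constrains where legs may sit \emph{on} $\Ga$; it does not exclude the a priori possibility that $\tGa$ minus its legs is a proper tropical modification of $\Ga$ --- for instance a genus-zero vertex carrying two or more marked legs and a single edge joining it to the rest of the curve, which the stabilization would erase. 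Since the finiteness of the fibers is exactly what converts $\dim M_{0,2g+2}^{trop}=2g-1$ into $\dim H_g=2g-1$, this step cannot be waved away; and the inclusion $\pi(H_{g,2g+2})\subseteq H_g$, as you phrase it, also presupposes it (though that half alone could be patched by invoking Proposition~\ref{prop:hyperellipticrestriction} to restrict to the stabilization).

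The paper closes precisely this hole with a one-line Riemann--Hurwitz computation: if $x\in\tGa$ carries $n\geq 2$ marked legs and becomes unstable after their removal, then $g(x)=0$, $\val_{\tGa}(x)=n+1$, and (since all marked legs are dilated and harmonicity forces the remaining edge at $x$ to be dilated as well) $\val_{\tDe}(\tau(x))=n+1$, whence $0=\Ram_x(\tau)=2\chi_{\tDe}(\tau(x))-\chi_{\tGa}(x)=n-1$, a contradiction. With this in hand every fiber element really is $\Ga$ decorated by legs, and your counting argument via \eqref{eq:ramification} becomes legitimate. A minor further caveat: automorphisms of $(\Ga,\eta)$ may identify some of your labelings, so $(2g+2)!/\prod_v a_v!$ is only an upper bound for the fiber cardinality --- harmless, since finiteness is all the dimension count requires.
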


\begin{proof} This result follows from Lemma 2.4 in~\cite{BBC}. We give a proof for the sake of completeness.

The existence and uniqueness of $\tau:\tGa\to \tDe$ follows from Propositions~\ref{prop:cyclelegs} (taking $D$ to be the set of all legs of $\tDe$) and~\ref{prop:cycleunique}. Let $\Ga$ denote the curve $\tGa$ with all legs removed, we claim that $\Ga$ is stable. Indeed, let $x\in \tGa$ be the point of attachment for $n\geq 2$ legs. If $x$ is unstable in $\Ga$, then $g(x)=0$ and $\val_{\tGa}(x)=n+1$, and the unique edge at $x$ is also dilated, so $\val_{\tDe}(\tau(x))=n+1$. Computing the ramification degree
$$
0=\Ram_{x}(\tau)=2\chi_{\tDe}(\tau(x))-\chi_{\tGa}(x)=n-1,
$$
we obtain a contradiction. It follows that $\Ga=\pi(\tGa)$, and the restriction of $\tau$ to $\Ga$ is an effective hyperelliptic morphism to the tree $\De$ obtained by removing all legs of $\tDe$. Hence $\Ga\in H_g$.

Conversely, given $\Ga\in H_g$ with effective hyperelliptic morphism $\ph:\Ga\to \De$, we reconstruct all curves $\tau:\tGa\to \tDe$ projecting to $\Ga$ by attaching the legs $p_1,\ldots,p_{2g+2}$ to the $2g+2$ points (counted with multiplicity) of the effective ramification divisor of $\ph$.
\end{proof}

We now consider the double ramification loci $DR_{g,(2,-2)}\subset M_{g,2}^{trop}$, $DR_{g,(2,-1,-1)}\subset M_{g,3}^{trop}$, and $DR_{g,(1,1,-1,-1)}\subset M_{g,4}^{trop}$ of degree two. Recall that a curve $\Ga$ lies inside a double ramification locus if a tropical modification $\tGa$ of $\Ga$ admits a finite effective harmonic morphism to a tree with specified action on the legs. We first observe that for hyperelliptic morphisms, we can in fact always assume that $\tGa=\Ga$. 

\begin{proposition} Let $\Ga$ be a tropical curve, let $\Ga'\supset\Ga$ be a tropical modification of $\Ga$, and let $\tau:\Ga'\to \De$ be a hyperelliptic morphism. Then the restriction of $\tau$ to $\Ga$ is a hyperelliptic morphism, which is effective if $\tau$ is effective. \label{prop:hyperellipticrestriction}

\end{proposition}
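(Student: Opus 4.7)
My plan is to work with the hyperelliptic involution $\sigma : \Ga' \to \Ga'$ associated with $\tau$ (fixing the dilation subgraph pointwise and swapping the two $\tau$-preimages elsewhere), and to reduce the general case to an elementary tropical modification. By iterating elementary steps, it suffices to treat the case $\Ga' = \Ga \cup e$ where $e$ is a single edge attached at a point $L \in \Ga$ with free endpoint $y$. The strategy is to show that in this case $\sigma$ fixes $e$ pointwise, so that $\sigma$ preserves $\Ga$ as a subset of $\Ga'$; then $\sigma|_{\Ga}$ is a well-defined involution on $\Ga$, and $\tau|_{\Ga} : \Ga \to \tau(\Ga) \subseteq \De$ is the corresponding hyperelliptic morphism to a subtree of $\De$.

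To show that $\sigma$ fixes $e$ pointwise in the elementary case, I would argue as follows. The free endpoint $y$ has valence one in $\Ga'$, and so does $\sigma(y)$. The only other valence-one points of $\Ga'$ are valence-one vertices of $\Ga$ (the attachment point $L$ has strictly larger valence in $\Ga'$, so does not qualify). In the setting in which this proposition is applied, $\Ga$ is a stable marked tropical curve, so any valence-one vertex of $\Ga$ carries positive genus and is fixed by $\sigma$; this forces $\sigma(y) = y$, since $\sigma(y)$ cannot be both fixed and distinct from $y$. The harmonicity of $\tau$ at $y$ together with the fact that $y$ has a unique tangent direction then forces $\sigma$ to fix the edge $e$ pointwise (equivalently, $d_{\tau}(e) = 2$ and $\tau(y)$ is a leaf of $\De$). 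Consequently $\sigma(\Ga) = \Ga$.

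Granting $\sigma(\Ga) = \Ga$, the restriction $\sigma|_{\Ga}$ fixes every positive-genus vertex of $\Ga$ (inherited from $\sigma$), and the quotient $\Ga / \sigma|_{\Ga}$ embeds into $\De = \Ga'/\sigma$ as the connected subtree $\tau(\Ga)$, which is therefore genus zero. So $\sigma|_{\Ga}$ is a hyperelliptic involution on $\Ga$, and its associated morphism is exactly $\tau|_{\Ga}$. The degree at any $y' \in \tau(\Ga)$ equals $2$ because both $\sigma$-conjugate preimages of $y'$ lie in $\Ga$. For the effectivity claim, at every $v \in \Ga$ fixed by $\sigma|_{\Ga}$, the tangent directions to $\Ga$ at $v$ form a subset of those to $\Ga'$, so the number of fixed tangent directions in $\Ga$ satisfies $\kappa_{\Ga}(v) \leq \kappa_{\Ga'}(v) \leq 2g(v) + 2$, inheriting effectivity from $\tau$.

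The main obstacle is the claim that $\sigma$ fixes newly added edges pointwise. Without stability of $\Ga$, one could in principle have $\sigma$ swap the free endpoint $y$ with a genus-zero valence-one vertex of $\Ga$, producing a legitimate hyperelliptic involution on $\Ga'$ whose restriction to $\Ga$ has degree one rather than two; this is why the stability of $\Ga$ (implicit in the double ramification context where this proposition is applied) is essential to the argument.
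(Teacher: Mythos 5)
There is a genuine gap: your reduction to a single attached edge, together with the claim that $\si$ must fix each newly attached edge pointwise, is false. A hyperelliptic morphism on a modification of a stable curve can perfectly well have two attached edges that are \emph{undilated} and swapped by the involution --- for instance, attach edges $e_1,e_2$ of equal length at points $x_1,x_2\in\Ga$ with $\si(x_1)=x_2$ (or two such edges at a single fixed point); these are exactly the configurations of types (4) and (5) in Proposition~\ref{prop:1111}, so they occur in precisely the situations where the proposition is invoked. In such a situation your key step fails twice over: first, the free endpoint $y$ of one attached edge satisfies $\si(y)\neq y$, because $\si(y)$ is the endpoint of the \emph{other} attached edge (a genus-zero valence-one point of $\Ga'$ that is not a vertex of $\Ga$), so your dichotomy ``$\si(y)$ is either $y$ or a valence-one vertex of $\Ga$'' does not exhaust the possibilities; second, peeling off one elementary modification at a time is not legitimate, since after removing only one edge of a swapped pair the restriction to the intermediate curve violates harmonicity at the attachment point (the two image directions there receive degree sums $1$ and $2$), and the intermediate curves are themselves non-stable modifications of $\Ga$, so the stability hypothesis you rely on is unavailable at those steps. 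Your argument is correct only in the genuinely elementary case of a single edge attached to a stable curve, and the effectivity bound $\kappa_\Ga(v)\le\kappa_{\Ga'}(v)$ is fine, but the general statement does not follow.

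The paper's proof avoids this by working one extremal edge of the \emph{target} at a time and removing its full preimage: if the extremal edge $e'$ over $e$ is dilated, delete $\{v',e'\}$ upstairs and $\{v,e\}$ downstairs; if it is undilated, its $\si$-partner $e''=\si(e')$ is another extremal edge, and one deletes the pair $\{v',v'',e',e''\}$ together with $\{v,e\}$. In both cases one restricts to the preimage of a subgraph, so harmonicity (and, by Remark~\ref{rem:restriction}, effectivity) is preserved, and iterating removes all extremal edges of $\Ga'$, landing on the stabilization $\Ga$. If you want to salvage your involution-based picture, you must allow the second alternative ($\si$ swapping two attached edges with a common image) and always remove $\si$-orbits of extremal edges rather than single elementary modifications.
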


\begin{proof} Fix a model $\ph:G'\to G$ for $\tau$. Let $v'\in V(G')$ be an extremal vertex attached to an edge $e'\in E(G')$, and let $v=\ph(v')$ and $e=\ph(e')$. If $d_{\ph}(v')=2$, then $d_{\ph}(e')=2$, so $\ph^{-1}(v)=\{v'\}$ and $\ph^{-1}(e)=\{e'\}$, and the restriction of $\ph$ to $G'\backslash\{v',e'\}$ is a hyperelliptic morphism onto $G\backslash\{v,e\}$ (see Remark~\ref{rem:restriction}). On the other hand, if $d_{\ph}(v')=1$, then $d_{\ph}(e')=1$, and $v''=\si(v')$ is an extremal vertex attached to $e''=\si(e')$. Hence $\ph^{-1}(v)=\{v',v''\}$ and $\ph^{-1}(e)=\{e',e''\}$, and the restriction of $\ph$ to $G'\backslash\{v',v'',e',e''\}$ is a hyperelliptic morphism onto $G\backslash\{v,e\}$. Proceeding in this way, we remove all extremal edges of $G'$ and obtain a model of $\Ga$. Finally, if $\tau$ is effective, then $\ph$ is effective by Remark~\ref{rem:restriction}.
\end{proof}

We note that this result is specific for morphisms of degree two, and does not hold for higher degrees (see example following Thm.~\ref{thm:main}). 

We are now ready to determine the structure of the double ramification loci of degree two. 

\begin{proposition} Let $g\geq 1$, and denote $\pi_n:M_{g,n}^{trop}\to M_g^{trop}$ the projection maps for $n=2,3,4$. Then the images of the double ramification loci of degree two are all equal to the effective hyperelliptic locus:
$$
\pi_2\big(\DR_{g,(2,-2)}\big)=H_g,\quad \pi_3\big(\DR_{g,(2,-1,-1)}\big)=H_g,\quad \textrm{ and } \quad \pi_4\big(\DR_{g,(1,1,-1,-1)}\big)=H_g.
$$
For each $\Ga\in H_g$, the fibers 
\begin{equation*}
\big(\pi_2^{-1}(\Ga)\big)\cap \DR_{g,(2,-2)}, \quad \big(\pi_3^{-1}(\Ga)\big)\cap \DR_{g,(2,-1,-1)}, \quad \textrm{ and } \quad \big(\pi_4^{-1}(\Ga)\big)\cap \DR_{g,(1,1,-1,-1)}
\end{equation*}
have dimensions 0, 1, and 2, respectively. \label{prop:DRhyperelliptic}
\end{proposition}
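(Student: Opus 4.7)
The starting observation is Proposition \ref{prop:hyperellipticrestriction}, which for morphisms of degree two lets us dispense with tropical modifications: a point $(\Ga, p_1, \ldots, p_n)$ lies in the DR locus if and only if $\Ga$ itself admits a finite effective hyperelliptic morphism $\tau \colon \Ga \to \De$ with the prescribed leg behavior. To prove $\pi_n(\DR_{g,a}) \subseteq H_g$, given such $\tau$, we remove the marked legs from both source and target to obtain a restriction $\tau^\circ \colon \Ga^\circ \to \De^\circ$, which by Remark \ref{rem:restriction} is still finite, effective, and of degree two. Passing to the stabilization in the sense of Definition \ref{def:stabilization} then produces an effective hyperelliptic morphism on the stable leg-less curve $\pi_n(\Ga)$, placing it in $H_g$ by Theorem \ref{thm:ABBRhyperelliptic}.

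For the reverse inclusion and the fiber dimensions, fix $\Ga_0 \in H_g$ with its unique effective hyperelliptic morphism $\eta \colon \Ga_0 \to \De_0$, hyperelliptic involution $\iota$, dilation subgraph $C_\eta$, and ramification divisor $R$ of total degree $2g+2$. Any element of $\pi_n^{-1}(\Ga_0) \cap \DR_{g,a}$ is obtained by simultaneously attaching marked legs to $\Ga_0$ (after subdivision if needed) and the two legs $0$, $\infty$ to $\De_0$, and extending $\eta$ to a finite effective morphism $\tau$. Every marked leg on the source then falls into exactly one of two types: a \emph{dilated leg} attached at a fixed point of $\iota$ and mapping with local degree $2$ to a leg of $\De$, or one member of a \emph{hyperelliptic pair} attached at two involution-swapped points $x, \iota(x) \in \Ga_0$ both mapping with degree $1$ to the same leg of $\De$.

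The key ramification computation, applying $\Ram_\tau(v') = 2 \chi(\tau(v')) - \chi(v')$ before and after attachment, shows that attaching a dilated leg at a vertex $v' \in C_\eta$ decreases the local ramification by exactly one, while attaching a hyperelliptic pair at involution-swapped points contributes zero. Consequently, dilated legs are constrained to lie at vertices in the support of $R$ with effective ramification $\geq 1$ (a \emph{finite} set of options), whereas a hyperelliptic pair may be attached above any point $y \in \De_0 \setminus C_\eta$, giving a $1$-dimensional family. The three cases $(2,-2)$, $(2,-1,-1)$, and $(1,1,-1,-1)$ correspond respectively to two dilated legs, one dilated leg plus one hyperelliptic pair, and two hyperelliptic pairs, yielding fiber dimensions $0$, $1$, and $2$; since $R$ has degree $2g+2 \geq 2$, the required dilated legs always exist, so $\pi_n(\DR_{g,a}) = H_g$ in every case. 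The main obstacle is the careful effectiveness bookkeeping: one must verify that attempting to place a dilated leg at an interior point of a dilated edge, or at a vertex of zero ramification, violates effectiveness, which is precisely what prevents any extra continuous moduli from appearing in the dilated case.
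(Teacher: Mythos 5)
Your forward inclusion $\pi_n(\DR_{g,a})\subseteq H_g$ and your surjectivity argument are fine and essentially the paper's (Proposition~\ref{prop:hyperellipticrestriction} plus removal of legs; note only that the ``stabilization'' of Definition~\ref{def:stabilization} is defined for \emph{unramified} morphisms, so the correct tool for the last step is again Proposition~\ref{prop:hyperellipticrestriction}, as you yourself use at the start). The gap is in your description of the fibers. You assert that any element of $\pi_n^{-1}(\Ga_0)\cap\DR_{g,a}$ is obtained by attaching the marked legs directly to (a subdivision of) $\Ga_0$. That is false: a point of the fiber is a stable marked curve $\tGa$ whose \emph{stabilization after forgetting the legs} is $\Ga_0$, so $\tGa$ may contain extra edges attached to $\Ga_0$ that carry the legs and are contracted by $\pi_n$. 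Such configurations genuinely occur: they are types~(\ref{item:211type2})--(\ref{item:211type3}) of Proposition~\ref{prop:211} (a pair of legs, or all three legs, at the free end of a new dilated edge) and types~(\ref{item:1111type2})--(\ref{item:1111type6}) of Proposition~\ref{prop:1111} (new dilated edges, or involution-swapped pairs of undilated edges of equal length, carrying the legs). Your leg-by-leg ramification bookkeeping only covers legs rooted on $\Ga_0$ itself, so it neither rules these configurations out nor bounds their dimension.

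This matters in both directions of the dimension claim. For $a=(2,-2)$ you must \emph{exclude} the modification configurations (e.g.\ both legs at the end of a newly attached dilated edge) by a Riemann--Hurwitz computation at the new extremal vertex, as in the converse part of the proof of Proposition~\ref{prop:22}; otherwise the fiber dimension $0$ is not established. For $a=(2,-1,-1)$ and $(1,1,-1,-1)$ the omitted families are nonempty and have dimensions $1$ and $2$ (edge lengths are continuous parameters while the attachment points become constrained by $x\leq R$ or by the involution), so your exhibited families give only the lower bound; without classifying \emph{all} ways the legs can sit on a modification of $\Ga_0$ compatibly with an effective hyperelliptic morphism, you have no upper bound on the fiber dimension. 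The paper closes exactly this gap by the case-by-case analyses in Propositions~\ref{prop:22}, \ref{prop:211} and \ref{prop:1111}, which track which attachment points become unstable after forgetting the legs. (A smaller inaccuracy: an involution-swapped pair of legs may also be attached at a single fixed point over a point of the dilation cycle, not only over $\De_0\setminus C_\eta$; this does not affect dimensions but your classification statement is too narrow even for legs on $\Ga_0$.)
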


\begin{proof} Let $\tGa$ be a curve lying in one of the double ramification loci $\DR_{g,(2,-2)}$, $\DR_{g,(2,-1,-1)}$, or $\DR_{g,(1,1,-1,-1)}$, and let $\Ga=\pi_n(\tGa)\in M_g^{trop}$ for the appropriate $n$. By definition, there exists an effective hyperelliptic morphism from a tropical modification of $\tGa$ to a tree. Proposition~\ref{prop:hyperellipticrestriction} implies that in fact this morphism restricts to an effective hyperelliptic morphism $\ph:\tGa\to \tDe$, where $\tDe$ is a tree with legs $0$ and $\infty$. Remove the legs from $\tGa$ and $\tDe$ to obtain the curves $\Ga_0$ and $\De$, respectively, then $\ph$ restricts to an effective hyperelliptic morphism $\ph\vert_{\Ga_0}:\Ga_0\to \De$. Since $\Ga_0$ is a tropical modification of $\Ga$, Proposition~\ref{prop:hyperellipticrestriction} implies that $\ph$ further restricts to an effective hyperelliptic morphism on $\Ga$. Hence $\Ga\in H_g$, and we have shown that 
$$
\pi_2\big(\DR_{g,(2,-2)}\big)\subseteq H_g,\quad \pi_3\big(\DR_{g,(2,-1,-1)}\big)\subseteq H_g,\quad \textrm{ and }\quad \pi_4\big(\DR_{g,(1,1,-1,-1)}\big)\subseteq H_g.
$$
We prove that these inclusions are equalities, and the dimension results for the fibers, by giving an explicit description of all curves in the three double ramification loci whose underlying unmarked curve is a given effective hyperelliptic curve. For the remainder of this section, we fix the following notation:
\begin{enumerate}
\item $\Ga\in H_g$ is an effective hyperelliptic curve.

\item $\si:\Ga\to \Ga$ is the hyperelliptic involution.

\item $\eta:\Ga\to \De$ is the hyperelliptic morphism.

\item $R\in \Div_{2g+2}(\Ga)$ is the ramification divisor of the hyperelliptic morphism. 
\end{enumerate}
\end{proof}

\begin{proposition} \label{prop:22} The curves $\tGa\in DR_{g,(2,-2)}$ with $\pi_2(\tGa)=\Ga$ are obtained by choosing points $x_1,x_2\in \Ga$ such that $x_1+x_2\leq R$, and attaching the legs $p_1$ and $p_2$ to $x_1$ and $x_2$, respectively.

\end{proposition}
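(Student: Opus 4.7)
The strategy is to combine Proposition \ref{prop:hyperellipticrestriction} with the uniqueness of the hyperelliptic morphism (Theorem \ref{thm:ABBRhyperelliptic}), and then interpret the effectiveness condition as a divisor inequality involving the ramification divisor $R$ of $\eta$.

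First I would suppose $\tGa\in DR_{g,(2,-2)}$ with $\pi_2(\tGa)=\Ga$. By Definition \ref{def:DR}, there is a finite effective harmonic morphism $\tau'\colon\tGa'\to\tDe$ from a tropical modification of $\tGa$ to a tree $\tDe$ with legs $0,\infty$, satisfying $\tau'(p_i)\in\{0,\infty\}$ and $d_{\tau'}(p_i)=2$. Because this morphism has degree two, Proposition \ref{prop:hyperellipticrestriction} lets me restrict to an effective hyperelliptic morphism $\tau\colon\tGa\to\tDe$ defined on $\tGa$ itself. Removing the four legs $p_1,p_2,0,\infty$ produces an effective hyperelliptic morphism $\tau|_\Ga\colon\Ga\to\De$, which by uniqueness (Theorem \ref{thm:ABBRhyperelliptic}) must coincide with $\eta$; in particular, the dilation subgraph of $\tau|_\Ga$ is $C_\eta$.

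Since $d_\tau(p_i)=2$, the attachment point $x_i$ lies in the dilation subgraph of $\tau$, and hence in its restriction $C_\eta$. Using the identity $\val_\Ga(v)=2\val_\De(\eta(v))-\val_{C_\eta}(v)$ at a vertex $v\in C_\eta$, which follows from the harmonicity of $\eta$, together with the formula $R(v)=2g(v)+2-\val_{C_\eta}(v)$, a short Euler-characteristic computation yields
\[
\Ram_\tau(x)=R(x)-\#\{i:x_i=x\}
\]
at every vertex $x$ of $\tGa$. The effectiveness condition $\Ram_\tau\geq 0$ is therefore equivalent to the divisor inequality $x_1+x_2\leq R$ on $\Ga$. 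I would also check separately that $x_i$ cannot lie in the interior of an edge: at such a subdivision point, $g=0$ and $\val_{C_\tau}=3$, forcing $\Ram_\tau=-1<0$.

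Conversely, given any $x_1,x_2\in\Ga$ with $x_1+x_2\leq R$, the points lie in the support of $R$, which is contained in $C_\eta$. I construct $\tDe$ by attaching legs $0,\infty$ to $\De$ at $\eta(x_1),\eta(x_2)$, and $\tGa$ by attaching $p_1,p_2$ to $\Ga$ at $x_1,x_2$; extending $\eta$ by sending $p_i$ to the appropriate leg with degree $2$ preserves harmonicity, and the local computation above certifies effectiveness. Stability of $\tGa$ follows automatically because $R$ is supported on the vertices of the minimal model of $\Ga$, so each $x_i$ is either such a vertex or coincides with one. The main obstacle is the ramification bookkeeping in the coincident case $x_1=x_2$, where two dilated legs are attached at the same vertex; verifying that the support of $R$ is exactly the set of admissible attachment points is what pins down the divisor inequality.
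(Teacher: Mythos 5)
Your overall strategy is the same as the paper's: restrict via Proposition~\ref{prop:hyperellipticrestriction}, identify the restriction with $\eta$ by uniqueness (Theorem~\ref{thm:ABBRhyperelliptic}), and encode effectiveness through the bookkeeping $\Ram_\ph = R - x_1 - x_2$ (your formula $\Ram_\tau(x)=R(x)-\#\{i:x_i=x\}$ is exactly the paper's observation that attaching a dilated leg at a point and a leg at its image drops the local ramification by one). The construction in your converse direction is also the paper's.

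There is, however, a genuine gap in your classification direction. When you write that removing the legs ``produces an effective hyperelliptic morphism $\tau|_{\Ga}\colon \Ga \to \De$,'' you conflate $\tGa$ minus its legs (call it $\Ga_0$) with its stabilization $\Ga$; the hypothesis $\pi_2(\tGa)=\Ga$ only says $\Ga$ is the stabilization of $\Ga_0$, not that $\Ga_0=\Ga$. Consequently your claim that the attachment points $x_i$ lie ``in the dilation subgraph of $\tau$, and hence in $C_\eta$'' presupposes $x_i\in\Ga$, which is what needs to be proved: a priori $\tGa$ could be $\Ga$ with a dilated edge $e$ attached at some point, with \emph{both} legs $p_1,p_2$ attached at the free endpoint of $e$ (the degree-two analogue of type~(\ref{item:211type3}) in Proposition~\ref{prop:211}); such a curve still projects to $\Ga$ under $\pi_2$ but is not of the form asserted in the statement. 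The paper closes exactly this hole: it shows $\Ga_0=\Ga$ by noting that the only possible destabilization is at a common attachment point $x_1=x_2=x$ with $g(x)=0$, and there the Riemann--Hurwitz condition $\Ram_\ph(x)=2-2\val_{\tDe}(\ph(x))+\val_{\tGa}(x)\geq 0$ together with $\val_{\tDe}(\ph(x))\geq 3$ forces $\val_{\tGa}(x)\geq 4$, so no extremal edge is created. Your separate check that $x_i$ cannot be an interior point of an edge treats a different (valence-two) configuration and does not rule out the extremal-edge one. The omission is reparable by the same kind of local computation you already use (at the free end of such an edge one gets $\Ram_\tau=-1$), but as written the argument assumes the key point rather than proving it.
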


\begin{proof} Pick $x_1,x_2\in \Ga$ such that $x_1+x_2\leq R$, these points are dilated. Attach $p_1$ and $p_2$ as dilated legs to $x_1$ and $x_2$, respectively, and attach legs $0$ and $\infty$ to $\De$ at $\eta(p_1)$ and $\eta(p_2)$, respectively, to obtain a hyperelliptic morphism $\ph:\tGa\to \tDe$. Attaching a single leg to the source and target reduces the ramification degree at the attaching point by one, hence the ramification divisor $\Ram\ph=R-x_1-x_2$ of $\ph$ is effective, and therefore $\tGa\in \DR_{g,(2,-2)}$. We note that there are finitely many such curves $\tGa$. 

Conversely, let $\tGa\in \DR_{g,(2,-2)}$ be such that $\pi(\tGa)=\Ga$, let $\ph:\tGa\to \tDe$ be the hyperelliptic morphism, and let $\Ga_0\subset \tGa$ be $\tGa$ with $p_1$ and $p_2$ removed. We claim that $\Ga_0$ is equal to its stabilization $\Ga$. Indeed, let $x_1\in \Ga_0$ and $x_2\in \Ga_0$ be the attachment points of $p_1$ and $p_2$, respectively. If $x_1\neq x_2$, or if $x_1=x_2$ is a point of positive genus, then $\Ga_0$ is stable. Now suppose that $x_1=x_2=x$ and $g(x)=0$. Since $\val_{\tDe}\big(\ph(x)\big)\geq 3$, the Riemann--Hurwitz condition  
$$
\Ram_{\ph}(x)=2-2\val_{\tDe}\big(\ph(x))\big)+\val_{\tGa}(x)\geq 0
$$
at $x$ implies that $\val_{\tGa}(x)\geq 4$. Therefore $\val_{\Ga_0}(x)\geq 2$, hence $\Ga_0$ is stable. Finally, comparing the ramification divisors $R=\Ram \ph|_{\Ga}=\Ram \ph+x_1+x_2$, we see that $x_1+x_2\leq R$ because $\Ram \ph$ is effective.
\end{proof}

The proofs of the following propositions are similar.

\begin{proposition} The curves $\tGa\in DR_{g,(2,-1,-1)}$ with $\pi_3(\tGa)=\Ga$ are obtained in one of the following ways. 
\begin{enumerate} \item Choose points $x_1,x_2,x_3\in \Ga$ such that $x_1\leq R$ and $\si(x_2)=x_3$. Attach the legs $p_1$, $p_2$, and $p_3$ to $\Ga$ at $x_1$, $x_2$, and $x_3$, respectively. \label{item:211type1}

\item Choose points $x_1,x_2\in \Ga$ such that $x_1+x_2\leq R$. Attach the leg $p_1$ to $x_1$, attach a dilated edge $e$ of arbitrary length to $x_2$, and attach the legs $p_2$ and $p_3$ to the other endpoint of $e$. \label{item:211type2}

\item Choose a point $x_1\in \Ga$ such that $x_1\leq R$. Attach a dilated edge $e$ of arbitrary length to $x_1$, and attach the legs $p_1,p_2,p_3$ to the other endpoint of $e$. \label{item:211type3}
\end{enumerate} \label{prop:211}
\end{proposition}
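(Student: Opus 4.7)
The proof follows the template of Proposition \ref{prop:22}. By Proposition \ref{prop:hyperellipticrestriction}, a stable tropical curve $\tGa \in M_{g,3}^{trop}$ with $\pi_3(\tGa) = \Ga$ lies in $\DR_{g,(2,-1,-1)}$ if and only if there exists an effective hyperelliptic morphism $\ph \colon \tGa \to \tDe$ to a tree $\tDe$ with marked legs $0, \infty$, such that $\ph(p_1) = 0$ with $d_\ph(p_1) = 2$ and $\ph(p_j) = \infty$ with $d_\ph(p_j) = 1$ for $j = 2, 3$. In particular $p_1$ lies in the dilation subgraph of $\ph$, while $p_2, p_3$ do not.

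For the forward direction, in each of the three configurations the plan is to extend $\eta \colon \Ga \to \De$ to $\ph \colon \tGa \to \tDe$ by assigning the unique degrees on the added half-edges consistent with the prescribed values on $p_1, p_2, p_3$; harmonicity is then automatic, and effectiveness reduces to computing $\Ram_\ph(x) = 2\chi_{\tDe}(\ph(x)) - \chi_{\tGa}(x)$ at each attachment point. Attaching a dilated half-edge at a point $x \in \Ga$, together with its image in $\tDe$, decreases $\chi_{\tGa}(x)$ and $\chi_{\tDe}(\ph(x))$ each by one and so lowers the local ramification by $2\cdot 1-1=1$ from its value $R(x)$ under $\eta$, while attaching a non-dilated leg at a non-Weierstrass point (and the corresponding leg of $\tDe$) produces a local isomorphism and does not affect ramification. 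These computations yield the effectiveness conditions $x_1 \leq R$ in case (1), $x_1 + x_2 \leq R$ (together with a local ramification contribution of $+1$ at the free endpoint $y$ of the dilated edge $e$) in case (2), and $x_1 \leq R$ in case (3), with no ramification at the free endpoint in the latter case.

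For the backward direction, given $\tGa \in \DR_{g,(2,-1,-1)}$ with $\pi_3(\tGa) = \Ga$, fix the associated effective hyperelliptic morphism $\ph \colon \tGa \to \tDe$ and write $\tGa = \Ga \cup T$, where $T$ is the (possibly empty) union of trees attached to $\Ga$ by the tropical modification. Because $\tGa$ carries only three marked legs and every genus-zero vertex of $T$ must have valency at least three by stability of $\tGa$, the combinatorial possibilities for $T$ collapse to three options: no extra tree (all legs attached directly to $\Ga$), a single extra edge $e$ with two legs at its free endpoint and the third leg on $\Ga$, or a single extra edge $e$ with all three legs at its free endpoint. In the first option, the two non-dilated legs $p_2, p_3$ map with degree one to the single leg $\infty$ of $\tDe$, so their attachment points form the two preimages in $\tGa$ of the attachment point of $\infty$ in $\tDe$; these are hyperelliptic conjugates, giving case (1). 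In the second option, harmonicity at the free endpoint $y$ of $e$ forces the solitary leg to be $p_1$ (since a leg of degree $2$ cannot coexist at $y$ with only one further leg of degree $1$ mapping to $\infty$) and forces $e$ to be dilated, yielding case (2). In the third option, $e$ must again be dilated (since $p_1$ is), giving case (3).

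The main obstacle is ruling out more exotic configurations for $T$, such as a chain of two extra vertices each carrying a single leg, or two disjoint extra edges each carrying a leg. These are excluded by combining three constraints: the stability condition (every interior vertex of $T$ must carry at least one leg, leaving no room for purely combinatorial connecting edges), the prescribed degrees $(2,1,1)$ on the marked legs (which force $p_1$ to be dilated while $p_2, p_3$ are not), and the harmonicity of $\ph$ at each interior vertex of $T$ (which rigidly determines the dilation profile along $T$). A careful enumeration then shows the three listed configurations are exhaustive, completing the proof.
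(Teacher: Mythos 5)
Your forward direction is essentially the paper's (extend $\eta$ over the added edges and legs and check $\Ram_\ph\geq 0$ at the attachment points), and your local ramification bookkeeping there is correct. The gap is in the backward direction, specifically in the claim that the possibilities for the attached tree $T$ ``collapse to three options'' and that exotic configurations are excluded by stability, the prescribed leg degrees, and harmonicity alone. Consider the configuration where $T$ is a path of two edges attached to $\Ga$ at a point $x$, with the leg $p_1$ at the middle vertex $v$ and the legs $p_2,p_3$ at the free endpoint $w$. This is stable ($v$ and $w$ both have valence three), respects the degrees $(2,1,1)$, and is harmonic: harmonicity at $w$ forces the edge $vw$ to be dilated, and then harmonicity at $v$ forces the edge $xv$ to be dilated as well, so $d_\ph(v)=2$ is consistent. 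None of your three constraints rules it out. What kills it is \emph{effectiveness}: $v$ has genus zero and three dilated tangent directions, so $\Ram_\ph(v)=2g(v)+2-\val_{C_\ph}(v)=-1<0$. This is exactly where the paper's proof invokes the Riemann--Hurwitz inequality $\Ram_\ph(y_1)\geq 0$ (in its case $y_1\neq y_2=y_3$, sub-case $z=y_1$) to conclude $\chi_{\tGa}(y_1)\leq -2$, i.e.\ that the attachment point of $p_1$ must already lie in $\Ga$ rather than on a new vertex created by the modification. As written, your ``careful enumeration'' omits the one case where the enumeration is actually delicate, so the exhaustiveness of types (1)--(3) is not established.

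A secondary omission: in the backward direction you never derive the divisor conditions $x_1\leq R$ (types (1) and (3)) and $x_1+x_2\leq R$ (type (2)). These follow quickly by comparing $R=\Ram(\ph|_\Ga)$ with $\Ram\ph$, as in the paper (each dilated half-edge attached at a point of $\Ga$ lowers the local ramification by one, so effectiveness of $\ph$ forces the stated inequalities), but the step should be stated, since these inequalities are part of the classification being proved. The paper's organization --- case analysis on the attachment points $y_1,y_2,y_3$ and on whether they become unstable after deleting the legs, with effectiveness used at each problematic point --- is a cleaner way to make your tree enumeration airtight.
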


\begin{figure}

\begin{tabular}{cc}

\begin{tikzpicture}

\draw [thick] (1,0) .. controls (1,0.5) and (2,0.5) .. (2,0);
\draw [thick] (1,0) .. controls (1,-0.5) and (2,-0.5) .. (2,0);
\draw[ultra thick] (2,0) -- (2.4,0.8) node[above] {$p_1$};
\draw[ultra thick] (2,0) -- (3,0);

\draw[fill] (2,0) circle(.8mm);
\draw[fill] (3,0) circle(.8mm);
\draw [thick] (3,0) .. controls (3,0.5) and (4.5,0.5) .. (4.5,0);
\draw [thick] (3,0) .. controls (3,-0.5) and (4.5,-0.5) .. (4.5,0);
\draw[fill] (4,0.355) circle(.9mm);
\draw[fill] (4,-0.355) circle(.8mm);

\draw (3.51,0.255) -- (3.49,0.455);
\draw (3.51,-0.255) -- (3.49,-0.455);

\draw[thick] (4,0.355) -- (4.8,0.755) node[right] {$p_2$};
\draw[thick] (4,-0.355) -- (4.8,-0.755) node[right] {$p_3$};

\end{tikzpicture} 

&

\begin{tikzpicture}

\draw [thick] (1,0) .. controls (1,0.5) and (2,0.5) .. (2,0);
\draw [thick] (1,0) .. controls (1,-0.5) and (2,-0.5) .. (2,0);
\draw[fill] (2,0) circle(.8mm);
\draw[ultra thick] (2,0) -- (2.5,0.8) node[above] {$p_1$};
\draw[ultra thick] (2,0) -- (3,0);
\draw[fill] (3,0) circle(.8mm);

\draw [thick] (3,0) .. controls (3,0.5) and (4.5,0.5) .. (4.5,0);
\draw [thick] (3,0) .. controls (3,-0.5) and (4.5,-0.5) .. (4.5,0);
\draw[fill] (4.5,0) circle(.8mm);

\draw[ultra thick] (4.5,0) -- (5.5,0);
\draw[fill] (5.5,0) circle(.8mm);

\draw[thick] (5.5,0) -- (6.3,0.4) node[right]{$p_2$};
\draw[thick] (5.5,0) -- (6.3,-0.4) node[right]{$p_3$};
\draw (3.75,0.275) -- (3.75,0.475);
\draw (3.75,-0.275) -- (3.75,-0.475);

\end{tikzpicture}

\\ 
Type (\ref{item:211type1}) & Type (\ref{item:1111type2}) \\

\begin{tikzpicture}

\draw [thick] (1,0) .. controls (1,0.5) and (2,0.5) .. (2,0);
\draw [thick] (1,0) .. controls (1,-0.5) and (2,-0.5) .. (2,0);
\draw[fill] (2,0) circle(.8mm);
\draw[ultra thick] (2,0) -- (3,0);
\draw[fill] (3,0) circle(.8mm);

\draw [thick] (3,0) .. controls (3,0.5) and (4.5,0.5) .. (4.5,0);
\draw [thick] (3,0) .. controls (3,-0.5) and (4.5,-0.5) .. (4.5,0);
\draw[fill] (4.5,0) circle(.8mm);

\draw[ultra thick] (4.5,0) -- (5.5,0);
\draw[fill] (5.5,0) circle(.8mm);

\draw[thick] (5.5,0) -- (6.3,0.4) node[right]{$p_2$};
\draw[thick] (5.5,0) -- (6.3,-0.4) node[right]{$p_3$};
\draw[ultra thick] (5.5,0) -- (6.4,0) node[right] {$p_1$};

\draw (3.75,0.275) -- (3.75,0.475);
\draw (3.75,-0.275) -- (3.75,-0.475);

\end{tikzpicture}

&

\\ 
Type (\ref{item:211type3}) &  \\

\end{tabular}
\caption{Examples of marked curves in $\DR_{g,(2,-1,-1)}$ with the same underlying hyperelliptic curve of Example~\ref{ex:hyperellipticcurve}. The thick edges and the leg $p_1$ are dilated, pairs of edges with dashes have the same length.}
\label{fig:211}
\end{figure}
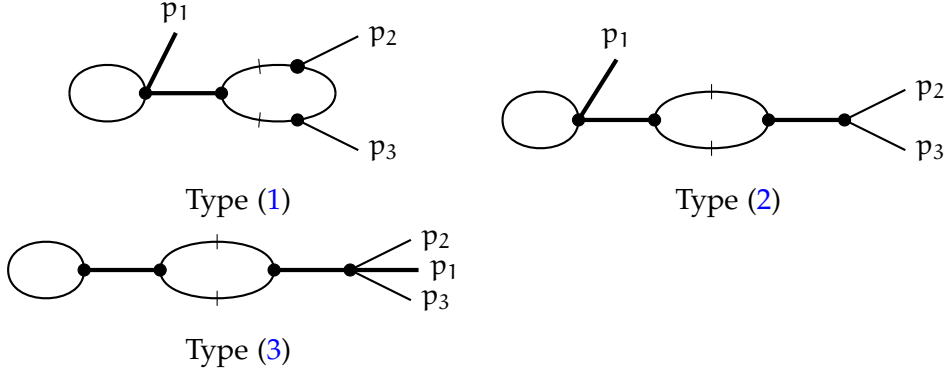

\begin{proof} Let $\tGa$ be one of the curves defined above, it is clear that $\pi_3(\tGa)=\Ga$. We obtain the target tree $\tDe$ from $\De$ as follows:

\begin{enumerate}

\item In case (\ref{item:211type1}), attach $0$ to $\eta(x_1)$ and $\infty$ to $\eta(x_2)=\eta(x_3)$, respectively.

\item In case (\ref{item:211type2}), attach $0$ to $\eta(x_1)$, an edge of length $l(e)/2$ to $\eta(x_2)$, and $\infty$ to the free endpoint of the edge.

\item In case (\ref{item:211type3}), attach an edge of length $l(e)/2$ to $\eta(x_1)$, and attach $0$ and $\infty$ to the free endpoint of the edge.

\end{enumerate}
It is clear how to extend $\eta:\Ga\to \De$ to a hyperelliptic morphism $\ph:\tGa\to \tDe$, and the restrictions on the points imply that $\ph$ is effective, hence all $\tGa$ constructed in this way lie in $DR_{g,(2,-1,-1)}$. We observe that for any $\Ga\in H_g$ constructions (\ref{item:211type2}) and (\ref{item:211type3}) give one-dimensional families of curves in $DR_{g,(2,-1,-1)}$ with underlying unmarked curve $\Ga$, and so does construction (\ref{item:211type1}) unless $\Ga=\bullet_g$. We also observe that these three one-dimensional families have common degenerations, for example setting $x_1=x_2=x_3$ in (\ref{item:211type1}) gives the same curve as choosing an edge $e$ of length zero in (\ref{item:211type3}). 

Conversely, let $\tGa\in \DR_{g,(2,-1,-1)}$ be a stable curve with dilated leg $p_1$ and undilated legs $p_2$ and $p_3$, such that $\Ga=\pi(\tGa)$. Let $\ph:\tGa\to \tDe$ be the effective hyperelliptic morphism to a tree $\tDe$ with legs $0$ and $\infty$, with $p_1$ mapping to $0$, and $p_2$ and $p_3$ mapping to $\infty$. Denote $\Ga_0\subset \tGa$ the graph $\tGa$ with the legs $p_i$ removed, so $\Ga\subset \Ga_0$ is its stabilization. 

Let $y_i\in \Ga_0$ be the points at which the legs $p_i$ are attached to $\tGa$. Then $y_1$ is dilated, and either $y_2\neq y_3$ and they are both undilated, or $y_2=y_3$ is dilated. In addition, $\si(y_2)=y_3$. Hence we have the following possibilities:

\begin{enumerate} \item The points $y_1$ are all distinct. In this case, removing the legs $p_i$ does not destabilize the graph, hence $\Ga_0=\Ga$. Denoting $x_i=y_i$ for $i=1,2,3$, we see that $\si(x_2)=x_3$, and comparing the ramification divisors $R=\Ram\ph|_{\Ga}=\Ram \ph+x_1$ we see that $x_1\leq R$. Hence we obtain a curve of type (\ref{item:211type1}).

\item $y_1\neq y_2=y_3$. If $g(y_2)\geq 1$ or $\val_{\tGa}(y_2)\geq 4$, then $\Ga_0=\Ga$ is already stable. Denoting $x_i=y_i$ for $i=1,2,3$, we see that $\si(x_2)=x_3$, and $R=\Ram \ph+x_1$, hence $x_1\leq R$ and we obtain a curve of type (\ref{item:211type1}).

Now suppose that $\chi_{\tGa}(y_2)=-1$, then $y_2$ is an unstable point of $\Ga_0$ and is the free endpoint of a dilated edge $e$ attached to the rest of $\Ga_0$ at a point $z$. If $z\neq y_1$, then removing $e$ does not destabilize $y_1$, hence $y_1\in \Ga$. Denoting $x_1=y_1$ and $x_2=y_2=y_3$, we see that $R=\Ram \ph+x_1+x_2$, where $x_1=y_2=y_3$, hence $x_1+x_2\leq R$ and we have a curve of type (\ref{item:211type2}). If $z=y_1$, we claim that $y_1\in \Ga$. Indeed, either $\tGa$ has no edges other than $e$, in which case $g(y_1)\geq 1$ and $y_1\in \Ga$, or there are additional edges at $y_1$, in which case $\val_{\tDe}(\ph(y_1))\geq 3$. The Riemann--Hurwitz condition 
$$
\Ram_{\ph}(c)=2\big(2-\val_{\tDe}(\ph(y_1))\big)-\chi_{\tGa}(y_1)\geq 0
$$
at $y_1$ then implies that $\chi_{\tGa}(y_1)\leq -2$, so $\chi_{\Ga_0}(y_1)\leq -1$ and $y_1\in \Ga$. Therefore, the curve $\tGa$ is of type (\ref{item:211type2}) with $x_1=x_2=y_1$.

\item $y_1=y_2=y_3$. If $g(y_1)\geq 1$ or $\val_{\tGa}(y_1)\geq 5$, then $\Ga_0=\Ga$ is stable. Denoting $x_i=y_i$ for $i=1,2,3$, we see that $\si(x_2)=x_3$, and $R=\Ram \ph+x_1$, hence $x_1\leq R$ and we obtain a curve of type (\ref{item:211type1}).

If $g(y_1)=0$ and $\val_{\tGa}(y_1)=4$, then $y_1$ is an unstable point of $\Ga_0$, hence $\Ga_0$ consists of $\Ga$ with a dilated edge $e$ attached at a point $x_1\in \Ga$, whose free endpoint is $y_1$. In this case $R=\Ram \ph+x_1$, hence $x_1\leq R$ and we have a curve of type (\ref{item:211type3}).

\end{enumerate}

\end{proof}

\begin{proposition} The curves $\tGa\in DR_{g,(1,1,-1,-1)}$ with $\pi_4(\tGa)=\Ga$ are obtained in one of the following ways (see Figure~\ref{fig:1111}):

\begin{enumerate} \item Choose points $x_1,x_2,x_3,x_4\in \Ga$ such that $\si(x_1)=x_2$ and $\si(x_3)=x_4$. For $i=1,2,3,4$, attach the leg $p_i$ to $\Ga$ at $x_i$. \label{item:1111type1}

\item Choose points $x_1,x_2,x_3\in \Ga$ such that $x_1\leq R$ and $\si(x_2)=x_3$. Attach a dilated edge $e$ of arbitrary length to $x_1$. Attach the legs $p_1$ and $p_2$ to the free endpoint of $e$, and attach $p_3$ and $p_4$ to $x_2$ and $x_3$, respectively. Alternatively, attach $p_3$ and $p_4$ to $e$, and $p_1$ and $p_2$ to $x_2$ and $x_3$, respectively. \label{item:1111type2}

\item Choose points $x_1,x_2\in \Ga$ such that $x_1+x_2\leq R$. Attach dilated edges $e_1$ and $e_2$ of arbitrary lengths to $x_1$ and $x_2$, respectively. Attach the legs $p_1$ and $p_2$ to $e_1$, and attach $p_3$ and $p_4$ to $e_2$. \label{item:1111type3}

\item Choose points $x_1,x_2\in \Ga$ such that $\si(x_1)=x_2$. Attach undilated edges $e_1$ and $e_2$ of equal lengths to $x_1$ and $x_2$. Attach the legs $p_1$ and $p_3$ to the free endpoint of $e_1$, and attach $p_2$ and $p_4$ to the free endpoint of $e_2$. Alternatively, attach $p_1$ and $p_4$ to $e_1$, and $p_2$ and $p_3$ to $e_2$.  \label{item:1111type4}

\item Choose a point $x_1\in \Ga$ such that $x_1\leq R$. Attach a dilated edge $e$ of arbitrary length to $x_1$, and attach undilated edges $f_1$ and $f_2$ of equal lengths to the other endpoint of $e$. Attach the legs $p_1$ and $p_3$ to the free endpoint of $f_1$, and the legs $p_2$ and $p_4$ to the free endpoint of $f_2$. Alternatively, attach $p_1$ and $p_4$ to $f_1$, and $p_2$ and $p_3$ to $f_2$. \label{item:1111type5}

\item Choose a point $x_1\in \Ga$ such that $x_1\leq R$. Attach a dilated edge $e$ to $x_1$ of arbitrary length, and attach a dilated edge $f$ of arbitrary length to the free endpoint $y$ of $e$. Attach the legs $p_1$ and $p_2$ to $y$, and attach $p_3$ and $p_4$ to the free endpoint of $f$. Alternatively, attach $p_3$ and $p_4$ to $y$, and attach $p_1$ and $p_2$ to $f$.  \label{item:1111type6}

\end{enumerate}
\label{prop:1111}
\end{proposition}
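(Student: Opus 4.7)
The plan is to mimic the two-direction strategy used in Propositions \ref{prop:22} and \ref{prop:211}: first verify that each of the six prescribed constructions produces a curve in $\DR_{g,(1,1,-1,-1)}$ projecting to $\Ga$, and then establish the converse by a case analysis on the effective hyperelliptic morphism $\ph\colon \tGa \to \tDe$.

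For the forward direction, in each of the six cases I would build the target tree $\tDe$ by mirroring on $\De$ the modifications made to $\Ga$: attach $0$ and $\infty$ to $\De$ at the $\eta$-images of the attachment points of the legs $p_i$; for every added dilated edge $e$, attach a single edge of length $l(e)/2$ to $\De$; and for every $\si$-conjugate pair of undilated edges $e_1,e_2$ of equal length, attach a single edge of the same length. The obvious extension of $\eta$ (degree two on dilated new edges, degree one on each half of a $\si$-conjugate pair) then defines a hyperelliptic morphism $\ph\colon\tGa\to\tDe$, and effectivity reduces via formula \eqref{eq:ramification} to checking that each dilated attachment point removes one from the coefficient of that point in the new ramification divisor relative to $R$; the hypotheses $x_i\le R$ and $x_1+x_2\le R$ are precisely what is needed to keep $\Ram \ph$ effective.

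For the converse, the key input is that the four legs $p_i$ are all undilated and that $\{p_1,p_2\}$ and $\{p_3,p_4\}$ are the entire $\ph$-fibers of $0$ and $\infty$ respectively, which forces the hyperelliptic involution to satisfy $\si(p_1)=p_2$ and $\si(p_3)=p_4$. Consequently the attachment points $y_i := r_{\tGa}(p_i)$ in $\Ga_0:=\tGa\setminus\{p_1,\ldots,p_4\}$ obey $\si(y_1)=y_2$ and $\si(y_3)=y_4$, and whenever $y_{2k-1}=y_{2k}$ this common point is fixed by $\si$ and hence lies in the support of $R$. I then run a case analysis on the stabilization $\Ga_0\to\Ga$: if no rational tails are added we land in type (\ref{item:1111type1}); otherwise the grafted rational subtrees must be reconstructed using the Riemann--Hurwitz inequalities at their internal vertices, exactly as in the closing argument of Proposition \ref{prop:211}.

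The main obstacle will be the combinatorial explosion in the converse direction. The $\si$-equivariant rational trees that can be grafted onto $\Ga$ so as to carry exactly two undilated legs above $0$ and two above $\infty$ are severely constrained by Riemann--Hurwitz and by the length-halving identity \eqref{eq:length}, and I expect only five such shapes to survive, corresponding to types (\ref{item:1111type2})--(\ref{item:1111type6}): a single dilated edge with two legs at its free end, two disjoint such dilated edges, a $\si$-conjugate pair of undilated edges each carrying two legs, the same $\si$-conjugate pair preceded by a dilated edge, and a chain of two dilated edges. The various ``alternatives'' in the statement correspond to the non-equivalent ways to distribute legs above $0$ versus above $\infty$ consistently with $\si$ and will have to be enumerated explicitly; the most delicate step will be ruling out further shapes, which requires carefully bounding the valence of each interior vertex of such a tail.
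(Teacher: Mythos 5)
Your plan follows the paper's proof essentially verbatim: the forward direction builds $\tDe$ by mirroring each modification on $\De$ (an edge of half the length under each new dilated edge, a single edge of the common length under a $\si$-conjugate pair) and checks effectivity against $R$, while the converse is the same case analysis on the attachment points $y_i$ and the stabilization $\Ga_0\to\Ga$, with the grafted rational tails pinned down by the Riemann--Hurwitz conditions, and the five tail shapes you predict are exactly types (\ref{item:1111type2})--(\ref{item:1111type6}). One small inaccuracy to fix when you carry this out: if $y_1=y_2$ the common point is indeed $\si$-fixed (dilated), but that does not place it in the support of $R$ --- in type (\ref{item:1111type1}) with $x_1=x_2$ a dilated point no condition $x_1\leq R$ is needed, since attaching the leg $0$ to $\De$ and the two legs $p_1,p_2$ to $\tGa$ leaves the local ramification unchanged, and the correct bookkeeping is the case-by-case comparison of ramification divisors $R=\Ram\,\ph+\cdots$ as in Propositions~\ref{prop:22} and~\ref{prop:211}.
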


\begin{figure}

\begin{tabular}{cc}

\begin{tikzpicture}

\draw [thick] (1,0) .. controls (1,0.5) and (2,0.5) .. (2,0);
\draw [thick] (1,0) .. controls (1,-0.5) and (2,-0.5) .. (2,0);
\draw[fill] (2.5,0) circle(.8mm);
\draw[thick] (2.5,0) -- (2.1,0.8) node[above] {$p_1$};
\draw[thick] (2.5,0) -- (2.9,0.8) node[above] {$p_2$};
\draw[ultra thick] (2,0) -- (3,0);

\draw[fill] (2,0) circle(.8mm);
\draw[fill] (3,0) circle(.8mm);
\draw [thick] (3,0) .. controls (3,0.5) and (4.5,0.5) .. (4.5,0);
\draw [thick] (3,0) .. controls (3,-0.5) and (4.5,-0.5) .. (4.5,0);
\draw[fill] (4,0.355) circle(.9mm);
\draw[fill] (4,-0.355) circle(.8mm);

\draw (3.51,0.255) -- (3.49,0.455);
\draw (3.51,-0.255) -- (3.49,-0.455);

\draw[thick] (4,0.355) -- (4.8,0.755) node[right] {$p_3$};
\draw[thick] (4,-0.355) -- (4.8,-0.755) node[right] {$p_4$};

\end{tikzpicture} 

&

\begin{tikzpicture}

\draw [thick] (1,0) .. controls (1,0.5) and (2,0.5) .. (2,0);
\draw [thick] (1,0) .. controls (1,-0.5) and (2,-0.5) .. (2,0);
\draw[fill] (2,0) circle(.8mm);
\draw[thick] (2.5,0) -- (2.1,0.8) node[above] {$p_1$};
\draw[thick] (2.5,0) -- (2.9,0.8) node[above] {$p_2$};
\draw[ultra thick] (2,0) -- (3,0);
\draw[fill] (2.5,0) circle(.8mm);
\draw[fill] (3,0) circle(.8mm);

\draw [thick] (3,0) .. controls (3,0.5) and (4.5,0.5) .. (4.5,0);
\draw [thick] (3,0) .. controls (3,-0.5) and (4.5,-0.5) .. (4.5,0);
\draw[fill] (4.5,0) circle(.8mm);

\draw[ultra thick] (4.5,0) -- (5.5,0);
\draw[fill] (5.5,0) circle(.8mm);

\draw[thick] (5.5,0) -- (6.3,0.4) node[right]{$p_3$};
\draw[thick] (5.5,0) -- (6.3,-0.4) node[right]{$p_4$};
\draw (3.75,0.275) -- (3.75,0.475);
\draw (3.75,-0.275) -- (3.75,-0.475);

\end{tikzpicture}

\\ 
Type (\ref{item:1111type1}) & Type (\ref{item:1111type2}) \\

\begin{tikzpicture}

\draw [thick] (1,0) .. controls (1,0.5) and (2,0.5) .. (2,0);
\draw [thick] (1,0) .. controls (1,-0.5) and (2,-0.5) .. (2,0);
\draw[fill] (2.5,1) circle(.8mm);
\draw[ultra thick](2.5,1) -- (3,0);
\draw[thick] (2.5,1) -- (1.7,1.4) node[left] {$p_1$};
\draw[thick] (2.5,1) -- (1.7,0.6) node[left] {$p_2$};
\draw[fill] (3,0) circle(.8mm);
\draw[fill] (2,0) circle(.8mm);
\draw[ultra thick] (2,0) -- (3,0);
\draw (2.9,0);
\draw [thick] (3,0) .. controls (3,0.5) and (4.5,0.5) .. (4.5,0);
\draw [thick] (3,0) .. controls (3,-0.5) and (4.5,-0.5) .. (4.5,0);
\draw[fill] (4.5,0) circle(.8mm);
\draw[ultra thick](4.5,0) -- (5.5,0);
\draw[fill] (5.5,0) circle(.8mm);

\draw (3.75,0.275) -- (3.75,0.475);
\draw (3.75,-0.275) -- (3.75,-0.475);
\draw[thick] (5.5,0) -- (6.3,0.4) node[right] {$p_3$};
\draw[thick] (5.5,0) -- (6.3,-0.4) node[right] {$p_4$};

\end{tikzpicture} 

&

\begin{tikzpicture}

\draw [thick] (1,0) .. controls (1,0.5) and (2,0.5) .. (2,0);
\draw [thick] (1,0) .. controls (1,-0.5) and (2,-0.5) .. (2,0);
\draw[ultra thick] (2,0) -- (3,0);

\draw[fill] (2,0) circle(.8mm);

\draw[fill] (3,0) circle(.8mm);

\draw [thick] (3,0) .. controls (3,0.5) and (4.5,0.5) .. (4.5,0);
\draw [thick] (3,0) .. controls (3,-0.5) and (4.5,-0.5) .. (4.5,0);
\draw[fill] (4,0.355) circle(.9mm);
\draw[fill] (4,-0.355) circle(.8mm);

\draw (3.51,0.255) -- (3.49,0.455);
\draw (3.51,-0.255) -- (3.49,-0.455);

\draw[thick] (4,0.325) -- (5,0.6);
\draw[fill] (5,0.6) circle(.8mm);
\draw[thick] (5,0.6) -- (5.8,0.9) node[right] {$p_1$};
\draw[thick] (5,0.6) -- (5.8,0.3) node[right] {$p_3$};
\draw[thick] (4,-0.325) -- (5,-0.6);
\draw[fill] (5,-0.6) circle(.8mm);
\draw[thick] (5,-0.6) -- (5.8,-0.3) node[right] {$p_2$};
\draw[thick] (5,-0.6) -- (5.8,-0.9) node[right] {$p_4$};

\begin{scope}[shift={(4.5,0.4625)}];
\begin{scope}[shift={(0.02,0.0055)}];
\draw (-0.0275,0.1) -- (0.0275,-0.1);
\end{scope};
\begin{scope}[shift={(-0.02,-0.0055)}];
\draw (-0.0275,0.1) -- (0.0275,-0.1);
\end{scope};
\end{scope};

\begin{scope}[shift={(4.5,-0.4625)}];
\begin{scope}[shift={(0.02,-0.0055)}];
\draw (-0.0275,-0.1) -- (0.0275,0.1);
\end{scope};
\begin{scope}[shift={(-0.02,0.0055)}];
\draw (-0.0275,-0.1) -- (0.0275,0.1);
\end{scope};
\end{scope};

\end{tikzpicture}

\\ 
Type (\ref{item:1111type3}) & Type (\ref{item:1111type4}) \\

\begin{tikzpicture}

\draw [thick] (1,0) .. controls (1,0.5) and (2,0.5) .. (2,0);
\draw [thick] (1,0) .. controls (1,-0.5) and (2,-0.5) .. (2,0);
\draw[ultra thick] (2,0) -- (3,0);
\draw [thick] (3,0) .. controls (3,0.5) and (4.5,0.5) .. (4.5,0);
\draw [thick] (3,0) .. controls (3,-0.5) and (4.5,-0.5) .. (4.5,0);
\draw[ultra thick] (4.5,0) -- (5.5,0);
\draw[fill](2,0) circle(.8mm);
\draw[fill](3,0) circle(.8mm);

\draw[fill](5.5,0) circle(.8mm);
\draw[thick] (5.5,0) -- (6.5,0.6);
\draw[thick] (5.5,0) -- (6.5,-0.6);
\draw[thick] (6.5,0.6) -- (7.3,0.9) node[right] {$p_1$};
\draw[thick] (6.5,0.6) -- (7.3,0.3) node[right] {$p_3$};
\draw[fill] (6.5,0.6) circle(.8mm);
\draw[thick] (6.5,-0.6) -- (7.3,-0.9) node[right] {$p_4$};
\draw[thick] (6.5,-0.6) -- (7.3,-0.3) node[right] {$p_2$};
\draw[fill] (6.5,-0.6) circle(.8mm);
\draw (3.75,0.275) -- (3.75,0.475);
\draw (3.75,-0.275) -- (3.75,-0.475);
\draw[fill](4.5,0) circle(.8mm);
\draw (5.2,0);

\begin{scope}[shift={(0.02,0.012)}];
\draw (6.06,0.2) -- (5.94,0.4);
\end{scope};
\begin{scope}[shift={(-0.02,-0.012)}];
\draw (6.06,0.2) -- (5.94,0.4);
\end{scope};

\begin{scope}[shift={(0.02,-0.012)}];
\draw (6.06,-0.2) -- (5.94,-0.4);
\end{scope};
\begin{scope}[shift={(-0.02,0.012)}];
\draw (6.06,-0.2) -- (5.94,-0.4);
\end{scope};

\end{tikzpicture}

& 
\begin{tikzpicture}

\draw [thick] (1,0) .. controls (1,0.5) and (2,0.5) .. (2,0);
\draw [thick] (1,0) .. controls (1,-0.5) and (2,-0.5) .. (2,0);
\draw[ultra thick] (2,0) -- (3,0);
\draw [thick] (3,0) .. controls (3,0.5) and (4.5,0.5) .. (4.5,0);
\draw [thick] (3,0) .. controls (3,-0.5) and (4.5,-0.5) .. (4.5,0);
\draw[ultra thick] (4.5,0) -- (5.5,0);
\draw[fill](2,0) circle(.8mm);
\draw[fill](3,0) circle(.8mm);
\draw[fill](5.5,0) circle(.8mm);
\draw[fill](5,0) circle(.8mm);
\draw[thick] (5,0) -- (4.6,0.8) node[above] {$p_1$};
\draw[thick] (5,0) -- (5.4,0.8) node[above] {$p_2$};
\draw[thick] (5.5,0) -- (6.3,0.4) node[right] {$p_3$};
\draw[thick] (5.5,0) -- (6.3,-0.4) node[right] {$p_4$};
\draw[fill] (4.5,0) circle(.8mm);
\draw (3.75,0.275) -- (3.75,0.475);
\draw (3.75,-0.275) -- (3.75,-0.475);

\end{tikzpicture}

\\

Type (\ref{item:1111type5}) & Type (\ref{item:1111type6}) \\

\end{tabular}
\caption{Examples of marked curves in $DR_{g,(1,1,-1,-1)}$ with the same underlying hyperelliptic curve of Ex.~\ref{ex:hyperellipticcurve}. Thick edges are dilated, pairs of edges with dashes have the same length.}
\label{fig:1111}
\end{figure}
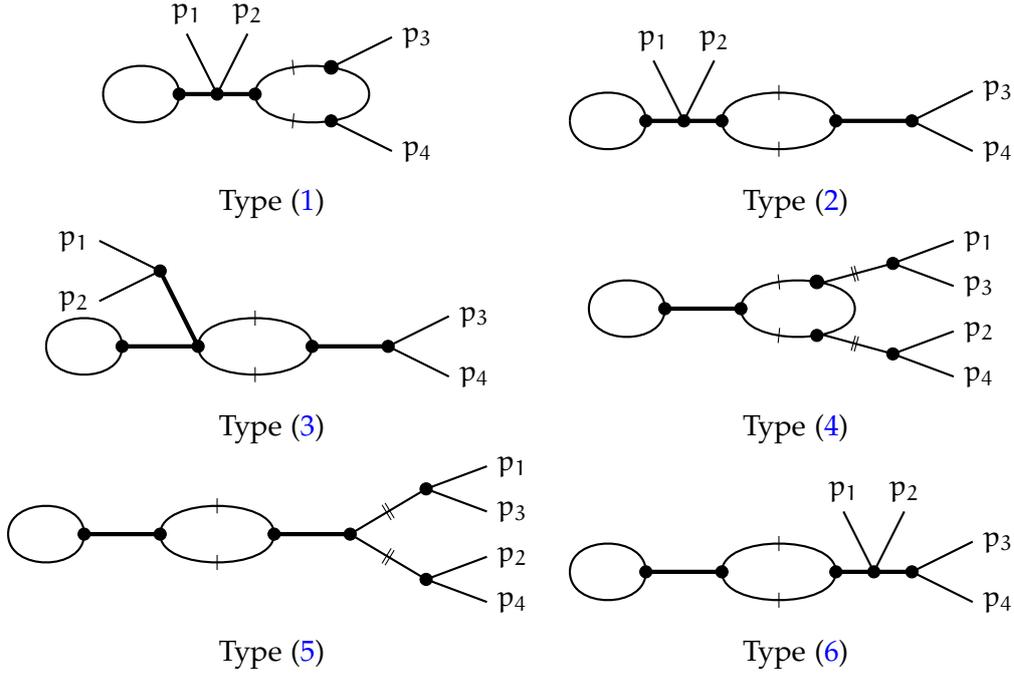

\begin{proof} Let $\tGa$ be one of the curves defined above, it is clear that $\pi_4(\tGa)=\Ga$. We obtain the target tree $\tDe$ from $\De$ as follows:

\begin{enumerate}

\item In case (\ref{item:1111type1}), attach $0$ to $\eta(x_1)=\eta(x_2)$ and $\infty$ to $\eta(x_3)=\eta(x_4)$, respectively.

\item In case (\ref{item:1111type2}), attach an edge of length $l(e)/2$ to $\eta(x_1)$. Attach $0$ to the free endpoint of the edge and $\infty$ to $\eta(x_2)=\eta(x_3)$, or vice versa. 

\item In case (\ref{item:1111type3}), attach edges $f_1$ and $f_2$ of lengths $l(e_1)/2$ and $l(e_2)/2$ to $\eta(x_1)$ and $\eta(x_2)$, respectively. Attach $0$ and $\infty$ to the free endpoints of $f_1$ and $f_2$, respectively.

\item In case (\ref{item:1111type4}), attach an edge of length $l(e_1)=l(e_2)$ to $\eta(x_1)=\eta(x_2)$, and attach $0$ and $\infty$ to the free endpoint of the edge.

\item In case (\ref{item:1111type5}), attach an edge of length $l(e)/2$ to $x_1$, and attach an edge of length $l(f_1)=l(f_2)$. Attach $0$ and $\infty$ to the free endpoint of the second edge. 

\item In case (\ref{item:1111type6}), attach an edge of length $l(e)/2$ with free endpoint $z$ to $x_1$, and attach an edge of length $l(f)/2$ to $z$. Attach $0$ to $z$ and $\infty$ to the free endpoint of the second edge, or vice versa.

\end{enumerate}

It is clear how to extend $\eta:\Ga\to \De$ to a hyperelliptic morphism $\ph:\tGa\to \tDe$, and the restrictions on the points imply that $\ph$ is effective, hence all $\tGa$ constructed in this way lie in $DR_{g,(1,1,-1,-1)}$. We observe that for any $\Ga\in H_g$ constructions (\ref{item:211type3}), (\ref{item:1111type5}), and (\ref{item:1111type6}) give two-dimensional families of curves in $DR_{g,(1,1,-1,-1)}$ with underlying unmarked curve $\Ga$, and so do constructions (\ref{item:1111type1}), (\ref{item:1111type2}), and (\ref{item:1111type4}) unless $\Ga=\bullet_g$. As in Proposition~\ref{prop:211}, there are various common degenerations between the families, for example choosing $x_1=x_2\leq R$ in (\ref{item:1111type4}) is equivalent to choosing an edge $e$ of length zero in (\ref{item:1111type5}).

Conversely, let $\tGa\in DR_{g,(1,1,-1,-1)}$ be a stable curve with undilated legs $p_1$, $p_2$, $p_3$, and $p_4$, such that $\Ga=\pi(\tGa)$. Let $\ph:\tGa\to \tDe$ be the effective hyperelliptic morphism to a tree $\tDe$ with legs $0$ and $\infty$, with $p_1$ and $p_2$ mapping to $0$, and $p_2$ and $p_3$ mapping to $\infty$. Denote $\Ga_0\subset \tGa$ the graph $\tGa$ with the legs $p_i$ removed, so $\Ga\subset \Ga_0$ is its stabilization. 

Let $y_i\in \Ga_0$ be the points at which the legs $p_i$ are attached to $\Ga_0$. Then either $y_1\neq y_2$ and they are both undilated, or $y_1=y_2$ is dilated, and similarly for the pair $y_3$ and $y_4$. In addition, $\si(y_1)=y_2$ and $\si(y_3)=y_4$. The points $y_i$ are stable in $\tGa$, and we consider whether or not they are unstable in $\Ga_0$, which may happen if some of them coincide. We have the following possibilities.

\begin{enumerate}

\item The points $y_i$ are all distinct and undilated. In this case all $y_i$ are semistable, hence $\Ga_0=\Ga$. Denoting $x_i=y_i$ for $i=1,2,3,4$, we see that $\si(x_1)=x_2$ and $\si(x_3)=x_4$, so we obtain a curve of type (\ref{item:1111type1}).

\item The points $y_1$, $y_3$, and $y_4$ are distinct, while $y_1=y_2$. The points $y_3$ and $y_4$ are undilated and semistable, while $y_1=y_2$ is dilated. If $y_1$ is semistable, then $\Ga_0=\Ga$ is already stable. Denoting $x_i=y_i$ for $i=1,2,3,4$, we see that $\si(x_1)=x_2$ and $\si(x_3)=x_4$, so we obtain a curve of type (\ref{item:1111type1}).

If $\chi_{\tGa}(y_1)=-1$, then $y_1$ is an unstable point of $\Ga_0$, and is the free endpoint of dilated edge $e$ attached a point $x_1\in \Ga_0$. Since $x_1$ is dilated it cannot be equal to $y_3$ or $y_4$, therefore removing $e$ does not destabilize $y_3$ and $y_4$. It follows that $x_1\in \Ga_0$. Denoting $x_2=y_3$ and $x_3=y_4$, we see that $R=\Ram \ph+x_1$, hence $x_1\leq R$ and we have a curve of type (\ref{item:1111type2}).

\item The points $y_1$, $y_2$, and $y_4$ are distinct, while $y_3=y_4$. This case is symmetric to the one above and yields curves of types (\ref{item:1111type1}) and (\ref{item:1111type2}).

\item $y_1=y_2\neq y_3=y_4$, and both points are dilated. If $y_1$ and $y_3$ are semistable, then $\Ga_0=\Ga$, and denoting $x_i=y_i$ for $i=1,2,3,4$ we obtain a curve of type (\ref{item:1111type1}).

If $y_1$ is semistable and $y_3$ is unstable, then $y_3$ is the free endpoint of a dilated edge $e$ attached to the rest of $\Ga_0$ at a point $z$, and there are two possibilities. If $z\neq y_1$, then $y_1\in \Ga$, since removing $e$ does not destabilize $y_1$. Denoting $x_1=z$ and $x_2=x_3=y_1$, we see that $R=\Ram \ph+x_1$, hence $x_1\leq R$ and we have a curve of type (\ref{item:1111type2}). If $z=y_1$, it may still happen that $y_1\in \Ga$, and we again obtain a curve of type (\ref{item:1111type2}) with $x_1=x_2=x_3=z$. If $z=y_1$ and $\chi_{\Ga_0}(y_1)=2$, then removing $e$ destabilizes $y_1$, hence $y_1$ is connected to $\Ga$ by a dilated edge $f$. Denoting $x_1\in \Ga$ the attaching point of $f$, we see that $R=\Ram \ph+x_1$, and our curve is of type (\ref{item:1111type6}). The case when $y_1$ is unstable and $y_2$ is semistable is treated similarly and yields curves of types (\ref{item:1111type2}) and (\ref{item:1111type6}).

Finally, suppose that $y_1$ and $y_2$ are both unstable, then they are connected to $\Ga_0$ by dilated edges $e_1$ and $e_2$ rooted at $x_1,x_2\in \Ga$. We see that $R=\Ram \ph+x_1+x_2$, and our curve is of type (\ref{item:1111type3}).

\item $y_1=y_3\neq y_2=y_4$, and both points are undilated. The hyperelliptic involution restricts to an isomorphism of the neighborhoods of $y_1$ and $y_2$ in $\Ga_0$, which implies that $y_1$ and $y_2$ are both either semistable or unstable. If $y_1$ and $y_2$ are semistable, then $\Ga_0=\Ga$, and denoting $x_i=y_i$ for $i=1,2,3,4$ we obtain a curve of type (\ref{item:1111type1}).

If $y_1$ and $y_2$ are unstable, then they are the endpoints of two undilated edges $e_1$ and $e_2$ attached to the rest of $\Ga_0$ at points $z_1$ and $z_2$. The hyperelliptic involution exchanges $e_1$ and $e_2$, so $l(e_1)=l(e_2)$ and $\si(z_1)=z_2$. If $z_1\neq z_2$, then removing $e_1$ and $e_2$ does not destabilize $z_1$ and $z_2$, hence $z_1,z_2\in \Ga$. Denoting $x_1=z_1$ and $x_2=z_2$, we obtain a curve of type (\ref{item:1111type4}). Similarly, if $z_1=z_2$ and $\chi_{\Ga_0}(z_1)\leq -2$, then $z_1\in \Ga$, and we again obtain a curve of type (\ref{item:1111type4}). Finally, if $z_1=z_2$ and $\chi_{\Ga_0}(z_1)=-1$, then $z_1$ is the endpoint of an edge $f$ attached attached to $\Ga$ at a point $x_1$. Denoting $e=f$ and $f_i=e_i$, we see that $R=\Ram \ph+x_1$, so we have a curve of type (\ref{item:1111type5}).

\item $y_1=y_4\neq y_2=y_3$, and both points are undilated. This case is symmetric to the one above and yields curves of types (\ref{item:1111type4}) and (\ref{item:1111type5}).

\item $y_1=y_2=y_3=y_4$ is a dilated point. If $\chi_{\Ga_0}(y_1)\leq -4$, then $\Ga_0=\Ga$ and denoting $x_i=y_i\in \Ga$ for $i=1,2,3,4$, we obtain a curve of type (\ref{item:1111type1}). If $\chi_{\Ga_0}(y_1)=-3$, then $y_1$ is an unstable point of $\Ga_0$, connected by a dilated edge $e$ to a point $x_1\in \Ga$. This is the degenerate case of type (\ref{item:1111type6}) when $l(f)=0$, or type (\ref{item:1111type5}) when $l(f_1)=l(f_2)=0$.
\end{enumerate}
\end{proof}



\section{Tropicalization, specialization, and realizability}

In this section we recall the moduli-theoretic and non-Archimedean approach to the process of tropicalization for marked curves and admissible cover, as pioneered in \cite{ACP} and \cite{CavalieriMarkwigRanganathan_tropadmissiblecovers}, and generalize this approach to divisors. In Section \ref{section_realizability} we then proceed to prove Theorem  \ref{thm_realizabilityofprincipaldivisors} and \ref{thm_realizabilityDR} from the introduction. From now we work over an algebraically closed field $k$ of characteristic zero carrying the trivial absolute value $\vert\cdot\vert$.


\subsection{Tropicalizing algebraic curves and their moduli spaces}

Let $X$ be a smooth projective curve over a non-Archimedean field $K$ extending $k$ and let $p_1,\ldots, p_n\in  X(K)$ be $n$ marked points. The stable reduction theorem implies that, possibly after replacing $K$ by a finite extension, there is a unique flat and proper nodal model $\calX$ of $X$ over the valuation ring $R$ of $K$ together with sections $s_1,\ldots, s_n$ of $\calX$ extending $p_1,\ldots,p_n$ such that the special fiber $(\calX_0,s_1,\ldots,s_n)$ is a stable $n$-marked curve. We define a tropical curve $\Ga_X$ associated to this datum as follows: 
\begin{itemize}
\item The underlying graph $G_X$ is the dual graph of $\calX_0$: there is a vertex $v\in V(G_X)$ for every irreducible component $\calX_v$ of $\calX_0$, there is an edge $e\in E(G_X)$ connecting vertices $u$ and $v$ for every node connecting the two components $\calX_u$ and $\calX_v$, and there is a marked leg $p_i\in L(G_X)$ attached to the vertex $v$ for each section $s_i$ intersecting $\calX_v$.
\item The weight $g(v)$ of a vertex $v\in V(G_X)$ is the genus of the normalization of $\calX_v$. 
\item The length of an edge $e\in E(G_X)$ is equal to $\val(t_e)$, where $t_e\in R$ is an element such that the node corresponding to $e$ is locally given by the equation $xy=t_e$.
\end{itemize}

A point in $\calM_{g,n}^{an}$ is given by a smooth projective algebraic curve $X/K$ of genus $g$ over a non-Archimedean extension of $k$ together with $n$ marked rational points. So we obtain a natural tropicalization map
\begin{equation*}\begin{split}
\trop_{g,n}\colon \calM_{g,n}^{an}&\longrightarrow M_{g,n}^{trop} \\ 
\big[X/K\big] & \longmapsto \big[\Ga_X\big] \ .
\end{split}\end{equation*}

The Deligne-Mumford boundary of $\calMbar_{g,n}$ has (stack-theoretically) normal crossings. Thus the open immersion $\calM_{g,n}\hookrightarrow \calMbar_{g,n}$ is a toroidal embedding. So, by \cite{Thuillier_toroidal} and \cite{ACP}, there is natural strong deformation retraction 
\begin{equation*}
\rho_{g,n}\colon\calM_{g,n}^{an}\longrightarrow \calM_{g,n}^{an}
\end{equation*}
onto a closed subset known as the \emph{non-Archimedean skeleton} $\Sigma_{g,n}$ of $\calM_{g,n}$; it naturally has the structure of a generalized cone complex. 

\begin{theorem}[\cite{ACP} Theorem 1.1]\label{thm_ACP} The tropicalization map $\trop_{g,n}\colon \calM_{g,n}^{an}\rightarrow M_{g,n}^{trop}$ has a natural section $\Phi_{g,n}\colon M_{g,n}^{trop}\rightarrow \calM_{g,n}^{an}$ that induces an isomorphism $M_{g,n}^{trop}\xrightarrow{\simeq}\Sigma_{g,n}$ and identifies $\trop_{g,n}$ with the deformation retraction $\rho_{g,n}$. 
\end{theorem}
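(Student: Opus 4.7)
The plan is to invoke Thuillier's general construction of the non-Archimedean skeleton for a toroidal embedding of a Deligne--Mumford stack, and then identify the resulting skeleton $\Sigma_{g,n}$ combinatorially with $M_{g,n}^{trop}$. The hypothesis that the Deligne--Mumford--Knudsen boundary $\calMbar_{g,n}\setminus \calM_{g,n}$ has (stack-theoretically) simple normal crossings gives access to Thuillier's theorem, which simultaneously produces the closed subset $\Sigma_{g,n}\subset \calMbar_{g,n}^{an}$, its natural generalized cone complex structure, and the strong deformation retraction $\rho_{g,n}$ onto it. In particular, the cones of $\Sigma_{g,n}$ are in bijection with the toroidal strata of $\calMbar_{g,n}$, and each stratum of codimension $k$ contributes a cone of dimension $k$.

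The first step is to identify these cones explicitly. A codimension-$k$ boundary stratum corresponds to a stable weighted marked graph $G$ with $\#E(G)=k$, and étale-locally near the stratum the boundary is cut out by the node-smoothing parameters $\{t_e\}_{e\in E(G)}$, each of the form $xy=t_e$. Thuillier's cone attached to such a stratum is then canonically $\R_{\geq 0}^{E(G)}$, which is exactly the cone $M_G^{trop}$ of Section~\ref{section_Mgntrop}. Under an edge contraction $G\to G'$ corresponding to the inclusion of one stratum in the closure of another, Thuillier's face gluing reproduces the face inclusion $M_{G'}^{trop}\hookrightarrow M_G^{trop}$; and graph automorphisms act compatibly through the stabilizers of the corresponding strata. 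Assembling these identifications functorially over the category $I_{g,n}$ yields an isomorphism $M_{g,n}^{trop}\xrightarrow{\simeq}\Sigma_{g,n}$ of generalized cone complexes, and composing its inverse with the inclusion $\Sigma_{g,n}\hookrightarrow \calM_{g,n}^{an}$ defines the section $\Phi_{g,n}$.

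It remains to verify that $\trop_{g,n}=\rho_{g,n}$ after this identification. This is a stratum-wise check: given $[X/K]\in\calM_{g,n}^{an}$, one takes its stable reduction with dual graph $G_X$ and local smoothing parameters $t_e$, and observes that both $\trop_{g,n}$ (by the definition of edge length $\val(t_e)$) and $\rho_{g,n}$ (as the tuple of valuations of the toroidal coordinates at the generic point of the stratum) return the same point of $M_{G_X}^{trop}$. The main obstacle throughout is the stack-theoretic nature of $\calMbar_{g,n}$: its boundary is \emph{self-intersecting} because nontrivial automorphisms of stable graphs glue boundary divisors to themselves, so Thuillier's theory must be applied in its stacky refinement and the cones must be glued with automorphism data. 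This is precisely what forces the target of $\trop_{g,n}$ to be a \emph{generalized} cone complex rather than a classical one, and the bulk of the technical work consists of checking that the retraction $\rho_{g,n}$ descends compatibly along the quotients by these automorphism groups.
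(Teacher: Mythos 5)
This theorem is not proved in the paper at all---it is quoted directly from Abramovich--Caporaso--Payne \cite{ACP}, and your outline faithfully reproduces their argument: Thuillier's skeleton and strong deformation retraction for a toroidal embedding of a Deligne--Mumford stack, identification of the cones over the boundary strata of $\calMbar_{g,n}$ (indexed by stable dual graphs, with coordinates the node-smoothing parameters $t_e$) with the cones $M_G^{trop}$, compatibility with edge contractions and graph automorphisms, and the stratum-wise comparison of $\trop_{g,n}$ with $\rho_{g,n}$. The only slip is calling the boundary ``simple'' normal crossings at the outset: as you yourself observe later, the boundary divisors self-intersect, so it is only (stack-theoretically) normal crossings, which is precisely why the skeleton carries the structure of a \emph{generalized} cone complex rather than a cone complex in the sense of \cite{KKMSD}.
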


Theorem \ref{thm_ACP}, a posteriori, shows that $\trop_{g,n}$ is well-defined, continuous, proper and surjective. 

\begin{remark}\label{remark_nonArchstack}
The space $\calM_{g,n}^{an}$ is non-Archimedean analytic stack, i.e. a category fibered in groupoids over the category of non-Archimedean analytic spaces, for which topological notions such as deformation retractions do not a priori make sense; as a remedy we implicitly work with its underlying topological space, which (topologically) agrees with the non-Archimedean analytification of the coarse moduli space $M_{g,n}$ of $\calM_{g,n}$ (see \cite[Section 3]{Ulirsch_tropisquot} for details).
\end{remark}


\subsection{Compactifying the moduli space of divisors}\label{section_moduliofdivisors}

Before we can study the specialization of divisors from algebraic to tropical curves from a moduli-theoretic point of view in Section \ref{section_specialization}, we first need to find a natural toroidal compactification of the moduli space of divisors. 

\begin{definition}
Let $g\geq 1$ and $d_+, d_-\geq 0$, and assume that either $d_+>0$ or $d_->0$ if $g=1$. Define $\calDivbar_{g,(d_+, d_-)}$ to be the fibered category over $\calMbar_g$ whose fiber over a stable curve $X\rightarrow S$ is the set of pairs $(X', D_+, D_-)$ consisting of a semistable model $X'$ of $X$ as well as two relative effective Cartier divisors $D_+$ and $D_-$ of degrees $d_+$ and $d_-$, respectively, subject to the following conditions:
\begin{enumerate}[(i)]
\item The supports of $D_+$ and $D_-$ are contained in the non-singular part of $X'$ in each fiber of $\pi\colon X'\rightarrow S$.
\item The twisted canonical divisor $K_{X'}+D_++D_-$ is $\pi$-relatively ample. 
\end{enumerate}
\end{definition}

Write $\calDiv_{g,(d_+,d_-)}$ for the restriction of $\calDivbar_{g,(d_+,d_-)}$ to the locus $\calM_g$ of smooth curves in $\calMbar_g$. It parametrizes smooth curves $X$ together with a pair of effective divisors $(D_+, D_-)$ of degrees $d_+$ and $d_-$, and can be identified with the fibered product 
$\calX_{g}^{(d_+)}\times_{\calM_g}\calX_g^{(d_-)}$ of two symmetric powers of the universal curve $\calX_g$ over $\calM_g$. 

\begin{theorem}\label{thm_Divgdd}
The fibered category $\calDivbar_{g,(d_+,d_-)}$ defines a Deligne-Mumford stack that is smooth and proper over $k$. Its coarse moduli space is projective and the complement of $\calDiv_{g,(d_+,d_-)}$ in $\calDivbar_{g,(d_+,d_-)}$ is a divisor with (stack-theoretically) normal crossings. 
\end{theorem}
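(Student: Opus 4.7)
My plan is to deduce Theorem~\ref{thm_Divgdd} by identifying $\calDivbar_{g,(d_+,d_-)}$ with a natural quotient of a Hassett moduli space of weighted pointed stable curves, providing the algebraic counterpart to Proposition~\ref{prop_Divgddtrop=quot}.

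Setting $N = d_+ + d_-$, I would consider a weight vector $\mathbf{a} = (a_+,\dots,a_+,a_-,\dots,a_-) \in ((0,1] \cap \QQ)^N$ chosen so that Hassett's stability inequality on $\overline{\calM}_{g,\mathbf{a}}$ matches condition (ii) for triples with reduced divisors. By \cite{Hassett}, $\overline{\calM}_{g,\mathbf{a}}$ is a smooth and proper Deligne-Mumford stack over $k$ with projective coarse moduli, and its boundary is a stack-theoretically normal crossings divisor. The symmetric group $S_{d_+} \times S_{d_-}$ acts on $\overline{\calM}_{g,\mathbf{a}}$ by permuting markings within each block, and the stack-theoretic quotient inherits all of these properties. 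The forgetful morphism
\begin{equation*}
(X', s_1, \ldots, s_N) \longmapsto \Big(X',\; \sum_{i=1}^{d_+} s_i,\; \sum_{i=d_++1}^{N} s_i\Big)
\end{equation*}
descends to a morphism $\overline{\calM}_{g,\mathbf{a}}/(S_{d_+} \times S_{d_-}) \to \calDivbar_{g,(d_+,d_-)}$ that is an isomorphism onto the open substack of triples with reduced divisors; the full moduli is obtained by varying $\mathbf{a}$ along Hassett's wall-crossings and incorporating semistable blowups to accommodate divisors with arbitrary multiplicities. If the Hassett comparison proves delicate, each individual property can alternatively be verified directly: representability as a DM stack follows from finiteness of automorphism groups (since $K_{X'} + D_+ + D_-$ is ample by condition (ii)); smoothness follows from the vanishing of $\Ext^2$ of the relevant log tangent complex on the one-dimensional scheme $X'$; properness follows from semistable reduction combined with extension of the divisors by flatness, with condition (ii) pinpointing a unique semistable model; projectivity of the coarse moduli follows from the relative ampleness of $K_{X'} + D_+ + D_-$; and the normal crossings structure of the boundary follows from the local description of versal deformations, which are parametrized by independent smoothing parameters $t_e$ for each node $e$ of $X'$ together with smooth parameters for divisor support.

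The main obstacle I anticipate is the careful treatment of divisors with multiplicity. A reduced divisor on a semistable curve corresponds naturally to a configuration of distinct marked points, but a divisor such as $D_+ = 2p$ supported at a single smooth point has no direct analog as a Hassett-stable configuration with all weights exceeding $1/2$. Understanding how semistable modifications interact with divisor multiplicities, and ensuring that the stack-theoretically normal crossings property extends consistently across all such strata, is the central technical difficulty of the proof.
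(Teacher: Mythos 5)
Your overall strategy (compare with a Hassett space of weighted pointed curves and quotient by $S_{d_+}\times S_{d_-}$) is the same one the paper uses, but there is a genuine gap at precisely the point you flag as the "central technical difficulty": divisors with multiplicities. The paper resolves this not by wall-crossing or semistable blowups, but by choosing the weights \emph{small} from the start, namely $\epsilon=\tfrac{1}{\max(d_+,d_-)}$ (all markings of weight $\epsilon$). With such weights, any collection of markings of total weight at most $1$ is allowed to collide at a smooth point of the curve, so a point of $\calMbar_{g,\epsilon^{d_++d_-}}$ where several $p_i$'s coincide \emph{is} exactly a divisor with multiplicity, and Hassett's stability inequality for these weights reproduces condition (ii) (ampleness of $K_{X'}+D_++D_-$ on the semistable model). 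Your proposal instead starts from weights for which points cannot collide (hence only reduced divisors appear) and then appeals to "varying $\mathbf a$ along Hassett's wall-crossings and incorporating semistable blowups" to reach the non-reduced locus; this is not an argument — wall-crossing morphisms go in the wrong direction for this purpose (they contract loci rather than adjoin new objects), and no blowup of the reduced-divisor moduli will produce the boundary strata where, say, $D_+=2p$, together with the required smooth normal-crossings structure there. So the very step you identify as the obstacle is left unproved, while the standard small-weight trick makes it disappear.

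A second, related omission: even with the correct small weights, the quotient stack $\big[\calMbar_{g,\epsilon^{d_++d_-}}/(S_{d_+}\times S_{d_-})\big]$ is \emph{not} isomorphic to $\calDivbar_{g,(d_+,d_-)}$. At points where markings collide, the permutation action has nontrivial stabilizers that do not correspond to automorphisms of the triple $(X',D_+,D_-)$, so the quotient stack has spurious gerbe-like inertia along the non-reduced locus. The paper removes it by taking the \emph{relative coarse moduli space} (in the sense of Abramovich--Olsson--Vistoli) of the forgetful morphism $\big[\calMbar_{g,\epsilon^{d_++d_-}}/(S_{d_+}\times S_{d_-})\big]\rightarrow\calMbar_g$, following the construction in M\"oller--Ulirsch--Werner; from this identification it deduces that $\calDivbar_{g,(d_+,d_-)}$ is a proper DM stack with projective coarse space, and then obtains smoothness and the normal-crossings boundary by adapting Hassett's deformation-theoretic arguments. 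Your direct fallback verifications (properness via the valuative criterion with condition (ii) selecting the model, boundary structure via smoothing parameters $t_e$) are in the right spirit and essentially what the deformation-theoretic part of the paper's proof amounts to, but as written they do not substitute for the two missing ingredients above, and the claim that projectivity of the coarse space "follows from relative ampleness of $K_{X'}+D_++D_-$" needs the finite relation to $\calMbar_{g,\epsilon^{d_++d_-}}$ (or some projectivity criterion) rather than fiberwise ampleness alone.
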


\begin{proof}
Let $\epsilon=\frac{1}{\max({d_+,d_-})}$ and consider the Hassett moduli space $\calMbar_{g,\epsilon^{d_++d_-}}$ of weighted stable curves of type $(g,\epsilon,\ldots, \epsilon)$. As in \cite[Section 1]{MoellerUlirschWerner_realizability}, the moduli space $\calDivbar_{g,(d_+,d_-)}$ is the relative coarse moduli space (in the sense of \cite[Definition 3.2]{AbramovichOlssonVistoli_twistedstablemaps}) of the forgetful morphism 
\begin{equation*}
\big[\calMbar_{g,\epsilon^{d_++d_-}}/S_{d_+}\times S_{d_-}\big]\longrightarrow \calMbar_g \ .
\end{equation*}
This shows that $\calDivbar_{g,(d_+,d_-)}$ is a proper Deligne-Mumford stack with projective coarse moduli space.
The rest of the proof consists of a standard adaption of the deformation-theoretic arguments for Hassett spaces, as in \cite[Section 3.3]{Hassett}; it is left to the avid reader. 
\end{proof}

\begin{remark}
If $d_-=0$, then the moduli space $\calDivbar_{g,d}$ constructed in \cite[Section 2]{MoellerUlirschWerner_realizability} that provides us with a compactification of the moduli space of effective divisors of degree $d$ on smooth curves of genus $g$, i.e. of the $d$-th symmetric power of the universal curve over $\calM_g$. One can view this space also as a moduli space of stable quotients constructed by Marian, Oprea, and Pandharipande (see \cite[Section 4.1]{MarianOpreaPandharipande_stablequotients}).
\end{remark}

\begin{remark}
The moduli space $\calDivbar_{g,(d_+,d_-)}$ is neither equal to the fibered product $\calXbar_{g}^{(d_+)}\times_{\calMbar_g}\calXbar_g^{(d_-)}$ of the universal curves $\calXbar_g$ over $\calMbar_g$ nor to $\calDivbar_{g,d_+}\times_{\calMbar_g}\calDivbar_{g,d_-}$. 
\end{remark}


\subsection{Specialization of divisors}\label{section_specialization}

In this section, we describe a universal version of the well-known procedure for specializing divisors from algebraic to tropical curves (see~\cite{Baker_specialization}).

Let $X$ be a smooth curve over a non-Archimedean field $K$ extending $k$, and let $D$ be a divisor on $X$. Choose two effective divisors $D_+$ and $D_-$ on $X$ such that $D=D_+-D_-$. By the valuative criterion of properness for $\calDivbar_{g,(d_+,d_-)}$, there is a finite extension $K'$ of $K$ as well as a unique semistable model $\calX'$ of $X_{K'}$ such that the following conditions hold:
\begin{enumerate}[(i)]
\item The closures $\calD_{+}$ and $\calD_{-}$ of $D_+$ and $D_-$ in $\calX'$ do not meet the singularities of the special fiber $\calX'_0$ of $\calX'$.
\item The twisted canonical divisor $K_{\calX'}+\calD_{+}+\calD_{-}$ is ample on $\calX'_0$. 
\end{enumerate}
From this datum we define a divisor $\sp(D)$ on $\Ga_X$, called the \emph{specialization} of $D$ to $\Ga_X$, as follows. The tropical curve associated to $\calX'$ is $\Ga_X$, and we pick a model $G'_X$ for $\Ga_X$ having a vertex $v\in V(G'_X)$ for each irreducible component $X'_v$ of $\calX'_0$. We then set
\begin{equation*}
\sp(D)=\mdeg(\calD_{+}\vert_{\calX'_{0}})-\mdeg(\calD_{-}\vert_{\calX'_0}).
\end{equation*}
Here $\mdeg (\calD_{\pm}|_{\calX'_0})$ is the multidegree of $\calD_{\pm}$ on the irreducible components of $\calX'_0$, so the coefficient of $v\in V(\Ga'_X)$ in $\sp(D)$ is the difference of the intersection numbers of $\calD_+$ and $\calD_-$ with $\calX'_v$. We note that $\sp(D)$ does not depend on the choice of decomposition $D=D_+-D_-$.

We have a natural tropicalization map
\begin{equation*}\begin{split}
\trop_{g,(d_+,d_-)}\colon \calDiv_{g,(d_+,d_-)}^{an}&\longrightarrow \Div_{g,(d_+,d_-)}^{trop} \\ 
\big(X/K,D_+, D_-\big) & \longmapsto \big(\Ga_X, \mdeg(\calD_+\vert_{\calX_0}), \mdeg(\calD_-\vert_{\calX_0})\big) \ .
\end{split}\end{equation*}
By Theorem \ref{thm_Divgdd} the open immersion $\calDiv_{g,(d_+,d_-)}\hookrightarrow \calDivbar_{g,(d_+,d_-)}$ is stack-theoretically a toroidal embedding. So, again by \cite{Thuillier_toroidal} and \cite{ACP}, there is natural strong deformation retraction 
\begin{equation*}
\rho_{g,(d_+,d_-)}\colon\calDiv_{g,(d_+,d_-)}^{an}\longrightarrow \calDiv_{g,(d_+,d_-)}^{an}
\end{equation*}
whose image is the \emph{non-Archimedean skeleton} $\Sigma_{g,(d_+,d_-)}$ of $\calDiv_{g,(d_+,d_-)}$. 

\begin{theorem}\label{thm_Divtrop=Divskel}
The tropicalization map $\trop_{g,(d_+,d_-)}\colon \calDiv_{g,(d_+,d_-)}^{an}\rightarrow \Div_{g,(d_+,d_-)}^{trop}$ has a natural section $\Phi_{g,(d_+,d_-)}\colon \Div_{g,(d_+,d_-)}^{trop}\rightarrow \calDiv_{g,(d_+,d_-)}^{an}$ that induces an isomorphism \begin{equation*}
\Div_{g,(d_+,d_-)}^{trop}\xlongrightarrow{\simeq}\Sigma_{g,(d_+,d_-)} 
\end{equation*}
and identifies $\trop_{g,d}$ with the deformation retraction $\rho_{g,(d_+,d_-)}$. 
\end{theorem}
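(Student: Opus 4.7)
The strategy is to reduce to the corresponding statement for Hassett's moduli space of weighted stable curves, and then descend along the $S_{d_+}\times S_{d_-}$-quotient furnished by Proposition~\ref{prop_Divgddtrop=quot}. Set $N=d_++d_-$ and $\epsilon=1/\max(d_+,d_-)$, and consider the Hassett moduli stack $\calMbar_{g,\epsilon^{N}}$. As was established in the proof of Theorem~\ref{thm_Divgdd}, $\calDivbar_{g,(d_+,d_-)}$ is the relative coarse moduli space of the natural morphism $[\calMbar_{g,\epsilon^N}/(S_{d_+}\times S_{d_-})]\to\calMbar_g$, where the symmetric groups permute the first $d_+$ and last $d_-$ marked sections respectively.

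The first step is to invoke the Hassett-analogue of the Abramovich--Caporaso--Payne theorem established in \cite{Ulirsch_tropHassett} and \cite{CavalieriHampeMarkwigRanganathan}: the tropicalization map $\trop_{g,\epsilon^N}\colon\calM_{g,\epsilon^N}^{an}\to M_{g,\epsilon^N}^{trop}$ admits a natural continuous section $\Phi_{g,\epsilon^N}$ whose image is the non-Archimedean skeleton $\Sigma_{g,\epsilon^N}$ of the toroidal embedding $\calM_{g,\epsilon^N}\hookrightarrow\calMbar_{g,\epsilon^N}$, and under the induced identification $M_{g,\epsilon^N}^{trop}\simeq\Sigma_{g,\epsilon^N}$ the tropicalization map coincides with the Thuillier deformation retraction $\rho_{g,\epsilon^N}$.

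Next I would observe that the $S_{d_+}\times S_{d_-}$-action on $\calMbar_{g,\epsilon^N}$ preserves the boundary divisor (it merely permutes marked points of equal Hassett weight), so it is a toroidal action in the sense compatible with Thuillier's construction. Consequently both the retraction $\rho_{g,\epsilon^N}$ and the section $\Phi_{g,\epsilon^N}$ are $S_{d_+}\times S_{d_-}$-equivariant with respect to the tautological action on $M_{g,\epsilon^N}^{trop}$ given by permuting the leg labels. Passing to quotients, I obtain a continuous section of
\begin{equation*}
\big(\calM_{g,\epsilon^N}^{an}\big)\big/(S_{d_+}\times S_{d_-})\longrightarrow M_{g,\epsilon^N}^{trop}\big/(S_{d_+}\times S_{d_-})
\end{equation*}
whose image is the quotient skeleton. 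Using Proposition~\ref{prop_Divgddtrop=quot} to identify the right-hand quotient with $\Div_{g,(d_+,d_-)}^{trop}$, and using that the analytification of a relative coarse moduli space agrees with the topological quotient on the level of underlying topological spaces (as in the conventions of Remark~\ref{remark_nonArchstack} and the discussion in \cite[Section~3]{Ulirsch_tropisquot}) to identify the left-hand quotient with $\calDiv_{g,(d_+,d_-)}^{an}$, I obtain both the section $\Phi_{g,(d_+,d_-)}$ and the identification $\Div_{g,(d_+,d_-)}^{trop}\simeq\Sigma_{g,(d_+,d_-)}$.

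The final step is to verify that the descended retraction agrees with $\trop_{g,(d_+,d_-)}$ on the open stratum. This is a pointwise check: given $(X/K,D_+,D_-)\in\calDiv_{g,(d_+,d_-)}^{an}$, the valuative criterion for $\calDivbar_{g,(d_+,d_-)}$ produces the same semistable model $\calX'$ as the valuative criterion applied to the weighted marked curve $(X,p_1,\ldots,p_N)$ obtained by ordering the support of $D_+$ and $D_-$, and the multidegree of $\calD_\pm|_{\calX'_0}$ on a component $\calX'_v$ agrees by definition with the number of the corresponding marked sections meeting $\calX'_v$, which is precisely the image of the divisor construction applied to the tropical marked curve under the forgetful map of Proposition~\ref{prop_Divgddtrop=quot}. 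The principal obstacle in this plan is the descent across the quotient: one must confirm that Thuillier's deformation retraction and the skeleton construction commute with forming the relative coarse moduli space along the action of a finite toroidal automorphism group, for which I expect to rely on the stack-theoretic framework of \cite[Section~3]{Ulirsch_tropisquot} together with the explicit cone-combinatorics underlying Proposition~\ref{prop_Divgddtrop=quot}.
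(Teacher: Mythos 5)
Your plan is correct in substance but follows a genuinely different route from the paper. The paper's own proof is a direct adaptation of \cite[Theorem 1.2.1]{ACP}: it observes that the cones of $\Div_{g,(d_+,d_-)}^{trop}$ and the toroidal boundary strata of $\calDivbar_{g,(d_+,d_-)}$ are indexed by the very same category $I_{g,(d_+,d_-)}$ of stable triples $(G,D_+,D_-)$ from Section~\ref{section_modulitropicaldivisors}, and then matches the formal local toroidal structure at each stratum with the corresponding cone, exactly as ACP do for $\calMbar_{g,n}$. You instead invoke the Hassett-space skeleton theorem of \cite{Ulirsch_tropHassett} and descend along the $S_{d_+}\times S_{d_-}$-quotient, using the relative-coarse-space presentation from the proof of Theorem~\ref{thm_Divgdd} and the quotient description of Proposition~\ref{prop_Divgddtrop=quot}; this is consistent with the paper's own toolkit, since precisely this quotient identification (with \cite[Theorem 1.2.2]{ConradTemkin} and the $(S_{d_+}\times S_{d_-})$-invariance of $\trop_{g,\epsilon^{d_++d_-}}$) is what drives the proof of Proposition~\ref{prop_tropdiv=func}. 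What each approach buys: yours reuses an existing skeleton theorem and avoids redoing the stratum-by-stratum analysis, at the price of the step you yourself flag --- one must verify that Thuillier's retraction and the skeleton are equivariant and pass both through the stack quotient and through relative coarsification (harmless on underlying topological spaces, per Remark~\ref{remark_nonArchstack}, but it does need to be said, since $\calDivbar_{g,(d_+,d_-)}$ is not the stack quotient itself); the paper's route is self-contained at the level of toroidal combinatorics and has the added benefit of exhibiting the cone-complex structure of $\Sigma_{g,(d_+,d_-)}$ explicitly via the index category, which is what makes the identification with $\Div_{g,(d_+,d_-)}^{trop}$ transparent. Your final pointwise comparison of the descended retraction with $\trop_{g,(d_+,d_-)}$ via the valuative criterion is the right check and mirrors the specialization construction of Section~\ref{section_specialization}.
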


As above, Theorem \ref{thm_Divtrop=Divskel} implies that the tropicalization map $\trop_{g,(d_+,d_-)}$ is well-defined, continuous, proper, and surjective. 

\begin{proof}[Proof of Theorem \ref{thm_Divtrop=Divskel}]
This proof follows the already well-paved road that has been built in the proof of \cite[Theorem 1.2.1]{ACP}. The central point is that both the cones of $Div_{g,(d_+,d_-)}$ and the toroidal boundary strata of $\calDivbar_{g,(d_+,d_-)}$ are parameterized by the objects of the category $I_{g,(d_+,d_-)}$ of stable triples $(G,D_+,D_-)$, as in Section \ref{section_modulitropicaldivisors}. We leave the details of this argument to the avid reader. 
\end{proof}

\begin{proposition}\label{prop_tropdiv=func}
Let $(a^+_1,\ldots, a^+_{n_+})$ and $(a^-_1,\ldots, a_{n_-}^-)$ be partitions of $d_+$ and $d_-$ respectively. Then the natural diagram
\begin{center}\begin{tikzcd}
\calM_{g,n_++n_-}^{an} \arrow[d]\arrow[rr,"\trop_{g,n_++n_-}"]&&M_{g,n_++n_-}^{trop}\arrow[d]\\
\calDiv_{g,d_+,d_-}^{an}\arrow[rr,"\trop_{g,(d_+,d_-)}"] &&\Div_{g,d_+,d_-}^{trop}
\end{tikzcd}\end{center}
is commutative, where the vertical maps send the marked points (or legs) $p_1^+,\ldots, p_{n_+}^+$ and $p_1^-,\ldots, p_{n_-}^-$ to the pair of divisors $\sum_{i=1}^{n_+} a_i^+p_i^+$ and $\sum_{i=1}^{n_-}a_i^- p_i^-$.
\end{proposition}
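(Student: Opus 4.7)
The plan is to verify commutativity pointwise using the explicit descriptions of both tropicalization maps. Since all four maps are continuous (the horizontal maps by Theorems~\ref{thm_ACP} and~\ref{thm_Divtrop=Divskel}), it suffices to check commutativity on the dense subset of points represented by stable one-parameter degenerations $\calX/R$ whose generic fiber is the given smooth pointed curve $(X, p_1^+, \ldots, p_{n_-}^-)$.

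First I would unwind the composition going right then down. For such a degeneration, with sections $s_i^\pm$ extending the marked points, $\trop_{g,n_++n_-}$ produces the marked tropical curve $(\Ga_X, p_1^+, \ldots, p_{n_-}^-)$, where each leg $p_i^\pm$ is rooted at the vertex $v_i^\pm \in V(\Ga_X)$ indexing the irreducible component of $\calX_0$ met by $s_i^\pm$. Composing with the vertical map on the right sends this to $(\Ga_X, \sum a_i^+ v_i^+, \sum a_i^- v_i^-) \in \Div_{g,d_+,d_-}^{trop}$, interpreting each leg $p_i^\pm$ via its root point $c(p_i^\pm) = v_i^\pm$.

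Next I would compute the composition going down then right. Applying the vertical map on the left produces $(X, D_+, D_-) \in \calDiv_{g,d_+,d_-}^{an}$ with $D_\pm = \sum a_i^\pm p_i^\pm$. The pointed stable model $\calX$ already serves as a valid semistable model for $\calDivbar_{g,(d_+,d_-)}$: the closures $\calD_\pm = \sum a_i^\pm s_i^\pm$ avoid the nodes because the sections lie in the smooth locus, and $K_{\calX} + \calD_+ + \calD_-$ is $\pi$-ample because it equals the $\pi$-ample divisor $K_\calX + \sum_i s_i^\pm$ plus the effective divisor $\sum_i(a_i^\pm - 1)s_i^\pm$. Computing the specialization using this model, $\mdeg(\calD_\pm|_{\calX_0})$ assigns to $v \in V(\Ga_X)$ the intersection number $\calD_\pm \cdot \calX_v = \sum_{i: v_i^\pm = v} a_i^\pm$, because each section meets $\calX_{v_i^\pm}$ transversally at a single smooth point. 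Hence $\sp(D_\pm) = \sum a_i^\pm v_i^\pm$, agreeing with the previous computation.

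The main subtlety I would need to handle is that the minimal semistable model actually used to represent the resulting element of $\Div_{g,d_+,d_-}^{trop}$ may differ from the pointed stable model: simple vertices of $\Ga_X$ lying outside the divisor support are contracted in the divisor-minimal model (since the divisor stability condition on rational components is weaker than the pointed stability condition when some $a_i^\pm > 1$). This reconciliation is governed by the face morphism identifications in the generalized cone complex $\Div_{g,d_+,d_-}^{trop}$ from Section~\ref{section_modulitropicaldivisors}, which guarantee that both choices of model represent the same point in the colimit. Once this identification is in place, the two paths agree on the chosen dense subset, and commutativity extends to all of $\calM_{g,n_++n_-}^{an}$ by continuity.
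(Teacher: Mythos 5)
Your computation is correct on the locus where the marked points stay pairwise separated in the special fiber, but there is a genuine gap exactly where the proposition has content: when two or more marked points collide at a smooth point of the special fiber. In that case the pointed stable model acquires rational tails (genus-zero components meeting the rest of the special fiber in a single node and carrying the colliding sections), and such a curve is \emph{not} a semistable model of the underlying unmarked stable curve; so it is not an admissible datum for $\calDivbar_{g,(d_+,d_-)}$ at all, and your ampleness argument never gets off the ground, because the hypothesis of being a semistable model fails before conditions (i)--(ii) are tested. The model that actually computes $\trop_{g,(d_+,d_-)}$ contracts these tails, which relocates the divisor to the attachment point; correspondingly, on the tropical side the root points $c(p_i^{\pm})$ then sit at one-valent genus-zero vertices that are not points of the stable curve underlying any element of $\Div_{g,d_+,d_-}^{trop}$, so the right-hand vertical map must push the divisor forward along the stabilization, moving it a positive distance to the attachment vertex. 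Matching these two relocations is precisely the nontrivial assertion, and your argument does not address it: the discrepancy you flag (simple vertices outside the divisor support, i.e.\ non-minimal semistable models) is the harmless one and is indeed absorbed by the colimit presentation, but the discrepancy that actually occurs --- contraction of dangling edges of \emph{positive} length carrying divisor points --- is not a face morphism of $\Div_{g,d_+,d_-}^{trop}$ (face maps set edge lengths to zero or permute coordinates; they never contract an edge of positive length), so the ``same point in the colimit'' claim fails as stated.

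For comparison, the paper never invokes the pointed stable model: it factors both vertical maps through the Hassett space $\calM_{g,\epsilon^{d_++d_-}}$ with $\epsilon=\tfrac{1}{d_++d_-}$ (first changing the weights from $1$ to $\epsilon$, then repeating the $i$-th marked point $a_i^{\pm}$ times) and through the corresponding tropical Hassett space, where the contraction of exactly these tails is built into the already established compatibility of tropicalization with reduction maps between Hassett spaces; it then uses the presentations of $\calDivbar_{g,(d_+,d_-)}$ and of $\Div_{g,d_+,d_-}^{trop}$ as quotients by $S_{d_+}\times S_{d_-}$ (Theorem~\ref{thm_Divgdd} and Proposition~\ref{prop_Divgddtrop=quot}) together with the $S_{d_+}\times S_{d_-}$-invariance of tropicalization. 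To repair your direct approach you would need to identify the correct divisor model on the collision locus (contract the rational tails, keeping track of where the sections land) and verify that the resulting multidegree agrees with the pushforward of $\sum_i a_i^{\pm}c(p_i^{\pm})$ under tropical stabilization; alternatively, route the argument through the Hassett spaces as the paper does.
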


\begin{proof}
Let $\epsilon=\frac{1}{d_++d_-}$. By \cite[Proposition 5.1]{Ulirsch_tropHassett}, the natural diagram
\begin{center}\begin{tikzcd}
\calM_{g,n_++n_-}^{an}\arrow[rr,"\trop_{g,n_++n_-}"]\arrow[d]&& M_{g,n_++n_-}^{trop}\arrow[d]\\
\calM_{g,\epsilon^{n_++n_-}}^{an}\arrow[rr,"\trop_{g,\epsilon^{n_++n_-}}"]&& M_{g,\epsilon^{n_++n_-}}^{trop} 
\end{tikzcd}\end{center}
is commutative. Consider the natural closed immersion $\calM_{g,\epsilon^n}\hookrightarrow \calM_{g,\epsilon^{d_++d_-}}$ that sends a pointed curve $(X,p_1^+,\ldots, p^-_{n_-})$ to the pointed curve $(X,p^+_1,\ldots,p^+_1,\ldots, p^-_{n_-},\ldots, p^-_{n_-})$ where each point $p_i^+$ is marked $a_i^+$ many times, and each $p_j^-$ is marked $a_j^-$ many times. On the tropical side, the analogous map identifies $M_{g,\epsilon^{n_++n_-}}^{trop}$ with the subcomplex of $M_{g,\epsilon^{d_++d_-}}^{trop}$ consisting of curves whose legs are split into groups of sizes $a_i^{\pm}$, with all legs in each group attached at a single point. It is clear that both inclusions naturally commute with tropicalization. In Theorem \ref{thm_Divgdd} we have seen that $\calDiv_{g,(d_+,d_-)}$ is a quotient of $M_{g,\epsilon^{d_++d_-}}$ by $S_{d_+}\times S_{d_-}$ (quotients commute with analytification by \cite[Theorem 1.2.2.]{ConradTemkin}) and in Proposition \ref{prop_Divgddtrop=quot} that $\Div_{g,(d_+,d_-)}^{trop}$ is a quotient of $M_{g,\epsilon^{d_++d_-}}^{trop}$ by $S_{d_+}\times S_{d_-}$. We prove our claim by observing that $\trop_{g,\epsilon^{d_++d_-}}$ is $\left(S_{d_+}\times S_{d_-}\right)$-invariant.
\end{proof}

 
\subsection{Tropicalization of admissible covers}\label{section_tropadmissiblecovers}

We recall the tropicalization of admissible covers introduced in~\cite{CavalieriMarkwigRanganathan_tropadmissiblecovers}. Fix $g,h\geq 0$, and let $\mu=(\mu^1,\ldots, \mu^r)$ be a vector of partitions of an integer $d>0$. Denote by $\calH_{g\rightarrow h,d}(\mu)$ the moduli space of degree $d$ Hurwitz covers with ramification profile $\mu$, parametrizing morphisms $f:X'\rightarrow X$ of the following kind:

\begin{itemize}

\item $X'$ is a smooth projective curve of genus $g$ with marked points $p'_{ij}$ for $i=1,\ldots,r$ and $j=1,\ldots,|\mu^i|$, and additional marked points $q'_{ij}$ for $i=1,\ldots,s$ and $j=1,\ldots,d-1$. 

\item $X$ is a smooth projective curve of genus $h$ with marked points $p_i$ for $i=1,\ldots,r$, and additional marked points $q_i$ for $i=1,\ldots,s$.

\item $f$ is a degree $d$ morphism mapping $p'_{ij}$ to $p_i$ with ramification profile $\mu^i$ and $q'_{ij}$ to $q_i$ with simple ramification profile $(2,1,\ldots,1)$, and no other ramification points. 

\end{itemize}

As in Chapter~\ref{sec:tropadmissible}, the number $s$ of simple ramification points is uniquely determined by Eq.~\eqref{eq:explicits}. We write $\calHbar_{g\rightarrow h,d}(\mu)$ for the compactification of $\calH_{g\rightarrow,d}(\mu)$ as a space of admissible covers as in \cite{HarrisMumford}\footnote{We secretly work with the normalization of $\calH_{g\rightarrow h,d}(\mu)$, constructed in \cite{AbramovichCortiVistoli} as a moduli space of twisted stable maps to $\mathbf{B}S_n$, or rather a cover thereof taking into account markings on the source curve (as in \cite{JarvisKaufmannKimura, SchmittvanZelm}). For the purpose of this paper it is safe to ignore this extra complication.}.

We define a natural tropicalization map 
\begin{equation*}
\trop_{g\rightarrow h,d}(\mu)\colon \calH_{g\rightarrow h,d}^{an}(\mu)\longrightarrow H_{g\rightarrow h,d}^{trop}(\mu) 
\end{equation*} 
as follows. A point in $\calH_{g\rightarrow h,d}^{an}(\mu)$ is given by a degree $d$ Hurwitz cover $f\colon X'\rightarrow X$ defined over a non-Archimedean extension $K$ of $k$, with ramification points and local degrees as described above. The valuative criterion for properness, applied to $\calHbar_{g\rightarrow h,d}(\mu)$, yields (possibly after a finite extension of $K$) a unique extension of $X'\rightarrow X$ to a family of admissible covers $\calX'\rightarrow\calX$ over the valuation ring $R$ of $K$. This family defines an unramified harmonic morphism $\ph:\Ga_{X'}\to \Ga_X$ of tropical curves as follows.
\begin{itemize}
\item The family restricts to a map of the special fibers $\calX'_0\to \calX_0$, which sends components to components, nodes to nodes, and marked points to marked points. Hence there is an induced map $\ph\colon G_{X'}\rightarrow G_X$ on the dual graphs of the special fibers.
 
\item Consider a node in $\calX_0$, corresponding to an edge $e\in E(G_X)$ and having local equation $xy=t_e$. A point over this node is a node in $\calX'_0$, corresponding to an edge $e'$ with local equation $x'y'=t_{e'}$, and furthermore there exists an integer $r_{e'}\geq 1$ such that $t_e=(t_{e'})^{r_{e'}}$ and the morphism is locally given by $x=(x')^{r_{e'}}$ and $y=(y')^{r_{e'}}$. We set the degree of $\ph$ on $e'$ to be $d_{\ph}(e')=r_{e'}$. For a vertex $v'\in V(G_{X'})$ the degree $d_{\ph}(v')$ is the degree of the map $X'_{v'}\to X_{\ph(v')}$, while on the legs the degrees of $\ph$ are $\mu_{ij}$ on $p'_{ij}$ and $(2,1,\ldots,1)$ on $q'_{ij}$.

\end{itemize}

Identifying $\ph$ with the restriction of $f^{an}\colon X^{an}\rightarrow Y^{an}$ to their minimal non-Archimedean skeletons, we find that $\ph$ is a finite harmonic morphism by \cite[Theorem A]{ABBRI}. This can also be deduced from the fact that away from nodes and marked points the reduction of $\ph$ to the special fiber is a covering space. Finally, applying the Riemann-Hurwitz formula to each component, we find that $\ph\colon \Ga_{X'}\rightarrow \Ga_X$ is unramified, and therefore defines an element 
\begin{equation*}
\trop_{g\rightarrow h,d}(\mu)\big[f:X'\to X\big]=\big[\ph\colon\Ga_{X'}\rightarrow\Ga_X\big]
\end{equation*}
in the tropical Hurwitz space $H_{g\rightarrow h,d}^{trop}(\mu)$. 

The boundary of $\calH_{g\rightarrow h,d}(\mu)$ in $\calHbar_{g\rightarrow h,d}(\mu)$ has normal crossings. So there is a strong deformation retraction $\rho_{g\rightarrow h,d}(\mu)$ from $\calH_{g\rightarrow h,d}^{an}(\mu)$ onto a closed subset $\Sigma_{g\rightarrow h,d}(\mu)$, the \emph{non-Archimedean skeleton} of $\calH_{g\rightarrow h,d}^{an}(\mu)$, as defined in \cite{Thuillier_toroidal, ACP}. The following Theorem \ref{thm_tropHurwitz} from \cite{CavalieriMarkwigRanganathan_tropadmissiblecovers} shows that these two maps are compatible.

\begin{theorem}[\cite{CavalieriMarkwigRanganathan_tropadmissiblecovers} Theorem 1]\label{thm_tropHurwitz}
The tropicalization map 
\begin{equation*}
\trop_{g\rightarrow h,d}(\mu)\colon \calH^{an}_{g\longrightarrow h,d}(\mu)\rightarrow \calH_{g\rightarrow h,d}^{trop}(\mu)
\end{equation*} 
naturally factors through the retraction $\rho_{g\rightarrow h,d}(\mu)\colon \calH^{an}_{g\rightarrow h,d}(\mu)\rightarrow \Sigma_{g\rightarrow h,d}(\mu)$ and the induced map $\Sigma_{g\rightarrow h,d}(\mu)\rightarrow\calH^{trop}_{g\rightarrow h}(\mu)$ is a strict morphism of generalized cone complexes, i.e. the restriction of it to every cone in $\Sigma_{g\rightarrow h,d}(\mu)$ maps isomorphically onto a cone in $\calH_{g\rightarrow h,d}^{trop}(\mu)$. 
\end{theorem}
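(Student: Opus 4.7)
The plan is to follow the general strategy developed in \cite{ACP} for $\calMbar_{g,n}$, and adapted in many subsequent papers to moduli spaces of logarithmically smooth or toroidal objects. The core heuristic is that, for a toroidal Deligne-Mumford stack, the Thuillier skeleton can be canonically identified with the generalized cone complex whose cones are given by the combinatorial types of strata and whose cone coordinates are the valuations of the local smoothing parameters. Since $\calHbar_{g\rightarrow h,d}(\mu)$ is toroidal by the local deformation theory of admissible covers, and since its stratification is precisely indexed by the category $I_{g\rightarrow h,d}(\mu)$, this identification should produce a canonical isomorphism $\Sigma_{g\rightarrow h,d}(\mu)\simeq H^{trop}_{g\rightarrow h,d}(\mu)$ of generalized cone complexes. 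After this identification is established, compatibility with the tropicalization map amounts to verifying that the edge lengths assigned by $\trop_{g\rightarrow h,d}(\mu)$ coincide with the coordinates coming from Thuillier's construction.

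Concretely, I would first analyze the local deformation theory of an admissible cover $f_0\colon \calX'_0\rightarrow \calX_0$ at the boundary. By the classical description going back to \cite{HarrisMumford} (and cleaned up via log geometry, e.g. in \cite{AbramovichCortiVistoli}), the versal deformation space has local charts of the form $\Spec k[\![s_e, t_{e'}]\!]_{e\in E(G),\, e'\in E(G')}$ subject to the relations $s_e=t_{e'}^{d_\ph(e')}$ whenever $\ph(e')=e$. Smoothing the node corresponding to $e'$ is controlled by $t_{e'}$, and the cover condition forces $s_e$ to be a power of $t_{e'}$, so the deformation space naturally lives over $\Spec k[\![s_e]\!]_{e\in E(G)}$ with smooth (finite) cover structure. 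This means that, étale-locally near the boundary, $\calHbar_{g\rightarrow h,d}(\mu)$ is toroidal with monoid $\NN^{E(G)}$ and produces the cone $M_\ph=\R_{\geq 0}^{E(G)}$ exactly as in the definition of $H^{trop}_{g\rightarrow h,d}(\mu)$ in Section~\ref{sec:tropadmissible}.

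Next I would assemble these local charts into a global statement. The strata of $\calHbar_{g\rightarrow h,d}(\mu)$ are in bijection with isomorphism classes of objects of $I_{g\rightarrow h,d}(\mu)$: a stratum consists of admissible covers with a fixed topological type $\ph\colon G'\rightarrow G$. The face relations between strata are governed by weighted edge contractions of $G$ (with their induced contractions on $G'$, by Lemma~\ref{lemma_contractionofunramifiedharmonic}), which correspond exactly to the face maps in $I_{g\rightarrow h,d}(\mu)$. Hence the category indexing Thuillier's colimit presentation of $\Sigma_{g\rightarrow h,d}(\mu)$ agrees with the category used to build $H^{trop}_{g\rightarrow h,d}(\mu)$, and the local description in the previous paragraph identifies each cone. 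To conclude the factorization and cone-wise isomorphism statement, one tests on a point of $\calH^{an}_{g\rightarrow h,d}(\mu)$ given by $f\colon X'\rightarrow X$ over a DVR $R$: the edge length on $e\in E(G_X)$ produced by $\trop_{g\rightarrow h,d}(\mu)$ is by definition $\val(s_e)$, which coincides with the Thuillier coordinate on the face of $\Sigma_{g\rightarrow h,d}(\mu)$ corresponding to the type $\ph\colon G_{X'}\rightarrow G_X$.

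The main obstacle is not the overall architecture, which mirrors \cite{ACP}, but rather the careful bookkeeping of automorphisms and the verification that one works with the correct (normalized) moduli stack. Admissible covers come with nontrivial automorphism groups along the boundary (in particular, the $d_\ph(e')$-th roots of unity acting on the local equation $xy=t_{e'}$), which make $\calHbar_{g\rightarrow h,d}(\mu)$ toroidal only in a stack-theoretic sense; as in \cite{ACP}, one must pass to the associated cone stack rather than a cone complex, and then observe that these automorphisms act trivially on the cone $M_\ph$, so that the induced map to $H^{trop}_{g\rightarrow h,d}(\mu)$ really is a strict morphism of generalized cone complexes (i.e. restricts to an isomorphism on each cone, not merely to a linear map onto a cone). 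This is precisely where one needs to use the normalized (twisted) version of $\calHbar_{g\rightarrow h,d}(\mu)$ so that the stratification and local monoidal structure line up.
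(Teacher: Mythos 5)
There is a genuine error at the heart of your outline. You assert that the boundary strata of $\calHbar_{g\rightarrow h,d}(\mu)$ are in bijection with the isomorphism classes of objects of $I_{g\rightarrow h,d}(\mu)$ and that this yields a canonical isomorphism $\Sigma_{g\rightarrow h,d}(\mu)\simeq H^{trop}_{g\rightarrow h,d}(\mu)$. This is false, and it contradicts results quoted in this very paper: by Theorem \ref{thm_Hurwitzrealizability} the image of $\trop_{g\rightarrow h,d}(\mu)$ consists only of the cones of Hurwitz type, so cones of $H^{trop}_{g\rightarrow h,d}(\mu)$ whose local Hurwitz numbers vanish (see Example \ref{example_Hurwitzrealizability}, with profile $(2,2),(2,2),(3,1)$ over a genus $0$ vertex) correspond to no stratum of $\calHbar_{g\rightarrow h,d}(\mu)$ at all, so the map $\Sigma_{g\rightarrow h,d}(\mu)\rightarrow H^{trop}_{g\rightarrow h,d}(\mu)$ is not surjective. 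It is also not injective on cones: when a local Hurwitz number exceeds one, or when several non-isomorphic configurations of covers of the components of the special fiber have the same dual-graph data, distinct boundary strata of $\calHbar_{g\rightarrow h,d}(\mu)$ give rise to the same combinatorial type $\ph\colon G'\rightarrow G$, so several cones of the skeleton are mapped onto the same cone $M_\ph$. The statement you are asked to prove is deliberately weaker than an isomorphism: only that the tropicalization factors through the retraction and that the induced map is \emph{strict}, i.e.\ restricts to an isomorphism on each individual cone. Your argument should therefore be reorganized so that the stratum-by-stratum local analysis (versal deformations $s_e=t_{e'}^{d_\ph(e')}$, the monoid $\NN^{E(G)}$ after passing to the normalized/twisted model, triviality of the boundary automorphisms on $M_\ph$) establishes the cone-wise isomorphism onto $M_\ph$ and the compatibility of edge lengths with $\val(s_e)$, without ever claiming a global identification of $\Sigma_{g\rightarrow h,d}(\mu)$ with $H^{trop}_{g\rightarrow h,d}(\mu)$.

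For context, the paper itself does not prove this theorem; it is imported verbatim from Cavalieri--Markwig--Ranganathan, whose proof does follow the \cite{ACP}-style skeleton strategy you sketch (toroidal charts from the deformation theory of admissible covers, matching of face relations with weighted edge contractions, comparison of smoothing parameters with edge lengths). So your architecture is the right one; the defect is the overshoot to a global isomorphism, which is exactly the point where the geometry of admissible covers differs from that of $\calMbar_{g,n}$, and which the realizability discussion in Section \ref{section_tropadmissiblecovers} is designed to capture.
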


Again, we a posteriori see that the tropicalization map $\trop_{g\rightarrow h,d}(\mu)$ is well-defined, continuous, and proper. It is, however, not surjective.
 
\begin{example} \label{example_Hurwitzrealizability} Let $\phi\colon \bullet_{0,6}\rightarrow \bullet_{0,3}$ be the harmonic morphism of degree $4$ from a tropical curve consisting of one genus zero vertex and $6$ legs to a tropical curve consisting of one genus zero vertex and $3$ legs, with dilation profiles $(2,2)$, $(2,2)$, $(3,1)$ along the legs. This morphism is unramified, but it cannot be the tropicalization of a ramified cover $\PP^1\rightarrow\PP^1$ with multiplicity profile $(2,2)$, $(2,2)$, $(3,1)$, since such a cover does not exist algebraically (see Example~3.4 in~\cite{ABBRII})
\end{example}

Example \ref{example_Hurwitzrealizability} is the simplest negative example of the so-called \emph{Hurwitz existence problem}. It asks when, given two genera $g,h\geq 0$ and a vector of partitions $\mu=(\mu_1,\ldots,\mu_n)$ of $d>0$ that fulfill the Riemann-Hurwitz condition, there is a ramified degree $d$ cover $X\rightarrow Y$ of compact Riemann surfaces of genera $g$ and $h$ respectively with ramification profile $\mu$. We refer the reader to \cite{PervovaPetronio_HurwitzexistenceI} for an extensive treatment of this still largely unsolved problem. 

Let $\ph:G'\to G$ be an unramified harmonic morphism of weighted graphs. For a vertex $v'\in V(G')$, we consider the degrees of $f$ on the half-edges $T_{v'}G'$. Specifically, we let $\mu_{\ph}(v')$ be the $\val_{\ph(v')}$-tuple of partitions of $\deg_{\ph}(v')$, indexed by the half-edges $T_{\ph(v')}G$. The local Hurwitz number $h_{g(v')\to g(v)}\big(\mu_{\ph}(v')\big)$ is the number of ramified covers of an algebraic curve of genus $g(v)$ by an algebraic curve of genus $g(v')$ with ramification profile $\mu_{\ph}(v')$ (weighted by automorphism). We say that $\ph$ is a cover of {\it Hurwitz type} if the local Hurwitz numbers $h_{g(v')\to g(v)}\big(\mu_{\ph}(v')\big)$ are not equal to zero for any $v'\in V(G')$.

\begin{theorem}[\cite{CavalieriMarkwigRanganathan_tropadmissiblecovers} Theorem 2]\label{thm_Hurwitzrealizability}
The image of $\trop_{g\rightarrow h,d}(\mu)$ are precisely the cones parametrizing admissible covers of Hurwitz type.
\end{theorem}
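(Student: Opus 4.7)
The strategy is to reduce the problem to a cell-by-cell statement and then use the deformation theory of admissible covers. By Theorem \ref{thm_tropHurwitz}, the tropicalization map factors through the non-Archimedean skeleton $\Sigma_{g\to h, d}(\mu)$ via a strict morphism of generalized cone complexes. In particular, the image of $\trop_{g\rightarrow h,d}(\mu)$ is automatically a union of cones of $H^{trop}_{g\rightarrow h,d}(\mu)$. So it suffices, for each combinatorial type $[\phi\colon G'\to G]\in I_{g\to h,d}(\mu)$, to decide whether its interior cone $M_\phi^\circ$ lies in the image, and my task reduces to showing that this happens precisely when $\phi$ is of Hurwitz type.

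The necessity direction is essentially bookkeeping. If a tropical cover in $M_\phi^\circ$ is the tropicalization of an admissible cover $f\colon \calX'\to \calX$ over a valuation ring $R$, then in the special fiber $f_0\colon \calX'_0\to \calX_0$ the irreducible components correspond to vertices of $G'$ and $G$. For any $v'\in V(G')$ with image $v=\phi(v')$, the restriction $f_0\vert_{\calX'_{v'}}\colon \calX'_{v'}\to \calX_v$ is, by the definition of an admissible cover, a Hurwitz cover of degree $d_\phi(v')$ and ramification profile exactly $\mu_\phi(v')$ (read off from the dilation degrees of $\phi$ on the half-edges at $v'$). The existence of this cover forces $h_{g(v')\to g(v)}(\mu_\phi(v'))>0$, so $\phi$ is of Hurwitz type.

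For the sufficiency direction, given a $\phi$ of Hurwitz type, I would first build an admissible cover $f_0\colon X'_0\to X_0$ with dual graph morphism equal to $\phi$. For each $v\in V(G)$, pick a smooth marked curve $X_v$ of genus $g(v)$ whose marked points are indexed by $T_v G$. For each $v'\in \phi^{-1}(v)$, use the nonvanishing of the local Hurwitz number to pick a ramified cover $X'_{v'}\to X_v$ of degree $d_\phi(v')$ with ramification profile $\mu_\phi(v')$; the preimages of the marked points of $X_v$ then have canonical labels in $T_{v'}G'$. Glue the $X_v$ along the nodes indexed by edges of $G$, and glue the $X'_{v'}$ compatibly along the edges of $G'$, to produce an admissible cover $f_0\colon X'_0\to X_0$ representing a boundary point $\xi\in \calHbar_{g\rightarrow h,d}(\mu)$ in the stratum indexed by $\phi$.

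The main obstacle, and the remaining step, is to smooth $\xi$ with prescribed valuation parameters so as to recover an arbitrary point of $M_\phi^\circ$. Here I would invoke the toroidal structure of the boundary of $\calHbar_{g\rightarrow h,d}(\mu)$ along $\calH_{g\rightarrow h,d}(\mu)$: at the point $\xi$, the \'etale-local deformation space of the admissible cover $f_0$ has a system of parameters $(t_e)_{e\in E(G)}$ indexed by the nodes of $X_0$, and for each node $e'$ of $X'_0$ lying over $e$ the smoothing parameter is $t_{e'}=t_e^{1/d_\phi(e')}$ (after a ramified extension of $R$). This is precisely the toroidal model that matches the cone $M_\phi=\RR_{\geq 0}^{E(G)}$ with face maps given by weighted edge contractions. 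Hence, for any prescribed edge lengths $\ell\in M_\phi^\circ$, I can choose a valued field $K/k$ with valuation ring $R$ and pull back the universal family along a map $\Spec R\to \calHbar_{g\rightarrow h,d}(\mu)$ sending the closed point to $\xi$ and the generic point into $\calH_{g\rightarrow h,d}(\mu)$ with $\val(t_e)=\ell(e)$, giving an algebraic Hurwitz cover whose tropicalization is the prescribed tropical cover. Combining this with Theorem \ref{thm_tropHurwitz} gives that $M_\phi^\circ$ lies in the image, and summing over all Hurwitz-type $\phi$ completes the proof.
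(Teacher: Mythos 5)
Your plan is correct, but note that the paper does not prove this statement at all: it is imported verbatim from Cavalieri--Markwig--Ranganathan (their Theorem~2), and your sketch essentially reconstructs the argument given there — necessity because the restriction of the special fibre of an admissible cover to each component is a connected cover realizing the local profile $\mu_\ph(v')$, and sufficiency by gluing local Hurwitz covers into a boundary admissible cover and smoothing it using the Harris--Mumford local parameters $t_{e'}$ with $t_e=t_{e'}^{d_\ph(e')}$. Two small remarks: once you have produced the boundary point $\xi$, Theorem~\ref{thm_tropHurwitz} already implies that the entire cone $M_\ph$ lies in the image (the skeleton cone over a nonempty stratum maps isomorphically onto $M_\ph$), so your explicit valuation-prescribing deformation step is an alternative to, rather than an addition to, that result; and the toroidal structure you invoke is, strictly speaking, available on the normalization of $\calHbar_{g\rightarrow h,d}(\mu)$ (cf.\ the footnote in Section~\ref{section_tropadmissiblecovers}), though this does not affect the existence argument.
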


In the proof of our result we will use that the tropicalization map naturally commutes with the tautological morphisms.

\begin{theorem}[\cite{CavalieriMarkwigRanganathan_tropadmissiblecovers} Theorem 4]\label{thm_tropadmcovfunc}
Let $N=\vert \mu_1\vert+\cdots +\vert \mu_r\vert +s (d-1)$ to be the number of points in the preimage of the branch points. The tropicalization map $\trop_{g\rightarrow h,d}(\mu)\colon \calH^{an}_{g\rightarrow h,d}(\mu)\rightarrow \calH_{g\rightarrow h,d}^{trop}(\mu)$ naturally commutes with source and branch target morphisms. In other words, the diagram
\begin{center}\begin{tikzcd}
\calH^{an}_{g\rightarrow h,d}(\mu) \arrow[rrrr,"\br^{an}"]\arrow[dd,"\src^{an}"']\arrow[rrd,"\trop_{g\rightarrow h,d}(\mu)"]&&&& \calM_{h,r+s}^{an}\arrow[d,"\trop_{h,r+s}"]\\
&& H_{g\rightarrow h,d}^{trop}(\mu) \arrow[rr,"\br^{trop}"]\arrow[d,"\src^{trop}"'] && M_{h,r+s}^{trop}\\
\calM_{g,N}^{an}\arrow[rr,"\trop_{g,N}"] && M_{g,N}^{trop}
\end{tikzcd}\end{center}
is commutative.
\end{theorem}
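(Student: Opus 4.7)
The plan is to reduce the commutativity of the two squares to a general principle about tropicalization of toroidal morphisms, combined with an explicit combinatorial check for the stabilization procedure on the source and target sides.

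First, I would invoke the ACP framework: by Theorem~\ref{thm_ACP} the tropicalization maps $\trop_{g,N}$ and $\trop_{h,r+s}$ are identified with the retractions $\rho_{g,N}$ and $\rho_{h,r+s}$ of $\calM_{g,N}^{an}$ and $\calM_{h,r+s}^{an}$ onto their respective non-Archimedean skeletons, and by Theorem~\ref{thm_tropHurwitz} the map $\trop_{g\to h,d}(\mu)$ factors through the retraction $\rho_{g\to h,d}(\mu)$ onto the skeleton $\Sigma_{g\to h,d}(\mu) \simeq H_{g\to h,d}^{trop}(\mu)$. Since the Harris--Mumford compactification $\calHbar_{g\to h,d}(\mu)$ has toroidal boundary and the tautological morphisms $\src\colon\calHbar_{g\to h,d}(\mu)\to \calMbar_{g,N}$ and $\br\colon \calHbar_{g\to h,d}(\mu)\to \calMbar_{h,r+s}$ send boundary strata to boundary strata, they are morphisms of toroidal embeddings. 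By the functoriality of Thuillier's skeleton construction (see~\cite{Thuillier_toroidal} or \cite[Section~6]{ACP}), any toroidal morphism $\calY\to\calZ$ induces a morphism of skeletons $\Sigma(\calY)\to\Sigma(\calZ)$ that commutes with the retractions. Applied to $\src$ and $\br$, this already reduces the theorem to identifying the induced maps of skeletons with the tropical source and branch morphisms $\src^{trop}$ and $\br^{trop}$ defined in Section~\ref{sec:tropadmissible}.

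For the explicit identification I would argue as follows. Given a Hurwitz cover $f\colon X'\to X$ over a non-Archimedean extension $K$ of $k$, the valuative criterion applied to $\calHbar_{g\to h,d}(\mu)$ produces a unique admissible extension $\calX'\to\calX$ over the valuation ring. Its dual graphs, together with the dilation degrees along edges and legs computed from the local nodal equations $xy=t_e$, recover precisely the unramified harmonic morphism $\ph\colon \Ga_{X'}\to \Ga_X$ that defines $\trop_{g\to h,d}(\mu)[f]$. On the other hand, the stable models of $(X',p'_{ij},q'_{ij})$ and $(X,p_i,q_i)$ used to compute $\trop_{g,N}$ and $\trop_{h,r+s}$ are obtained from $\calX'$ and $\calX$ by contracting unstable components of the respective special fibers; the resulting tropical curves are, by definition, the stabilizations $(\Ga_{X'})_{st}$ and $(\Ga_X)_{st}$. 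It therefore suffices to verify that the tropical source and branch maps $\src^{trop}$ and $\br^{trop}$ agree with the operation of taking a (possibly non-minimal) model of an unramified harmonic morphism and passing to the stabilization, which is exactly the content of Definition~\ref{def:stabilization} together with the compatibility of tropical stabilization with the leg-marking data built into $H_{g\to h,d}^{trop}(\mu)$.

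The main obstacle I anticipate is bookkeeping the stabilization at unstable trees of components: when the source or target models contain semistable vertices or unstable tails that are present in the admissible model but not in the stable model of either $X'$ or $X$ alone, one must check that contracting them on both the algebraic and tropical sides produces matching outputs. The key input here is the observation preceding Definition~\ref{def:stabilization}, namely that an unramified harmonic morphism induces unramified harmonic morphisms on the semistabilization and stabilization, together with the fact that the Deligne--Knudsen--Mumford and Harris--Mumford compactifications are related precisely by such contractions. Once this combinatorial matching is carried out at the level of cones of $H_{g\to h,d}^{trop}(\mu)$ and their images in $M_{g,N}^{trop}$ and $M_{h,r+s}^{trop}$, both squares of the diagram commute on each cone and hence globally. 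A detailed execution of the combinatorial match can be found in~\cite{CavalieriMarkwigRanganathan_tropadmissiblecovers}.
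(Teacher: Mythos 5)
The paper offers no proof of this statement: it is imported verbatim as Theorem~4 of \cite{CavalieriMarkwigRanganathan_tropadmissiblecovers}, so there is no internal argument to compare with, and your closing deferral to that reference puts you in essentially the same position as the authors. Your outline is consistent with how the cited proof goes: identify the tropicalization maps with skeleton retractions (Theorem~\ref{thm_ACP}, Theorem~\ref{thm_tropHurwitz}), use that $\src$ and $\br$ extend to the toroidal compactifications, and then check on points, via the valuative criterion for $\calHbar_{g\rightarrow h,d}(\mu)$, that the dual-graph data of the extended admissible cover reproduces the tropical tautological maps.

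Two caveats. First, your parenthetical $\Sigma_{g\rightarrow h,d}(\mu)\simeq H^{trop}_{g\rightarrow h,d}(\mu)$ is false: Theorem~\ref{thm_tropHurwitz} only gives a strict morphism of generalized cone complexes $\Sigma_{g\rightarrow h,d}(\mu)\rightarrow H^{trop}_{g\rightarrow h,d}(\mu)$, which is in general neither injective (cones can be identified with multiplicity given by Hurwitz numbers) nor surjective (by Theorem~\ref{thm_Hurwitzrealizability} the image consists only of cones of Hurwitz type). This does not damage the argument, since all you need is the factorization $\trop_{g\rightarrow h,d}(\mu)=\iota\circ\rho_{g\rightarrow h,d}(\mu)$ together with the pointwise computation in your second paragraph, but you should not assert the isomorphism. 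Second, the stabilization bookkeeping you flag as the main obstacle is actually vacuous in this setting: the target of an admissible cover carries all $r+s$ markings and is stable, and the induced map of dual graphs is unramified, so $\chi(v')=d_\ph(v')\,\chi(\ph(v'))<0$ for every source vertex; hence the marked special fibers of $\calX'$ and $\calX$ are already stable, these families are precisely the stable models computed by $\trop_{g,N}$ and $\trop_{h,r+s}$, and no contraction of unstable tails ever intervenes. With the edge lengths matched via $t_e=(t_{e'})^{r_{e'}}$ (consistently with Remark~\ref{rem:edgelengths}), the pointwise check closes the argument, so your proposal is correct in outline once these two points are repaired.
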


\subsection{The realizability problem}\label{section_realizability}

Finally, we now prove Theorem \ref{thm_realizabilityofprincipaldivisors} and \ref{thm_realizabilityDR} from the introduction. We first need to following auxiliary result, which, together with Corollary \ref{cor_tree->linearequivalence}, also implies that the tropicalization of $\calDR_{g,a}$ is a semilinear subset of $\PD_{g,a}^{trop}$. 

\begin{proposition}\label{prop_factorization}
The restriction of the tropicalization map $\trop_{g,n}\colon \calM_{g,n}^{an}\longrightarrow M_{g,n}^{trop}$ to $\calDR_{g,a}^{an}$ naturally factors through $\DR_{g,a}^{trop}$. 
\end{proposition}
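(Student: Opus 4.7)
The plan is to use the moduli space of tropical admissible covers as a bridge, exploiting Theorems~\ref{thm_tropHurwitz} and~\ref{thm_tropadmcovfunc}. Given a point of $\calDR_{g,a}^{an}$, represented by $(X,p_1,\ldots,p_n)$ over a non-Archimedean extension $K/k$, condition (ii) in the definition of $\calDR_{g,a}$ provides a map $f\colon X\to\PP^1$ with ramification profiles $\mu^+=(a_i)_{a_i>0}$ and $\mu^-=(-a_i)_{a_i<0}$ over $0$ and $\infty$. After possibly enlarging $K$, we may additionally mark the remaining branch points $q_1,\ldots,q_s\in\PP^1$ of $f$ (simple by a generic choice argument; otherwise we modify $f$ by composition with an automorphism of $\PP^1$, or alternatively pass to a stable reduction of $f$, which only affects the cover away from $0$ and $\infty$) together with their preimages $q'_{ij}$, thereby obtaining a point $[f\colon X\to \PP^1]$ of $\calH_{g\to 0,d}(\mu)^{an}$, where $\mu=(\mu^+,\mu^-,(2,1,\ldots,1),\ldots,(2,1,\ldots,1))$ with $s$ simple ramification entries.

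Next, Theorem~\ref{thm_tropHurwitz} provides a tropicalization $\ttau\colon\tGa\to\tDe$ in $H^{\trop}_{g\to 0,d}(\mu)$, which is by construction an unramified harmonic morphism, and in particular finite and effective. Here $\tDe$ is a stable tree with legs $0,\infty,r_1,\ldots,r_s$, the legs over $0$ and $\infty$ in $\tGa$ have dilation profiles $\mu^+$ and $\mu^-$, and each $r_i$ has $d-1$ preimages with simple dilation profile. Applying Theorem~\ref{thm_tropadmcovfunc} to the composition of the source map $\src\colon\calH_{g\to 0,d}(\mu)^{an}\to\calM_{g,n+s(d-1)}^{an}$ with the forgetful map $\pi\colon\calM_{g,n+s(d-1)}^{an}\to\calM_{g,n}^{an}$ (which, by \cite{ACP}, is compatible with tropicalization on the nose) identifies the stabilization of the tropical source $\tGa$ after forgetting the legs $q'_{ij}$ with $\trop_{g,n}\big([X,p_1,\ldots,p_n]\big)=:\Ga\in M_{g,n}^{trop}$.

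It now remains to extract from $\ttau$ the finite effective harmonic morphism to a two-legged tree required by Definition~\ref{def_introPDDR}(ii). Let $\De$ be the tree obtained from $\tDe$ by deleting the legs $r_1,\ldots,r_s$, and let $\Ga'$ be the tropical curve obtained from $\tGa$ by deleting the legs $q'_{ij}$. Since $\ttau$ sends the $q'_{ij}$ exactly to the $r_i$, the restriction $\tau\colon\Ga'\to\De$ is well-defined and finite. By Remark~\ref{rem:restriction}, deleting legs only increases the ramification degree at the remaining vertices, so $\tau$ is effective. By construction $\tau$ acts with the prescribed dilation profiles on the legs over $0$ and $\infty$, and $\Ga'$ is a tropical modification of its stabilization $\Ga$ because removing the unstable rational tails attached by the deleted legs is precisely the inverse operation of a tropical modification. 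Hence $\Ga\in\DR_{g,a}^{trop}$, as desired.

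The main technical point is the compatibility of the various tropical and algebraic forgetful maps used to identify the stabilization of $\tGa$ minus its $q'_{ij}$-legs with the unmarked tropicalization $\Ga$; this is the content of Theorem~\ref{thm_tropadmcovfunc}, combined with the functoriality of tropicalization under forgetful maps \cite{ACP}. The verification that each step is well-defined on the level of (topological) stacks, rather than merely on $K$-points, reduces to the corresponding statement for $\trop_{g,n}$ since the tropicalization maps of interest factor through retractions onto non-Archimedean skeleta.
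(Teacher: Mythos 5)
Your route is essentially the paper's: lift a point of $\calDR_{g,a}^{an}$ to the analytified Hurwitz space, tropicalize via Theorem~\ref{thm_tropHurwitz}, and use Theorem~\ref{thm_tropadmcovfunc} together with the compatibility of forgetful maps with tropicalization from \cite{ACP} (the paper's diagram \eqref{eq_Hurwitztropfunc}) to conclude that the image under $\trop_{g,n}$ lies in $\DR_{g,a}^{trop}$; your final step of deleting the auxiliary legs and invoking Remark~\ref{rem:restriction} just makes explicit why the resulting unramified tropical cover yields the finite effective morphism required by Definition~\ref{def_introPDDR}(ii).

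The one genuine flaw is your treatment of the extra branch points. The map $f\colon X\to\PP^1$ is given, not generic, so there is no ``generic choice argument''; and post-composing $f$ with an automorphism of $\PP^1$ changes nothing about its ramification profiles, so it cannot turn non-simple branch points into simple ones (the appeal to ``stable reduction of $f$'' is likewise not a substitute). Fortunately this step is unnecessary: the Hurwitz spaces $\calH_{g\to h,d}(\mu)$ as set up in the paper allow an arbitrary vector of partitions $\mu=(\mu^1,\ldots,\mu^r)$, so one simply takes $\mu^1,\mu^2$ to be the profiles over $0$ and $\infty$ and records whatever ramification profiles actually occur over the remaining branch points as further entries $\mu^3,\ldots,\mu^r$ (with $s$ then counting the leftover simple branch points, possibly zero). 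This is exactly what the paper means by lifting to $\calH_{g\rightarrow h,d}^{an}(\mu)$ ``for some choice of $\mu$''; with that substitution your argument goes through unchanged.
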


\begin{proof}
Let $d=\sum_{i:a_i>0}a_i=-\sum_{i:a_i<0}a_i$ be the degree of $a$ and $\mu=(\mu^1,\ldots, \mu^r)$ be a vector of partitions of $d$ such that $\mu^1=(a_i)_{i:a_i>0}$ and $\mu^2=(-a_i)_{i:a_i<0}$. Let $N=\vert\mu_1\vert+\cdots+\vert\mu_r\vert+s(d-1)$. By Theorem \ref{thm_tropadmcovfunc} and \cite[Theorem 1.2.2]{ACP} we have a natural commutative diagram
\begin{equation}\label{eq_Hurwitztropfunc}\begin{tikzcd}
\calH^{an}_{g\rightarrow h,d}(\mu)\arrow[d,"\src^{an}"] \arrow[rr,"\trop_{g\rightarrow h,d}(\mu)"] && H_{g\rightarrow h,d}^{trop}(\mu)\arrow[d,"\src^{trop}"]\\
\calM_{g,N}^{an} \arrow[d,"\textrm{forget}^{an}"] \arrow[rr,"\trop_{g,N}"]&& M_{g,N}^{trop}\arrow[d,"\textrm{forget}"]\\
\calM_{g,n}^{an} \arrow[rr,"\trop_{g,n}"]&& M_{g,n}^{trop} 
\end{tikzcd}\end{equation}
where $n=\vert \mu_1\vert +\vert \mu_2\vert$. Given a point in $\calDR_{g,a}^{an}\subseteq \calM_{g,n}^{an}$, we find that it naturally lifts to a point in $\calH_{g\rightarrow h,d}^{an}(\mu)$ for some choice of $\mu$ as above. Therefore, by the commutativity of \eqref{eq_Hurwitztropfunc}, its image under the tropicalization map $\trop_{g,n}$ also lies in $\DR_{g,a}^{trop}$. 
\end{proof}

\begin{proof}[Proof of Theorem \ref{thm_realizabilityofprincipaldivisors} and \ref{thm_realizabilityDR}]
By Theorem \ref{thm_Hurwitzrealizability}, the image of $\trop_{g\rightarrow h,d}(\mu)$ in \eqref{eq_Hurwitztropfunc} are precisely those cones in $H_{g\rightarrow h,d}^{trop}(\mu)$ that parametrize unramified covers of Hurwitz type. This immediately shows Theorem \ref{thm_realizabilityDR}. Since the diagram
\begin{equation*}\begin{tikzcd}
\calM_{g,n}^{an}\arrow[d,""] \arrow[rr,"\trop_{g,n}"]&& M_{g,n}^{trop}\arrow[d,""]\\
\calDiv_{g,d,d}^{an}\arrow[rr,"\trop_{g,d,d}"]&&\Div_{g,(d,d)}^{trop}
\end{tikzcd}\end{equation*}
commutes, by Proposition \ref{prop_tropdiv=func}, this also implies Theorem \ref{thm_realizabilityofprincipaldivisors}.
\end{proof}




\addcontentsline{toc}{section}{References}
\bibliographystyle{amsalpha}
\bibliography{biblio}{}

\addresses

\end{document}